\theoremstyle{plain}
\newtheorem{theorem}{Theorem}[section]
\newtheorem{lemma}[theorem]{Lemma}
\newtheorem{proposition}[theorem]{Proposition}
\theoremstyle{definition}
\newtheorem{definition}[theorem]{Definition}
\newtheorem{assumption}[theorem]{Assumption}
\theoremstyle{remark}
\newtheorem{remark}[theorem]{Remark}
\numberwithin{equation}{section}
\DeclareMathOperator*{\esup}{ess\,sup}
\begin{document}

\title[Parabolic $p$-Laplace equation in a moving thin domain]{Thin-film limit of the parabolic $p$-Laplace equation in a moving thin domain}

\author[T.-H. Miura]{Tatsu-Hiko Miura}
\address{Graduate School of Science and Technology, Hirosaki University, 3, Bunkyo-cho, Hirosaki-shi, Aomori, 036-8561, Japan}
\email{thmiura623@hirosaki-u.ac.jp}

\subjclass[2020]{35B25,35K92,35R01,35R37}

\keywords{Parabolic $p$-Laplace equation, moving thin domain, thin-film limit}

\begin{abstract}
  We consider the parabolic $p$-Laplace equation with $p>2$ in a moving thin domain under a Neumann type boundary condition corresponding to the total mass conservation.
  When the moving thin domain shrinks to a given closed moving hypersurface as its thickness tends to zero, we rigorously derive a limit problem by showing the weak convergence of the weighted average of a weak solution to the thin-domain problem and characterizing the limit function as a unique weak solution to the limit problem.
  The limit problem obtained in this paper is a system of a nonlinear partial differential equation and an algebraic equation on the moving hypersurface.
  This seems to be somewhat strange, but we also find that the limit problem can be seen as a new kind of local mass conservation law on the moving hypersurface with a normal flux.
\end{abstract}

\maketitle

\section{Introduction} \label{S:Intro}

\subsection{Problem settings and main result} \label{SS:Int_Main}
Let $T\in(0,\infty)$.
For $t\in[0,T]$, let $\Gamma_t$ be a given closed moving hypersurface in $\mathbb{R}^n$ with $n\geq2$, and let $\bm{\nu}(\cdot,t)$ be the unit outward normal vector field of $\Gamma_t$.
We set
\begin{align*}
  S_T := \bigcup_{t\in(0,T)}\Gamma_t\times\{t\}, \quad \overline{S_T} := \bigcup_{t\in[0,T]}\Gamma_t\times\{t\}.
\end{align*}
Let $g_0$ and $g_1$ be smooth functions on $\overline{S_T}$ such that $g:=g_1-g_0\geq c$ on $\overline{S_T}$ with some constant $c>0$ (note that we do not make any assumptions on the signs of $g_0$ and $g_1$).
For $t\in[0,T]$ and a sufficiently small $\varepsilon>0$, we define
\begin{align} \label{E:Def_MTD}
  \Omega_t^\varepsilon := \{y+r\bm{\nu}(y,t) \mid y\in\Gamma_t, \, \varepsilon g_0(y,t)<r<\varepsilon g_1(y,t)\}
\end{align}
and call $\Omega_t^\varepsilon$ a moving thin domain.
Moreover, we set
\begin{align*}
  Q_T^\varepsilon := \bigcup_{t\in(0,T)}\Omega_t^\varepsilon\times\{t\}, \quad \partial_\ell Q_T^\varepsilon := \bigcup_{t\in(0,T)}\partial\Omega_t^\varepsilon\times\{t\},
\end{align*}
and denote by $\partial_{\nu^\varepsilon}$ the outer normal derivative on $\partial\Omega_t^\varepsilon$ and by $V^\varepsilon$ the scalar outer normal velocity of $\partial\Omega_t^\varepsilon$ (see Section \ref{S:Prelim} for the details of the above notations).

Let $p\in(2,\infty)$.
We consider the following parabolic $p$-Laplace equation
\begin{align} \label{E:pLap_MTD}
  \left\{
  \begin{alignedat}{3}
    \partial_tu^\varepsilon-\mathrm{div}(|\nabla u^\varepsilon|^{p-2}\nabla u^\varepsilon) &= f^\varepsilon &\quad &\text{in} &\quad &Q_T^\varepsilon, \\
    |\nabla u^\varepsilon|^{p-2}\partial_{\nu^\varepsilon}u^\varepsilon+V^\varepsilon u^\varepsilon &= 0 &\quad &\text{on} &\quad &\partial_\ell Q_T^\varepsilon, \\
    u^\varepsilon|_{t=0} &= u_0^\varepsilon &\quad &\text{in} &\quad &\Omega_0^\varepsilon.
  \end{alignedat}
  \right.
\end{align}
Here, the Neumann type boundary condition implies the conservation of total mass
\begin{align*}
  \frac{d}{dt}\int_{\Omega_t^\varepsilon}u^\varepsilon\,dx = \int_{\Omega_t^\varepsilon}f^\varepsilon\,dx, \quad t\in(0,T)
\end{align*}
by the Reynolds transport theorem and the divergence theorem.
In other words, the rate of mass on $\partial\Omega_t^\varepsilon$ moving in the normal direction is balanced with the normal flux so that a substance does not go out of or come into $\Omega_t^\varepsilon$ through $\partial\Omega_t^\varepsilon$.

We are concerned with weak solutions to \eqref{E:pLap_MTD} in the framework of evolving Bochner spaces and weak material derivatives introduced in \cite{AlCaDjEl23,AlElSt15_PM}.
Roughly speaking, for a function space $\mathcal{X}(\Omega_t^\varepsilon)$ on the moving domain $\Omega_t^\varepsilon$, we define the evolving Bochner space
\begin{align*}
  L_{\mathcal{X}}^r(Q_T^\varepsilon) = L^r(0,T;\mathcal{X}(\Omega_{(\cdot)}^\varepsilon)), \quad r\in[1,\infty].
\end{align*}
Also, for $p\in(2,\infty)$ and $p'\in(1,2)$ satisfying $1/p+1/p'=1$, we set
\begin{align*}
  \mathbb{W}^{p,p'}(Q_T^\varepsilon) = \{u\in L_{W^{1,p}}^p(Q_T^\varepsilon) \mid \partial_\varepsilon^\bullet u\in L_{[W^{1,p}]^\ast}^{p'}(Q_T^\varepsilon)\},
\end{align*}
where $\partial_\varepsilon^\bullet$ stands for the weak material derivative (i.e., the weak time derivative along a velocity of $\Omega_t^\varepsilon$).
For the precise definition, we refer to Section \ref{S:Boch}.
In what follows, we also use the above notations with $\Omega_t^\varepsilon$ and $Q_T^\varepsilon$ replaced by $\Gamma_t$ and $S_T$, respectively.

In our previous work \cite{Miu25pre_pLap}, we studied the problem \eqref{E:pLap_MTD} in a general moving domain and established the existence and uniqueness of a weak solution $u^\varepsilon\in\mathbb{W}^{p,p'}(Q_T^\varepsilon)$ to \eqref{E:pLap_MTD} for any given data $u_0^\varepsilon\in L^2(\Omega_0^\varepsilon)$ and $f^\varepsilon\in L_{[W^{1,p}]^\ast}^{p'}(Q_T^\varepsilon)$ (see Definition \ref{D:WS_MTD} for the definition of a weak solution to \eqref{E:pLap_MTD}).
The purpose of this paper is to study the thin-film limit of \eqref{E:pLap_MTD}.
More precisely, when $\Omega_t^\varepsilon$ shrinks to $\Gamma_t$ as $\varepsilon\to0$, we intend to show the convergence of the weak solution $u^\varepsilon$ to \eqref{E:pLap_MTD} in an appropriate sense and to derive a limit problem on $\Gamma_t$ satisfied by the limit of $u^\varepsilon$.
Such a thin-film limit problem is important in view of applications, since it is closely related to reduction of dimension as well as modelling of various phenomena in thin domains and their limit sets.

When $p=2$ (the heat equation), we studied the thin-film limit of \eqref{E:pLap_MTD} in \cite{Miu17} and derived a limit problem on $\Gamma_t$ rigorously by means of the weak convergence of the average of a solution in the thin direction and characterization of the weak limit.
The limit problem obtained there is a linear diffusion equation on $\Gamma_t$ which involves the velocity, the mean curvature, and the Laplace--Beltrami operator of $\Gamma_t$.
We also estimated the difference of a solution on $\Omega_t^\varepsilon$ and the one on $\Gamma_t$ in terms of the $L^2$-norm in \cite{Miu17} and of the sup-norm in \cite{Miu23}.
In this paper, we consider the case $p>2$ and focus on derivation of a limit problem of \eqref{E:pLap_MTD}.
Surprisingly, it turns out that the limit problem is a system of a nonlinear partial differential equation and an algebraic equation on $\Gamma_t$.
We will study the difference estimate for solutions to the thin-domain and limit problems in a future work.

To state the main result, let us fix some notations (see Sections \ref{S:Prelim} for details).
Let $\nabla_\Gamma$ and $\mathrm{div}_\Gamma$ be the tangential gradient and the surface divergence on $\Gamma_t$.
We write $\mathbf{v}_\Gamma$ for the total velocity of $\Gamma_t$ and denote its normal and tangential components by
\begin{align*}
  V_\Gamma := \mathbf{v}_\Gamma\cdot\bm{\nu}, \quad \mathbf{v}_\Gamma^\tau := \mathbf{v}_\Gamma-V_\Gamma\bm{\nu} \quad\text{on}\quad \Gamma_t.
\end{align*}
Note that $V_\Gamma$ is scalar valued.
We write $\partial_\Gamma^\bullet$ for the material derivative on $\Gamma_t$.
It is the time derivative along $\mathbf{v}_\Gamma$ and formally written as $\partial_\Gamma^\bullet=\partial_t+\mathbf{v}_\Gamma\cdot\nabla$.
For a function $\eta$ on $\Gamma_t$, let $\bar{\eta}$ be the constant extension of $\eta$ in the normal direction of $\Gamma_t$.
By duality, we also define the ``constant extension'' $\bar{f}$ of $f\in[W^{1,p}(\Gamma_t)]^\ast$ (see \eqref{E:Def_CEFu} for the precise definition).
When $\varphi$ is a function on $\Omega_t^\varepsilon$, we define the weighted average of $\varphi$ in the thin direction by
\begin{align*}
  \mathcal{M}_\varepsilon \varphi(y) := \frac{1}{\varepsilon g(y,t)}\int_{\varepsilon g_0(y,t)}^{\varepsilon g_1(y,t)}\varphi\bigl(y+r\bm{\nu}(y,t)\bigr)J(y,t,r)\,dr, \quad y\in\Gamma_t.
\end{align*}
Here, $J$ is the Jacobian that appears in the change of variables from an integral over $\Omega_t^\varepsilon$ to integrals over $\Gamma_t$ and its normal direction (see \eqref{E:Def_J} and \eqref{E:CoV_MTD}).
Also, let $\partial_\nu\varphi=\bar{\bm{\nu}}\cdot\nabla\varphi$ be the derivative of $\varphi$ in the direction $\bar{\bm{\nu}}$ (i.e., the normal direction of $\Gamma_t$).
As we mentioned before, we use the evolving Bochner spaces
\begin{align*}
  L_{\mathcal{X}}^r(Q_T^\varepsilon), \quad L_{\mathcal{X}}^r(S_T), \quad \mathbb{W}^{p,p'}(Q_T^\varepsilon), \quad \mathbb{W}^{p,p'}(S_T), \quad r\in[1,\infty], \quad p \in (2,\infty)
\end{align*}
with $1/p+1/p'=1$, whose precise definitions are given in Section \ref{S:Boch}.

Now, we are ready to present the main result of this paper.

\begin{theorem} \label{T:TFL}
  For $u_0^\varepsilon\in L^2(\Omega_0^\varepsilon)$ and $f^\varepsilon\in L_{[W^{1,p}]^\ast}^{p'}(Q_T^\varepsilon)$, let $u^\varepsilon\in\mathbb{W}^{p,p'}(Q_T^\varepsilon)$ be a unique weak solution to \eqref{E:pLap_MTD}.
  Suppose that the following conditions are satisfied:
  \begin{itemize}
    \item[(a)] there exists a constant $c>0$ such that
    \begin{align} \label{E:Data_MTD}
      \varepsilon^{-1/2}\|u_0^\varepsilon\|_{L^2(\Omega_0^\varepsilon)} \leq c, \quad \varepsilon^{-1/p'}\|f^\varepsilon\|_{L_{[W^{1,p}]^\ast}^{p'}(Q_T^\varepsilon)} \leq c
    \end{align}
    for all sufficiently small $\varepsilon>0$,
    \item[(b)] there exists a function $v_0\in L^2(\Gamma_0)$ such that
    \begin{align} \label{E:u0Av_WeCo}
      \lim_{\varepsilon\to0}\mathcal{M}_\varepsilon u_0^\varepsilon = v_0 \quad\text{weakly in $L^2(\Gamma_0)$},
    \end{align}
    \item[(c)] there exists a functional $f\in L_{[W^{1,p}]^\ast}^{p'}(S_T)$ such that
    \begin{align} \label{E:ExF_StCo}
      \lim_{\varepsilon\to0}\varepsilon^{-1/p'}\|f^\varepsilon-\bar{f}\|_{L_{[W^{1,p}]^\ast}^{p'}(Q_T^\varepsilon)} = 0.
    \end{align}
  \end{itemize}
  Then, there exist functions $v\in\mathbb{W}^{p,p'}(S_T)$ and $\zeta\in L_{L^p}^p(S_T)$ such that
  \begin{alignat*}{2}
    \lim_{\varepsilon\to0}\mathcal{M}_\varepsilon u^\varepsilon &= v &\quad &\text{weakly in $\mathbb{W}^{p,p'}(S_T)$}, \\
    \lim_{\varepsilon\to0}\mathcal{M}_\varepsilon(\partial_\nu u^\varepsilon) &= \zeta &\quad &\text{weakly in $L_{L^p}^p(S_T)$},
  \end{alignat*}
  and $(v,\zeta)$ is a unique weak solution to the limit problem
  \begin{align} \label{E:pLap_Lim}
    \left\{
    \begin{aligned}
      &\partial_\Gamma^\bullet(gv)+gv\,\mathrm{div}_\Gamma\mathbf{v}_\Gamma \\
      &\qquad -\mathrm{div}_\Gamma\Bigl(g\bigl[(|\nabla_\Gamma v|^2+\zeta^2)^{(p-2)/2}\nabla_\Gamma v+v\mathbf{v}_\Gamma^\tau\bigr]\Bigr) = gf \quad\text{on}\quad S_T, \\
      &(|\nabla_\Gamma v|^2+\zeta^2)^{(p-2)/2}\zeta+V_\Gamma v = 0 \quad\text{on}\quad S_T, \\
      &v|_{t=0} = v_0 \quad\text{on}\quad \Gamma_0.
    \end{aligned}
    \right.
  \end{align}
\end{theorem}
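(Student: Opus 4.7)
The plan is to derive uniform $\varepsilon$-scaled a~priori bounds for $u^\varepsilon$, extract weak subsequential limits of the relevant thin-direction averages, pass to the limit in the weak formulation of \eqref{E:pLap_MTD} using carefully chosen test functions (either constant or linear in the normal direction), and finally identify the nonlinear flux via a Minty--Browder monotonicity argument in the evolving setting.

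Testing \eqref{E:pLap_MTD} with $u^\varepsilon$ itself and using Reynolds' transport theorem together with the Neumann-type boundary condition yields an energy identity from which, under hypothesis~(a), one extracts (as already done in \cite{Miu25pre_pLap}) the a~priori bound
\[
\|u^\varepsilon\|_{L_{L^2}^\infty(Q_T^\varepsilon)}^2 + \|\nabla u^\varepsilon\|_{L_{L^p}^p(Q_T^\varepsilon)}^p \leq C\varepsilon.
\]
The change of variables $x = y+r\bm{\nu}(y,t)$ with Jacobian $J$ combined with Jensen's inequality for $\mathcal{M}_\varepsilon$ then transfers these into uniform bounds on $\mathcal{M}_\varepsilon u^\varepsilon$ in $L_{L^2}^\infty(S_T)$, on the tangential and normal averages $\mathcal{M}_\varepsilon(\nabla_\tau u^\varepsilon)$ and $\mathcal{M}_\varepsilon(\partial_\nu u^\varepsilon)$ in $L_{L^p}^p(S_T)$, and on $\mathcal{M}_\varepsilon(|\nabla u^\varepsilon|^{p-2}\nabla u^\varepsilon)$ in $L_{L^{p'}}^{p'}(S_T)$; a dual bound on $\partial_\varepsilon^\bullet u^\varepsilon$ furnishes a uniform estimate on $\partial_\Gamma^\bullet(\mathcal{M}_\varepsilon u^\varepsilon)$ in $L_{[W^{1,p}]^\ast}^{p'}(S_T)$. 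Extracting subsequences yields candidates $v\in\mathbb{W}^{p,p'}(S_T)$, $\zeta\in L_{L^p}^p(S_T)$, and a flux limit $\bm{\xi}=\xi_\tau+\xi_\nu\bar{\bm{\nu}}$ in $L_{L^{p'}}^{p'}(S_T)$; a standard commutation argument between $\mathcal{M}_\varepsilon$ and $\nabla_\tau$ identifies $\mathcal{M}_\varepsilon(\nabla_\tau u^\varepsilon)\rightharpoonup\nabla_\Gamma v$, so $v \in L_{W^{1,p}}^p(S_T)$.

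To extract the PDE in \eqref{E:pLap_Lim}, I insert test functions of the form $\phi^\varepsilon = \bar{\psi}$ with $\psi \in C^\infty(\overline{S_T})$ into the weak formulation, change variables, divide by $\varepsilon$, and send $\varepsilon\to 0$. The tangential gradient of $\bar{\psi}$ on $\Omega_t^\varepsilon$ equals $\overline{\nabla_\Gamma\psi}$ to leading order, so the flux term converges to $\int_0^T\!\int_{\Gamma_t} g\,\xi_\tau\cdot\nabla_\Gamma\psi\,d\mathcal{H}^{n-1}\,dt$, the lateral-boundary contribution $V^\varepsilon u^\varepsilon\phi^\varepsilon$ produces the $\mathbf{v}_\Gamma^\tau$-transport piece after an asymptotic expansion (in the spirit of the $p=2$ calculation of \cite{Miu17}), and the material-derivative term yields $\int_0^T\!\int_{\Gamma_t}\partial_\Gamma^\bullet(gv)\psi$ after a transport-formula integration by parts on $S_T$. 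To extract the algebraic equation, I use test functions $\phi^\varepsilon(x,t) = d(x,t)\bar{\psi}(x,t)$, where $d$ is the signed distance to $\Gamma_t$: since $\nabla d = \bar{\bm{\nu}}$ and $|d|\lesssim\varepsilon$ on $\Omega_t^\varepsilon$, we have $\nabla\phi^\varepsilon = \bar{\psi}\bar{\bm{\nu}} + O(\varepsilon)$, the flux term isolates $\xi_\nu\bar{\psi}$ in the limit, the time-derivative and source contributions vanish after division by $\varepsilon$, and the lateral-boundary contribution yields $V_\Gamma v$, producing $\xi_\nu + V_\Gamma v = 0$ weakly.

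The main obstacle is identifying the pair $(\xi_\tau,\xi_\nu)$ with $(|\nabla_\Gamma v|^2+\zeta^2)^{(p-2)/2}(\nabla_\Gamma v,\zeta)$, because averaging the $p$-Laplace nonlinearity does not respect the splitting into tangential and normal components. My plan is a Minty--Browder argument in the evolving setting: starting from the energy identity for $u^\varepsilon$ on $Q_T^\varepsilon$, I take $\limsup_{\varepsilon\to 0}$ of the whole identity (using the weak convergences above, hypotheses~(b) and~(c), and lower semicontinuity of the $L^2$-norm at $t=T$) and compare with the formal energy identity for \eqref{E:pLap_Lim} tested against $v$, which crucially uses \emph{both} equations of \eqref{E:pLap_Lim} simultaneously: the algebraic equation supplies exactly the normal-component contribution missing when one tests only the PDE against $v$, and together they assemble the full density $(|\nabla_\Gamma v|^2+\zeta^2)^{p/2}$. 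Monotonicity of $(\bm{a},b) \mapsto (|\bm{a}|^2+b^2)^{(p-2)/2}(\bm{a},b)$ on $\mathbb{R}^{n-1}\times\mathbb{R}$ then identifies $(\xi_\tau,\xi_\nu)$ and confirms that $\zeta$ is the asserted weak limit. Uniqueness of $(v,\zeta)$ for \eqref{E:pLap_Lim} finally follows by subtracting two solutions, eliminating $\zeta$ via the algebraic equation, and applying monotonicity plus Gronwall, which upgrades the subsequential convergences to the full convergences claimed in the theorem.
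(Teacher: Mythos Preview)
Your overall architecture matches the paper's: uniform energy bounds, weak limits of the averages, test functions $\bar\psi$ and $d\bar\psi$ to extract the PDE and the algebraic relation, then a monotonicity step and a uniqueness argument. However, the last two steps contain genuine gaps.

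\textbf{The monotonicity step.} You propose to take the $\limsup$ of the full energy identity on $[0,T]$, using lower semicontinuity at $t=T$ and hypothesis~(b) at $t=0$. This does not close. The initial-data term enters as $+\tfrac{1}{2\varepsilon}\|u_0^\varepsilon\|_{L^2(\Omega_0^\varepsilon)}^2$, and for the Minty argument you need its $\limsup$ to be bounded above by $\tfrac12\int_{\Gamma_0}g|v_0|^2\,d\mathcal{H}^{n-1}$. But hypothesis~(b) gives only weak convergence of $\mathcal{M}_\varepsilon u_0^\varepsilon$, and since $\mathcal{M}_\varepsilon$ and the square do not commute, Jensen's inequality yields $\tfrac1\varepsilon\|u_0^\varepsilon\|_{L^2}^2\gtrsim\int_{\Gamma_0}g|\mathcal{M}_\varepsilon u_0^\varepsilon|^2$: the inequality goes the wrong way. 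The paper flags exactly this obstruction. In addition, the energy identity contains lower-order quadratic cross terms such as $\tfrac1\varepsilon\int_0^T(u^\varepsilon,\mathbf{v}^\varepsilon\cdot\nabla u^\varepsilon)_{L^2(\Omega_t^\varepsilon)}\,dt$ and $\tfrac1\varepsilon\int_0^T(u^\varepsilon,u^\varepsilon\,\mathrm{div}\,\mathbf{v}^\varepsilon)_{L^2(\Omega_t^\varepsilon)}\,dt$; passing to the limit in these products requires \emph{strong} convergence of $v^\varepsilon$ in $L_{L^2}^2(S_T)$, which you never invoke. The paper resolves both issues simultaneously: it multiplies the monotonicity inequality by a time cutoff $\theta\in C_c^\infty(0,T)$ before integrating (so that integration by parts produces $\theta'$ rather than time traces at $0$ and $T$), and it uses the Aubin--Lions lemma on $\mathbb{W}^{p,p'}(S_T)$ to obtain $v^\varepsilon\to v$ strongly in $L_{L^2}^2(S_T)$, which is precisely what is needed to pass to the limit in all the quadratic terms. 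Without these two ingredients the identification $\mathbf{w}=|\mathbf{b}_{v,\zeta}|^{p-2}\mathbf{b}_{v,\zeta}$ cannot be completed under the stated hypotheses.

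\textbf{Uniqueness.} Your plan ``monotonicity plus Gronwall'' on the $L^2$ norm of the difference fails when $\Gamma_t$ moves. After combining the two equations of \eqref{E:pLap_Lim} into one weak form and testing with $(\eta,\xi)=(v_{\mathrm d},\zeta_{\mathrm d})$, the transport contribution generates
\[
\int_0^T\bigl(gv_{\mathrm d},\mathbf{v}_\Gamma\cdot\mathbf{b}_{\mathrm d}\bigr)_{L^2(\Gamma_t)}\,dt,\qquad \mathbf{b}_{\mathrm d}:=\nabla_\Gamma v_{\mathrm d}+\zeta_{\mathrm d}\bm{\nu},
\]
and for $p>2$ this cannot be absorbed by Young's inequality into the available quantities $\|v_{\mathrm d}\|_{L^2}^2$ and $\|\mathbf{b}_{\mathrm d}\|_{L^p}^p$ (one would need $\|\mathbf{b}_{\mathrm d}\|_{L^2}^2$). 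The paper instead establishes an $L^1$-contraction: testing with $\eta=\Lambda_\gamma'(gv_{\mathrm d})$ and $\xi=g\zeta_{\mathrm d}\Lambda_\gamma''(gv_{\mathrm d})$ for a mollified absolute value $\Lambda_\gamma$, the monotone term remains nonnegative, and the dangerous cross terms are supported on $\{|gv_{\mathrm d}|\le\gamma\}$ and vanish as $\gamma\to0$ because the algebraic relation forces $\zeta_1=\zeta_2$ (hence $\mathbf{b}_{\mathrm d}=0$) a.e.\ on $\{v_1=v_2\}$.
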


For the definition of a weak solution to \eqref{E:pLap_Lim}, see Definition \ref{D:WS_Lim}.
In Section \ref{S:Outline}, we show the outline of the proof of Theorem \ref{T:TFL}.
The precise proof is given in Section \ref{S:TFLPr}.

By Theorem \ref{T:TFL}, we can also get the following existence (and uniqueness) result.
For the proof, we again refer to Section \ref{S:TFLPr}.

\begin{theorem} \label{T:Lim_UnEx}
  For all $v_0\in L^2(\Gamma_0)$ and $f\in L_{[W^{1,p}]^\ast}^{p'}(S_T)$, there exists a unique weak solution $(v,\zeta)$ to \eqref{E:pLap_Lim}.
\end{theorem}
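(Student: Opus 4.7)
The plan is to derive Theorem \ref{T:Lim_UnEx} as a corollary of Theorem \ref{T:TFL} by constructing, for given data $(v_0, f) \in L^2(\Gamma_0) \times L_{[W^{1,p}]^\ast}^{p'}(S_T)$, a family of thin-domain data $\{(u_0^\varepsilon, f^\varepsilon)\}_{\varepsilon>0}$ whose limits are $v_0$ and $f$ in the senses required by hypotheses (a)--(c) of Theorem \ref{T:TFL}.

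The natural choice is constant extension in the normal direction: set $u_0^\varepsilon := \bar{v_0}|_{\Omega_0^\varepsilon}$ and $f^\varepsilon := \bar{f}|_{Q_T^\varepsilon}$. Then condition (c) is trivial since $f^\varepsilon - \bar{f} \equiv 0$. Condition (a) follows from a change of variables $x = y + r\bar{\bm{\nu}}(y,t)$, which via the uniform bounds on $J$ in \eqref{E:Def_J} and \eqref{E:CoV_MTD} yields $\|u_0^\varepsilon\|_{L^2(\Omega_0^\varepsilon)}^2 \leq C\varepsilon\|v_0\|_{L^2(\Gamma_0)}^2$ and the analogous estimate for the dual norm of $f^\varepsilon$ — the latter requires unwrapping the definition \eqref{E:Def_CEFu} of the dual constant extension, which is the only step with a nontrivial subtlety. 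For condition (b), one computes
\[
  \mathcal{M}_\varepsilon u_0^\varepsilon(y) = v_0(y) \cdot \frac{1}{\varepsilon g(y,0)} \int_{\varepsilon g_0(y,0)}^{\varepsilon g_1(y,0)} J(y,0,r)\,dr,
\]
and since $J(y,0,0) = 1$, the multiplicative factor converges to $1$ uniformly in $y$ as $\varepsilon \to 0$, giving $\mathcal{M}_\varepsilon u_0^\varepsilon \to v_0$ strongly (hence weakly) in $L^2(\Gamma_0)$.

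With (a)--(c) verified, Theorem \ref{T:TFL} directly supplies a pair $(v,\zeta) \in \mathbb{W}^{p,p'}(S_T) \times L_{L^p}^p(S_T)$ which is a weak solution to \eqref{E:pLap_Lim}, establishing existence. Uniqueness is already part of the conclusion of Theorem \ref{T:TFL} (the theorem asserts that $(v,\zeta)$ is the \emph{unique} weak solution), so it transfers at once to the present statement.

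The main obstacle I anticipate is essentially bookkeeping: checking that the dual extension $\bar{f}$ of an element of $L_{[W^{1,p}]^\ast}^{p'}(S_T)$ indeed satisfies the scaled bound $\|\bar{f}\|_{L_{[W^{1,p}]^\ast}^{p'}(Q_T^\varepsilon)} \leq C\varepsilon^{1/p'}\|f\|_{L_{[W^{1,p}]^\ast}^{p'}(S_T)}$. This is dual to the straightforward $L^p$-norm estimate for the direct constant extension of $W^{1,p}(\Gamma_t)$-functions and should follow routinely from \eqref{E:Def_CEFu} together with the change-of-variables formula; beyond that, the argument is purely a transfer of the conclusion of Theorem \ref{T:TFL}.
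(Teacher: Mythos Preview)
Your proposal is correct and follows essentially the same route as the paper: construct thin-domain data by constant extension, verify hypotheses (a)--(c) of Theorem \ref{T:TFL}, and read off existence and uniqueness. The only cosmetic difference is that the paper takes $u_0^\varepsilon := \bar v_0/J(\pi(\cdot,0),0,d(\cdot,0))$ so that $\mathcal{M}_\varepsilon u_0^\varepsilon = v_0$ holds \emph{exactly} rather than just in the limit, and it cites Lemma \ref{L:CE_Func} (equation \eqref{E:CE_Func}) for the dual-norm bound you anticipate.
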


\subsection{Physical interpretation of the limit problem} \label{SS:Int_PhIn}
Although the original thin-domain problem \eqref{E:pLap_MTD} describes diffusion (or local conservation of mass), the limit problem \eqref{E:pLap_Lim} seems to be less related to physical phenomena due to its complexity.
However, it actually expresses local conservation of mass on $\Gamma_t$.
Let us see it in a simplified setting.

When $g\equiv1$ and $f\equiv0$, the limit problem \eqref{E:pLap_Lim} reduces to
  \begin{align} \label{E:pLim_Redu}
  \left\{
  \begin{aligned}
    &\partial_\Gamma^\bullet v+v\,\mathrm{div}_\Gamma\mathbf{v}_\Gamma-\mathrm{div}_\Gamma\Bigl((|\nabla_\Gamma v|^2+\zeta^2)^{(p-2)/2}\nabla_\Gamma v+v\mathbf{v}_\Gamma^\tau\Bigr) = 0 \quad\text{on}\quad S_T, \\
    &(|\nabla_\Gamma v|^2+\zeta^2)^{(p-2)/2}\zeta+V_\Gamma v = 0 \quad\text{on}\quad S_T.
  \end{aligned}
  \right.
\end{align}
Moreover, since $\mathbf{v}_\Gamma=V_\Gamma\bm{\nu}+\mathbf{v}_\Gamma^\tau$, and since $\mathbf{v}_\Gamma^\tau$ and $\nabla_\Gamma V_\Gamma$ are tangential on $\Gamma_t$,
\begin{align*}
  \partial_\Gamma^\bullet v &= \partial_tv+\mathbf{v}_\Gamma\cdot\nabla v = \partial_t v+(V_\Gamma\bm{\nu})\cdot\nabla v+\mathbf{v}_\Gamma^\tau\cdot\nabla_\Gamma v, \\
  \mathrm{div}_\Gamma\mathbf{v}_\Gamma &= \nabla_\Gamma V_\Gamma\cdot\bm{\nu}+V_\Gamma\,\mathrm{div}_\Gamma\bm{\nu}+\mathrm{div}_\Gamma\mathbf{v}_\Gamma^\tau = -V_\Gamma H+\mathrm{div}_\Gamma\mathbf{v}_\Gamma^\tau, \\
  \mathrm{div}_\Gamma(v\mathbf{v}_\Gamma^\tau) &= \nabla_\Gamma v\cdot\mathbf{v}_\Gamma^\tau+v\,\mathrm{div}_\Gamma\mathbf{v}_\Gamma^\tau,
\end{align*}
where $H=-\mathrm{div}_\Gamma\bm{\nu}$ is the mean curvature of $\Gamma_t$.
Thus, we can write \eqref{E:pLim_Redu} as
\begin{align} \label{E:pLim_Nor}
  \left\{
  \begin{aligned}
    &\partial_\Gamma^\circ v-V_\Gamma Hv = \mathrm{div}_\Gamma\Bigl((|\nabla_\Gamma v|^2+\zeta^2)^{(p-2)/2}\nabla_\Gamma v\Bigr) \quad\text{on}\quad S_T, \\
    &(|\nabla_\Gamma v|^2+\zeta^2)^{(p-2)/2}\zeta+V_\Gamma v = 0 \quad\text{on}\quad S_T,
  \end{aligned}
  \right.
\end{align}
where $\partial_\Gamma^\circ=\partial_t+(V_\Gamma\bm{\nu})\cdot\nabla$ is the normal time derivative (i.e., the time derivative along the normal velocity field $V_\Gamma\bm{\nu}$, see Section \ref{S:Prelim} for details).

First, let us formally set $p=2$.
In this case, the system \eqref{E:pLim_Nor} decouples to
\begin{align} \label{E:Heat_Lim}
  \partial_\Gamma^\circ v-V_\Gamma Hv = \Delta_\Gamma v, \quad \zeta+V_\Gamma v = 0 \quad\text{on}\quad S_T,
\end{align}
where $\Delta_\Gamma$ is the Laplace--Beltrami operator on $\Gamma_t$.
In \cite{Miu17}, the first equation of \eqref{E:Heat_Lim} was derived from the heat equation in $\Omega_t^\varepsilon$ by the thin-film limit, but the second one was not obtained.
As mentioned in \cite{DziEll07} (see also \cite{DziEll13_AN}), the first equation of \eqref{E:Heat_Lim} is a linear diffusion equation on $\Gamma_t$ that corresponds to the local mass conservation of the form
\begin{align*}
  \frac{d}{dt}\int_{\mathcal{D}_t}v\,d\mathcal{H}^{n-1} = -\int_{\partial\mathcal{D}_t}\mathbf{q}\cdot\bm{\mu}\,d\mathcal{H}^{n-2}, \quad \mathbf{q} = -\nabla_\Gamma v
\end{align*}
for any portion $\mathcal{D}_t$ of $\Gamma_t$ moving with normal velocity field $V_\Gamma\bm{\nu}$, where $\mathbf{q}$ is a tangential flux, $\mathcal{H}^k$ is the $k$-dimensional Hausdorff measure, and $\bm{\mu}$ is the unit outer conormal of $\partial\mathcal{D}_t$ (i.e., a unit tangential vector field on $\Gamma_t$ that is outer normal to $\partial\mathcal{D}_t$).
Note that
\begin{align*}
  \frac{d}{dt}\int_{\mathcal{D}_t}v\,d\mathcal{H}^{n-1} = \int_{\mathcal{D}_t}\{\partial_\Gamma^\circ v+\mathrm{div}_\Gamma(vV_\Gamma\bm{\nu})\}\,d\mathcal{H}^{n-1} = \int_{\mathcal{D}_t}(\partial_\Gamma^\circ v-V_\Gamma Hv)\,d\mathcal{H}^{n-1}
\end{align*}
by the Leibniz formula when $\mathcal{D}_t$ moves with velocity $V_\Gamma\bm{\nu}$ (see \cite[Lemma 2.2]{DziEll07}).
However, this description does not explain why the second equation of \eqref{E:Heat_Lim} appears.
In particular, the physical meaning of the function $\zeta$ is totally unclear.

Let us return to the case $p>2$.
To give a physical interpretation of \eqref{E:pLim_Nor}, we consider a surface flux $\mathbf{j}$ on $\Gamma_t$ which may have both the normal and tangential components, and propose the following local mass conservation law:
\begin{align} \label{E:LCM_full}
  \frac{d}{dt}\int_{\mathcal{D}_t}v\,d\mathcal{H}^{n-1} = -\int_{\partial\mathcal{D}_t}\mathbf{j}\cdot\bm{\mu}\,d\mathcal{H}^{n-2}-\int_{\mathcal{D}_t}\mathbf{j}\cdot\bm{\nu}\,d\mathcal{H}^{n-1}+\int_{\mathcal{D}_t}V_\Gamma v\,d\mathcal{H}^{n-1}.
\end{align}
Again, $\mathcal{D}_t$ is any portion of $\Gamma_t$ moving with velocity $V_\Gamma\bm{\nu}$ and $\bm{\mu}$ is the unit outer conormal of $\partial\mathcal{D}_t$.
In the right-hand side, the first term is the tangential mass flow out of $\mathcal{D}_t$ through $\partial\mathcal{D}_t$ by the flux $\mathbf{j}$, which represents the exchange of mass within $\Gamma_t$.
On the other hand, the second term is the normal mass flow leaving $\mathcal{D}_t$ by the flux $\mathbf{j}$, which can be seen as the exchange of mass between $\Gamma_t$ and its outer environment.
The last term stands for the rate of mass on $\mathcal{D}_t$ moving in the normal direction of $\Gamma_t$ with scalar velocity $V_\Gamma$.
Compared to the description in \cite{DziEll07,DziEll13_AN}, the second and third terms are new terms that take into account the normal flux and velocity.
Since $\bm{\mu}$ is the unit outer conormal of $\partial\mathcal{D}_t$ that is tangential on $\Gamma_t$, we see by the surface divergence theorem that
\begin{align*}
  \int_{\partial\mathcal{D}_t}\mathbf{j}\cdot\bm{\mu}\,d\mathcal{H}^{n-2} = \int_{\partial\mathcal{D}_t}\mathbf{j}_\tau\cdot\bm{\mu}\,d\mathcal{H}^{n-2} = \int_{\mathcal{D}_t}\mathrm{div}_\Gamma\mathbf{j}_\tau\,d\mathcal{H}^{n-1},
\end{align*}
where $\mathbf{j}_\tau=\mathbf{j}-(\mathbf{j}\cdot\bm{\nu})\bm{\nu}$ is the tangential component of $\mathbf{j}$.
Using this to \eqref{E:LCM_full}, we get
\begin{align} \label{E:DLCM_gen}
  \partial_\Gamma^\circ v-V_\Gamma Hv = -\mathrm{div}_\Gamma\mathbf{j}_\tau-\mathbf{j}\cdot\bm{\nu}+V_\Gamma v \quad\text{on}\quad S_T.
\end{align}
This equation is the differential form of a local mass conservation law when there is the exchange of mass between $\Gamma_t$ and its outer environment caused by the flux $\mathbf{j}$.

Now, we assume that the rate of mass on $\Gamma_t$ moving in the normal direction is balanced with the normal flux, i.e., $V_\Gamma v=\mathbf{j}\cdot\bm{\nu}$ on $\Gamma_t$, so that a substance on $\Gamma_t$ does not go out of or come from the outer environment.
This situation is similar to the boundary condition of the thin-domain problem \eqref{E:pLap_MTD}.
Under this assumption, the equation \eqref{E:DLCM_gen} reduces to
\begin{align} \label{E:DLCM_spe}
  \partial_\Gamma^\circ v-V_\Gamma Hv = -\mathrm{div}_\Gamma\mathbf{j}_\tau, \quad V_\Gamma v = \mathbf{j}\cdot\bm{\nu} \quad\text{on}\quad S_T.
\end{align}
In particular, setting the flux by $\mathbf{j}=-|\mathbf{b}_{v,\zeta}|^{p-2}\mathbf{b}_{v,\zeta}$ with
\begin{align*}
  \mathbf{b}_{v,\zeta} = \nabla_\Gamma v+\zeta\bm{\nu}, \quad |\mathbf{b}_{v,\zeta}| = (|\nabla_\Gamma v|^2+\zeta^2)^{1/2},
\end{align*}
we get \eqref{E:pLim_Nor} when $p>2$ and \eqref{E:Heat_Lim} when $p=2$ from \eqref{E:DLCM_spe}, and find that $\zeta$ is related to the normal flux.
It seems that the second equation of \eqref{E:DLCM_spe} is implicitly assumed in \cite{DziEll07,DziEll13_AN} as a natural condition keeping a substance on $\Gamma_t$.
Here, we make it an explicit assumption.

\subsection{Literature overview} \label{SS:Int_Liter}
Thin domains arise in various problems like elasticity, lubrication, and geophysical fluid dynamics (see e.g. \cite{Cia97,Cia00,OckOck95,Ped87}).
Since the works by Hale and Raugel \cite{HalRau92_DH,HalRau92_RD}, many authors have studied partial differential equations (PDEs) in flat thin domains around lower dimensional domains.
There are also several works on other shapes of thin domains such as a thin L-shaped domain \cite{HalRau95}, a flat thin domain with holes \cite{PriRyb01}, thin tubes and networks \cite{Kos00,Kos02,Yan90}, and a thin spherical shell \cite{BrDhLe_21,Miu25pre_NSTSS,TemZia97} (see also \cite{Rau95} for examples of thin domains).
Curved thin domains around general (hyper)surfaces and manifolds were considered in the study of the eigenvalue problem of the Laplacian \cite{JimKur16,Kre14,Sch96,Yac18}, a reaction-diffusion equation \cite{PrRiRy02,PriRyb03_Cur}, the Navier--Stokes equations \cite{Miu20_03,Miu21_02,Miu22_01,Miu24_SNS}, the Ginzburg--Landau heat flow \cite{Miu25_GL}, and the Cahn--Hilliard equation \cite{Miu25pre_CH}.
The $p$-Laplace equation in flat thin domains was studied in the elliptic case \cite{ArNaPe21,NakPaz24,NakPer21,NakPer22_NON,NakPer24,NogNak20,PerSil15,Sil13} and the parabolic case \cite{LiFre25,LiFrYu23,LiLiFr24,PuLi25}.
Recently, the authors of \cite{GiLaRy25} introduced an abstract framework of singular limits of gradient flows in Hilbert spaces varying with parameters and applied it to the parabolic $p$-Laplace equation in a flat thin domain.

The above cited papers deals with fixed thin domains that do not move in time.
PDEs in moving thin domains have been studied in a few works.
Elliott and Stinner \cite{EllSti09} considered a moving thin domain around a moving surface to propose a diffuse interface approximation of a given surface advection-diffusion equation (see also \cite{ElStStWe11} for a numerical computation of the diffuse interface model).
Pereira and Silva \cite{PerSil13} studied a reaction-diffusion equation in a flat thin domain with moving top boundary.
In \cite{Miu17}, the present author rigorously derived a limit problem from the heat equation in a moving thin domain around a moving surface.
Moreover, the difference estimates of solutions to the heat equation and the limit problem were established in the $L^2$-norm \cite{Miu17} and the sup-norm \cite{Miu23}.
Fluid and porous medium equations in moving thin domains around moving surfaces were studied in \cite{Miu18,MiGiLi18}.
These works carried out formal calculations of the thin-film limit, but a mathematical justification has not been done due to the difficulty arising from the nonlinearlity.
This paper gives the first rigorous result on the thin-film limit of nonlinear equations in moving thin domains around moving surfaces.

Recently, PDEs on moving surfaces have been widely studied both mathematically and numerically in view of applications.
There are many works on linear diffusion equations on a moving surface (see e.g. \cite{AlElSt15_IFB,DziEll07,DziEll13_AN,DziEll13_MC,EllFri15,OlReXu14,Vie14} and the references cited therein).
Nonlinear equations on moving surfaces were also studied such as a Stefan problem \cite{AlpEll15}, a porous medium equation \cite{AlpEll16}, and the Hamilton--Jacobi equation \cite{DeElMiSt19}.
Fluid equations on moving surfaces were proposed and analyzed in \cite{ArrDeS09,JaOlRe18,KoLiGi17,OlReZh22}.
The Cahn--Hilliard equation on a moving surface was used to model phase separation in binary alloys and cell membranes which evolve in time (see e.g. \cite{EilEll08,YuQuOl20}), and it has been analyzed under various settings of potential and mobility (see e.g. \cite{CaeEll21,CaElGrPo23,EllRan15,EllSal25_RP,EllSal_26_DM,OCoSti16}).
Moreover, the recent paper \cite{EllSal25_NSCH} studied the system of the Navier--Stokes and Cahn--Hilliard equations on a moving surface.
Liquid crystals on moving surfaces were modeled and computed in \cite{NiReVo19,NiReVo20}.
The authors of \cite{AbBuGa23_QP,AbBuGa23_ST} studied a system of the mean curvature flow and a diffusion equation on a moving surface.
Also, the numerical schemes of that system were developed in \cite{DecSty22,ElGaKo22}.
In \cite{YaOzSa16}, the authors introduced a model of nonlinear elasticity in time-dependent ambient spaces like moving surfaces.
As explained in Section \ref{SS:Int_PhIn}, this paper proposes a new kind of diffusion equation on a moving surface by the thin-film limit of the parabolic $p$-Laplace equation.

\subsection{Organization of the paper} \label{SS:Int_Org}
The rest of this paper is organized as follows.
We give the outline of the proof of Theorem \ref{T:TFL} in Section \ref{S:Outline}.
In Section \ref{S:Prelim}, we prepare notations and basic results on a moving hypersurface and thin domain.
Section \ref{S:Boch} presents settings and some facts of function spaces on time-dependent domains.
In Section \ref{S:WSMTD}, we give the definition of a weak solution to the thin-domain problem \eqref{E:pLap_MTD} and derive some estimates for the weak solution.
We study the weighted average operator $\mathcal{M}_\varepsilon$ in Section \ref{S:WeAve}.
Section \ref{S:TFLPr} is devoted to the proof of Theorems \ref{T:TFL} and \ref{T:Lim_UnEx}.
The proofs of some auxiliary results are provided in Section \ref{S:Pf_Aux}.
A short conclusion is made in Section \ref{S:Concl}.

\section{Outline of Proof} \label{S:Outline}
Let us explain the outline, difficulty, and idea of the proof of Theorem \ref{T:TFL}.
We refer to Section \ref{S:TFLPr} for the precise proof.

\subsection{Basic outline} \label{SS:OuLi_Bas}
For the weak solution $u^\varepsilon$ to the thin-domain problem \eqref{E:pLap_MTD}, let
\begin{align*}
  v^\varepsilon := \mathcal{M}_\varepsilon u^\varepsilon, \quad \mathbf{w}^\varepsilon := \mathcal{M}_\varepsilon(|\nabla u^\varepsilon|^{p-2}\nabla u^\varepsilon), \quad \zeta^\varepsilon := \mathcal{M}_\varepsilon(\partial_\nu u^\varepsilon) \quad\text{on}\quad S_T.
\end{align*}
First, we estimate $u^\varepsilon$, $\nabla u^\varepsilon$, and $\partial_\varepsilon^\bullet u^\varepsilon$ explicitly in terms of $\varepsilon$ by using an energy method and the assumption \eqref{E:Data_MTD} on the given data (see Section \ref{S:WSMTD}).
Here, we can apply an energy method as in the case of fixed domains with the aid of the abstract framework of evolving Bochner spaces developed in \cite{AlCaDjEl23,AlElSt15_PM}.
It follows from the resulting estimates and properties of the weighed average operator $\mathcal{M}_\varepsilon$ given in Section \ref{S:WeAve} that
\begin{align*}
  v^\varepsilon\in\mathbb{W}^{p,p'}(S_T), \quad \mathbf{w}^\varepsilon\in [L_{L^{p'}}^{p'}(S_T)]^n, \quad \zeta^\varepsilon\in L_{L^p}^p(S_T)
\end{align*}
and they are bounded in these spaces uniformly in $\varepsilon$.
Thus, up to a subsequence,
\begin{align*}
  v^\varepsilon \to v, \quad \mathbf{w}^\varepsilon \to \mathbf{w}, \quad \zeta^\varepsilon \to \zeta \quad\text{as}\quad \varepsilon\to0 \quad\text{weakly.}
\end{align*}
Next, for a test function $\eta$ on $S_T$ and $t\in(0,T)$, let $\bar{\eta}(\cdot,t)$ be the constant extension of $\eta(\cdot,t)$ in the normal direction of $\Gamma_t$.
We substitute $\bar{\eta}$ for a weak form of the thin-domain problem \eqref{E:pLap_MTD} and take the average in the thin direction by using the relation
\begin{align} \label{E:OuLi_Ave}
  \frac{1}{\varepsilon}\int_{\Omega_t^\varepsilon}u^\varepsilon(x,t)\bar{\eta}(x,t)\,dx = \int_{\Gamma_t}g(y,t)\mathcal{M}_\varepsilon u^\varepsilon(y,t)\eta(y,t)\,d\mathcal{H}^{n-1}(y),
\end{align}
which follows from the change of variables formula \eqref{E:CoV_MTD} and the definition of $\mathcal{M}_\varepsilon$.
Then, we send $\varepsilon\to0$ to get a weak form on $\Gamma_t$ satisfied by the limit functions $v$ and $\mathbf{w}$ (but $\zeta$ does not appear).
After that, we determine $\mathbf{w}$ in terms of $v$ and $\zeta$ to confirm that $(v,\zeta)$ is a weak solution to the limit problem \eqref{E:pLap_Lim}.
Moreover, we show the uniqueness of a weak solution to \eqref{E:pLap_Lim} to complete the proof of Theorem \ref{T:TFL}.
Here, the characterization of $\mathbf{w}$ and the proof of the uniqueness are the difficult parts.
In particular, the characterization of $\mathbf{w}$ requires the most effort and new ideas.

\subsection{Difference from the linear case} \label{SS:OuLi_Lin}
Before showing the main difficulty, let us explain the linear case $p=2$ studied in \cite{Miu17}.
Let $u_2^\varepsilon$ be a weak solution to the heat equation in $\Omega_t^\varepsilon$ (i.e., the problem \eqref{E:pLap_MTD} with $p=2$) and let $v_2^\varepsilon:=\mathcal{M}_\varepsilon u_2^\varepsilon$ on $S_T$.
Also, let $\bar{\eta}$ be the constant extension of a test function $\eta$ on $S_T$.
When $p=2$, a weak form of the heat equation in $\Omega_t^\varepsilon$ involves the standard Dirichlet form on $\Omega_t^\varepsilon$, and we can get
\begin{align*}
  \frac{1}{\varepsilon}\int_0^T\left(\int_{\Omega_t^\varepsilon}\nabla u_2^\varepsilon\cdot\nabla\bar{\eta}\,dx\right)\,dt = \int_0^T\left(\int_{\Gamma_t}g\nabla_\Gamma v_2^\varepsilon\cdot\nabla_\Gamma\eta\,d\mathcal{H}^{n-1}\right)\,dt+R_\varepsilon
\end{align*}
and estimate the error term $R_\varepsilon$ explicitly in terms of $\varepsilon$ to show that $R_\varepsilon\to0$ as $\varepsilon\to0$ by using \eqref{E:OuLi_Ave} and computing the commutator of $\mathcal{M}_\varepsilon$ and $\nabla$ directly.
Thus, we can derive a weak form on $\Gamma_t$ involving only $v_2^\varepsilon$ from that of the heat equation in $\Omega_t^\varepsilon$, and we easily find that the weak limit of $v_2^\varepsilon$ satisfies a weak form of a limit problem just by sending $\varepsilon\to0$.

In the present case $p>2$, however, it is too hard to compute the commutator of $\mathcal{M}_\varepsilon$ and the nonlinear gradient operator $|\nabla|^{p-2}\nabla$.
Moreover, it is not clear how the function $\zeta$ in \eqref{E:pLap_Lim} appears in the averaging process.
Thus, instead of computing the commutator, we characterize the limit vector field $\mathbf{w}$ in terms of $v$ and $\zeta$ after sending $\varepsilon\to0$.

\subsection{Characterization of $\mathbf{w}$} \label{SS:OuLi_Cha}
We first determine the normal component of $\mathbf{w}$.
To this end, we return to the weak form of the thin-domain problem \eqref{E:pLap_MTD} which involves the terms
\begin{align*}
  \int_0^T\left(\int_{\Omega_t^\varepsilon}|\nabla u^\varepsilon|^{p-2}\nabla u^\varepsilon\cdot\nabla\psi\,dx\right)\,dt, \quad \int_0^T\left(\int_{\Omega_t^\varepsilon}u^\varepsilon(\mathbf{v}^\varepsilon\cdot\nabla\psi)\,dx\right)\,dt,
\end{align*}
where $\psi$ is a test function on $Q_T^\varepsilon$ and $\mathbf{v}^\varepsilon$ is a velocity of $\Omega_t^\varepsilon$ which is close to the velocity $\mathbf{v}_\Gamma$ of $\Gamma_t$.
In the weak form of \eqref{E:pLap_MTD} divided by $\varepsilon$, we take the test function
\begin{align*}
  \psi(x,t) = [d\bar{\eta}](x,t) = d(x,t)\bar{\eta}(x,t), \quad (x,t) \in Q_T^\varepsilon,
\end{align*}
where $d(\cdot,t)$ is the signed distance function from $\Gamma_t$ and $\bar{\eta}$ is the constant extension of a test function $\eta$ on $S_T$.
Then, using $\nabla d=\bar{\bm{\nu}}$ in $Q_T^\varepsilon$, $\mathbf{v}_\Gamma\cdot\bm{\nu}=V_\Gamma$ on $S_T$, \eqref{E:OuLi_Ave}, and the estimates for $u^\varepsilon$, we carry out some calculations to find that
\begin{align*}
  \frac{1}{\varepsilon}\int_0^T\left(\int_{\Omega_t^\varepsilon}|\nabla u^\varepsilon|^{p-2}\nabla u^\varepsilon\cdot\nabla[d\bar{\eta}]\,dx\right)\,dt &= \int_0^T\left(\int_{\Gamma_t}g(\mathbf{w}^\varepsilon\cdot\bm{\nu})\eta\,d\mathcal{H}^{n-1}\right)\,dt+R_\varepsilon^1, \\
  \frac{1}{\varepsilon}\int_0^T\left(\int_{\Omega_t^\varepsilon}u^\varepsilon(\mathbf{v}^\varepsilon\cdot\nabla[d\bar{\eta}])\,dx\right)\,dt &= \int_0^T\left(\int_{\Gamma_t}gV_\Gamma v^\varepsilon\eta\,d\mathcal{H}^{n-1}\right)\,dt+R_\varepsilon^2,
\end{align*}
and that the error terms $R_\varepsilon^1$, $R_\varepsilon^2$ and all other terms in the weak form of \eqref{E:pLap_MTD} divided by $\varepsilon$ with test function $d\bar{\eta}$ vanish as $\varepsilon\to0$.
Thus, letting $\varepsilon\to0$ in the weak form, we get
\begin{align*}
  \int_0^T\left(\int_{\Gamma_t}g(\mathbf{w}\cdot\bm{\nu}+V_\Gamma v)\eta\,d\mathcal{H}^{n-1}\right)\,dt = 0
\end{align*}
for any test function $\eta$, which implies $\mathbf{w}\cdot\bm{\nu}+V_\Gamma v=0$ on $S_T$ by $g>0$.
For details of the above discussions, we refer to Proposition \ref{P:Chw_Nor}.

Next, we determine the whole part of $\mathbf{w}$.
We proceed a monotonicity argument as usual, but start from the inequality for the integral on $Q_T^\varepsilon$ of the form
\begin{align} \label{E:OuLi_Mono}
  \frac{1}{\varepsilon}\int_0^T\theta(t)\left(\int_{\Omega_t^\varepsilon}\Bigl(|\nabla u^\varepsilon|^{p-2}\nabla u^\varepsilon-|\bar{\mathbf{z}}|^{p-2}\bar{\mathbf{z}}\Bigr)\cdot(\nabla u^\varepsilon-\bar{\mathbf{z}})\,dx\right)\,dt \geq 0.
\end{align}
Here, $\bar{\mathbf{z}}$ is the constant extension of any vector field $\mathbf{z}$ on $S_T$ which may have both the normal and tangential components.
Also, $\theta$ is any nonnegative function in $C_c^\infty(0,T)$ which depends only on time.
We send $\varepsilon\to0$ in the above inequality.
Then, we easily obtain the convergence of the integrals of
\begin{align*}
  \frac{\theta(t)}{\varepsilon}|\nabla u^\varepsilon|^{p-2}\nabla u^\varepsilon\cdot\bar{\mathbf{z}}, \quad \frac{\theta(t)}{\varepsilon}|\bar{\mathbf{z}}|^{p-2}\bar{\mathbf{z}}\cdot\nabla u^\varepsilon, \quad \frac{\theta(t)}{\varepsilon}|\bar{\mathbf{z}}|^p
\end{align*}
by using the change of variables formula \eqref{E:CoV_MTD}, the relation \eqref{E:OuLi_Ave} and some other properties of $\mathcal{M}_\varepsilon$, and the weak convergence of $v^\varepsilon$, $\mathbf{w}^\varepsilon$, and $\zeta^\varepsilon$.
Here, we emphasize that the function $\zeta$ in \eqref{E:pLap_Lim} appears at this point since
\begin{align*}
  \lim_{\varepsilon\to0}\mathcal{M}_\varepsilon(\nabla u^\varepsilon) = \mathbf{b}_{v,\zeta} = \nabla_\Gamma v+\zeta\bm{\nu} \quad\text{weakly in}\quad [L_{L^p}^p(S_T)]^n
\end{align*}
by properties of $\mathcal{M}_\varepsilon$ and the weak convergence of $v^\varepsilon$ and $\zeta^\varepsilon$.

The most difficult task is the proof of the convergence of
\begin{align} \label{E:OuLi_pGrad}
  \frac{1}{\varepsilon}\int_0^T\theta(t)\left(\int_{\Omega_t^\varepsilon}|\nabla u^\varepsilon|^p\,dx\right)\,dt.
\end{align}
Testing $\theta u^\varepsilon$ in the weak form of the thin-domain problem \eqref{E:pLap_MTD} and carrying out integration by parts with respect to time (see \eqref{E:Tr_Mult} for the precise integration by parts formula), we rewrite the above integral as a linear combination of
\begin{align} \label{E:OuLi_Quad}
  \begin{aligned}
    &\frac{1}{\varepsilon}\int_0^T\frac{d\theta}{dt}(t)\left(\int_{\Omega_t^\varepsilon}|u^\varepsilon|^2\,dx\right)\,dt, \quad \frac{1}{\varepsilon}\int_0^T\theta(t)\left(\int_{\Omega_t^\varepsilon}u^\varepsilon(\mathbf{v}^\varepsilon\cdot\nabla u^\varepsilon)\,dx\right)\,dt, \\
    &\frac{1}{\varepsilon}\int_0^T\theta(t)\left(\int_{\Omega_t^\varepsilon}|u^\varepsilon|^2\,\mathrm{div}\,\mathbf{v}^\varepsilon\,dx\right)\,dt, \quad \frac{1}{\varepsilon}\int_0^T\theta(t)\langle f^\varepsilon,u^\varepsilon\rangle_{W^{1,p}(\Omega_t^\varepsilon)}\,dt.
  \end{aligned}
\end{align}
Here, $\langle\cdot,\cdot\rangle_{W^{1,p}(\Omega_t^\varepsilon)}$ is the duality product between $[W^{1,p}(\Omega_t^\varepsilon)]^\ast$ and $W^{1,p}(\Omega_t^\varepsilon)$.
Note that, if we consider \eqref{E:OuLi_Mono} without the function $\theta$, then we will encounter the time traces
\begin{align*}
  \frac{1}{\varepsilon}\int_{\Omega_0^\varepsilon}|u_0^\varepsilon|^2\,dx, \quad \frac{1}{\varepsilon}\int_{\Omega_T^\varepsilon}|u^\varepsilon(T)|^2\,dx
\end{align*}
when we carry out integration by parts with respect to time.
They are usually handled by the strong convergence of $u_0^\varepsilon$ and the weak convergence of $u^\varepsilon(T)$ (see e.g. \cite{AlCaDjEl23,LaSoUr68,Lio69,Zei90_2B}), but we cannot get such results in the present case since $\mathcal{M}_\varepsilon$ and the square do not commute with each other.
Thus, we include $\theta$ in \eqref{E:OuLi_Mono} and apply $\theta(0)=\theta(T)=0$ to eliminate the time traces.
Moreover, since the integrals \eqref{E:OuLi_Quad} are quadratic with respect to $u^\varepsilon$ and $f^\varepsilon$, the weak convergence of $v^\varepsilon$ and $\mathcal{M}_\varepsilon(\nabla u^\varepsilon)$ is not enough to obtain the convergence of \eqref{E:OuLi_Quad} after taking the average in the thin direction.
In the present case, however, $v^\varepsilon$ is bounded in $\mathbb{W}^{p,p'}(S_T)$ and thus we can apply the Aubin--Lions type lemma (see Lemma \ref{L:AuLi}) shown in \cite{AlCaDjEl23} to observe that $v^\varepsilon$ converges to $v$ strongly in $L_{L^2}^2(S_T)$.
By this fact, the assumption \eqref{E:ExF_StCo} on $f^\varepsilon$, and some other results on $\mathcal{M}_\varepsilon$ and $\mathbf{v}^\varepsilon$, we can prove the convergence of \eqref{E:OuLi_Quad} that implies the convergence of \eqref{E:OuLi_pGrad} as $\varepsilon\to0$.

After the above discussions, we can send $\varepsilon\to0$ in \eqref{E:OuLi_Mono}.
We combine the resulting inequality and the weak form satisfied by $v$ and $\mathbf{w}$ to obtain
\begin{multline*}
  \int_0^T\theta(t)\left(\int_{\Gamma_t}g(\mathbf{w}-|\mathbf{z}|^{p-2}\mathbf{z})\cdot(\mathbf{b}_{v,\zeta}-\mathbf{z})\,d\mathcal{H}^{n-1}\right)\,dt \\
  -\int_0^T\theta(t)\left(\int_{\Gamma_t}g(\mathbf{w}\cdot\bm{\nu}+V_\Gamma v)\zeta\,d\mathcal{H}^{n-1}\right)\,dt \geq 0
\end{multline*}
with $\mathbf{b}_{v,\zeta}=\nabla_\Gamma v+\zeta\bm{\nu}$.
Moreover, the second integral vanishes by $\mathbf{w}\cdot\bm{\nu}+V_\Gamma v=0$ (this is the reason why we first determine the normal component of $\mathbf{w}$).
Hence, we get
\begin{align*}
  \int_0^T\theta(t)\left(\int_{\Gamma_t}g(\mathbf{w}-|\mathbf{z}|^{p-2}\mathbf{z})\cdot(\mathbf{b}_{v,\zeta}-\mathbf{z})\,d\mathcal{H}^{n-1}\right)\,dt \geq 0
\end{align*}
for any vector field $\mathbf{z}$ on $S_T$ and any nonnegative $\theta\in C_c^\infty(0,T)$.
From this inequality, we can derive $\mathbf{w}=|\mathbf{b}_{v,\zeta}|^{p-2}\mathbf{b}_{v,\zeta}$ on $S_T$ by choosing $\mathbf{z}$ and $\theta$ suitably and noting that $g>0$ on $S_T$.
This shows that $(v,\zeta)$ is indeed a weak solution to the limit problem \eqref{E:pLap_Lim}.
We refer to Proposition \ref{P:Chw_All} for details of the above discussions.

\subsection{Uniqueness of a weak solution} \label{SS:OuLi_Uni}
To prove the uniqueness of a weak solution to the limit problem \eqref{E:pLap_Lim}, we consider two weak solutions $(v_1,\zeta_1)$ and $(v_2,\zeta_2)$ to \eqref{E:pLap_Lim} with same data.
If $v_1=v_2$ on $S_T$, then we can deduce that $\zeta_1=\zeta_2$ on $S_T$ by the second equation of \eqref{E:pLap_Lim}, since the mapping
\begin{align*}
  \varphi\colon\mathbb{R}\to\mathbb{R}, \quad \varphi(s) = (a+s^2)^{(p-2)/2}s+b, \quad s\in\mathbb{R}
\end{align*}
is one-to-one (and surjective) for any fixed $a\geq0$ and $b\in\mathbb{R}$.
Thus, we set
\begin{align*}
  v_{\mathrm{d}} = v_1-v_2, \quad \zeta_{\mathrm{d}} = \zeta_1-\zeta_2, \quad \mathbf{b}_i = \nabla_\Gamma v_i+\zeta_i\bm{\nu} \quad\text{on}\quad S_T, \quad i=1,2.
\end{align*}
and prove $v_{\mathrm{d}}=0$ by using the weak forms of $(v_1,\zeta_1)$ and $(v_2,\zeta_2)$.
However, the weak form to \eqref{E:pLap_Lim} only corresponds to the first equation of \eqref{E:pLap_Lim} and the second equation is assumed to be satisfied in the strong form (see Definition \ref{D:WS_Lim}).
Thus, if we take the difference of the weak forms of $(v_1,\zeta_1)$ and $(v_2,\zeta_2)$, then the resulting equality involves the term
\begin{align} \label{E:OuLi_UGT}
  \int_0^T\left(\int_{\Gamma_t}g(|\mathbf{b}_1|^{p-2}\nabla_\Gamma v_1-|\mathbf{b}_2|^{p-2}\nabla_\Gamma v_2)\cdot\nabla_\Gamma\eta\,d\mathcal{H}^{n-1}\right)\,dt,
\end{align}
and we cannot say that this is nonnegative when $\eta=v_{\mathrm{d}}$, since $\mathbf{b}_1$ and $\mathbf{b}_2$ have the normal component.
To overcome this difficulty, we rewrite the second equation of \eqref{E:pLap_Lim} as
\begin{align*}
  \int_0^T\left(\int_{\Gamma_t}g(|\mathbf{b}_i|^{p-2}\zeta_i\bm{\nu}+v_i\mathbf{v}_\Gamma)\cdot(\xi\bm{\nu})\,d\mathcal{H}^{n-1}\right)\,dt = 0
\end{align*}
with a test function $\xi$ on $S_T$ by using $V_\Gamma=\mathbf{v}_\Gamma\cdot\bm{\nu}$, and add this equality to the weak form of the first equation of \eqref{E:pLap_Lim}.
Then, taking the difference of the new weak forms of $(v_1,\zeta_1)$ and $(v_2,\zeta_2)$, we get a weak form of $(v_{\mathrm{d}},\zeta_{\mathrm{d}})$ which involves the term
\begin{align*}
  \int_0^T\left(\int_{\Gamma_t}g(|\mathbf{b}_1|^{p-2}\mathbf{b}_1-|\mathbf{b}_2|^{p-2}\mathbf{b}_2)\cdot(\nabla_\Gamma\eta+\xi\bm{\nu})\,d\mathcal{H}^{n-1}\right)\,dt
\end{align*}
instead of \eqref{E:OuLi_UGT}.
This term is nonnegative when $\eta=v_{\mathrm{d}}$ and $\xi=\zeta_{\mathrm{d}}$.
Thus, it is negligible when we intend to estimate the $L^2(\Gamma_t)$-norm of $v_{\mathrm{d}}$ by an energy method.
However, this choice of test functions generates another problematic term
\begin{align*}
  \int_0^T\left(\int_{\Gamma_t}gv_{\mathrm{d}}(\mathbf{v}_\Gamma\cdot\mathbf{b}_{\mathrm{d}})\,d\mathcal{H}^{n-1}\right)\,dt,
\end{align*}
which cannot be properly absorbed into the integrals of $|v_{\mathrm{d}}|^2$ and $|\mathbf{b}_{\mathrm{d}}|^p$ due to $p\neq2$.
Note that this term appears since $\Gamma_t$ moves in time.
To circumvent this difficulty, we choose test functions appropriately to compute the time derivative of the (mollified) $L^1(\Gamma_t)$-norm of $v_{\mathrm{d}}$ by following the idea of the proof of \cite[Theorem 2.9]{CaNoOr17} which deals with nonlinear parabolic equations in time-dependent domains (see also \cite[Proposition 3.4]{Miu25pre_pLap}).
For details of the above discussions, we refer to Theorem \ref{T:Lim_Uni}.

\subsection{Average of the material derivative} \label{SS:OuLi_AvMat}
Let us also give a comment on the average of the material derivative.
When $\Gamma_t$ and $\Omega_t^\varepsilon$ do not move in time, it is clear that the weighted average $\mathcal{M}_\varepsilon$ commutes with the time derivative.
In the present case, however, we consider the moving sets $\Gamma_t$ and $\Omega_t^\varepsilon$ and use the material time derivatives along their velocities, so it is not trivial to compute the commutator of $\mathcal{M}_\varepsilon$ and the material derivatives, especially in the weak sense.
In Lemma \ref{L:Ave_WeMt}, we carry out this calculation by using properties of $\mathcal{M}_\varepsilon$ and the constant extension in the normal direction of $\Gamma_t$.

\section{Preliminaries} \label{S:Prelim}
In this section, we fix notations and give basic results on surfaces and thin domains.

Throughout this paper, we fix a coordinate system of $\mathbb{R}^n$.
For $i=1,\dots,n$, let $x_i$ be the $i$-th component of $x\in\mathbb{R}^n$ under the fixed coordinate system.
We write $c$ for a general positive constant independent of $t$ and $\varepsilon$.
For a Banach space $\mathcal{X}$, let $[\mathcal{X}]^\ast$ be its dual space and $\langle\cdot,\cdot\rangle_{\mathcal{X}}$ be the duality product between $[\mathcal{X}]^\ast$ and $\mathcal{X}$.

\subsection{Vectors and matrices} \label{SS:Pr_VecMa}
We write a vector $\mathbf{a}\in\mathbb{R}^n$ and a matrix $\mathbf{A}\in\mathbb{R}^{n\times n}$ as
\begin{align*}
  \mathbf{a} =
  \begin{pmatrix}
    a_1 \\ \vdots \\ a_n
  \end{pmatrix}
  = (a_1,\dots,a_n)^{\mathrm{T}}, \quad \mathbf{A} = (A_{ij})_{i,j} =
  \begin{pmatrix}
    A_{11} & \cdots & A_{1n} \\
    \vdots & \ddots & \vdots \\
    A_{n1} & \cdots & A_{nn}.
  \end{pmatrix}
\end{align*}
For $\mathbf{a},\mathbf{b}\in\mathbb{R}^n$, let $\mathbf{a}\cdot\mathbf{b}$, $|\mathbf{a}|$, and $\mathbf{a}\otimes\mathbf{b}=(a_ib_j)_{i,j}$ be the inner product, the norm, and the tensor product of $\mathbf{a}$ and $\mathbf{b}$, respectively.
Also, let $\mathbf{0}_n$ be the zero vector of $\mathbb{R}^n$.
Let $\mathbf{I}_n$ be the $n\times n$ the identity matrix and $\mathbf{A}^{\mathrm{T}}$ be the transpose of $\mathbf{A}\in\mathbb{R}^{n\times n}$.
We write
\begin{align*}
  \mathbf{A}:\mathbf{B} := \mathrm{tr}[\mathbf{A}^{\mathrm{T}}\mathbf{B}], \quad |\mathbf{A}| := \sqrt{\mathbf{A}:\mathbf{A}}, \quad \mathbf{A},\mathbf{B}\in\mathbb{R}^{n\times n}
\end{align*}
for the inner product and the Frobenius norm of matrices.
We will use the following basic inequalities for vectors (see e.g. \cite[Lemma 2.8]{Miu25pre_pLap} for the proof).

\begin{lemma} \label{L:pVec_Ineq}
  Let $p>2$.
  For all $\mathbf{a},\mathbf{b}\in\mathbb{R}^n$, we have
  \begin{gather}
    (|\mathbf{a}|^{p-2}\mathbf{a}-|\mathbf{b}|^{p-2}\mathbf{b})\cdot(\mathbf{a}-\mathbf{b}) \geq 0, \label{E:pVec_Coer} \\
    \bigl|\,|\mathbf{a}|^{p-2}\mathbf{a}-|\mathbf{b}|^{p-2}\mathbf{b}\,\bigr| \leq c(|\mathbf{a}|^{p-2}+|\mathbf{b}|^{p-2})|\mathbf{a}-\mathbf{b}|. \label{E:pVec_Lip}
  \end{gather}
\end{lemma}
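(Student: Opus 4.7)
The plan is to prove both inequalities by viewing $F(\mathbf{a}) := |\mathbf{a}|^{p-2}\mathbf{a}$ as the gradient of the convex potential $\Phi(\mathbf{a}) = |\mathbf{a}|^p/p$ and exploiting the explicit form of its Jacobian. A direct computation gives
\begin{align*}
  DF(\mathbf{a}) = |\mathbf{a}|^{p-2}\mathbf{I}_n + (p-2)|\mathbf{a}|^{p-4}\mathbf{a}\otimes\mathbf{a} \quad (\mathbf{a}\neq\mathbf{0}_n),
\end{align*}
which is symmetric positive semi-definite (both summands are, since $p>2$ and $\mathbf{a}\otimes\mathbf{a}$ is rank-one nonnegative), and whose operator norm is bounded by $(p-1)|\mathbf{a}|^{p-2}$ because $|\mathbf{a}|^{p-4}|\mathbf{a}|^2 = |\mathbf{a}|^{p-2}$. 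Since $p>2$, this bound goes to zero as $\mathbf{a}\to\mathbf{0}_n$, so $F\in C^1(\mathbb{R}^n;\mathbb{R}^n)$ with $DF(\mathbf{0}_n)=\mathbf{0}$, and the fundamental theorem of calculus is available along every segment.

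For \eqref{E:pVec_Coer}, I would set $\mathbf{c}(t) := (1-t)\mathbf{b}+t\mathbf{a}$ and $\phi(t) := F(\mathbf{c}(t))\cdot(\mathbf{a}-\mathbf{b})$ for $t\in[0,1]$. Then $\phi'(t) = DF(\mathbf{c}(t))(\mathbf{a}-\mathbf{b})\cdot(\mathbf{a}-\mathbf{b}) \geq 0$ by positive semi-definiteness, and integrating over $[0,1]$ yields $(F(\mathbf{a})-F(\mathbf{b}))\cdot(\mathbf{a}-\mathbf{b})=\phi(1)-\phi(0)\geq0$. For \eqref{E:pVec_Lip}, the fundamental theorem of calculus gives
\begin{align*}
  F(\mathbf{a})-F(\mathbf{b}) = \int_0^1 DF(\mathbf{c}(t))(\mathbf{a}-\mathbf{b})\,dt,
\end{align*}
whence $|F(\mathbf{a})-F(\mathbf{b})| \leq (p-1)\left(\int_0^1|\mathbf{c}(t)|^{p-2}\,dt\right)|\mathbf{a}-\mathbf{b}| \leq (p-1)(|\mathbf{a}|+|\mathbf{b}|)^{p-2}|\mathbf{a}-\mathbf{b}|$. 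To convert the sum inside the power into a sum of powers, I would use the elementary bound $(s+t)^{p-2}\leq c_p(s^{p-2}+t^{p-2})$ for $s,t\geq0$, which follows from convexity of $r\mapsto r^{p-2}$ when $p\geq3$ and from subadditivity of $r\mapsto r^{p-2}$ when $2<p<3$.

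The only technical point deserving attention is the degeneracy of $DF$ at the origin when $2<p<4$: the second summand contains $|\mathbf{a}|^{p-4}$, which is unbounded in isolation. This is not a genuine obstacle because the tensor $\mathbf{a}\otimes\mathbf{a}$ contributes a factor $|\mathbf{a}|^2$ in norm, so the combined term is $O(|\mathbf{a}|^{p-2})$ and the integrand in the FTC representation is continuous even when the segment $\mathbf{c}(t)$ crosses $\mathbf{0}_n$. Hence both steps go through uniformly without the need to separate the case $\mathbf{a}=\mathbf{0}_n$ or $\mathbf{b}=\mathbf{0}_n$.
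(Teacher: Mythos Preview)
Your proof is correct. The paper does not actually give its own argument for this lemma: it simply cites \cite[Lemma 2.8]{Miu25pre_pLap}, so there is no internal proof to compare your approach against. Your method---computing the Jacobian $DF(\mathbf{a})=|\mathbf{a}|^{p-2}\mathbf{I}_n+(p-2)|\mathbf{a}|^{p-4}\mathbf{a}\otimes\mathbf{a}$, verifying its positive semi-definiteness and the operator-norm bound $(p-1)|\mathbf{a}|^{p-2}$, and then integrating along the segment $(1-t)\mathbf{b}+t\mathbf{a}$---is a standard and complete route to both \eqref{E:pVec_Coer} and \eqref{E:pVec_Lip}; the handling of the apparent singularity at the origin for $2<p<4$ is also properly addressed.
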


\subsection{Moving surface} \label{SS:Pre_Sur}
For $t\in[0,T]$, let $\Gamma_t$ be a closed (i.e., compact and without boundary), connected, and oriented smooth hypersurface in $\mathbb{R}^n$.
We assume that $\Gamma_t$ is the boundary of a bounded domain $\Omega_t$ in $\mathbb{R}^n$.
Let $\bm{\nu}(\cdot,t)$ be the unit outward normal vector field of $\Gamma_t$ that points from $\Omega_t$ into $\mathbb{R}^n\setminus\Omega_t$.
Also, let $d(\cdot,t)$ be the signed distance function from $\Gamma_t$ increasing in the direction of $\bm{\nu}(\cdot,t)$.
We set
\begin{align*}
  S_T := \bigcup_{t\in(0,T)}\Gamma_t\times\{t\}, \quad \overline{S_T} := \bigcup_{t\in[0,T]}\Gamma_t\times\{t\}.
\end{align*}

\begin{assumption} \label{A:Flow_Sur}
  We make the following assumptions on the evolution of $\Gamma_t$.
  \begin{enumerate}
    \item There exists a smooth mapping (a flow map)
    \begin{align*}
      \Phi_{(\cdot)}^0\colon\Gamma_0\times[0,T]\to\mathbb{R}^n, \quad (Y,t) \mapsto \Phi_t^0(Y)
    \end{align*}
    such that $\Phi_0^0$ is the identity mapping on $\Gamma_0$ and $\Phi_t^0\colon\Gamma_0\to\Gamma_t$ is a diffeomorphism for all $t\in[0,T]$.
    We write $\Phi_{-t}^0$ for the inverse mapping of $\Phi_t^0$.
    \item There exist an open neighborhood $\mathcal{O}_T$ of $\overline{S_T}$ in $\mathbb{R}^n\times[0,T]$ and a continuous vector field $\tilde{\bm{\nu}}\colon\mathcal{O}_T\to\mathbb{R}^n$ such that $\tilde{\bm{\nu}}|_{\overline{S_T}}=\bm{\nu}$ on $\overline{S_T}$.
  \end{enumerate}
\end{assumption}

The condition (ii) states that the orientation of $\Gamma_t$ as the boundary of $\Omega_t$ is preserved in time.
By Assumption \ref{A:Flow_Sur}, the following results hold (see Section \ref{SS:PA_Sur} for the proofs).

\begin{lemma} \label{L:MS_ST}
  The sets $S_T$ and $\overline{S_T}$ are smooth hypersurfaces in $\mathbb{R}^{n+1}$.
  Moreover, $\overline{S_T}$ is compact in $\mathbb{R}^{n+1}$.
\end{lemma}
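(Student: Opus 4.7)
The plan is to construct a global smooth parametrization of $\overline{S_T}$ using the flow map from Assumption~\ref{A:Flow_Sur}(i), and then read off both the hypersurface structure and the compactness directly. Define
\begin{equation*}
  \Psi\colon \Gamma_0\times[0,T]\to\mathbb{R}^{n+1}, \qquad \Psi(Y,t):=\bigl(\Phi_t^0(Y),\,t\bigr).
\end{equation*}
By Assumption~\ref{A:Flow_Sur}(i), $\Phi_{(\cdot)}^0$ is smooth and $\Phi_t^0\colon\Gamma_0\to\Gamma_t$ is a diffeomorphism for each $t$, so $\Psi$ is smooth and, by construction, $\Psi(\Gamma_0\times[0,T])=\overline{S_T}$. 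Injectivity is immediate: if $\Psi(Y,t)=\Psi(Y',t')$, then the last coordinate forces $t=t'$, and the bijectivity of $\Phi_t^0$ then yields $Y=Y'$.

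Next I would verify that $\Psi$ is an immersion. A tangent vector to $\Gamma_0\times[0,T]$ at $(Y,t)$ has the form $(v,s)$ with $v\in T_Y\Gamma_0$ and $s\in\mathbb{R}$; its image under $d\Psi_{(Y,t)}$ is
\begin{equation*}
  \bigl(d_Y\Phi_t^0(v)+s\,\partial_t\Phi_t^0(Y),\,s\bigr)\in\mathbb{R}^n\times\mathbb{R}.
\end{equation*}
Since $\Phi_t^0$ is a diffeomorphism onto $\Gamma_t$, the map $v\mapsto d_Y\Phi_t^0(v)$ is injective from $T_Y\Gamma_0$ (dimension $n-1$) into $T_{\Phi_t^0(Y)}\Gamma_t\subset\mathbb{R}^n$; together with the last component $s$, this shows the kernel of $d\Psi_{(Y,t)}$ is trivial, so $d\Psi$ has rank $n$ everywhere.

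The product $\Gamma_0\times[0,T]$ is compact (since $\Gamma_0$ is closed in the topological sense, i.e., compact), so the continuous injection $\Psi$ into the Hausdorff space $\mathbb{R}^{n+1}$ is a homeomorphism onto its image. Combined with the immersion property, this makes $\Psi$ a smooth embedding of the compact $n$-manifold with boundary $\Gamma_0\times[0,T]$ onto $\overline{S_T}$. Hence $\overline{S_T}$ is a smoothly embedded $n$-dimensional submanifold (with boundary $\Gamma_0\times\{0\}\cup\Gamma_T\times\{T\}$) of $\mathbb{R}^{n+1}$, and restricting $\Psi$ to $\Gamma_0\times(0,T)$ exhibits $S_T$ as a smooth hypersurface (without boundary). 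Compactness of $\overline{S_T}$ is automatic, being the continuous image of the compact set $\Gamma_0\times[0,T]$.

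There is no real obstacle here: the lemma is essentially a bookkeeping consequence of the flow-map assumption. The only point requiring mild care is the rank computation for $d\Psi$, where one must exploit that the added time coordinate is transverse to the spatial slices, so that $d\Psi$ stays injective even though $\partial_t\Phi_t^0(Y)$ may itself be tangent (or not) to $\Gamma_t$. Assumption~\ref{A:Flow_Sur}(ii) plays no role in this lemma; it is reserved for later results where the global orientation of $\Gamma_t$ as the boundary of $\Omega_t$ must be consistent in~$t$.
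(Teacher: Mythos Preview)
Your proof is correct and essentially uses the same global map $\Psi(Y,t)=(\Phi_t^0(Y),t)$ as the paper, but the route diverges after that point. You invoke the standard differential-topology fact that an injective immersion of a compact manifold is an embedding, checking the rank of $d\Psi$ abstractly. The paper instead works locally: it picks a chart $Y\colon U\to\Gamma_0$, sets $y(\xi,t)=\Phi_t^0(Y(\xi))$, and applies the inverse function theorem to $\psi(\xi,t)=(y'(\xi,t),t)$ (where $y'$ denotes the first $n-1$ coordinates of $y$) to exhibit $\overline{S_T}$ explicitly as a graph $\{(z',z_n(z',t),t)\}$ near each point. Your argument is shorter for this lemma in isolation; the paper's concrete graph representation, however, is immediately recycled in the proofs of Lemmas~\ref{L:MS_Reg}, \ref{L:MS_ILC}, and \ref{L:MS_Tubu}, where explicit local coordinates and the formula \eqref{Pf_Re:nulc} for $\bm{\nu}$ are needed. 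Your observation that Assumption~\ref{A:Flow_Sur}(ii) is not used here is accurate; it enters only in Lemma~\ref{L:MS_Reg} to fix the sign in the local expression of $\bm{\nu}$.
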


\begin{lemma} \label{L:MS_Reg}
  The mapping $\Phi_{-(\cdot)}^0$ and the vector field $\bm{\nu}$ are smooth on $\overline{S_T}$.
\end{lemma}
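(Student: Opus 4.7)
The plan is to package both conclusions into a single smoothness statement for the flow map
\[
  \Psi\colon \Gamma_0 \times [0,T] \to \mathbb{R}^n \times [0,T], \qquad \Psi(Y,t) := (\Phi_t^0(Y),\, t),
\]
and then read off the two statements. By Assumption \ref{A:Flow_Sur}(i), $\Psi$ is smooth, its image is exactly $\overline{S_T}$, and it is injective: distinct $t$ give distinct second components, while for fixed $t$ the map $\Phi_t^0$ is a bijection onto $\Gamma_t$.

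The first step is to verify that $\Psi$ is an immersion. For $(V,s)\in T_{Y_0}\Gamma_0 \times \mathbb{R}$,
\[
  d\Psi_{(Y_0,t_0)}(V,s) = \bigl(d_Y\Phi_{t_0}^0(V) + s\,\partial_t\Phi_{t}^0(Y_0)\big|_{t=t_0},\; s\bigr),
\]
which vanishes only if $s=0$ and $d_Y\Phi_{t_0}^0(V)=0$; the diffeomorphism property of $\Phi_{t_0}^0$ then forces $V=0$. Hence $\Psi$ is an injective smooth immersion of the compact manifold-with-boundary $\Gamma_0\times[0,T]$, so the inverse function theorem yields a smooth local inverse near each point; by compactness and injectivity these patches glue into a global smooth inverse $\Psi^{-1}$ on $\overline{S_T}$. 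Reading off coordinates, $\Psi^{-1}(y,t) = (\Phi_{-t}^0(y),\, t)$, which establishes smoothness of $\Phi_{-(\cdot)}^0$ on $\overline{S_T}$.

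For the normal field I would work locally around a fixed $(y_0,t_0)\in\overline{S_T}$, with $Y_0 := \Phi_{-t_0}^0(y_0)$. Choose smooth tangent vector fields $\bm{\tau}_1,\dots,\bm{\tau}_{n-1}$ on a neighborhood $U\subset\Gamma_0$ of $Y_0$ spanning each tangent space. By the chain rule, $\bm{e}_i(Y,t) := d_Y\Phi_t^0(Y)\bm{\tau}_i(Y)$ is smooth in $(Y,t)\in U\times[0,T]$ and spans $T_{\Phi_t^0(Y)}\Gamma_t$. Applying the Gram--Schmidt procedure to $(\bm{e}_1,\dots,\bm{e}_{n-1},\bm{a})$ for any fixed $\bm{a}\in\mathbb{R}^n$ transverse to this frame at $(Y_0,t_0)$ yields, on a possibly smaller neighborhood, a smooth unit vector $\bm{n}(Y,t)$ orthogonal to $T_{\Phi_t^0(Y)}\Gamma_t$. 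At each $(Y,t)$ this $\bm{n}$ equals $\pm\bm{\nu}(\Phi_t^0(Y),t)$, and the sign is locally constant since the continuous extension $\tilde{\bm{\nu}}$ from Assumption \ref{A:Flow_Sur}(ii) agrees with $\bm{\nu}$ on $\overline{S_T}$ and depends continuously on $(y,t)$. Pre-composing with the smooth map $\Psi^{-1}$ then gives smoothness of $\bm{\nu}$ on $\overline{S_T}$.

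The main obstacle I foresee is justifying the passage from injective immersion to smooth embedding at the time-boundary points $t\in\{0,T\}$: away from these points the inverse function theorem applies without fuss, but at $t=0$ or $t=T$ one must either interpret smoothness on $[0,T]$ as the existence of a smooth extension to a slightly larger open interval (which is permitted by Assumption \ref{A:Flow_Sur}(i), since $\Phi_{(\cdot)}^0$ is smooth on the closed interval), or work with smoothness on a manifold-with-boundary directly. The normal-field construction is otherwise routine, with Assumption \ref{A:Flow_Sur}(ii) providing precisely the globally coherent sign selection needed to upgrade the local smooth unit normals to a single smooth field on all of $\overline{S_T}$.
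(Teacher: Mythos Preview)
Your argument is correct and follows the same strategy as the paper: both show that $(Y,t)\mapsto(\Phi_t^0(Y),t)$ is a smooth embedding of $\Gamma_0\times[0,T]$ onto $\overline{S_T}$ with smooth inverse, then build a smooth local unit normal and use the continuous extension $\tilde{\bm{\nu}}$ from Assumption~\ref{A:Flow_Sur}(ii) to pin down its sign. The only difference is packaging---the paper carries this out in the explicit graph coordinates set up in the proof of Lemma~\ref{L:MS_ST} and writes $\bm{\nu}$ via the graph-normal formula, whereas you invoke the injective-immersion-from-compact characterization of embeddings and Gram--Schmidt; both routes rest on the same two inputs (the diffeomorphism property of each $\Phi_t^0$ and Assumption~\ref{A:Flow_Sur}(ii)).
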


\begin{lemma} \label{L:MS_Tubu}
  There exists a constant $\delta>0$ such that the following statement holds: let
  \begin{align*}
    \overline{\mathcal{N}_t} := \{x\in\mathbb{R}^n \mid -\delta \leq d(x,t) \leq \delta\}, \quad t\in[0,T].
  \end{align*}
  Then, for each $t\in[0,T]$ and $x\in\overline{\mathcal{N}_t}$, there exists a unique $\pi(x,t)\in\Gamma_t$ such that
  \begin{align} \label{E:Fermi}
    x = \pi(x,t)+d(x,t)\bm{\nu}(\pi(x,t),t), \quad \nabla d(x,t) = \bm{\nu}(\pi(x,t),t).
  \end{align}
  Moreover, $\pi$ and $d$ are smooth on $\bigcup_{t\in[0,T]}\overline{\mathcal{N}_t}\times\{t\}$.
\end{lemma}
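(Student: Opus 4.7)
The plan is to reduce the claim to a single inverse function theorem applied to a global Fermi map on $\overline{S_T}\times\mathbb{R}$, and then extract the uniformity in $t$ from compactness of $\overline{S_T}$. After implicitly extending the flow $\Phi_{(\cdot)}^0$ (and hence $\bm{\nu}$) smoothly to a slightly larger open interval in $t$ so that the inverse function theorem applies at the endpoints $t\in\{0,T\}$, I would introduce
\[
F\colon\overline{S_T}\times\mathbb{R}\to\mathbb{R}^n\times[0,T],\qquad F(y,t,r):=\bigl(y+r\bm{\nu}(y,t),\,t\bigr),
\]
which is smooth by Lemma \ref{L:MS_Reg}. At a point $(y_0,t_0,0)$, the derivative of $F$ sends the $y$-variations along $\Gamma_{t_0}$ into $T_{y_0}\Gamma_{t_0}\times\{0\}$, sends $\partial_r$ to $(\bm{\nu}(y_0,t_0),0)$, and sends the time direction to a vector with last component $1$; since $T_{y_0}\Gamma_{t_0}\oplus\mathbb{R}\bm{\nu}(y_0,t_0)=\mathbb{R}^n$, this derivative is an isomorphism. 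The inverse function theorem yields a local diffeomorphism near every point of $\overline{S_T}\times\{0\}$, and a standard covering argument on the compact set $\overline{S_T}$ (Lemma \ref{L:MS_ST}) produces a uniform thickness $\delta_0>0$ on which $F$ is a local diffeomorphism.

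The main obstacle is promoting local injectivity to injectivity on the whole slab $U_\delta:=\{(y,t,r):(y,t)\in\overline{S_T},\,|r|<\delta\}$ for some $\delta\in(0,\delta_0]$. I would argue by contradiction: if no such $\delta$ exists, then there are sequences $(y_k,t_k,r_k)\neq(y_k',t_k',r_k')$ with $|r_k|,|r_k'|\to 0$ and $F(y_k,t_k,r_k)=F(y_k',t_k',r_k')$. The last component forces $t_k=t_k'$, and compactness of $\overline{S_T}$ yields, along a subsequence, $(y_k,t_k)\to(y_0,t_0)$ and $(y_k',t_k')\to(y_0',t_0)$. The spatial identity combined with $r_k,r_k'\to 0$ forces $y_0=y_0'$. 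But then for large $k$ both points lie in a neighborhood of $(y_0,t_0,0)$ on which $F$ is injective, a contradiction. Hence $F$ is a diffeomorphism from $U_\delta$ onto an open neighborhood $\mathcal{U}$ of $\overline{S_T}$ in $\mathbb{R}^n\times[0,T]$.

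Write $F^{-1}(x,t)=(\pi(x,t),\,t,\,\rho(x,t))$ on $\mathcal{U}$; the smoothness of $\pi$ and $\rho$ is inherited from that of $F^{-1}$. Since $x=\pi(x,t)+\rho(x,t)\bm{\nu}(\pi(x,t),t)$ with $\pi\in\Gamma_t$ and $\bm{\nu}$ orthogonal to $\Gamma_t$, the injectivity of $F$ on $U_\delta$ guarantees that $\pi(x,t)$ is the unique nearest point of $\Gamma_t$ to $x$ within $\mathcal{U}$ and that $\rho(x,t)$ coincides with the signed distance $d(x,t)$. The identity $\nabla d(x,t)=\bm{\nu}(\pi(x,t),t)$ then follows by differentiating $x=\pi(x,t)+d(x,t)\bm{\nu}(\pi(x,t),t)$ in $x$ at fixed $t$: the chain-rule contributions involving $\nabla_x\pi$ and $\nabla_x(\bm{\nu}\circ\pi)$ are tangential to $\Gamma_t$, so taking the inner product with $\bm{\nu}(\pi,t)$ gives $\nabla d\cdot\bm{\nu}(\pi,t)=1$, which together with $|\nabla d|=1$ forces equality. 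Finally, shrinking $\delta$ if necessary so that $\{x:|d(x,t)|\leq\delta\}\subset\mathcal{U}\cap(\mathbb{R}^n\times\{t\})$ for every $t\in[0,T]$ completes the construction, and joint smoothness of $\pi$ and $d$ on $\bigcup_{t\in[0,T]}\overline{\mathcal{N}_t}\times\{t\}$ is inherited from that of $F^{-1}$ on $\mathcal{U}$.
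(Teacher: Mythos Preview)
Your approach is correct and closely parallels the paper's: both hinge on the inverse function theorem applied to a Fermi-type map together with a compactness/contradiction argument to upgrade local injectivity to global injectivity on a uniform slab. The paper merely organizes this differently, isolating the injectivity step as a separate quantitative auxiliary lemma (Lemma~\ref{L:MS_ILC}, asserting $|y_1-y_2|+|r_1-r_2|\leq c_0|x_t(y_1,r_1)-x_t(y_2,r_2)|$ uniformly in $t$), establishing the Fermi decomposition first via nearest-point projection, and only then invoking the inverse function theorem in local charts for smoothness; your packaging via a single global map $F$ on $\overline{S_T}\times\mathbb{R}$ is slightly more streamlined.

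One step is too brief: the identification $\rho(x,t)=d(x,t)$. Injectivity of $F$ alone does not yield this; you must argue that a nearest point $y\in\Gamma_t$ to $x$ exists (compactness), that $x-y$ is normal there so $x=y+r\bm{\nu}(y,t)$ with $|r|=|d(x,t)|\leq|\rho|<\delta$, and then use injectivity of $F$ on $U_\delta$ to conclude $(y,r)=(\pi,\rho)$, with the sign fixed by the orientation of $\bm{\nu}$. The paper's ordering (nearest-point projection first, injectivity second) makes this automatic. As a minor aside, your differentiation argument for $\nabla d=\bm{\nu}(\pi,t)$ already yields $\partial_i d=\nu_i(\pi,t)$ for every $i$ once you take the inner product with $\bm{\nu}(\pi,t)$, so the appeal to $|\nabla d|=1$ is unnecessary.
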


Let $\mathbf{P}:=\mathbf{I}_n-\bm{\nu}\otimes\bm{\nu}$ on $\overline{S_T}$.
The matrix $\mathbf{P}(\cdot,t)$ is the orthogonal projection onto the tangent plane of $\Gamma_t$.
Fix any $t\in[0,T]$.
For a function $\eta$ on $\Gamma_t$, we set
\begin{align*}
  \nabla_\Gamma\eta = (\underline{D}_1\eta,\dots,\underline{D}_n\eta)^{\mathrm{T}} := \mathbf{P}(\cdot,t)\nabla\tilde{\eta} \quad\text{on}\quad \Gamma_t,
\end{align*}
instead of writing $\nabla_{\Gamma_t}$ for simplicity.
Here, $\tilde{\eta}$ is an extension of $\eta$ to $\overline{\mathcal{N}_t}$ and the value of $\nabla_\Gamma\eta$ is independent of the choice of $\tilde{\eta}$.
We call $\nabla_\Gamma$ and $\underline{D}_i$ the tangential gradient and derivative on $\Gamma_t$, respectively.
Next, we define the tangential gradient matrix and the surface divergence of a vector field $\mathbf{v}$ on $\Gamma_t$ by
\begin{align*}
  \nabla_\Gamma\mathbf{v} := (\underline{D}_i\mathrm{v}_j)_{i,j}, \quad \mathrm{div}_\Gamma\mathbf{v} := \mathrm{tr}[\nabla_\Gamma\mathbf{v}] \quad\text{on}\quad \Gamma_t, \quad \mathbf{v} = (\mathrm{v}_1,\dots,\mathrm{v}_n)^{\mathrm{T}}.
\end{align*}
Here, $\mathbf{v}$ is not necessarily tangential and the indices of $\nabla_\Gamma\mathbf{v}$ are reversed in some literature.
We also define the Weingarten map $\mathbf{W}$ and the mean curvature $H$ of $\Gamma_t$ by
\begin{align*}
  \mathbf{W} := -\nabla_\Gamma\bm{\nu}, \quad H := -\mathrm{div}_\Gamma\bm{\nu} \quad\text{on}\quad \overline{S_T},
\end{align*}
which are smooth on $\overline{S_T}$ by Lemma \ref{L:MS_Reg}.

Let $\mathcal{H}^{n-1}$ be the Hausdorff measure of dimension $n-1$.
Fix any $t\in[0,T]$.
It is known (see e.g. \cite[Lemma 16.1]{GilTru01} and \cite[Theorem 2.10]{DziEll13_AN}) that the integration by parts formula
\begin{align} \label{E:IbP_Sur}
  \int_{\Gamma_t}\zeta\underline{D}_i\eta\,d\mathcal{H}^{n-1} = -\int_{\Gamma_t}\eta\{\underline{D}_i\zeta+\zeta [H\nu_i](\cdot,t)\}\,d\mathcal{H}^{n-1}, \quad i=1,\dots,n
\end{align}
holds for $\eta,\zeta\in C^1(\Gamma_t)$.
For $\eta\in L^1(\Gamma_t)$, we define $\underline{D}_i\eta\in L^1(\Gamma_t)$, if exists, by the function satisfying \eqref{E:IbP_Sur} for all $\zeta\in C^1(\Gamma_t)$.
We call $\underline{D}_i\eta$ the weak tangential derivative of $\eta$.
Also, for $q\in[1,\infty]$, we define the Sobolev space
\begin{align*}
  W^{1,q}(\Gamma_t) := \{\eta\in L^q(\Gamma_t) \mid \nabla_\Gamma\eta=(\underline{D}_1\eta,\dots,\underline{D}_n\eta)^{\mathrm{T}}\in L^q(\Gamma_t)^n\}
\end{align*}
and its norm as in the case of flat domains.
By Assumption \ref{A:Flow_Sur}, we have the equivalence of norms in time as follows (see Section \ref{SS:PA_Sur} for the proof).

\begin{lemma} \label{L:CoSur_Lq}
  Let $q\in[1,\infty]$ and $t\in[0,T]$.
  We have
  \begin{align} \label{E:CoSur_Lq}
    c^{-1}\|V\|_{L^q(\Gamma_0)} \leq \|v\|_{L^q(\Gamma_t)} \leq c\|V\|_{L^q(\Gamma_0)}
  \end{align}
  for all $v\in L^q(\Gamma_t)$ and $V:=v\circ\Phi_t^0$ on $\Gamma_0$.
  If in addition $v\in W^{1,q}(\Gamma_t)$, then
  \begin{align} \label{E:CoSur_W1q}
    c^{-1}\|\nabla_\Gamma V\|_{L^q(\Gamma_0)} \leq \|\nabla_\Gamma v\|_{L^q(\Gamma_t)} \leq c\|\nabla_\Gamma V\|_{L^q(\Gamma_0)}.
  \end{align}
  In particular, for the area $|\Gamma_t|$ of $\Gamma_t$, we see by \eqref{E:CoSur_Lq} with $q=1$ and $v\equiv1$ that
  \begin{align} \label{E:Sur_Area}
    c^{-1}|\Gamma_0| \leq |\Gamma_t| \leq c|\Gamma_0| \quad\text{for all}\quad t\in[0,T].
  \end{align}
\end{lemma}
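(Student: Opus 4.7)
The plan is to exploit the fact that the flow map $\Phi_t^0 \colon \Gamma_0 \to \Gamma_t$ is a smooth diffeomorphism depending smoothly on the compact parameter $t \in [0,T]$, so that both $\Phi_t^0$ and $\Phi_{-t}^0$ have tangential differentials whose operator norms, together with the associated surface Jacobians, are bounded from above and away from zero by constants independent of $t$. The two inequalities in the lemma will then follow from the surface change-of-variables formula combined with the chain rule for tangential derivatives.

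For the $L^q$-estimate, I would first write, for $q \in [1,\infty)$,
\begin{align*}
  \|v\|_{L^q(\Gamma_t)}^q = \int_{\Gamma_t}|v|^q\,d\mathcal{H}^{n-1} = \int_{\Gamma_0}|V(Y)|^q J_t(Y)\,d\mathcal{H}^{n-1}(Y),
\end{align*}
where $J_t$ is the tangential Jacobian of $\Phi_t^0$, i.e.\ $J_t(Y)=\sqrt{\det\bigl(\mathbf{D}_\tau\Phi_t^0(Y)^{\mathrm{T}}\mathbf{D}_\tau\Phi_t^0(Y)\bigr)}$ with $\mathbf{D}_\tau\Phi_t^0(Y)$ the differential of $\Phi_t^0$ restricted to $T_Y\Gamma_0$. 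Since $\Phi^0_{(\cdot)}$ is smooth on the compact set $\Gamma_0\times[0,T]$ and each $\Phi_t^0$ is a diffeomorphism (so in particular $J_t>0$), one gets $c^{-1} \le J_t(Y) \le c$ uniformly in $(Y,t)$, which yields \eqref{E:CoSur_Lq} for $q<\infty$. The case $q=\infty$ follows directly from $\Phi_t^0$ being a bijection together with the fact that $J_t^{-1}$ is uniformly bounded (so null sets are preserved uniformly in $t$). Taking $q=1$ and $v\equiv 1$ gives \eqref{E:Sur_Area}.

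For the gradient estimate, I would invoke the surface chain rule: for $Y\in\Gamma_0$ and $x=\Phi_t^0(Y)\in\Gamma_t$,
\begin{align*}
  \nabla_\Gamma V(Y) = \mathbf{D}_\tau\Phi_t^0(Y)^{\mathrm{T}}\,\bigl(\nabla_\Gamma v\bigr)(\Phi_t^0(Y)),
\end{align*}
where, because the range of $\mathbf{D}_\tau\Phi_t^0(Y)$ lies in $T_x\Gamma_t$, the vector $(\nabla_\Gamma v)(x)$ can be replaced by its tangential projection on the right and both sides live in $T_Y\Gamma_0$. Smoothness of $\Phi_t^0$ together with smoothness of $\Phi_{-(\cdot)}^0$ and $\bm{\nu}$ on $\overline{S_T}$ (Lemma \ref{L:MS_Reg}) and the compactness of $\overline{S_T}$ (Lemma \ref{L:MS_ST}) imply that $\mathbf{D}_\tau\Phi_t^0$ and its pointwise inverse $\mathbf{D}_\tau\Phi_{-t}^0\circ\Phi_t^0$ have operator norms bounded uniformly in $(Y,t)$. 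Combining this pointwise comparison $c^{-1}|(\nabla_\Gamma v)\circ\Phi_t^0| \le |\nabla_\Gamma V| \le c\,|(\nabla_\Gamma v)\circ\Phi_t^0|$ with the already proved \eqref{E:CoSur_Lq} applied to the scalar function $|\nabla_\Gamma v|\in L^q(\Gamma_t)$ yields \eqref{E:CoSur_W1q}.

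There is no substantive obstacle: the entire argument is a uniform-in-$t$ version of the standard change-of-variables/chain-rule equivalence of Sobolev norms under a diffeomorphism. The only care needed is to identify the correct surface Jacobian and to check that uniform positivity and boundedness of both $J_t$ and $\mathbf{D}_\tau\Phi_t^0$ follow from smoothness on the compact set $\overline{S_T}$, which is exactly what Assumption \ref{A:Flow_Sur} and Lemmas \ref{L:MS_ST}--\ref{L:MS_Reg} deliver; the $q=\infty$ case is then handled separately by a direct bijection argument rather than the integral change of variables.
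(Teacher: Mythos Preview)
Your proposal is correct and follows essentially the same approach as the paper: a change-of-variables formula with a uniformly bounded surface Jacobian for \eqref{E:CoSur_Lq}, and a pointwise comparison of tangential gradients via the chain rule for \eqref{E:CoSur_W1q}, with all uniform bounds coming from smoothness on the compact set $\overline{S_T}$. The only cosmetic differences are that the paper represents the Jacobian as $\mathcal{J}_t^0(Y)=\exp\bigl(\int_0^t\mathrm{div}_\Gamma\mathbf{v}_\Gamma(\Phi_\tau^0(Y),\tau)\,d\tau\bigr)$ via the Leibniz formula (Lemma~\ref{L:CoV_Sur}) rather than as $\sqrt{\det(\mathbf{D}_\tau\Phi_t^0)^{\mathrm{T}}\mathbf{D}_\tau\Phi_t^0}$, and carries out the gradient comparison in local coordinates using the Riemannian metric $\theta_{ij}=\partial_{\xi_i}y\cdot\partial_{\xi_j}y$ instead of your coordinate-free formulation with $\mathbf{D}_\tau\Phi_t^0$; these are the same objects in different notation.
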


Let $t\in[0,T]$.
For a function $\eta$ on $\Gamma_t$, we define $\bar{\eta}=\eta\circ\pi(\cdot,t)$, i.e.
\begin{align*}
  \bar{\eta}(x) := \eta\bigl(\pi(x,t)\bigr), \quad x\in\overline{\mathcal{N}_t}.
\end{align*}
This is the constant extension of $\eta$ in the normal direction of $\Gamma_t$.
Also, for a function $\eta$ on $\overline{S_T}$, we write $\bar{\eta}=\eta\circ\pi$ or, more precisely,
\begin{align*}
  \bar{\eta}(x,t) := \eta(\pi(x,t),t), \quad (x,t) \in \textstyle\bigcup_{s\in[0,T]}\overline{\mathcal{N}_s}\times\{s\}.
\end{align*}
In what follows, we frequently use these notations without mention.
For example, $\overline{\nabla_\Gamma\eta}$ is the constant extension of $\nabla_\Gamma\eta$.
Taking the constant $\delta$ in Lemma \ref{L:MS_Tubu} small if necessary, we have the following result (see \cite[Lemma 2.2]{Miu23} for the proof).

\begin{lemma} \label{L:CEGr_NB}
  There exists a constant $c>0$ such that
  \begin{align} \label{E:CEGr_NB}
    |\nabla\bar{\eta}(x)| \leq c\Bigl|\overline{\nabla_\Gamma\eta}(x)\Bigr|, \quad \Bigl|\nabla\bar{\eta}(x)-\overline{\nabla_\Gamma\eta}(x)\Bigr| \leq c|d(x,t)|\Bigl|\overline{\nabla_\Gamma\eta}(x)\Bigr|
  \end{align}
  for all $t\in[0,T]$, $x\in\overline{\mathcal{N}_t}$, and functions $\eta$ on $\Gamma_t$.
\end{lemma}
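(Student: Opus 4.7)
The plan is to start from the identity $\bar{\eta}(x)=\eta(\pi(x,t))$ and apply the chain rule, reducing the lemma to a precise description of the Jacobian $D\pi(\cdot,t)$ on the tubular neighborhood. First I would fix $t\in[0,T]$ and, for $\eta\in C^1(\Gamma_t)$, extend $\eta$ to a $C^1$ function $\tilde{\eta}$ on an open neighborhood of $\Gamma_t$; then $\bar{\eta}(x)=\tilde{\eta}(\pi(x,t))$ on $\overline{\mathcal{N}_t}$ and the chain rule gives
\begin{align*}
  \nabla\bar{\eta}(x) = D\pi(x,t)^{\mathrm{T}}\nabla\tilde{\eta}(\pi(x,t)).
\end{align*}
Since $\pi(x,t)\in\Gamma_t$ and the left-hand side does not see the normal part of $\nabla\tilde{\eta}$ (because the image of $D\pi(x,t)$ lies in the tangent plane at $\pi(x,t)$, as I will verify below), one can replace $\nabla\tilde{\eta}(\pi(x,t))$ by $\overline{\nabla_\Gamma\eta}(x)$, yielding the key identity
\begin{align*}
  \nabla\bar{\eta}(x) = D\pi(x,t)^{\mathrm{T}}\,\overline{\nabla_\Gamma\eta}(x).
\end{align*}

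Next I would compute $D\pi(x,t)$ explicitly. From Lemma \ref{L:MS_Tubu} we have $x=\pi(x,t)+d(x,t)\bm{\nu}(\pi(x,t),t)$ and $\nabla d(x,t)=\bar{\bm{\nu}}(x,t)$, so
\begin{align*}
  \pi(x,t) = x - d(x,t)\bar{\bm{\nu}}(x,t).
\end{align*}
Differentiating component-wise gives
\begin{align*}
  D\pi(x,t) = \mathbf{I}_n - \bar{\bm{\nu}}(x,t)\otimes\bar{\bm{\nu}}(x,t) - d(x,t)\,D\bar{\bm{\nu}}(x,t) = \bar{\mathbf{P}}(x,t) - d(x,t)\,D\bar{\bm{\nu}}(x,t),
\end{align*}
where $\bar{\mathbf{P}}=\mathbf{I}_n-\bar{\bm{\nu}}\otimes\bar{\bm{\nu}}$ extends $\mathbf{P}$. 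By Lemma \ref{L:MS_Reg} and the smoothness of $\pi$ from Lemma \ref{L:MS_Tubu}, $D\bar{\bm{\nu}}$ is uniformly bounded on the compact set $\bigcup_{t\in[0,T]}\overline{\mathcal{N}_t}\times\{t\}$ (after shrinking $\delta$), so
\begin{align*}
  \bigl|D\pi(x,t)-\bar{\mathbf{P}}(x,t)\bigr| \leq c|d(x,t)|.
\end{align*}

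Combining the two displays and using that $\nabla_\Gamma\eta$ is tangential (hence $\mathbf{P}\nabla_\Gamma\eta=\nabla_\Gamma\eta$, and therefore $\bar{\mathbf{P}}\,\overline{\nabla_\Gamma\eta}=\overline{\nabla_\Gamma\eta}$) together with the symmetry of $\bar{\mathbf{P}}$ gives
\begin{align*}
  \nabla\bar{\eta}(x) - \overline{\nabla_\Gamma\eta}(x) = \bigl(D\pi(x,t)^{\mathrm{T}}-\bar{\mathbf{P}}(x,t)\bigr)\overline{\nabla_\Gamma\eta}(x),
\end{align*}
from which the second inequality of \eqref{E:CEGr_NB} follows with a constant $c$ depending only on $\|D\bar{\bm{\nu}}\|_\infty$, and the first inequality follows by the triangle inequality. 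Uniformity of $c$ in $t$ is guaranteed by the compactness of $\overline{S_T}$ (Lemma \ref{L:MS_ST}) and the continuity in $t$ of the constants involved.

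The main obstacle is purely notational rather than conceptual: one must keep track of two gradient conventions (the full $\mathbb{R}^n$-gradient versus the tangential gradient) and verify that the chain-rule identity $\nabla\bar{\eta}=D\pi^{\mathrm{T}}\overline{\nabla_\Gamma\eta}$ does not depend on the choice of smooth extension $\tilde{\eta}$. This independence is exactly the statement that $D\pi(x,t)\bar{\bm{\nu}}(x,t)=\mathbf{0}_n$, which is immediate from $\bar{\mathbf{P}}\bar{\bm{\nu}}=\mathbf{0}_n$ and $D\bar{\bm{\nu}}^{\mathrm{T}}\bar{\bm{\nu}}=\nabla(|\bar{\bm{\nu}}|^2)/2=\mathbf{0}_n$, so once recorded, the rest of the argument is routine. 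For general $\eta\in W^{1,q}(\Gamma_t)$, the inequalities extend by density using the smoothness of $\pi$ and $\bar{\bm{\nu}}$.
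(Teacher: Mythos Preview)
Your argument is correct and is the standard route: write $\bar{\eta}=\eta\circ\pi$, apply the chain rule, and use the explicit formula $D\pi=\bar{\mathbf{P}}-d\,D\bar{\bm{\nu}}$ (which is symmetric since $\bar{\bm{\nu}}=\nabla d$ makes $D\bar{\bm{\nu}}$ the Hessian of $d$) together with the uniform bound on $D\bar{\bm{\nu}}$ over the compact space--time tubular neighborhood. The paper itself does not prove this lemma in the text but defers to \cite[Lemma~2.2]{Miu23}; your computation is precisely the natural one and almost certainly coincides with what that reference does.

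One small remark on presentation: the step ``independence of the extension $\tilde{\eta}$'' really requires $D\pi(x,t)^{\mathrm{T}}\bar{\bm{\nu}}(x,t)=\mathbf{0}_n$ (to kill the normal part of $\nabla\tilde{\eta}$ after applying $D\pi^{\mathrm{T}}$), whereas you verify $D\pi(x,t)\bar{\bm{\nu}}(x,t)=\mathbf{0}_n$. These are equivalent here because $D\pi$ is symmetric, but it would be cleaner to say so explicitly rather than leave the reader to reconcile the two.
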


We define a vector field $\mathbf{v}_\Gamma\colon\overline{S_T}\to\mathbb{R}^n$ by
\begin{align} \label{E:Def_vSur}
  \mathbf{v}_\Gamma(y,t) := \partial_t\Phi_t^0\bigl(\Phi_{-t}^0(y)\bigr) = \frac{\partial}{\partial t}\Bigl(\Phi_t^0(Y)\Bigr)\Big|_{Y=\Phi_{-t}^0(y)}, \quad (y,t) \in \overline{S_T},
\end{align}
which is the total velocity of $\Gamma_t$ along the flow map $\Phi_t^0$.
By Assumption \ref{A:Flow_Sur} and Lemma \ref{L:MS_Reg}, we see that $\mathbf{v}_\Gamma$ is smooth on $\overline{S_T}$.
We also set
\begin{align*}
  V_\Gamma(y,t) := [\mathbf{v}_\Gamma\cdot\bm{\nu}](y,t), \quad \mathbf{v}_\Gamma^\tau(y,t) := [\mathbf{P}\mathbf{v}_\Gamma](y,t), \quad (y,t)\in\overline{S_T}
\end{align*}
and call $V_\Gamma$ the scalar outer normal velocity of $\Gamma_t$.
Note that the evolution of $\Gamma_t$ as a subset of $\mathbb{R}^n$ is determined solely by $V_\Gamma$, i.e., the flow map $\Phi_t^\nu$ defined by
\begin{align*}
  \Phi_0^\nu(Y) = Y, \quad \partial_t\Phi_t^\nu(Y) = [V_\Gamma\bm{\nu}](\Phi_t^\nu(Y),t), \quad (Y,t)\in\Gamma_0\times[0,T]
\end{align*}
gives the diffeomorphism $\Phi_t^\nu\colon\Gamma_0\to\Gamma_t$ for each $t\in[0,T]$.

For a function $\eta$ on $\overline{S_T}$, we define the material derivative by
\begin{align} \label{E:Def_MtSur}
  \partial_\Gamma^\bullet\eta(y,t) := \frac{\partial}{\partial t}\Bigl(\eta(\Phi_t^0(Y),t)\Bigr)\Big|_{Y=\Phi_{-t}^0(y)}, \quad (y,t)\in \overline{S_T}.
\end{align}
This is the time derivative along the velocity $\mathbf{v}_\Gamma$.
Indeed, the chain rule gives
\begin{align*}
  \partial_\Gamma^\bullet\eta(y,t) = \frac{\partial}{\partial t}\Bigl(\tilde{\eta}(\Phi_t^0(Y),t)\Bigr)\Big|_{Y=\Phi_{-t}^0(y)} = [\partial_t\tilde{\eta}+\mathbf{v}_\Gamma\cdot\nabla\tilde{\eta}](y,t),
\end{align*}
where $\tilde{\eta}(\cdot,t)$ is any extension of $\eta(\cdot,t)$ to $\overline{\mathcal{N}_t}$.
We can also define and get
\begin{align*}
  \partial_\Gamma^\circ\eta(y,t) := \frac{\partial}{\partial t}\Bigl(\eta(\Phi_t^\nu(Y),t)\Bigr)\Big|_{Y=\Phi_{-t}^\nu(y)} = [\partial_t\tilde{\eta}+(V_\Gamma\bm{\nu})\cdot\nabla\tilde{\eta}](y,t), \quad (y,t)\in \overline{S_T},
\end{align*}
where $\Phi_{-t}^\nu$ is the inverse mapping of $\Phi_t^\nu$.
We call $\partial_\Gamma^\circ\eta$ the normal time derivative of $\eta$.

\subsection{Moving thin domain} \label{SS:Pre_Thin}
Let $g_0$ and $g_1$ be smooth functions on $\overline{S_T}$ such that
\begin{align} \label{E:G_Bdd}
  c^{-1} \leq g := g_1-g_0 \leq c \quad\text{on}\quad \overline{S_T}
\end{align}
with some constant $c>0$.
We do not make any assumptions on the signs of $g_0$ and $g_1$.
For $t\in[0,T]$ and $\varepsilon>0$, let $\Omega_t^\varepsilon$ be the moving thin domain given by \eqref{E:Def_MTD}.
We write $\bm{\nu}^\varepsilon(\cdot,t)$ for the unit outward normal vector field of $\partial\Omega_t^\varepsilon$ and define the outer normal derivative $\partial_{\nu^\varepsilon}:=\bm{\nu}^\varepsilon(\cdot,t)\cdot\nabla$ on $\partial\Omega_t^\varepsilon$.
Also, we set
\begin{align*}
  Q_T^\varepsilon := \bigcup_{t\in(0,T)}\Omega_t^\varepsilon\times\{t\}, \quad \overline{Q_T^\varepsilon} := \bigcup_{t\in[0,T]}\overline{\Omega_t^\varepsilon}\times\{t\}, \quad \partial_\ell Q_T^\varepsilon := \bigcup_{t\in(0,T)}\partial\Omega_t^\varepsilon\times\{t\}.
\end{align*}
Since $g_0$ and $g_1$ are bounded on $\overline{S_T}$, we can take a constant $\varepsilon_0\in(0,1)$ such that $\varepsilon_0|g_i|\leq\delta$ for $i=0,1$, where $\delta$ is the constant given in Lemma \ref{L:MS_Tubu}.
Thus,
\begin{align*}
  \overline{\Omega_t^\varepsilon} \subset \overline{\mathcal{N}_t} \quad\text{for all}\quad \varepsilon\in(0,\varepsilon_0], \quad t\in[0,T].
\end{align*}
In what follows, we always assume $0<\varepsilon\leq\varepsilon_0$.
Then, we can use \eqref{E:Fermi} in $\overline{\Omega_t^\varepsilon}$ to get
\begin{align*}
  d(x,t) = r \in[\varepsilon g_0(y,t),\varepsilon g_1(y,t)] \quad\text{for}\quad x = y+r\bm{\nu}(y,t) \in \overline{\Omega_t^\varepsilon}, \quad (y,t) \in \overline{S_T}.
\end{align*}
Hence, $|d|\leq c\varepsilon$ in $\overline{Q_T^\varepsilon}$.
We frequently use this fact and $0<\varepsilon<1$ below.

Let us give basic facts on integrals over $\Omega_t^\varepsilon$.
We define
\begin{align} \label{E:Def_J}
  J(y,t,r) := \det[\mathbf{I}_n-r\mathbf{W}(y,t)], \quad (y,t) \in \overline{S_T}, \quad r\in[-\delta,\delta].
\end{align}
Since $J(y,t,0)=1$ and $\mathbf{W}$ is smooth on $\overline{S_T}$, we can write
\begin{align} \label{E:J_Poly}
  J(y,t,r) = \sum_{k=0}^nr^kJ_k(y,t), \quad J_0(y,t) \equiv 1, \quad (y,t)\in\overline{S_T}, \quad r\in[-\delta,\delta]
\end{align}
and $J_1,\dots,J_n$ are smooth on $\overline{S_T}$.
Thus, taking $\delta$ small if necessary, we may have
\begin{align} \label{E:J_Bdd}
  c^{-1} \leq J(y,t,r) \leq c \quad\text{for all}\quad (y,t) \in \overline{S_T}, \quad r\in[-\delta,\delta].
\end{align}
The function $J$ is the Jacobian in the change of variables formula
\begin{align} \label{E:CoV_MTD}
  \int_{\Omega_t^\varepsilon}\varphi(x)\,dx = \int_{\Gamma_t}\int_{\varepsilon g_0(y,t)}^{\varepsilon g_1(y,t)}\varphi^\sharp(y,r)J(y,t,r)\,dr\,d\mathcal{H}^{n-1}(y)
\end{align}
for a function $\varphi$ on $\Omega_t^\varepsilon$ (see e.g. \cite[Section 14.6]{GilTru01}), where we set
\begin{align} \label{E:Pull_MTD}
  \varphi^\sharp(y,r) := \varphi\bigl(y+r\bm{\nu}(y,t)\bigr), \quad y\in\Gamma_t, \, r\in[\varepsilon g_0(y,t),\varepsilon g_1(y,t)].
\end{align}
For $q\in[1,\infty)$, it follows from \eqref{E:J_Bdd} and \eqref{E:CoV_MTD} that
\begin{align} \label{E:Lq_MTD}
  c^{-1}\|\varphi\|_{L^q(\Omega_t^\varepsilon)}^q \leq \int_{\Gamma_t}\int_{\varepsilon g_0(y,t)}^{\varepsilon g_1(y,t)}|\varphi^\sharp(y,r)|^q\,dr\,d\mathcal{H}^{n-1}(y) \leq c\|\varphi\|_{L^q(\Omega_t^\varepsilon)}^q.
\end{align}
When $\eta$ is a function on $\Gamma_t$, we see by \eqref{E:CEGr_NB}, \eqref{E:G_Bdd}, and \eqref{E:Lq_MTD} that
\begin{align} \label{E:CE_Lq}
  \|\bar{\eta}\|_{L^q(\Omega_t^\varepsilon)} \leq c\varepsilon^{1/q}\|\eta\|_{L^q(\Gamma_t)}, \quad \|\nabla\bar{\eta}\|_{L^q(\Omega_t^\varepsilon)} \leq c\varepsilon^{1/q}\|\nabla_\Gamma\eta\|_{L^q(\Gamma_t)}.
\end{align}
Let $|\Omega_t^\varepsilon|$ be the volume of $\Omega_t^\varepsilon$.
Then, we have
\begin{align} \label{E:Vol_MTD}
  c^{-1}\varepsilon \leq |\Omega_t^\varepsilon| \leq c\varepsilon \quad\text{for all}\quad t\in[0,T]
\end{align}
by \eqref{E:CoV_MTD} with $\varphi\equiv1$, \eqref{E:Sur_Area}, \eqref{E:G_Bdd}, and \eqref{E:J_Bdd}.
Let us give basic inequalities on $\Omega_t^\varepsilon$.

\begin{lemma} \label{L:L2Lp_MTD}
  Let $p\in(2,\infty)$, $t\in[0,T]$, and $u\in L^p(\Omega_t^\varepsilon)$.
  Then,
  \begin{align} \label{E:L2Lp_MTD}
    \|u\|_{L^2(\Omega_t^\varepsilon)} \leq c\varepsilon^{1/2-1/p}\|u\|_{L^p(\Omega_t^\varepsilon)}.
  \end{align}
\end{lemma}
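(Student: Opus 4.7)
The plan is to reduce this to a direct application of Hölder's inequality combined with the volume estimate \eqref{E:Vol_MTD}. Since $p > 2$, the exponents $p/2$ and $p/(p-2)$ are conjugate, which is exactly the pairing needed to bound the $L^2$-norm by the $L^p$-norm on a finite-volume domain.

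Concretely, I would first apply Hölder's inequality to $\int_{\Omega_t^\varepsilon}|u|^2\,dx = \int_{\Omega_t^\varepsilon}|u|^2\cdot 1\,dx$ with the conjugate pair $(p/2, p/(p-2))$, obtaining
\begin{align*}
  \int_{\Omega_t^\varepsilon}|u|^2\,dx \leq \left(\int_{\Omega_t^\varepsilon}|u|^p\,dx\right)^{2/p}\left(\int_{\Omega_t^\varepsilon}1\,dx\right)^{(p-2)/p} = \|u\|_{L^p(\Omega_t^\varepsilon)}^2\,|\Omega_t^\varepsilon|^{(p-2)/p}.
\end{align*}
Then I would invoke the volume bound $|\Omega_t^\varepsilon|\leq c\varepsilon$ from \eqref{E:Vol_MTD} to get $|\Omega_t^\varepsilon|^{(p-2)/p}\leq c\varepsilon^{(p-2)/p}$, and finally take square roots of both sides, noting $(p-2)/(2p) = 1/2 - 1/p$, to arrive at the claimed inequality.

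There is no real obstacle here: the inequality is essentially the classical Lebesgue embedding $L^p \hookrightarrow L^2$ on a set of finite measure, and the only nontrivial ingredient is that $|\Omega_t^\varepsilon|$ is of order $\varepsilon$ uniformly in $t$, which is already established in \eqref{E:Vol_MTD}. The constant $c$ absorbs the area factor $|\Gamma_0|$ and the bounds on $g$ and $J$ coming from \eqref{E:G_Bdd} and \eqref{E:J_Bdd}.
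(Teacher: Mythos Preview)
Your proof is correct and essentially identical to the paper's: both apply H\"older's inequality with the conjugate pair $(p/2,\,p/(p-2))$ and then invoke the volume bound \eqref{E:Vol_MTD}, noting $(p-2)/(2p)=1/2-1/p$. The paper just phrases the conjugate exponent as $q$ with $2/p+1/q=1$, but this is the same computation.
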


\begin{proof}
  Noting that $p>2$, we take $q>1$ such that $2/p+1/q=1$.
  Then,
  \begin{align*}
    \|u\|_{L^2(\Omega_t^\varepsilon)} \leq \|u\|_{L^p(\Omega_t^\varepsilon)}|\Omega_t^\varepsilon|^{1/2q} \leq c\varepsilon^{1/2q}\|u\|_{L^p(\Omega_t^\varepsilon)}
  \end{align*}
  by H\"{o}lder's inequality and \eqref{E:Vol_MTD}.
  Since $1/2q=1/2-1/p$, we get \eqref{E:L2Lp_MTD}.
\end{proof}

In what follows, we write $(u,1)_{L^2(\Omega_t^\varepsilon)}=\int_{\Omega_t^\varepsilon}u\,dx$ for $u\in L^1(\Omega_t^\varepsilon)$.
The next lemma shows that Poincar\'{e}'s inequality holds on $\Omega_t^\varepsilon$ uniformly in time and thickness.

\begin{lemma} \label{L:UP_MTD}
  Let $q\in[1,\infty)$.
  There exist constants $\varepsilon_1\in(0,\varepsilon_0]$ and $c>0$ such that
  \begin{align} \label{E:UP_MTD}
    \|u\|_{L^q(\Omega_t^\varepsilon)} \leq c\|\nabla u\|_{L^q(\Omega_t^\varepsilon)}
  \end{align}
  for all $\varepsilon\in(0,\varepsilon_1]$, $t\in[0,T]$, and $u\in W^{1,q}(\Omega_t^\varepsilon)$ satisfying $(u,1)_{L^2(\Omega_t^\varepsilon)}=0$.
\end{lemma}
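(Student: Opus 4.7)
The plan is to reduce the inequality to a standard Poincar\'{e}--Wirtinger inequality on a fixed reference domain by ``unfolding'' the thin geometry. Using the flow map of Assumption \ref{A:Flow_Sur} together with the thickness functions $g_0,g_1$, I introduce the mapping
\[
\Psi_t^\varepsilon\colon\Sigma\to\Omega_t^\varepsilon,\qquad \Psi_t^\varepsilon(Y,s):=\Phi_t^0(Y)+\varepsilon\bigl[g_0(\Phi_t^0(Y),t)+s\,g(\Phi_t^0(Y),t)\bigr]\bm{\nu}(\Phi_t^0(Y),t),
\]
where $\Sigma:=\Gamma_0\times(0,1)$ is independent of $t$ and $\varepsilon$. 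For $\varepsilon_1$ sufficiently small, \eqref{E:G_Bdd} and Lemmas \ref{L:MS_Reg}--\ref{L:MS_Tubu} guarantee that $\Psi_t^\varepsilon$ is a smooth diffeomorphism onto $\Omega_t^\varepsilon$.

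The key observation is that $\partial_s\Psi_t^\varepsilon=\varepsilon g\,\bm{\nu}$ has length of order $\varepsilon$ and is orthogonal to $T\Gamma_t$, while $D_Y\Psi_t^\varepsilon$ is an $O(\varepsilon)$ perturbation of the tangential diffeomorphism $D\Phi_t^0$; this yields
\[
c^{-1}\varepsilon\leq|\det D\Psi_t^\varepsilon(Y,s)|\leq c\varepsilon \quad\text{uniformly in } (Y,s,t,\varepsilon).
\]
Setting $v:=u\circ\Psi_t^\varepsilon$ and using the chain rule $\partial_s v=\varepsilon g\,(\partial_\nu u)\circ\Psi_t^\varepsilon$, I would transfer the norms as
\[
\|u\|_{L^q(\Omega_t^\varepsilon)}^q\sim\varepsilon\|v\|_{L^q(\Sigma)}^q,\quad \|\partial_s v\|_{L^q(\Sigma)}\leq c\varepsilon^{1-1/q}\|\nabla u\|_{L^q(\Omega_t^\varepsilon)},\quad \|\nabla_Y v\|_{L^q(\Sigma)}\leq c\varepsilon^{-1/q}\|\nabla u\|_{L^q(\Omega_t^\varepsilon)},
\]
and the hypothesis $(u,1)_{L^2(\Omega_t^\varepsilon)}=0$ becomes the weighted condition $\int_\Sigma v\,\omega_t^\varepsilon\,dY\,ds=0$, where $\omega_t^\varepsilon:=\varepsilon^{-1}|\det D\Psi_t^\varepsilon|$ satisfies $c^{-1}\leq\omega_t^\varepsilon\leq c$ uniformly.

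Since $\Sigma$ is a fixed bounded connected Lipschitz domain, the standard Poincar\'{e}--Wirtinger inequality gives $\|v-\langle v\rangle_\Sigma\|_{L^q(\Sigma)}\leq C\|\nabla v\|_{L^q(\Sigma)}$ with $C$ independent of $t$ and $\varepsilon$. The uniform bounds on $\omega_t^\varepsilon$ then let me control $|\langle v\rangle_\Sigma|$ via the weighted zero-mean condition and the bound $\|v-\langle v\rangle_\Sigma\|_{L^1(\Sigma)}\leq c\|v-\langle v\rangle_\Sigma\|_{L^q(\Sigma)}$, yielding $\|v\|_{L^q(\Sigma)}\leq C\|\nabla v\|_{L^q(\Sigma)}$ uniformly. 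Combining the transferred norm bounds with $\varepsilon\leq 1$,
\[
\|u\|_{L^q(\Omega_t^\varepsilon)}^q\leq c\varepsilon\|v\|_{L^q(\Sigma)}^q\leq c\varepsilon\bigl(\|\partial_s v\|_{L^q(\Sigma)}^q+\|\nabla_Y v\|_{L^q(\Sigma)}^q\bigr)\leq c(\varepsilon^q+1)\|\nabla u\|_{L^q(\Omega_t^\varepsilon)}^q,
\]
which gives \eqref{E:UP_MTD}.

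The main obstacle is the rigorous verification of the gradient scaling $\|\nabla_Y v\|_{L^q(\Sigma)}\leq c\varepsilon^{-1/q}\|\nabla u\|_{L^q(\Omega_t^\varepsilon)}$: the chain rule produces $\nabla_Y v$ as a linear combination of tangential and normal components of $\nabla u\circ\Psi_t^\varepsilon$ with coefficients involving $D\Phi_t^0$, $\nabla_\Gamma g_0$, $\nabla_\Gamma g_1$, and the Weingarten map, all evaluated at points depending on $(Y,s,t)$. One must verify that these coefficients are uniformly bounded in $t\in[0,T]$ and $\varepsilon\in(0,\varepsilon_1]$ so that the bound depends only on $|\nabla u|$, not on individual directional components. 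A secondary, milder issue is ensuring that the passage from weighted to unweighted mean on $\Sigma$ depends only on the two-sided bound $c^{-1}\leq\omega_t^\varepsilon\leq c$, so the final Poincar\'{e} constant remains uniform.
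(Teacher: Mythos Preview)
Your proposal is correct but takes a genuinely different route from the paper. The paper's proof (in Section~\ref{SS:PA_UP}) works intrinsically with the weighted average operator: it observes that $\mathcal{M}_\varepsilon u$ has weighted mean zero on $\Gamma_t$, applies a separate surface Poincar\'{e} inequality (Lemma~\ref{L:UP_Sur}, proved by contradiction and compactness) together with \eqref{E:Ave_GrLq} to bound $\|\mathcal{M}_\varepsilon u\|_{L^q(\Gamma_t)}$, then splits $u=(u-\overline{\mathcal{M}_\varepsilon u})+\overline{\mathcal{M}_\varepsilon u}$ and uses \eqref{E:AvDf_Lq} and \eqref{E:CE_Lq} to obtain $\|u\|_{L^q}\leq c_1\varepsilon\|u\|_{L^q}+c_2\|\nabla u\|_{L^q}$, absorbing the first term for $\varepsilon$ small.

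Your unfolding to the fixed reference $\Sigma=\Gamma_0\times(0,1)$ is more self-contained: it bypasses both the surface Poincar\'{e} lemma and the average machinery, relying only on the scaling of the diffeomorphism and a single Poincar\'{e}--Wirtinger inequality on a fixed space. The paper's route, in contrast, reuses tools ($\mathcal{M}_\varepsilon$, surface estimates) that are already needed elsewhere in the thin-film analysis, so it costs nothing extra in that context. One minor correction: $\Sigma$ is not a Lipschitz domain in $\mathbb{R}^n$ but a compact Riemannian manifold with boundary; the Poincar\'{e}--Wirtinger inequality still holds there (via Rellich compactness on the closed hypersurface $\Gamma_0$), but you should phrase it that way. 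The obstacle you flag about the chain-rule coefficients is real but routine: every factor that appears ($D\Phi_t^0$, $\nabla_\Gamma g_i$, $\mathbf{W}$, $\bm{\nu}$) is smooth on the compact set $\overline{S_T}$ and enters only through bounded combinations or $O(\varepsilon)$ corrections, exactly as in the proof of \eqref{E:FTD_Est}.
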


The proof of Lemma \ref{L:UP_MTD} is given in Section \ref{SS:PA_UP}.
We assume $0<\varepsilon\leq\varepsilon_1$ below.

\begin{lemma} \label{L:Cor_UP}
  Let $q\in[1,\infty)$.
  Then, for all $t\in[0,T]$ and $u\in W^{1,q}(\Omega_t^\varepsilon)$,
  \begin{align} \label{E:Cor_UP}
    \|u\|_{L^q(\Omega_t^\varepsilon)} \leq c\Bigl(\|\nabla u\|_{L^q(\Omega_t^\varepsilon)}+\varepsilon^{-1+1/q}|(u,1)_{L^2(\Omega_t^\varepsilon)}|\Bigr).
  \end{align}
\end{lemma}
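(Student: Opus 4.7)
The plan is to reduce \eqref{E:Cor_UP} to Poincar\'{e}'s inequality \eqref{E:UP_MTD} by subtracting the mean of $u$ over $\Omega_t^\varepsilon$. Concretely, for $u\in W^{1,q}(\Omega_t^\varepsilon)$, set the average
\begin{align*}
  \bar{u} := \frac{1}{|\Omega_t^\varepsilon|}(u,1)_{L^2(\Omega_t^\varepsilon)}.
\end{align*}
Then $u-\bar{u}\in W^{1,q}(\Omega_t^\varepsilon)$ satisfies $(u-\bar{u},1)_{L^2(\Omega_t^\varepsilon)}=0$, so Lemma \ref{L:UP_MTD} applies, and since $\nabla\bar{u}=0$ we get
\begin{align*}
  \|u-\bar{u}\|_{L^q(\Omega_t^\varepsilon)} \leq c\|\nabla(u-\bar{u})\|_{L^q(\Omega_t^\varepsilon)} = c\|\nabla u\|_{L^q(\Omega_t^\varepsilon)}.
\end{align*}

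It remains to estimate $\bar{u}$ itself. Since $\bar{u}$ is constant on $\Omega_t^\varepsilon$,
\begin{align*}
  \|\bar{u}\|_{L^q(\Omega_t^\varepsilon)} = |\bar{u}|\,|\Omega_t^\varepsilon|^{1/q} = |(u,1)_{L^2(\Omega_t^\varepsilon)}|\,|\Omega_t^\varepsilon|^{1/q-1}.
\end{align*}
Because $1/q-1\leq 0$ and the volume bound \eqref{E:Vol_MTD} gives $|\Omega_t^\varepsilon|\geq c^{-1}\varepsilon$, we have $|\Omega_t^\varepsilon|^{1/q-1}\leq c\,\varepsilon^{1/q-1}=c\,\varepsilon^{-1+1/q}$, hence
\begin{align*}
  \|\bar{u}\|_{L^q(\Omega_t^\varepsilon)} \leq c\,\varepsilon^{-1+1/q}|(u,1)_{L^2(\Omega_t^\varepsilon)}|.
\end{align*}

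Combining these two estimates via the triangle inequality $\|u\|_{L^q(\Omega_t^\varepsilon)}\leq\|u-\bar{u}\|_{L^q(\Omega_t^\varepsilon)}+\|\bar{u}\|_{L^q(\Omega_t^\varepsilon)}$ yields \eqref{E:Cor_UP}. There is no real obstacle here; the only point requiring a moment's care is the sign of the exponent $1/q-1$ when inserting the volume bound \eqref{E:Vol_MTD}, but since this exponent is nonpositive the lower bound on $|\Omega_t^\varepsilon|$ is exactly what is needed, and the resulting constants are independent of $t$ and $\varepsilon$ thanks to the uniformity in Lemma \ref{L:UP_MTD} and \eqref{E:Vol_MTD}.
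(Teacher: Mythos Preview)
Your proof is correct and follows essentially the same approach as the paper: subtract the mean $\bar u=|\Omega_t^\varepsilon|^{-1}(u,1)_{L^2(\Omega_t^\varepsilon)}$, apply Lemma~\ref{L:UP_MTD} to $u-\bar u$, and control $\|\bar u\|_{L^q(\Omega_t^\varepsilon)}$ via the volume bound \eqref{E:Vol_MTD}. Your explicit remark on the sign of $1/q-1$ is a nice clarification of why the lower volume bound is the relevant one.
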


\begin{proof}
  Let $c_u^\varepsilon:=|\Omega_t^\varepsilon|^{-1}(u,1)_{L^2(\Omega_t^\varepsilon)}$ and $\varphi:=u-c_u^\varepsilon$ on $\Omega_t^\varepsilon$.
  Then,
  \begin{align*}
    \|u\|_{L^p(\Omega_t^\varepsilon)} &\leq \|\varphi\|_{L^q(\Omega_t^\varepsilon)}+\|c_u^\varepsilon\|_{L^q(\Omega_t^\varepsilon)} = \|\varphi\|_{L^q(\Omega_t^\varepsilon)}+|\Omega_t^\varepsilon|^{-1+1/q}|(u,1)_{L^2(\Omega_t^\varepsilon)}|.
  \end{align*}
  Moreover, since $\varphi\in W^{1,q}(\Omega_t^\varepsilon)$ and $(\varphi,1)_{L^2(\Omega_t^\varepsilon)}=0$, we can use \eqref{E:UP_MTD} to get
  \begin{align*}
    \|\varphi\|_{L^q(\Omega_t^\varepsilon)} \leq c\|\nabla\varphi\|_{L^q(\Omega_t^\varepsilon)} = c\|\nabla u\|_{L^q(\Omega_t^\varepsilon)}.
  \end{align*}
  By these inequalities and \eqref{E:Vol_MTD}, we obtain \eqref{E:Cor_UP}.
\end{proof}

\subsection{Flow map of the moving thin domain} \label{SS:Pre_FMTD}
Let us give a diffeomorphism from $\overline{\Omega_0^\varepsilon}$ to $\overline{\Omega_t^\varepsilon}$.
Recall that $\Phi_{\pm(\cdot)}^0$ are the mappings given in Assumption \ref{A:Flow_Sur}.

We follow the construction of \cite[Lemma 3.2]{Miu17}.
For $X\in\overline{\Omega_0^\varepsilon}$, let
\begin{align*}
  Y := \pi(X,0) \in \Gamma_0, \quad X_i := Y+\varepsilon g_i(Y,0)\bm{\nu}(Y,0) \in \partial\Omega_0^\varepsilon, \quad i = 0,1.
\end{align*}
Then, we see by \eqref{E:Fermi} that $X$ divides the line segment $X_0X_1$ internally, i.e.,
\begin{align} \label{E:X_divide}
  X = (1-\theta)X_0+\theta X_1 = Y+\varepsilon\{(1-\theta)g_0(Y,0)+\theta g_1(Y,0)\}\bm{\nu}(Y,0)
\end{align}
with some $\theta\in[0,1]$.
Based on this observation, for $t\in[0,T]$, we set
\begin{align*}
  y := \Phi_t^0(Y) \in \Gamma_t, \quad x_i := y+\varepsilon g_i(y,t)\bm{\nu}(y,t) \in \partial\Omega_t^\varepsilon, \quad i = 0,1
\end{align*}
and take the point $x\in\overline{\Omega_t^\varepsilon}$ dividing the line segment $x_0x_1$ in the same manner:
\begin{align*}
  x = (1-\theta)x_0+\theta x_1 =y+\varepsilon\{(1-\theta)g_0(y,t)+\theta g_1(y,t)\}\bm{\nu}(y,t).
\end{align*}
Thus, setting $\Phi_t^\varepsilon(X)=x$, we easily find that $\Phi_t^\varepsilon$ is a bijection from $\overline{\Omega_0^\varepsilon}$ onto $\overline{\Omega_t^\varepsilon}$.

To eliminate $\theta$, we see by \eqref{E:Fermi}, \eqref{E:X_divide}, and $|\bm{\nu}(Y,0)|^2=1$ that
\begin{align*}
  d(X,0) = (X-Y)\cdot\bm{\nu}(Y,0) = \varepsilon\{(1-\theta)g_0(Y,0)+\theta g_1(Y,0)\}.
\end{align*}
By this equality and $g_1-g_0=g$ on $\overline{S_T}$, we get
\begin{align*}
  \theta = \frac{d(X,0)-\varepsilon g_0(Y,0)}{\varepsilon g(Y,0)}.
\end{align*}
Thus, we can write $\Phi_t^\varepsilon(X)$ in terms of $X$ and $t$.
Similarly, we can also construct the inverse mapping $\Phi_{-t}^\varepsilon$ of $\Phi_t^\varepsilon$.
In summary, we get the following definitions and properties.

\begin{lemma} \label{L:Flow_MTD}
  For $(X,t) \in \overline{\Omega_0^\varepsilon}\times[0,T]$, we set $Y:=\pi(X,0)\in\Gamma_0$ and
  \begin{align} \label{E:Def_FloMTD}
    \begin{aligned}
      \Phi_t^\varepsilon(X) &:= \Phi_t^0(Y)+r^\varepsilon(X,t)\bm{\nu}(\Phi_t^0(Y),t), \\
      r^\varepsilon(X,t) &:= \frac{g(\Phi_t^0(Y),t)}{g(Y,0)}\{d(X,0)-\varepsilon g_0(Y,0)\}+\varepsilon g_0(\Phi_t^0(Y),t).
    \end{aligned}
  \end{align}
  Also, for $(x,t)\in\overline{Q_T^\varepsilon}$, let $z:=\pi(x,t)\in\Gamma_t$ and
  \begin{align*}
    \Phi_{-t}^\varepsilon(x) &:= \Phi_{-t}^0(z)+R^\varepsilon(x,t)\bm{\nu}(\Phi_{-t}^0(z),0), \\
    R^\varepsilon(x,t) &:= \frac{g(\Phi_{-t}^0(z),0)}{g(z,t)}\{d(x,t)-\varepsilon g_0(z,t)\}+\varepsilon g_0(\Phi_{-t}^0(z),0).
  \end{align*}
  Then, the mappings
  \begin{align*}
    \Phi_{(\cdot)}^\varepsilon\colon\overline{\Omega_0^\varepsilon}\times[0,T] \to \mathbb{R}^n, \quad \Phi_{-(\cdot)}^\varepsilon\colon\overline{Q_T^\varepsilon}\to\mathbb{R}^n
  \end{align*}
  are smooth and satisfy the following conditions:
  \begin{enumerate}
    \item $\Phi_0^\varepsilon$ is the identity mapping on $\overline{\Omega_0^\varepsilon}$,
    \item for each $t\in[0,T]$, the mappings
    \begin{align*}
      \Phi_t^\varepsilon\colon\overline{\Omega_0^\varepsilon}\to\overline{\Omega_t^\varepsilon}, \quad \Phi_t^\varepsilon|_{\partial\Omega_0^\varepsilon}\colon\partial\Omega_0^\varepsilon\to\partial\Omega_t^\varepsilon
    \end{align*}
    are differmorphisms with inverse mappings $\Phi_{-t}^\varepsilon$ and $\Phi_{-t}^\varepsilon|_{\partial\Omega_t^\varepsilon}$,
    \item for all $\alpha\in\mathbb{Z}_{\geq0}^n$ and $k\in\mathbb{Z}_{\geq0}$, we have
    \begin{align} \label{E:FTD_Est}
      \begin{aligned}
        \bigl|\partial_X^\alpha\partial_t^k\Phi_t^\varepsilon(X)\bigr| &\leq c_{\alpha,k}, \quad (X,t)\in\overline{\Omega_0^\varepsilon}\times[0,T], \\
        \bigl|\partial_x^\alpha\partial_t^k\Phi_{-t}^\varepsilon(x)\bigr| &\leq c_{\alpha,k}, \quad (x,t)\in\overline{Q_T^\varepsilon},
      \end{aligned}
    \end{align}
    where $c_{\alpha,k}>0$ is a constant depending on $\alpha$ and $k$ but independent of $\varepsilon$ and $t$.
  \end{enumerate}
\end{lemma}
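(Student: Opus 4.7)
The statement is essentially a verification of the explicit construction of $\Phi_t^\varepsilon$ and $\Phi_{-t}^\varepsilon$ given in \eqref{E:Def_FloMTD}, and I would organize the verification in three steps: mapping into the correct target set, checking the mutual inverse property (together with $\Phi_0^\varepsilon=\mathrm{id}$), and proving the uniform estimates.

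For the first step, fix $X\in\overline{\Omega_0^\varepsilon}$ and set $Y:=\pi(X,0)$ and $\theta:=(d(X,0)-\varepsilon g_0(Y,0))/(\varepsilon g(Y,0))$, which lies in $[0,1]$ by the definition \eqref{E:Def_MTD} of $\Omega_0^\varepsilon$. Plugging this into the formula for $r^\varepsilon$ gives
\begin{align*}
  r^\varepsilon(X,t) = \varepsilon\bigl\{(1-\theta)g_0(\Phi_t^0(Y),t)+\theta g_1(\Phi_t^0(Y),t)\bigr\} \in [\varepsilon g_0(\Phi_t^0(Y),t),\varepsilon g_1(\Phi_t^0(Y),t)],
\end{align*}
so $\Phi_t^\varepsilon(X)\in\overline{\Omega_t^\varepsilon}$ by \eqref{E:Def_MTD}; the boundary cases $\theta\in\{0,1\}$ give the restriction $\partial\Omega_0^\varepsilon\to\partial\Omega_t^\varepsilon$. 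The analogous calculation handles $\Phi_{-t}^\varepsilon$. The identity $\Phi_0^\varepsilon=\mathrm{id}$ follows by setting $t=0$ in \eqref{E:Def_FloMTD} and invoking $\Phi_0^0=\mathrm{id}_{\Gamma_0}$ together with \eqref{E:Fermi}. For the inverse relation, let $x=\Phi_t^\varepsilon(X)$ and $y:=\Phi_t^0(Y)\in\Gamma_t$. Since $x-y=r^\varepsilon(X,t)\bm{\nu}(y,t)$ with $|r^\varepsilon|\leq c\varepsilon\leq\delta$, the uniqueness clause in Lemma \ref{L:MS_Tubu} forces $\pi(x,t)=y$ and $d(x,t)=r^\varepsilon(X,t)$. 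Substituting these into the defining formula for $R^\varepsilon(x,t)$ and using $\Phi_{-t}^0(y)=Y$, the fraction $g(Y,0)/g(y,t)$ cancels the factor $g(y,t)/g(Y,0)$ hidden in $r^\varepsilon(X,t)-\varepsilon g_0(y,t)$, producing $R^\varepsilon(x,t)=d(X,0)$ and hence $\Phi_{-t}^\varepsilon(x)=Y+d(X,0)\bm{\nu}(Y,0)=X$ by \eqref{E:Fermi}. The reverse composition is symmetric. Smoothness of both maps is inherited from the smoothness of $\Phi_{\pm t}^0,\bm{\nu},\pi,d,g_0,g_1,1/g$ on the compact sets $\overline{S_T}$ and $\bigcup_{t}\overline{\mathcal{N}_t}\times\{t\}$ (Assumption \ref{A:Flow_Sur} and Lemmas \ref{L:MS_Reg}--\ref{L:MS_Tubu}), using $g\geq c>0$.

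The main technical step is the uniform estimate \eqref{E:FTD_Est}, and I expect this to be the one requiring the most care, though the difficulty is purely combinatorial. Every constituent function entering \eqref{E:Def_FloMTD} has derivatives of all orders bounded on its (compact) domain independently of $\varepsilon$, and $\varepsilon$ appears only as a bounded multiplicative factor, never in a denominator, thanks to $g\geq c>0$. By the chain rule, $r^\varepsilon(X,t)=O(\varepsilon)$ while $\partial_X r^\varepsilon$ and $\partial_t r^\varepsilon$ are $O(1)$; here the only derivative of $d$ that does not carry an $\varepsilon$ prefactor is $\partial_X d(X,0)=\bm{\nu}(\pi(X,0),0)$, which is itself $O(1)$. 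Iterating the Leibniz rule on the decomposition $\Phi_t^\varepsilon(X)=\Phi_t^0(Y)+r^\varepsilon(X,t)\bm{\nu}(\Phi_t^0(Y),t)$ then yields $|\partial_X^\alpha\partial_t^k\Phi_t^\varepsilon|\leq c_{\alpha,k}$ with $c_{\alpha,k}$ independent of $\varepsilon$, and the same scheme gives the corresponding bound for $\Phi_{-t}^\varepsilon$. The only real burden is tracking how many $\varepsilon$-factors survive each differentiation, but since we only require $c_{\alpha,k}<\infty$ (not an $\varepsilon$-refined estimate), this amounts to routine bookkeeping once the boundedness of $1/g$ is invoked.
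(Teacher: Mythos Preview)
Your proof is correct and follows essentially the same approach as the paper: the paper's discussion preceding the lemma introduces exactly the same internal-division parameter $\theta$ you use, and its actual proof then simply observes that the formulas are compositions of smooth functions on compact sets, independent of $\varepsilon$, with $0<\varepsilon<1$. Your write-up is more explicit than the paper's terse proof (particularly the inverse verification via Lemma~\ref{L:MS_Tubu} and the bookkeeping for \eqref{E:FTD_Est}), but the underlying argument is the same.
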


\begin{proof}
  The statements follow from the constructions of $\Phi_{\pm(\cdot)}^\varepsilon$ and the fact that $\Phi_{\pm(\cdot)}^\varepsilon$ consist of functions which are smooth on the space-time compact sets
  \begin{align*}
    \Gamma_0\times[0,T], \quad \overline{S_T}, \quad \textstyle\bigcup_{t\in[0,T]}\overline{\mathcal{N}_t}\times\{t\}
  \end{align*}
  and independent of $\varepsilon$ (recall also that we assume $0<\varepsilon<1$).
\end{proof}

We define the velocity field $\mathbf{v}^\varepsilon\colon\overline{Q_T^\varepsilon}\to\mathbb{R}^n$ of $\overline{\Omega_t^\varepsilon}$ along the flow map $\Phi_t^\varepsilon$ by
\begin{align} \label{E:Def_vMTD}
  \mathbf{v}^\varepsilon(x,t) := \partial_t\Phi_t^\varepsilon\bigl(\Phi_{-t}^\varepsilon(x)\bigr) = \frac{\partial}{\partial t}\Bigl(\Phi_t^\varepsilon(X)\Bigr)\Big|_{X=\Phi_{-t}^\varepsilon(x)}, \quad (x,t) \in \overline{Q_T^\varepsilon}.
\end{align}
Note that $\mathbf{v}^\varepsilon(\cdot,t)$ is defined on the whole domain $\overline{\Omega_t^\varepsilon}$.
We also define
\begin{align*}
  V^\varepsilon(x,t) := [\mathbf{v}^\varepsilon\cdot\bm{\nu}^\varepsilon](x,t), \quad x\in\partial\Omega_t^\varepsilon, \quad t\in[0,T],
\end{align*}
and call it the scalar outer normal velocity of $\partial\Omega_t^\varepsilon$.

For a function $\varphi$ on $\overline{Q_T^\varepsilon}$, we define the material derivative along the velocity $\mathbf{v}^\varepsilon$ by
\begin{align} \label{E:Def_MtMTD}
  \partial_\varepsilon^\bullet\varphi(x,t) := \frac{\partial}{\partial t}\Bigl(\varphi(\Phi_t^\varepsilon(X),t)\Bigr)\Big|_{X=\Phi_{-t}^\varepsilon(x)}, \quad (x,t) \in \overline{Q_T^\varepsilon}.
\end{align}
By the chain rule, we have $\partial_\varepsilon^\bullet\varphi=\partial_t\varphi+\mathbf{v}^\varepsilon\cdot\nabla\varphi$ on $\overline{Q_T^\varepsilon}$.

The next results give relations between the velocity and the material derivative on $\overline{\Omega_t^\varepsilon}$ and the corresponding ones on $\Gamma_t$.
For the proofs, we refer to Section \ref{SS:PA_VMTD}.

\begin{lemma} \label{L:Vls_MTD}
  We define
  \begin{align*}
    \mathbf{G}_{\Gamma,g} := \nabla_\Gamma\mathbf{v}_\Gamma+\bm{\nu}\otimes\left[\frac{\partial_\Gamma^\bullet g}{g}\bm{\nu}+\partial_\Gamma^\bullet\bm{\nu}\right], \quad \sigma_{\Gamma,g} := \mathrm{div}_\Gamma\mathbf{v}_\Gamma+\frac{\partial_\Gamma^\bullet g}{g} \quad\text{on}\quad \overline{S_T}.
  \end{align*}
  Also, let $\nabla\mathbf{v}^\varepsilon=(\partial_i\mathrm{v}_j^\varepsilon)_{i,j}$ in $\overline{Q_T^\varepsilon}$ for $\mathbf{v}^\varepsilon=(\mathrm{v}_1^\varepsilon,\cdots,\mathrm{v}_n^\varepsilon)^{\mathrm{T}}$.
  Then,
  \begin{align} \label{E:Vls_MTD}
    |\mathbf{v}^\varepsilon-\bar{\mathbf{v}}_\Gamma| \leq c\varepsilon, \quad \Bigl|\nabla\mathbf{v}^\varepsilon-\overline{\mathbf{G}}_{\Gamma,g}\Bigr| \leq c\varepsilon, \quad |\mathrm{div}\,\mathbf{v}^\varepsilon-\bar{\sigma}_{\Gamma,g}| \leq c\varepsilon \quad\text{in}\quad \overline{Q_T^\varepsilon}.
  \end{align}
  In particular, since $\mathbf{v}_\Gamma$, $\bm{\nu}$, and $g$ are smooth on $\overline{S_T}$ and \eqref{E:G_Bdd} holds, we have
  \begin{align} \label{E:BdV_MTD}
    |\mathbf{v}^\varepsilon| \leq c, \quad |\nabla\mathbf{v}^\varepsilon| \leq c, \quad |\mathrm{div}\,\mathbf{v}^\varepsilon| \leq c \quad\text{in}\quad \overline{Q_T^\varepsilon}.
  \end{align}
\end{lemma}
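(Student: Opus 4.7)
The plan is to exploit the explicit form \eqref{E:Def_FloMTD} of the flow map to write $\mathbf{v}^\varepsilon$ cleanly in Fermi coordinates, and then differentiate. The crucial preliminary observation is that for $x = \Phi_t^\varepsilon(X) \in \overline{\Omega_t^\varepsilon}$, the identity $\Phi_t^\varepsilon(X) = \Phi_t^0(Y) + r^\varepsilon(X,t)\bm{\nu}(\Phi_t^0(Y),t)$ exhibits $x$ on the normal line to $\Gamma_t$ at $\Phi_t^0(Y)$ with signed offset $r^\varepsilon(X,t)$. Because $|r^\varepsilon| \leq c\varepsilon \leq \delta$ by \eqref{E:G_Bdd}, the uniqueness part of Lemma \ref{L:MS_Tubu} forces $\pi(x,t) = \Phi_t^0(Y)$ and $d(x,t) = r^\varepsilon(X,t)$. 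This single observation eliminates the variable $X$: using it to solve $d(X,0)-\varepsilon g_0(Y,0) = \frac{g(Y,0)}{\bar g(x,t)}(d(x,t)-\varepsilon\overline{g_0}(x,t))$ and time-differentiating the flow map with $\partial_t \Phi_t^0(Y) = \mathbf{v}_\Gamma \circ \Phi_t^0$ and $\partial_t \bm{\nu}(\Phi_t^0(Y),t) = \partial_\Gamma^\bullet \bm{\nu} \circ \Phi_t^0$, one obtains the clean identity
\begin{equation*}
  \mathbf{v}^\varepsilon = \overline{\mathbf{v}}_\Gamma + A\, \overline{\bm{\nu}} + d\, \overline{\partial_\Gamma^\bullet \bm{\nu}} \quad\text{in}\quad \overline{Q_T^\varepsilon}, \qquad A := \frac{\overline{\partial_\Gamma^\bullet g}}{\overline{g}}\, d + \varepsilon\, \overline{\rho},
\end{equation*}
for some smooth $\rho$ on $\overline{S_T}$. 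Since $|d| \leq c\varepsilon$ throughout $\overline{Q_T^\varepsilon}$, both $A$ and the last summand are $O(\varepsilon)$, which proves $|\mathbf{v}^\varepsilon - \overline{\mathbf{v}}_\Gamma| \leq c\varepsilon$.

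For the gradient, I differentiate this identity in $x$ using $\nabla d = \overline{\bm{\nu}}$ on $\overline{\mathcal{N}_t}$ and Lemma \ref{L:CEGr_NB}, which provides $\nabla\overline{\eta} = \overline{\nabla_\Gamma \eta} + O(|d|) = \overline{\nabla_\Gamma \eta} + O(\varepsilon)$ for any smooth $\eta$ on $\Gamma_t$. The contributions to $\nabla \mathbf{v}^\varepsilon$ that are not manifestly $O(\varepsilon)$ come only from: (i) $\nabla \overline{\mathbf{v}}_\Gamma = \overline{\nabla_\Gamma \mathbf{v}_\Gamma} + O(\varepsilon)$; (ii) $\nabla$ hitting the $d$ inside $\frac{\overline{\partial_\Gamma^\bullet g}}{\overline{g}}d \cdot \overline{\bm{\nu}}$, producing $\frac{\overline{\partial_\Gamma^\bullet g}}{\overline{g}}\, \overline{\bm{\nu}} \otimes \overline{\bm{\nu}}$; and (iii) $\nabla$ hitting the $d$ in $d\, \overline{\partial_\Gamma^\bullet \bm{\nu}}$, producing $\overline{\bm{\nu}} \otimes \overline{\partial_\Gamma^\bullet \bm{\nu}}$. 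Every other product picks up an explicit factor of $d$ or $\varepsilon$ and is absorbed into the error. Assembling the three leading terms yields exactly $\overline{\mathbf{G}}_{\Gamma,g} + O(\varepsilon)$, and taking the trace, using $\mathrm{tr}(\overline{\bm{\nu}}\otimes\overline{\bm{\nu}}) = 1$ and $\mathrm{tr}(\overline{\bm{\nu}}\otimes\overline{\partial_\Gamma^\bullet \bm{\nu}}) = \overline{\bm{\nu}} \cdot \overline{\partial_\Gamma^\bullet \bm{\nu}} = 0$ (from $|\bm{\nu}|^2 \equiv 1$), gives $|\mathrm{div}\,\mathbf{v}^\varepsilon - \overline{\sigma}_{\Gamma,g}| \leq c\varepsilon$. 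The uniform bounds \eqref{E:BdV_MTD} then follow at once from \eqref{E:Vls_MTD}, \eqref{E:G_Bdd}, and the smoothness (hence boundedness on the compact set $\overline{S_T}$) of $\mathbf{v}_\Gamma$, $\bm{\nu}$, $g$, and their material derivatives.

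The only real obstacle is bookkeeping: I must confirm that each term produced by the chain rule either reproduces a piece of $\overline{\mathbf{G}}_{\Gamma,g}$ or carries an explicit factor of $d$ or $\varepsilon$. Bypassing $X$ at the very start via the identification $d(x,t) = r^\varepsilon(X,t)$ and working entirely with constant extensions is what keeps this manageable, since it reduces the computation to a combination of $\nabla d = \overline{\bm{\nu}}$ and \eqref{E:CEGr_NB} rather than a chain rule through $\Phi_{-t}^\varepsilon$ with its $\varepsilon$-dependent Jacobian.
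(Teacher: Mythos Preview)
Your proof is correct and follows essentially the same route as the paper: both derive the identity $\mathbf{v}^\varepsilon = \bar{\mathbf{v}}_\Gamma + d\,\overline{\mathbf{v}_1} + \varepsilon\,\overline{\mathbf{v}_2}$ (your decomposition with $A$ is just a regrouping of the paper's $\mathbf{v}_1 = \frac{\partial_\Gamma^\bullet g}{g}\bm{\nu}+\partial_\Gamma^\bullet\bm{\nu}$ and $\mathbf{v}_2 = (\partial_\Gamma^\bullet g_0 - \frac{\partial_\Gamma^\bullet g}{g}g_0)\bm{\nu}$) via the identification $d(x,t)=r^\varepsilon(X,t)$, then differentiate using $\nabla d=\bar{\bm{\nu}}$ and \eqref{E:CEGr_NB}, and finally take the trace using $\bm{\nu}\cdot\partial_\Gamma^\bullet\bm{\nu}=0$.
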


\begin{lemma} \label{L:Dist_Mat}
  For all $(x,t)\in\overline{Q_T^\varepsilon}$, we have
  \begin{align} \label{E:Dist_Mat}
    \partial_\varepsilon^\bullet d(x,t) = \frac{\overline{\partial_\Gamma^\bullet g}(x,t)}{\bar{g}(x,t)}\{d(x,t)-\varepsilon\bar{g}_0(x,t)\}+\varepsilon\overline{\partial_\Gamma^\bullet g_0}(x,t).
  \end{align}
  Thus, by $g_0,g\in C^\infty(\overline{S_T})$, \eqref{E:G_Bdd}, and $|d|\leq c\varepsilon$ in $\overline{Q_T^\varepsilon}$, we get
  \begin{align} \label{E:DisMt_Bdd}
    |\partial_\varepsilon^\bullet d| \leq c\varepsilon \quad\text{in}\quad \overline{Q_T^\varepsilon}.
  \end{align}
  Also, let $\eta$ be a function on $\overline{S_T}$.
  Then,
  \begin{align} \label{E:CE_Mat}
    \partial_\varepsilon^\bullet\bar{\eta}(x,t) = \overline{\partial_\Gamma^\bullet\eta}(x,t) \quad\text{for all}\quad (x,t)\in\overline{Q_T^\varepsilon}.
  \end{align}
\end{lemma}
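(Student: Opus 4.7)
The approach rests entirely on the explicit form of the flow map $\Phi_t^\varepsilon$ given in Lemma \ref{L:Flow_MTD}. For $X\in\overline{\Omega_0^\varepsilon}$ with $Y=\pi(X,0)$, the map sends $X$ to $\Phi_t^0(Y)+r^\varepsilon(X,t)\bm{\nu}(\Phi_t^0(Y),t)$, and since $\Phi_t^0(Y)\in\Gamma_t$, the uniqueness in the tubular-neighborhood identity \eqref{E:Fermi} yields at once the two key facts
\begin{align*}
  \pi\bigl(\Phi_t^\varepsilon(X),t\bigr) = \Phi_t^0(Y), \qquad d\bigl(\Phi_t^\varepsilon(X),t\bigr) = r^\varepsilon(X,t).
\end{align*}
These identities reduce everything to differentiation of explicit expressions.

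For \eqref{E:Dist_Mat}, I would differentiate $d(\Phi_t^\varepsilon(X),t)=r^\varepsilon(X,t)$ in $t$ at fixed $X$; by definition \eqref{E:Def_MtMTD} the left-hand side equals $\partial_\varepsilon^\bullet d$ evaluated at $x=\Phi_t^\varepsilon(X)$. On the right-hand side only $g(\Phi_t^0(Y),t)$ and $g_0(\Phi_t^0(Y),t)$ depend on $t$, and by definition \eqref{E:Def_MtSur} their $t$-derivatives are $\partial_\Gamma^\bullet g$ and $\partial_\Gamma^\bullet g_0$ at $(\Phi_t^0(Y),t)$. Writing $x=\Phi_t^\varepsilon(X)$ and using $\Phi_t^0(Y)=\pi(x,t)$ converts every such evaluation into the corresponding constant extension on $\overline{Q_T^\varepsilon}$, so that $g(\Phi_t^0(Y),t)=\bar{g}(x,t)$, $\partial_\Gamma^\bullet g_0(\Phi_t^0(Y),t)=\overline{\partial_\Gamma^\bullet g_0}(x,t)$, and so on. The residual factor $\{d(X,0)-\varepsilon g_0(Y,0)\}/g(Y,0)$ sitting in the $\partial_\Gamma^\bullet g$-term is rewritten in the intrinsic variables $(x,t)$ by solving the defining formula \eqref{E:Def_FloMTD} for this quantity and then applying $r^\varepsilon(X,t)=d(x,t)$, which gives
\begin{align*}
  \frac{d(X,0)-\varepsilon g_0(Y,0)}{g(Y,0)} = \frac{d(x,t)-\varepsilon\bar{g}_0(x,t)}{\bar{g}(x,t)}.
\end{align*}
Collecting the terms produces precisely \eqref{E:Dist_Mat}, and the bound \eqref{E:DisMt_Bdd} follows immediately from $g_0,g\in C^\infty(\overline{S_T})$, \eqref{E:G_Bdd}, and $|d|\leq c\varepsilon$ on $\overline{Q_T^\varepsilon}$.

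For the commutation \eqref{E:CE_Mat}, I would substitute $x=\Phi_t^\varepsilon(X)$ into $\bar{\eta}(x,t)=\eta(\pi(x,t),t)$ and invoke $\pi(\Phi_t^\varepsilon(X),t)=\Phi_t^0(Y)$ to get $\bar{\eta}(\Phi_t^\varepsilon(X),t)=\eta(\Phi_t^0(Y),t)$. Because $Y=\pi(X,0)$ is independent of $t$, differentiating in $t$ at fixed $X$ gives, by \eqref{E:Def_MtMTD} and \eqref{E:Def_MtSur}, $\partial_\varepsilon^\bullet\bar{\eta}(x,t)=\partial_\Gamma^\bullet\eta(\Phi_t^0(Y),t)=\overline{\partial_\Gamma^\bullet\eta}(x,t)$.

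There is no genuinely hard step here; the lemma is essentially an exercise in differentiating the explicit formula for $\Phi_t^\varepsilon$. The single point that requires care is the algebraic rewriting of $\{d(X,0)-\varepsilon g_0(Y,0)\}/g(Y,0)$ into the intrinsic variables $(x,t)$, but this is forced by the defining equation for $r^\varepsilon$ and needs no new idea. The conceptual point underlying everything is the naturality $\pi\circ\Phi_t^\varepsilon=\Phi_t^0\circ\pi(\cdot,0)$, built into the construction of $\Phi_t^\varepsilon$, which is what makes the material derivative $\partial_\varepsilon^\bullet$ on $Q_T^\varepsilon$ compatible with the surface material derivative $\partial_\Gamma^\bullet$ through the constant extension.
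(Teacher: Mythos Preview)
Your proposal is correct and follows essentially the same approach as the paper: identify $d(\Phi_t^\varepsilon(X),t)=r^\varepsilon(X,t)$ and $\pi(\Phi_t^\varepsilon(X),t)=\Phi_t^0(Y)$ via \eqref{E:Fermi} and \eqref{E:Def_FloMTD}, differentiate in $t$ using \eqref{E:Def_MtMTD} and \eqref{E:Def_MtSur}, and rewrite the factor $\{d(X,0)-\varepsilon g_0(Y,0)\}/g(Y,0)$ intrinsically by inverting the defining relation for $r^\varepsilon$. The paper merely packages the last step by citing the formula for $\partial r^\varepsilon/\partial t$ already computed in the proof of Lemma \ref{L:Vls_MTD}, but the content is identical.
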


\section{Evolving Bochner spaces} \label{S:Boch}
Let us introduce function spaces on the moving sets $\Gamma_t$ and $\Omega_t^\varepsilon$.
We adopt the abstract framework of evolving Bochner spaces developed in \cite{AlCaDjEl23,AlElSt15_PM}.

In order to avoid repeating definitions and statements, we abuse the notations
\begin{align} \label{E:Abuse}
  \begin{aligned}
    &\Omega_t^0 := \Gamma_t, \quad Q_T^0 := S_T, \quad \mathbf{v}^0 := \mathbf{v}_\Gamma, \quad \partial_0^\bullet := \partial_\Gamma^\bullet \quad\text{when}\quad \varepsilon = 0, \\
    &(\nabla_\varepsilon,\mathrm{div}_\varepsilon,dx_\varepsilon) :=
    \begin{cases}
      (\nabla,\mathrm{div},dx) &\text{if}\quad \varepsilon > 0, \\
      (\nabla_\Gamma,\mathrm{div}_\Gamma,d\mathcal{H}^{n-1}) &\text{if}\quad \varepsilon = 0
    \end{cases}
  \end{aligned}
\end{align}
throughout this section.
Also, in what follows, we sometimes suppress the time variable of functions and write
\begin{align*}
  (u_1,u_2)_{L^2(\Omega_t^\varepsilon)} = \int_{\Omega_t^\varepsilon}u_1(x)u_2(x)\,dx_\varepsilon, \quad u_1,u_2\colon\Omega_t^\varepsilon\to\mathbb{R}
\end{align*}
whenever the right-hand integral makes sense.

Let $\varepsilon\geq0$, and let $\Phi_{\pm(\cdot)}^\varepsilon$ be the mappings given in Assumption \ref{A:Flow_Sur} (when $\varepsilon=0$) and Lemma \ref{L:Flow_MTD} (when $\varepsilon>0$).
For functions $U$ on $\Omega_0^\varepsilon$ and $u$ on $\Omega_t^\varepsilon$, we define
\begin{align*}
  [\phi_t^\varepsilon U](x) := U\bigl(\Phi_{-t}^\varepsilon(x)\bigr), \quad x\in\Omega_t^\varepsilon, \qquad [\phi_{-t}^\varepsilon u](X) := u\bigl(\Phi_t^\varepsilon(X)\bigr), \quad X\in\Omega_0^\varepsilon.
\end{align*}
Clearly, $\phi_{-t}^\varepsilon$ is the inverse of $\phi_t^\varepsilon$.
Let $\mathcal{X}=L^r,W^{1,r}$ with $r\in[1,\infty]$.
Then,
\begin{align*}
  \phi_t^\varepsilon\colon\mathcal{X}(\Omega_0^\varepsilon)\to\mathcal{X}(\Omega_t^\varepsilon), \quad \phi_{-t}^\varepsilon\colon\mathcal{X}(\Omega_t^\varepsilon)\to\mathcal{X}(\Omega_0^\varepsilon)
\end{align*}
are linear and bounded by Lemma \ref{L:CoSur_Lq} (when $\varepsilon=0$) and Lemma \ref{L:Flow_MTD} (when $\varepsilon>0$).
Also, we can see that the pair $(\mathcal{X}(\Omega_t^\varepsilon),\phi_t^\varepsilon)_{t\in[0,T]}$ satisfies \cite[Assumption 2.1]{AlCaDjEl23}.
We write
\begin{align*}
  \mathcal{X}_T(Q_T^\varepsilon)  :=\bigcup_{t\in[0,T]}\mathcal{X}(\Omega_t^\varepsilon)\times\{t\}, \quad u\colon[0,T]\to\mathcal{X}_T(Q_T^\varepsilon), \quad u(t) = (\hat{u}(t),t)
\end{align*}
and identify $u(t)$ with $\hat{u}(t)$.
For $q\in[1,\infty]$, we define
\begin{align*}
  L_{\mathcal{X}}^q(Q_T^\varepsilon) &:= \{u\colon[0,T]\to\mathcal{X}_T(Q_T^\varepsilon) \mid \phi_{-(\cdot)}^\varepsilon{u}(\cdot)\in L^q(0,T;\mathcal{X}(\Omega_0^\varepsilon))\}.
\end{align*}
The space $L_{\mathcal{X}}^q(Q_T^\varepsilon)$ is a Banach space equipped with norm
\begin{align} \label{E:Def_LBq}
  \|u\|_{L_{\mathcal{X}}^q(Q_T^\varepsilon)} :=
  \begin{cases}
    \left(\int_0^T\|u(t)\|_{\mathcal{X}(\Omega_t^\varepsilon)}^q\,dt\right)^{1/q} &\text{if}\quad q\neq\infty, \\
    \esup_{t\in[0,T]}\|u(t)\|_{\mathcal{X}(\Omega_t^\varepsilon)} &\text{if}\quad q=\infty.
  \end{cases}
\end{align}
Moreover, there exists a constant $c>0$ independent of $\varepsilon$ such that
\begin{align} \label{E:LBq_Equi}
  c^{-1}\|u\|_{L_{\mathcal{X}}^q(Q_T^\varepsilon)} \leq \|\phi_{-(\cdot)}^\varepsilon u(\cdot)\|_{L^q(0,T;\mathcal{X}(\Omega_0^\varepsilon))} \leq c\|u\|_{L_{\mathcal{X}}^q(Q_T^\varepsilon)}
\end{align}
by Lemma \ref{L:CoSur_Lq} (when $\varepsilon=0$) and Lemma \ref{L:Flow_MTD} (when $\varepsilon>0$).
We also define
\begin{align*}
  C_{\mathcal{X}}(Q_T^\varepsilon) &:= \{u\colon[0,T]\to\mathcal{X}_T(Q_T^\varepsilon) \mid \phi_{-(\cdot)}^\varepsilon u(\cdot)\in C([0,T];\mathcal{X}(\Omega_0^\varepsilon))\}, \\
  \mathcal{D}_{\mathcal{X}}(Q_T^\varepsilon) &:= \{u\colon[0,T]\to\mathcal{X}_T(Q_T^\varepsilon) \mid \phi_{-(\cdot)}^\varepsilon u(\cdot)\in\mathcal{D}(0,T;\mathcal{X}(\Omega_0^\varepsilon))\},
\end{align*}
with $\mathcal{D}=C_c^\infty$.
When $q\neq\infty$, we can see that $\mathcal{D}_{\mathcal{X}}(Q_T^\varepsilon)$ is dense in $L_{\mathcal{X}}^q(Q_T^\varepsilon)$ by using \eqref{E:LBq_Equi} and applying cut-off and mollification in time to $\phi_{-(\cdot)}^\varepsilon u(\cdot)$.
Also, the next result is useful in actual calculations.

\begin{lemma} \label{L:EvBo_Den}
  Let $q,r\in[1,\infty)$.
  Also, let
  \begin{align} \label{E:Def_S0T}
    C_{0,T}^\infty(\overline{Q_T^\varepsilon}) := \{u\in C^\infty(\overline{Q_T^\varepsilon}) \mid \text{$u(t)=0$ in $\overline{\Omega_t^\varepsilon}$ near $t=0,T$}\}.
  \end{align}
  For each $u\in\mathcal{D}_{W^{1,r}}(Q_T^\varepsilon)$, there exists a sequence $\{u_k\}_{k=1}^\infty$ in $C_{0,T}^\infty(\overline{Q_T^\varepsilon})$ such that
  \begin{align} \label{E:EvBo_Den}
    \lim_{k\to\infty}\|u-u_k\|_{L_{W^{1,r}}^q(Q_T^\varepsilon)} = 0, \quad \lim_{k\to\infty}\|\partial_\varepsilon^\bullet u-\partial_\varepsilon^\bullet u_k\|_{L_{W^{1,r}}^q(Q_T^\varepsilon)} = 0.
  \end{align}
  Moreover, $C_{0,T}^\infty(\overline{Q_T^\varepsilon})$ is dense in $L_{W^{1,r}}^q(Q_T^\varepsilon)$.
\end{lemma}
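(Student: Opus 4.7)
The plan is to transport the problem to the reference cylinder $\Omega_0^\varepsilon\times[0,T]$, perform a spatial mollification there, and push the result back via the flow map of Lemma \ref{L:Flow_MTD}. Setting $U(X,t):=\phi_{-t}^\varepsilon u(t)(X)=u(\Phi_t^\varepsilon(X),t)$, the definition of $\mathcal{D}_{W^{1,r}}(Q_T^\varepsilon)$ gives $U\in C_c^\infty(0,T;W^{1,r}(\Omega_0^\varepsilon))$, and from \eqref{E:Def_MtMTD} we read off $\phi_{-t}^\varepsilon\partial_\varepsilon^\bullet u(t)=\partial_tU(t)$. Hence, in view of \eqref{E:LBq_Equi}, it is enough to construct a sequence $U_k\in C^\infty(\overline{\Omega_0^\varepsilon}\times[0,T])$ vanishing near $t=0,T$ and satisfying
\begin{align*}
  U_k\to U, \quad \partial_tU_k\to\partial_tU \quad\text{in}\quad L^q(0,T;W^{1,r}(\Omega_0^\varepsilon)).
\end{align*}
The desired $u_k$ is then recovered by $u_k(x,t):=U_k(\Phi_{-t}^\varepsilon(x),t)$, which lies in $C^\infty(\overline{Q_T^\varepsilon})$ by the joint smoothness and uniform estimates \eqref{E:FTD_Est} of $\Phi_{-t}^\varepsilon$, vanishes near $t=0,T$, and satisfies $\phi_{-t}^\varepsilon u_k(t)=U_k(t)$, $\phi_{-t}^\varepsilon\partial_\varepsilon^\bullet u_k(t)=\partial_tU_k(t)$, so that \eqref{E:LBq_Equi} yields \eqref{E:EvBo_Den}.

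To produce $U_k$, I would fix a bounded linear extension operator $E\colon W^{1,r}(\Omega_0^\varepsilon)\to W^{1,r}(\mathbb{R}^n)$ (available since $\Omega_0^\varepsilon$ is a bounded smooth domain), apply it pointwise in $t$ so that $EU\in C_c^\infty(0,T;W^{1,r}(\mathbb{R}^n))$, and convolve in space with a standard Friedrichs mollifier $\rho_{1/k}$:
\begin{align*}
  U_k(X,t):=\bigl(\rho_{1/k}*_X EU(\cdot,t)\bigr)(X), \quad (X,t)\in\overline{\Omega_0^\varepsilon}\times[0,T].
\end{align*}
Spatial smoothness is immediate from the mollifier; for joint $C^\infty$-regularity I note that any mixed partial can be written as $(\partial_X^\alpha\rho_{1/k})*_X\partial_t^j EU(\cdot,t)$, which is continuous on $\mathbb{R}^n\times[0,T]$ because $\partial_t^j EU\in C([0,T];W^{1,r}(\mathbb{R}^n))\hookrightarrow C([0,T];L^1_{\mathrm{loc}}(\mathbb{R}^n))$. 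Compact time support is inherited from $U$. The standard mollifier convergence in $W^{1,r}(\mathbb{R}^n)$, applied pointwise in $t$ and combined with dominated convergence (the integrand is controlled by $\|EU(\cdot,t)\|_{W^{1,r}(\mathbb{R}^n)}^q$, which lies in $L^1(0,T)$), gives the required $L^q$-convergence of $U_k$; the same argument applied to $\partial_tU_k=\rho_{1/k}*_X E(\partial_tU)$ handles the material derivative.

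For the final density assertion, I would combine the first part with the fact, noted in the paragraph preceding the lemma, that $\mathcal{D}_{W^{1,r}}(Q_T^\varepsilon)$ is dense in $L_{W^{1,r}}^q(Q_T^\varepsilon)$. Since $C_{0,T}^\infty(\overline{Q_T^\varepsilon})\subset\mathcal{D}_{W^{1,r}}(Q_T^\varepsilon)$ (verified by smoothness of $\Phi_t^\varepsilon$ and the vanishing condition near $t=0,T$), and the first part shows it is $L_{W^{1,r}}^q$-dense in $\mathcal{D}_{W^{1,r}}(Q_T^\varepsilon)$, a two-step $\varepsilon/2$ approximation completes the proof. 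The main (mild) obstacle is ensuring smoothness up to the lateral boundary of $\Omega_0^\varepsilon$: plain interior mollification of $U(\cdot,t)$ would only produce functions smooth on compact subsets of $\Omega_0^\varepsilon$, which is why the preliminary extension step is essential.
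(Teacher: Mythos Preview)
Your argument is correct and shares the paper's overall architecture: pull back to the fixed cylinder via $\phi_{-t}^\varepsilon$, approximate there so that both $U$ and $\partial_tU$ converge in $L^q(0,T;W^{1,r}(\Omega_0^\varepsilon))$, and push forward, using the identity $\phi_{-t}^\varepsilon\partial_\varepsilon^\bullet u=\partial_tU$ together with \eqref{E:LBq_Equi}. The difference lies in the approximation step on the fixed cylinder. The paper invokes a tensor-product argument (as in \cite[Lemma~2.2]{Mas84}): since $C^\infty(\overline{\Omega_0^\varepsilon})$ is dense in $W^{1,r}(\Omega_0^\varepsilon)$ and $U\in C_c^\infty(0,T;W^{1,r}(\Omega_0^\varepsilon))$, one can take $U_k(X,t)=\sum_{\ell=1}^{L_k}\lambda_{k,\ell}(t)\psi_{k,\ell}(X)$ with $\lambda_{k,\ell}\in C_c^\infty(0,T)$ and $\psi_{k,\ell}\in C^\infty(\overline{\Omega_0^\varepsilon})$. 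Your route via Sobolev extension plus Friedrichs mollification in $\mathbb{R}^n$ is equally valid and perhaps more concrete, but as written it is tailored to the case $\varepsilon>0$, where $\Omega_0^\varepsilon$ is a bounded smooth domain in $\mathbb{R}^n$. Recall that in Section~\ref{S:Boch} the convention \eqref{E:Abuse} is in force, so the lemma also covers $\varepsilon=0$ with $\Omega_0^0=\Gamma_0$ a closed hypersurface; there the extension to $W^{1,r}(\mathbb{R}^n)$ and Euclidean mollification are not directly available, and you would need a routine adaptation (e.g.\ mollify in local charts, or use the constant normal extension and mollify in the tubular neighbourhood). The paper's tensor-product approach sidesteps this by relying only on the density of $C^\infty(\overline{\Omega_0^\varepsilon})$ in $W^{1,r}(\Omega_0^\varepsilon)$, which holds uniformly in both cases.
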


\begin{proof}
  Let $u\in\mathcal{D}_{W^{1,r}}(Q_T^\varepsilon)$ and $U:=\phi_{-(\cdot)}^\varepsilon u(\cdot)\in\mathcal{D}(0,T;W^{1,r}(\Omega_0^\varepsilon))$.
  Since $r\neq\infty$ and $\partial\Omega_0^\varepsilon$ (or $\Gamma_0$ when $\varepsilon=0)$ is smooth, $C^\infty(\overline{\Omega_0^\varepsilon})$ is dense in $W^{1,r}(\Omega_0^\varepsilon)$ (see e.g. \cite{AdaFou03}).
  Also, $U(t)=0$ near $t=0,T$.
  Thus, as in \cite[Lemma 2.2]{Mas84}, we can take functions of the form
  \begin{align*}
    U_k(X,t) = \sum_{\ell=1}^{L_k}\lambda_{k,\ell}(t)\psi_{k,\ell}(X), \quad (X,t) \in \overline{\Omega_0^\varepsilon}\times[0,T], \quad k\in\mathbb{N}
  \end{align*}
  with $L_k\in\mathbb{N}$, $\lambda_{k,\ell}\in C_c^\infty(0,T)$, and $\psi_{k,\ell}\in C^\infty(\overline{\Omega_0^\varepsilon})$ such that
  \begin{align} \label{Pf_EBD:Uk}
    \lim_{k\to\infty}\|U-U_k\|_{L^q(0,T;W^{1,r}(\Omega_0^\varepsilon))} = 0, \quad \lim_{k\to\infty}\|\partial_tU-\partial_tU_k\|_{L^q(0,T;W^{1,r}(\Omega_0^\varepsilon))} = 0.
  \end{align}
  Recall that $q\neq\infty$.
  For each $k\in\mathbb{N}$, let $u_k:=\phi_{(\cdot)}^\varepsilon U_k(\cdot)$, i.e.,
  \begin{align*}
    u_k(x,t) = U_k(\Phi_{-t}^\varepsilon(x),t) = \sum_{\ell=1}^{L_k}\lambda_{k,\ell}(t)\psi_{k,\ell}\bigl(\Phi_{-t}^\varepsilon(x)\bigr), \quad (x,t)\in\overline{Q_T^\varepsilon}.
  \end{align*}
  Then, $u_k\in C_{0,T}^\infty(\overline{Q_T^\varepsilon})$ by $\lambda_{k,\ell}\in C_c^\infty(0,T)$ and the smoothness of $\psi_{k,\ell}$ and $\Phi_{-(\cdot)}^\varepsilon$ (see Lemma \ref{L:MS_Reg} for $\varepsilon=0$ and Lemma \ref{L:Flow_MTD} for $\varepsilon>0$).
  Moreover, by \eqref{E:Def_MtSur} and \eqref{E:Def_MtMTD},
  \begin{align*}
    \partial_tU(X,t) = \partial_\varepsilon^\bullet u(\Phi_t^\varepsilon(X),t) = [\phi_{-t}^\varepsilon\partial_\varepsilon^\bullet u(t)](X), \quad (X,t)\in\overline{\Omega_0^\varepsilon}\times[0,T]
  \end{align*}
  and the same relation holds for $U_k$ and $u_k$.
  Hence, we have \eqref{E:EvBo_Den} by \eqref{E:LBq_Equi} and \eqref{Pf_EBD:Uk}.
  Also, the density of $C_{0,T}^\infty(\overline{Q_T^\varepsilon})$ in $L_{W^{1,r}}^q(Q_T^\varepsilon)$ follows from that of $\mathcal{D}_{W^{1,r}}(Q_T^\varepsilon)$ and \eqref{E:EvBo_Den}.
\end{proof}

Let $[\phi_t^\varepsilon]^\ast\colon[\mathcal{X}(\Omega_t^\varepsilon)]^\ast \to [\mathcal{X}(\Omega_0^\varepsilon)]^\ast$ be the dual operator of $\phi_t^\varepsilon$.
We write
\begin{align*}
  [\mathcal{X}]_T^\ast(Q_T^\varepsilon):=\bigcup_{t\in[0,T]}[\mathcal{X}(\Omega_t^\varepsilon)]^\ast\times\{t\}, \quad f\colon[0,T]\to[\mathcal{X}]_T^\ast(Q_T^\varepsilon), \quad f(t) = (\hat{f}(t),t)
\end{align*}
and again identify $f(t)$ with $\hat{f}(t)$.
For $q\in[1,\infty]$, we define
\begin{align*}
  L_{[\mathcal{X}]^\ast}^q(Q_T^\varepsilon) := \{f\colon[0,T]\to[\mathcal{X}]_T^\ast(Q_T^\varepsilon) \mid [\phi_{(\cdot)}^\varepsilon]^\ast f(\cdot)\in L^q(0,T;[\mathcal{X}(\Omega_0^\varepsilon)]^\ast)\}
\end{align*}
and the norm $\|f\|_{L_{[\mathcal{X}]^\ast}^q(Q_T^\varepsilon)}$ as in \eqref{E:Def_LBq}.
If $q\neq\infty$ and $\mathcal{X}(\Omega_t^\varepsilon)$ is reflexive, then
\begin{align*}
  [L_{\mathcal{X}}^q(Q_T^\varepsilon)]^\ast=L_{[\mathcal{X}]^\ast}^{q'}(Q_T^\varepsilon), \quad \langle f,u\rangle_{L_{\mathcal{X}}^q(Q_T^\varepsilon)} = \int_0^T\langle f(t),u(t)\rangle_{\mathcal{X}(\Omega_t^\varepsilon)}\,dt
\end{align*}
with $1/q+1/q'=1$.
Thus, $L_{\mathcal{X}}^q(Q_T^\varepsilon)$ is reflexive if $q\neq1,\infty$.

When $u\in C^1(\overline{Q_T^\varepsilon})$, we have (recall that we use the notations \eqref{E:Abuse})
\begin{align*}
  \frac{d}{dt}\int_{\Omega_t^\varepsilon}u\,dx_\varepsilon = \int_{\Omega_t^\varepsilon}(\partial_\varepsilon^\bullet u+u\,\mathrm{div}_\varepsilon\mathbf{v}^\varepsilon)\,dx_\varepsilon, \quad t\in(0,T)
\end{align*}
by the Reynolds transport theorem (or the Leibniz formula, see e.g. \cite{DziEll07,DziEll13_AN} for $\varepsilon=0$ and \cite{Gur81} for $\varepsilon>0$).
Based on this formula, we define a weak material derivative as follows.

\begin{definition} \label{D:We_MatDe}
  Let $p\in(2,\infty)$.
  We say that $u\in L_{W^{1,p}}^1(Q_T^\varepsilon)$ has the weak material derivative if there exists a functional $z\in L_{[W^{1,p}]^\ast}^1(Q_T^\varepsilon)$ such that
  \begin{align} \label{E:Def_WeMat}
     \int_0^T\langle z,\psi\rangle_{W^{1,p}(\Omega_t^\varepsilon)}\,dt = -\int_0^T(u,\partial_\varepsilon^\bullet\psi+\psi\,\mathrm{div}_\varepsilon\mathbf{v}^\varepsilon)_{L^2(\Omega_t^\varepsilon)}\,dt
  \end{align}
  for all $\psi\in\mathcal{D}_{W^{1,p}}(Q_T^\varepsilon)$.
  In this case, we write $z=\partial_\varepsilon^\bullet u$.
\end{definition}

\begin{remark} \label{R:We_MatDe}
  By Lemma \ref{L:EvBo_Den}, we may replace $\mathcal{D}_{W^{1,p}}(Q_T^\varepsilon)$ by $C_{0,T}^\infty(\overline{Q_T^\varepsilon})$ in Definition \ref{D:We_MatDe}.
\end{remark}

\begin{definition} \label{D:TD_SoBo}
  Let $p\in(2,\infty)$ and $p'\in(1,2)$ satisfy $1/p+1/p'=1$.
  We define
  \begin{align*}
    \mathbb{W}^{p,p'}(Q_T^\varepsilon) &:= \{u\in L_{W^{1,p}}^p(Q_T^\varepsilon) \mid \partial_\varepsilon^\bullet u\in L_{[W^{1,p}]^\ast}^{p'}(Q_T^\varepsilon)\}, \\
    \|u\|_{\mathbb{W}^{p,p'}(Q_T^\varepsilon)} &:= \|u\|_{L_{W^{1,p}}^p(Q_T^\varepsilon)}+\|\partial_\varepsilon^\bullet u\|_{L_{[W^{1,p}]^\ast}^{p'}(Q_T^\varepsilon)}.
  \end{align*}
\end{definition}

The space $\mathbb{W}^{p,p'}(Q_T^\varepsilon)$ is a Banach space.
In \cite[Definition 3.17]{AlCaDjEl23}, it is written as
\begin{align*}
  \mathbb{W}^{p,p'}(Q_T^\varepsilon) = \mathbb{W}^{p,p'}(W^{1,p}(\Omega_{(\cdot)}^\varepsilon),[W^{1,p}(\Omega_{(\cdot)}^\varepsilon)]^\ast).
\end{align*}
As in \cite[Propisition 6.5]{AlCaDjEl23}, we can observe that the spaces
\begin{align*}
  \mathbb{W}^{p,p'}(Q_T^\varepsilon) \quad\text{and}\quad \{U\in L^p(0,T;W^{1,p}(\Omega_0^\varepsilon)) \mid \partial_tU\in L^{p'}(0,T;[W^{1,p}(\Omega_0^\varepsilon)]^\ast)\}
\end{align*}
are equivalent in the sense of \cite[Definition 3.19]{AlCaDjEl23} by Assumption \ref{A:Flow_Sur} and Lemmas \ref{L:MS_Reg} and \ref{L:CoSur_Lq} (when $\varepsilon=0$) and by Lemma \ref{L:Flow_MTD} (when $\varepsilon>0$).
Moreover, the Gelfand triple structure
\begin{align*}
  W^{1,p}(\Omega_t^\varepsilon) \hookrightarrow L^2(\Omega_t^\varepsilon) \hookrightarrow [W^{1,p}(\Omega_t^\varepsilon)]^\ast, \quad t\in[0,T]
\end{align*}
holds and the first embedding is compact by $p>2$.
Thus, we can use the abstract results \cite[Theorems 4.5 and 5.2]{AlCaDjEl23} to get the following transport and Aubin--Lions lemmas.

\begin{lemma} \label{L:Trans}
  The embedding $\mathbb{W}^{p,p'}(Q_T^\varepsilon)\hookrightarrow C_{L^2}(Q_T^\varepsilon)$ is continuous, i.e.,
  \begin{align*}
    \|u\|_{C_{L^2}(Q_T^\varepsilon)} := \sup_{t\in[0,T]}\|u(t)\|_{L^2(\Omega_t^\varepsilon)} \leq c_\varepsilon\|u\|_{\mathbb{W}^{p,p'}(Q_T^\varepsilon)}, \quad u\in\mathbb{W}^{p,p'}(Q_T^\varepsilon)
  \end{align*}
  with some constant $c_\varepsilon>0$ depending on $\varepsilon$.
  Moreover,
  \begin{align} \label{E:Trans}
    \frac{d}{dt}(u_1,u_2)_{L^2(\Omega_t^\varepsilon)} = \langle\partial_\varepsilon^\bullet u_1,u_2\rangle_{W^{1,p}(\Omega_t^\varepsilon)}+\langle\partial_\varepsilon^\bullet u_2,u_1\rangle_{W^{1,p}(\Omega_t^\varepsilon)}+(u_1,u_2\,\mathrm{div}_\varepsilon\mathbf{v}^\varepsilon\bigr)_{L^2(\Omega_t^\varepsilon)}
  \end{align}
  for all $u_1,u_2\in\mathbb{W}^{p,p'}(Q_T^\varepsilon)$ and almost all $t\in(0,T)$.
\end{lemma}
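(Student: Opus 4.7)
The plan is to reduce everything to the classical setting on the fixed reference domain $\Omega_0^\varepsilon$ via the flow-map pull-back $\phi_{-t}^\varepsilon$, and then invoke the abstract machinery of \cite{AlCaDjEl23}. Concretely, for $u\in\mathbb{W}^{p,p'}(Q_T^\varepsilon)$ I would first set $U(t):=\phi_{-t}^\varepsilon u(t)$. As noted just before the lemma statement (via \eqref{E:LBq_Equi} together with the identification of the weak time derivative of $U$ with the pull-back of $\partial_\varepsilon^\bullet u$), $U$ belongs to $L^p(0,T;W^{1,p}(\Omega_0^\varepsilon))$ with $\partial_tU\in L^{p'}(0,T;[W^{1,p}(\Omega_0^\varepsilon)]^\ast)$, and its norm there is equivalent to $\|u\|_{\mathbb{W}^{p,p'}(Q_T^\varepsilon)}$ up to constants depending on $\varepsilon$ through Lemmas~\ref{L:CoSur_Lq} and \ref{L:Flow_MTD}. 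Since $p>2$ and $\Omega_0^\varepsilon$ is bounded with Lipschitz boundary, Rellich--Kondrachov supplies the compact embedding inside the Gelfand triple $W^{1,p}(\Omega_0^\varepsilon)\hookrightarrow L^2(\Omega_0^\varepsilon)\hookrightarrow[W^{1,p}(\Omega_0^\varepsilon)]^\ast$, and the classical Lions--Magenes type result then gives $U\in C([0,T];L^2(\Omega_0^\varepsilon))$ with the standard bound. Pushing back via $\phi_t^\varepsilon$, which by Lemma~\ref{L:Flow_MTD} is a uniformly bounded isomorphism $L^2(\Omega_0^\varepsilon)\to L^2(\Omega_t^\varepsilon)$, then yields the continuous embedding $\mathbb{W}^{p,p'}(Q_T^\varepsilon)\hookrightarrow C_{L^2}(Q_T^\varepsilon)$.

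For the transport identity \eqref{E:Trans}, I would proceed in two stages. First, for smooth representatives $u_1,u_2\in C^1(\overline{Q_T^\varepsilon})$, the classical Reynolds transport theorem gives
\[
  \frac{d}{dt}\int_{\Omega_t^\varepsilon}u_1u_2\,dx=\int_{\Omega_t^\varepsilon}\bigl(\partial_\varepsilon^\bullet(u_1u_2)+u_1u_2\,\mathrm{div}\,\mathbf{v}^\varepsilon\bigr)\,dx,
\]
and the Leibniz rule $\partial_\varepsilon^\bullet(u_1u_2)=u_2\,\partial_\varepsilon^\bullet u_1+u_1\,\partial_\varepsilon^\bullet u_2$, immediate from \eqref{E:Def_MtMTD}, reproduces \eqref{E:Trans} with the duality pairings realized as $L^2$-inner products. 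To extend to general $u_1,u_2\in\mathbb{W}^{p,p'}(Q_T^\varepsilon)$ I would pull back to $\Omega_0^\varepsilon$, mollify $U_i:=\phi_{-(\cdot)}^\varepsilon u_i(\cdot)$ in time (without cut-off near the endpoints, so that endpoint values survive), and transport the mollifications back to produce $u_i^k\in C^1(\overline{Q_T^\varepsilon})$ with $u_i^k\to u_i$ in $L_{W^{1,p}}^p(Q_T^\varepsilon)$ and $\partial_\varepsilon^\bullet u_i^k\to\partial_\varepsilon^\bullet u_i$ in $L_{[W^{1,p}]^\ast}^{p'}(Q_T^\varepsilon)$. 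Each term in \eqref{E:Trans} then passes to the limit after integration over an arbitrary $[t_1,t_2]\subset[0,T]$: the duality pairings by H\"older in time, the left-hand side using the $C_{L^2}$-embedding just established, and the $\mathrm{div}\,\mathbf{v}^\varepsilon$-term using the $L^\infty$-bound \eqref{E:BdV_MTD}.

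The technical point requiring the most care is the identification of $\partial_\varepsilon^\bullet u$ (defined distributionally by \eqref{E:Def_WeMat}) with the ordinary distributional time derivative of its pull-back $U$; without this neither the Lions--Magenes embedding on $\Omega_0^\varepsilon$ nor the mollification argument for the product rule goes through. This identification, together with the norm equivalence \eqref{E:LBq_Equi}, is exactly the space-equivalence asserted in \cite[Definition 3.19, Proposition 6.5]{AlCaDjEl23}, so in practice the entire lemma reduces to an application of \cite[Theorems 4.5 and 5.2]{AlCaDjEl23} once one has verified the compatibility of the pull-back maps and the Gelfand triple structure (both already recorded just above the lemma). The $\varepsilon$-dependence of $c_\varepsilon$ enters only through the $\varepsilon$-dependent operator norms of $\phi_{\pm t}^\varepsilon$ bounded in Lemma~\ref{L:Flow_MTD}, which is why uniformity in $\varepsilon$ is not (and need not be) claimed here.
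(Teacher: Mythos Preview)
Your proposal is correct and follows essentially the same route as the paper: both verify the Gelfand triple structure and the compatibility of the pull-back maps with the evolving function spaces, and then invoke the abstract transport and Aubin--Lions results \cite[Theorems~4.5 and~5.2]{AlCaDjEl23}. The paper simply cites those theorems outright, whereas you unpack what their proof amounts to (pull back to $\Omega_0^\varepsilon$, apply the classical Lions--Magenes embedding, mollify in time for the product rule); this is a faithful expansion rather than a different argument. One small inaccuracy: the $\varepsilon$-dependence of $c_\varepsilon$ does not come from the pull-back operators themselves, since by \eqref{E:FTD_Est} their norms are bounded uniformly in $\varepsilon$; rather it enters through the embedding and Lions--Magenes constants on the reference domain $\Omega_0^\varepsilon$, whose geometry depends on $\varepsilon$.
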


\begin{lemma} \label{L:AuLi}
  The embedding $\mathbb{W}^{p,p'}(Q_T^\varepsilon)\hookrightarrow L_{L^2}^2(Q_T^\varepsilon)$ is compact.
\end{lemma}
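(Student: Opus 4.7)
The plan is to apply the abstract Aubin--Lions compactness theorem for evolving Bochner spaces proved in \cite[Theorem 5.2]{AlCaDjEl23}, as signaled by the paragraph preceding the statement. The hypotheses of that abstract theorem, in the present setting, reduce to two points: (i) compatibility of the evolving Gelfand triple $(W^{1,p}(\Omega_t^\varepsilon), L^2(\Omega_t^\varepsilon), [W^{1,p}(\Omega_t^\varepsilon)]^\ast)_{t\in[0,T]}$ with the push-forward maps $\phi_t^\varepsilon$, and (ii) a uniform-in-$t$ compact embedding $W^{1,p}(\Omega_t^\varepsilon) \hookrightarrow\hookrightarrow L^2(\Omega_t^\varepsilon)$. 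Point (i) has already been put in place in the discussion before the statement, via Lemmas \ref{L:CoSur_Lq} and \ref{L:Flow_MTD} together with the abstract equivalence result \cite[Proposition 6.5]{AlCaDjEl23}, and the dual embedding $L^2(\Omega_t^\varepsilon) \hookrightarrow [W^{1,p}(\Omega_t^\varepsilon)]^\ast$ is continuous by duality. Point (ii) follows from the Rellich--Kondrachov theorem: since $\Omega_t^\varepsilon$ (for $\varepsilon > 0$) is a bounded Lipschitz domain and $\Gamma_t$ (for $\varepsilon = 0$) is a smooth compact hypersurface, and since $p > 2$, the chain $W^{1,p} \hookrightarrow W^{1,2} \hookrightarrow\hookrightarrow L^2$ is compact pointwise in $t$.

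For concreteness it is worth noting the equivalent fixed-domain formulation, which is useful as a backup route. Pulling back by $\phi_{-t}^\varepsilon$, a sequence $\{u_k\}$ bounded in $\mathbb{W}^{p,p'}(Q_T^\varepsilon)$ corresponds to a sequence $\{U_k\} = \{\phi_{-(\cdot)}^\varepsilon u_k(\cdot)\}$ bounded in $L^p(0,T; W^{1,p}(\Omega_0^\varepsilon))$, with time derivatives $\{\partial_t U_k\}$ bounded in $L^{p'}(0,T; [W^{1,p}(\Omega_0^\varepsilon)]^\ast)$; the norm equivalence \eqref{E:LBq_Equi} is what transfers the bounds across the pullback. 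The classical Aubin--Lions lemma on the time-independent cylinder $\Omega_0^\varepsilon \times (0,T)$ then supplies a subsequence converging strongly in $L^2(0,T; L^2(\Omega_0^\varepsilon))$, and pushing forward by $\phi_t^\varepsilon$ returns a subsequence of $\{u_k\}$ converging strongly in $L_{L^2}^2(Q_T^\varepsilon)$, once again by norm equivalence.

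I do not anticipate a serious obstacle. The only analytical subtlety is the uniformity in $t$ of the compact embedding constant in (ii); this is automatic via the fixed-domain approach, because the diffeomorphisms $\Phi_{\pm t}^\varepsilon$ have derivatives bounded independently of $t$ by \eqref{E:FTD_Est}, so that $W^{1,p}$- and $L^2$-norms on $\Omega_t^\varepsilon$ are uniformly equivalent to the corresponding norms on $\Omega_0^\varepsilon$. All the real analytic content is already packaged either in \cite[Theorem 5.2]{AlCaDjEl23} or in the classical Aubin--Lions lemma; the role of the present proof is essentially to check that the hypotheses of one of these tools are met.
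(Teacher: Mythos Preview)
Your proposal is correct and follows exactly the paper's approach: the paper simply cites \cite[Theorem 5.2]{AlCaDjEl23} after noting the Gelfand triple structure with compact first embedding, and you have filled in the verification of those hypotheses (and even supplied an alternative fixed-domain route) in more detail than the paper itself.
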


We will also use the following results.

\begin{lemma} \label{L:Wpp_Mult}
  Let $\chi\in C^1(\overline{Q_T^\varepsilon})$ and $u\in\mathbb{W}^{p,p'}(Q_T^\varepsilon)$.
  Then, $\chi u\in\mathbb{W}^{p,p'}(Q_T^\varepsilon)$ and
  \begin{align} \label{E:WeMa_Mult}
    \langle\partial_\varepsilon^\bullet(\chi u),\psi\rangle_{W^{1,p}(\Omega_t^\varepsilon)} = \langle\partial_\varepsilon^\bullet u,\chi\psi\rangle_{W^{1,p}(\Omega_t^\varepsilon)}+(u,\psi\partial_\varepsilon^\bullet\chi)_{L^2(\Omega_t^\varepsilon)}
  \end{align}
  for all $\psi\in L_{W^{1,p}}^p(Q_T^\varepsilon)$ and almost all $t\in(0,T)$.
  Moreover, for a.a. $t\in(0,T)$,
  \begin{align} \label{E:Tr_Mult}
    \frac{d}{dt}(u,\chi u\bigr)_{L^2(\Omega_t^\varepsilon)} = 2\langle\partial_\varepsilon^\bullet u,\chi u\rangle_{W^{1,p}(\Omega_t^\varepsilon)}+(u,u\partial_\varepsilon^\bullet\chi\bigr)_{L^2(\Omega_t^\varepsilon)}+(u,\chi u\,\mathrm{div}_\varepsilon\mathbf{v}^\varepsilon)_{L^2(\Omega_t^\varepsilon)}.
  \end{align}
\end{lemma}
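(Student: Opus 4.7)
The plan is to define a candidate weak material derivative $z$ for $\chi u$ directly from the right-hand side of \eqref{E:WeMa_Mult} and then verify the integral identity in Definition \ref{D:We_MatDe}. Membership $\chi u\in L_{W^{1,p}}^p(Q_T^\varepsilon)$ is immediate from the pointwise product rule $\nabla_\varepsilon(\chi u)=\chi\,\nabla_\varepsilon u+u\,\nabla_\varepsilon\chi$ together with the boundedness of $\chi$ and $\nabla_\varepsilon\chi$ on the space-time compact set $\overline{Q_T^\varepsilon}$. For the candidate, I would define, for a.a.\ $t\in(0,T)$, the functional $z(t)\in[W^{1,p}(\Omega_t^\varepsilon)]^\ast$ by
\begin{align*}
  \langle z(t),\psi\rangle_{W^{1,p}(\Omega_t^\varepsilon)} := \langle\partial_\varepsilon^\bullet u(t),\chi(t)\psi\rangle_{W^{1,p}(\Omega_t^\varepsilon)}+(u(t),\psi\,\partial_\varepsilon^\bullet\chi(t))_{L^2(\Omega_t^\varepsilon)}
\end{align*}
for $\psi\in W^{1,p}(\Omega_t^\varepsilon)$. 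The boundedness of $\chi$ and $\partial_\varepsilon^\bullet\chi$ gives the pointwise bound $\|z(t)\|_{[W^{1,p}]^\ast}\leq c\bigl(\|\partial_\varepsilon^\bullet u(t)\|_{[W^{1,p}]^\ast}+\|u(t)\|_{L^2(\Omega_t^\varepsilon)}\bigr)$; combined with $u\in L_{W^{1,p}}^p(Q_T^\varepsilon)\subset L_{L^2}^{p'}(Q_T^\varepsilon)$ (using H\"{o}lder in time, the finiteness of $|\Omega_t^\varepsilon|$, and $p>2>p'$), this yields $z\in L_{[W^{1,p}]^\ast}^{p'}(Q_T^\varepsilon)$.

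To verify the defining identity \eqref{E:Def_WeMat} with $z$ in place of $\partial_\varepsilon^\bullet(\chi u)$, I would take an arbitrary $\psi\in C_{0,T}^\infty(\overline{Q_T^\varepsilon})$, which is an admissible test class by Lemma \ref{L:EvBo_Den} and Remark \ref{R:We_MatDe}. Since $\chi\in C^1(\overline{Q_T^\varepsilon})$, the product $\chi\psi$ again lies in $C_{0,T}^\infty(\overline{Q_T^\varepsilon})$ and so may itself be used as a test function against $\partial_\varepsilon^\bullet u$. Applying the pointwise material chain rule $\partial_\varepsilon^\bullet(\chi\psi)=\chi\,\partial_\varepsilon^\bullet\psi+\psi\,\partial_\varepsilon^\bullet\chi$ in \eqref{E:Def_WeMat} for $u$ with test function $\chi\psi$, and then adding the second summand of $\langle z,\psi\rangle$, the $\psi\,\partial_\varepsilon^\bullet\chi$ contributions cancel and there remains
\begin{align*}
  \int_0^T\langle z,\psi\rangle_{W^{1,p}(\Omega_t^\varepsilon)}\,dt=-\int_0^T(\chi u,\partial_\varepsilon^\bullet\psi+\psi\,\mathrm{div}_\varepsilon\mathbf{v}^\varepsilon)_{L^2(\Omega_t^\varepsilon)}\,dt.
\end{align*}
This is precisely \eqref{E:Def_WeMat} for $\chi u$, so $\chi u\in\mathbb{W}^{p,p'}(Q_T^\varepsilon)$ with $\partial_\varepsilon^\bullet(\chi u)=z$, which establishes \eqref{E:WeMa_Mult}.

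For \eqref{E:Tr_Mult}, the plan is to apply the transport formula \eqref{E:Trans} with $u_1=u$ and $u_2=\chi u$; this is legitimate now that both factors are known to lie in $\mathbb{W}^{p,p'}(Q_T^\varepsilon)$. Substituting \eqref{E:WeMa_Mult} with $\psi=u$ into the resulting term $\langle\partial_\varepsilon^\bullet(\chi u),u\rangle_{W^{1,p}}$ produces $\langle\partial_\varepsilon^\bullet u,\chi u\rangle_{W^{1,p}}+(u,u\,\partial_\varepsilon^\bullet\chi)_{L^2}$, and combining with the symmetric term $\langle\partial_\varepsilon^\bullet u,\chi u\rangle_{W^{1,p}}$ already present in \eqref{E:Trans} yields the factor of $2$ in \eqref{E:Tr_Mult}. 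The only subtle point in the whole argument is ensuring that $\chi\psi$ genuinely belongs to the admissible test-function class together with the pointwise validity of the product rule for $\partial_\varepsilon^\bullet$, which is precisely why I would work with the smooth dense subclass $C_{0,T}^\infty(\overline{Q_T^\varepsilon})$ from Lemma \ref{L:EvBo_Den} rather than with general elements of $\mathcal{D}_{W^{1,p}}(Q_T^\varepsilon)$.
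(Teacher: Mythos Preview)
The paper does not actually prove this lemma; it simply cites \cite[Lemma 2.6]{Miu25pre_pLap}. Your argument is therefore an independent proof, and the overall strategy---define the candidate $z$, verify \eqref{E:Def_WeMat} by testing $\partial_\varepsilon^\bullet u$ against $\chi\psi$, then feed \eqref{E:WeMa_Mult} into the transport formula \eqref{E:Trans}---is sound and is the natural route.

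There is one technical slip. You claim that for $\psi\in C_{0,T}^\infty(\overline{Q_T^\varepsilon})$ and $\chi\in C^1(\overline{Q_T^\varepsilon})$ the product $\chi\psi$ ``again lies in $C_{0,T}^\infty(\overline{Q_T^\varepsilon})$''. This is false: the product is only $C^1$, since $\chi$ is only $C^1$. Consequently $\chi\psi$ is not a priori an admissible test function in Definition~\ref{D:We_MatDe}, which requires the pullback to lie in $C_c^\infty(0,T;W^{1,p}(\Omega_0^\varepsilon))$. The fix is routine: the pullback $\phi_{-(\cdot)}^\varepsilon(\chi\psi)$ is $C^1$ in time with compact support and values in $W^{1,p}(\Omega_0^\varepsilon)$, so a time mollification yields a sequence in $\mathcal{D}_{W^{1,p}}(Q_T^\varepsilon)$ converging together with its strong material derivative; both sides of \eqref{E:Def_WeMat} for $u$ are continuous under this approximation, so the identity extends to the test function $\chi\psi$. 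Once this is patched, your cancellation argument and the derivation of \eqref{E:Tr_Mult} via \eqref{E:Trans} with $u_1=u$, $u_2=\chi u$ go through exactly as you describe.
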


\begin{proof}
  We refer to \cite[Lemma 2.6]{Miu25pre_pLap} for the proof.
\end{proof}

\begin{lemma} \label{L:Wpp_Comp}
  Let $\Lambda\in C^2(\mathbb{R})$ such that $\Lambda''$ is bounded on $\mathbb{R}$.
  Then,
  \begin{align*}
    \Lambda'(u) = \Lambda'\circ u \in L_{W^{1,p}}^p(Q_T^\varepsilon), \quad \Lambda(u) = \Lambda\circ u \in C_{L^1}(Q_T^\varepsilon)
  \end{align*}
  for all $u\in \mathbb{W}^{p,p'}(Q_T^\varepsilon)$.
  Moreover, for almost all $t\in(0,T)$, we have
  \begin{align} \label{E:Wpp_Comp}
    \frac{d}{dt}\int_{\Omega_t^\varepsilon}[\Lambda(u)](t)\,dx_\varepsilon = \langle\partial_\varepsilon^\bullet u(t),[\Lambda'(u)](t)\rangle_{W^{1,p}(\Omega_t^\varepsilon)}+\int_{\Omega_t^\varepsilon}[\Lambda(u)\,\mathrm{div}_\varepsilon\mathbf{v}^\varepsilon](t)\,dx_\varepsilon.
  \end{align}
\end{lemma}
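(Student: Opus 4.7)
The strategy is to check the regularity of $\Lambda'(u)$ and $\Lambda(u)$ by direct pointwise estimates, then to establish \eqref{E:Wpp_Comp} by approximating $u$ with classically differentiable functions for which the Reynolds transport theorem applies directly.

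\emph{Regularity.} Since $\Lambda''$ is bounded, there is a constant $c>0$ with $|\Lambda'(s)|\leq c(1+|s|)$ and $|\Lambda(s)|\leq c(1+|s|^2)$ for every $s\in\mathbb{R}$. For $u\in\mathbb{W}^{p,p'}(Q_T^\varepsilon)\hookrightarrow L_{L^p}^p(Q_T^\varepsilon)$ the first bound yields $\Lambda'(u)\in L_{L^p}^p(Q_T^\varepsilon)$, and the chain rule $\nabla_\varepsilon\Lambda'(u)=\Lambda''(u)\nabla_\varepsilon u$ combined with $\|\Lambda''\|_\infty<\infty$ gives $\Lambda'(u)\in L_{W^{1,p}}^p(Q_T^\varepsilon)$. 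Lemma \ref{L:Trans} provides $u\in C_{L^2}(Q_T^\varepsilon)$, and the quadratic bound on $\Lambda$ then implies $\Lambda(u)\in C_{L^1}(Q_T^\varepsilon)$.

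\emph{Smooth approximation and passage to the limit.} Using the equivalence of $\mathbb{W}^{p,p'}(Q_T^\varepsilon)$ with the fixed-domain Sobolev--Bochner space on $\Omega_0^\varepsilon$ recalled after Definition \ref{D:TD_SoBo} and the standard density of smooth functions there, choose $u_k\in C^1(\overline{Q_T^\varepsilon})$ with $u_k\to u$ in $\mathbb{W}^{p,p'}(Q_T^\varepsilon)$. For each $u_k$ the classical Reynolds transport theorem together with the pointwise chain rule $\partial_\varepsilon^\bullet\Lambda(u_k)=\Lambda'(u_k)\partial_\varepsilon^\bullet u_k$ yields, for all $0\leq s<t\leq T$,
\begin{align*}
  \int_{\Omega_t^\varepsilon}\Lambda(u_k)\,dx_\varepsilon-\int_{\Omega_s^\varepsilon}\Lambda(u_k)\,dx_\varepsilon = \int_s^t\Bigl[\langle\partial_\varepsilon^\bullet u_k,\Lambda'(u_k)\rangle_{W^{1,p}(\Omega_r^\varepsilon)}+(\Lambda(u_k),\mathrm{div}_\varepsilon\mathbf{v}^\varepsilon)_{L^2(\Omega_r^\varepsilon)}\Bigr]\,dr.
\end{align*}
The left-hand side converges uniformly in $(s,t)$ from the inequality $|\Lambda(u_k)-\Lambda(u)|\leq c(|u_k|+|u|)|u_k-u|$ together with $u_k\to u$ in $C_{L^2}(Q_T^\varepsilon)$, which follows from Lemma \ref{L:Trans}. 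After extracting a subsequence with $u_k\to u$ almost everywhere on $Q_T^\varepsilon$, the dominated convergence theorem applied to $\Lambda'(u_k)-\Lambda'(u)$ and $\Lambda''(u_k)\nabla_\varepsilon u_k-\Lambda''(u)\nabla_\varepsilon u$ gives $\Lambda'(u_k)\to\Lambda'(u)$ in $L_{W^{1,p}}^p(Q_T^\varepsilon)$ and $\Lambda(u_k)\to\Lambda(u)$ in $L_{L^1}^1(Q_T^\varepsilon)$. Combined with $\partial_\varepsilon^\bullet u_k\to\partial_\varepsilon^\bullet u$ in $L_{[W^{1,p}]^\ast}^{p'}(Q_T^\varepsilon)$ and the uniform bound $|\mathrm{div}_\varepsilon\mathbf{v}^\varepsilon|\leq c$ from Lemma \ref{L:Vls_MTD}, this passes the displayed identity to the limit with $u$ in place of $u_k$; uniqueness of the limit upgrades the subsequence convergence to the full sequence. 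Since the right-hand side is absolutely continuous in $t$, differentiation for almost every $t\in(0,T)$ delivers \eqref{E:Wpp_Comp}.

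\emph{Expected main obstacle.} The delicate step is the strong convergence of $\nabla_\varepsilon\Lambda'(u_k)=\Lambda''(u_k)\nabla_\varepsilon u_k$ in $L_{L^p}^p(Q_T^\varepsilon)$. The summand $\Lambda''(u_k)(\nabla_\varepsilon u_k-\nabla_\varepsilon u)$ is controlled by $\|\Lambda''\|_\infty$ times an $L^p$-convergent quantity, but the summand $(\Lambda''(u_k)-\Lambda''(u))\nabla_\varepsilon u$ only vanishes via a Vitali/dominated-convergence argument that relies on almost everywhere convergence of a subsequence of $u_k$ together with the fixed $L^p$ envelope $|\nabla_\varepsilon u|^p$. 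Once this gradient convergence is secured, all remaining passages are routine continuity of bilinear pairings.
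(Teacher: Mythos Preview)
The paper itself does not prove this lemma; it simply cites \cite[Lemma 2.7]{Miu25pre_pLap}. Your argument is the standard one for such chain-rule formulas in evolving Bochner spaces and is essentially correct: establish the regularity of $\Lambda'(u)$ and $\Lambda(u)$ by pointwise growth bounds, then prove the transport identity for a smooth approximating sequence via the classical Reynolds transport theorem, and pass to the limit using the continuous embedding $\mathbb{W}^{p,p'}(Q_T^\varepsilon)\hookrightarrow C_{L^2}(Q_T^\varepsilon)$ together with dominated convergence for the gradient term. The ``main obstacle'' you single out---strong $L^p$ convergence of $(\Lambda''(u_k)-\Lambda''(u))\nabla_\varepsilon u$---is indeed the only nonroutine step, and your Vitali/dominated-convergence treatment (a.e.\ convergence of a subsequence, fixed envelope $2\|\Lambda''\|_\infty|\nabla_\varepsilon u|\in L^p$) handles it.

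Two small remarks. First, the lemma is stated for both $\varepsilon>0$ and $\varepsilon=0$ (recall the convention \eqref{E:Abuse}); your appeal to Lemma \ref{L:Vls_MTD} for the bound on $\mathrm{div}_\varepsilon\mathbf{v}^\varepsilon$ covers only $\varepsilon>0$, but the case $\varepsilon=0$ follows directly from the smoothness of $\mathbf{v}_\Gamma$ on the compact set $\overline{S_T}$. Second, the density of $C^1(\overline{Q_T^\varepsilon})$ in $\mathbb{W}^{p,p'}(Q_T^\varepsilon)$ that you invoke is not stated verbatim in this paper, but it does follow from the equivalence with the fixed-domain space mentioned after Definition \ref{D:TD_SoBo} and standard density results there (cf.\ the abstract framework in \cite{AlCaDjEl23}).
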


\begin{proof}
  We refer to \cite[Lemma 2.7]{Miu25pre_pLap} for the proof.
\end{proof}

\section{Weak solution to the thin-domain problem} \label{S:WSMTD}
This section gives the definition and some estimates of a weak solution to \eqref{E:pLap_MTD}.
In what follows, we fix $p\in(2,\infty)$, and let $p'\in(1,2)$ satisfy $1/p+1/p'=1$.
Note that
\begin{align*}
  \bigl\|\,|\mathbf{a}|^{p-2}\mathbf{a}\,\|_{L^{p'}(\Omega_t^\varepsilon)} = \bigl\|\,|\mathbf{a}|^{p-1}\,\|_{L^{p'}(\Omega_t^\varepsilon)} = \|\mathbf{a}\|_{L^p(\Omega_t^\varepsilon)}^{p-1}, \quad \mathbf{a} \in [L^p(\Omega_t^\varepsilon)]^n
\end{align*}
by $p'=p/(p-1)$, and the same formula holds for integrals over $\Gamma_t$, $Q_T^\varepsilon$, and $S_T$.
We use this formula frequently below without mention.

Suppose that $u^\varepsilon$ and $f^\varepsilon$ are smooth and satisfy \eqref{E:pLap_MTD}.
Using
\begin{align*}
  \partial_tu^\varepsilon = \partial_\varepsilon^\bullet u^\varepsilon-\mathbf{v}^\varepsilon\cdot\nabla u^\varepsilon = \partial_\varepsilon^\bullet u^\varepsilon-\mathrm{div}(u^\varepsilon\mathbf{v}^\varepsilon)+u^\varepsilon\,\mathrm{div}\,\mathbf{v}^\varepsilon \quad\text{in}\quad Q_T^\varepsilon
\end{align*}
and $\partial_{\nu^\varepsilon}=\bm{\nu}^\varepsilon\cdot\nabla$ and $V^\varepsilon=\mathbf{v}^\varepsilon\cdot\bm{\nu}^\varepsilon$ on $\partial_\ell Q_T^\varepsilon$, we rewrite \eqref{E:pLap_MTD} as
\begin{align*}
  \left\{
  \begin{aligned}
    &\partial_\varepsilon^\bullet u^\varepsilon-\mathrm{div}(|\nabla u^\varepsilon|^{p-2}\nabla u^\varepsilon+u^\varepsilon\mathbf{v}^\varepsilon)+u^\varepsilon\,\mathrm{div}\,\mathbf{v}^\varepsilon = f^\varepsilon \quad \text{in} \quad Q_T^\varepsilon, \\
    &(|\nabla u^\varepsilon|^{p-2}\nabla u^\varepsilon+u^\varepsilon\mathbf{v}^\varepsilon)\cdot\bm{\nu}^\varepsilon = 0 \quad \text{on} \quad \partial_\ell Q_T^\varepsilon.
  \end{aligned}
  \right.
\end{align*}
We multiply the equation by a test function $\psi$, integrate over $Q_T^\varepsilon$, and carry out integration by parts and use the boundary condition to the divergence term.
Then, we get
\begin{multline*}
  \int_0^T\left(\int_{\Omega_t^\varepsilon}\{(\partial_\varepsilon^\bullet u^\varepsilon)\psi+(|\nabla u^\varepsilon|^{p-2}\nabla u^\varepsilon+u^\varepsilon\mathbf{v}^\varepsilon)\cdot\nabla\psi+u^\varepsilon\psi\,\mathrm{div}\,\mathbf{v}^\varepsilon\}\,dx\right)\,dt \\
  = \int_0^T\left(\int_{\Omega_t^\varepsilon}f^\varepsilon\psi\,dx\right)\,dt.
\end{multline*}
Based on this formula, we define a weak solution to \eqref{E:pLap_MTD} as follows.

\begin{definition} \label{D:WS_MTD}
  For given $u_0^\varepsilon\in L^2(\Omega_0^\varepsilon)$ and $f^\varepsilon\in L_{[W^{1,p}]^\ast}^{p'}(Q_T^\varepsilon)$, we say that $u^\varepsilon$ is a weak solution to \eqref{E:pLap_MTD} if $u^\varepsilon\in\mathbb{W}^{p,p'}(Q_T^\varepsilon)$ and it satisfies
  \begin{align} \label{E:WeFo_MTD}
    \begin{aligned}
      &\int_0^T\langle\partial_\varepsilon^\bullet u^\varepsilon,\psi\rangle_{W^{1,p}(\Omega_t^\varepsilon)}\,dt+\int_0^T(|\nabla u^\varepsilon|^{p-2}\nabla u^\varepsilon,\nabla\psi)_{L^2(\Omega_t^\varepsilon)}\,dt \\
      &\qquad +\int_0^T(u^\varepsilon,\mathbf{v}^\varepsilon\cdot\nabla\psi+\psi\,\mathrm{div}\,\mathbf{v}^\varepsilon)_{L^2(\Omega_t^\varepsilon)}\,dt = \int_0^T\langle f^\varepsilon,\psi\rangle_{W^{1,p}(\Omega_t^\varepsilon)}\,dt
    \end{aligned}
  \end{align}
  for all $\psi\in L_{W^{1,p}}^p(Q_T^\varepsilon)$ and the initial condition $u^\varepsilon(0)=u_0^\varepsilon$ in $L^2(\Omega_0^\varepsilon)$.
\end{definition}

Note that $L_{W^{1,p}}^p(Q_T^\varepsilon)\subset L_{H^1}^2(Q_T^\varepsilon)$ by $p>2$ and $T<\infty$.
Moreover,
\begin{align*}
  \left|\int_0^T(|\nabla u^\varepsilon|^{p-2}\nabla u^\varepsilon,\nabla\psi)_{L^2(\Omega_t^\varepsilon)}\,dt\right| \leq \|\nabla u^\varepsilon\|_{L_{L^p}^p(Q_T^\varepsilon)}^{p-1}\|\nabla\psi\|_{L_{L^p}^p(Q_T^\varepsilon)}
\end{align*}
by H\"{o}lder's inequality.
Thus, the weak form \eqref{E:WeFo_MTD} makes sense.
Also, the initial condition can be considered in $L^2(\Omega_0^\varepsilon)$ by Lemma \ref{L:Trans}.

The existence and uniqueness of a weak solution was shown in \cite{Miu25pre_pLap} for a general moving domain.
We do not repeat the proof and just give the statement below.

\begin{proposition} \label{P:ExUn_MTD}
  For all $u_0^\varepsilon\in L^2(\Omega_0^\varepsilon)$ and $f^\varepsilon\in L_{[W^{1,p}]^\ast}^{p'}(Q_T^\varepsilon)$, there exists a unique weak solution $u^\varepsilon\in\mathbb{W}^{p,p'}(Q_T^\varepsilon)$ to \eqref{E:pLap_MTD}.
\end{proposition}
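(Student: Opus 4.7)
The plan is a Faedo--Galerkin approximation transported along the flow map $\Phi_t^\varepsilon$, uniform a priori estimates via the transport formula, identification of the nonlinear flux in the weak limit by a Minty--Browder monotonicity argument, and finally an energy estimate for the difference of two solutions to obtain uniqueness from \eqref{E:pVec_Coer}.

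For existence I would fix a smooth family $\{\psi_k\}_{k\geq 1}\subset C^\infty(\overline{\Omega_0^\varepsilon})$ which is total in $W^{1,p}(\Omega_0^\varepsilon)$ and $L^2(\Omega_0^\varepsilon)$-orthogonal (for example a Neumann Laplace eigenbasis), transport it to $\overline{Q_T^\varepsilon}$ by $\psi_k^t:=\psi_k\circ\Phi_{-t}^\varepsilon$ so that $\partial_\varepsilon^\bullet\psi_k^t\equiv 0$, and look for an approximant $u_N^\varepsilon(t)=\sum_{k=1}^N a_k^N(t)\psi_k^t$ satisfying \eqref{E:WeFo_MTD} when tested against each $\psi_j^t$ for $j\leq N$, with $a_k^N(0)$ determined by the $L^2(\Omega_0^\varepsilon)$-projection of $u_0^\varepsilon$ onto the first $N$ modes. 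Using \eqref{E:Trans} with $u_1=u_N^\varepsilon$ and $u_2=\psi_j^t$, each pairing $\langle\partial_\varepsilon^\bullet u_N^\varepsilon,\psi_j^t\rangle$ becomes $\frac{d}{dt}(u_N^\varepsilon,\psi_j^t)_{L^2(\Omega_t^\varepsilon)}-(u_N^\varepsilon,\psi_j^t\,\mathrm{div}\,\mathbf{v}^\varepsilon)_{L^2(\Omega_t^\varepsilon)}$, so the Galerkin identity reduces to an ODE system for $(a_1^N,\dots,a_N^N)$ with continuous right-hand side whose local solvability follows from Peano's theorem. Testing the approximate equation against $u_N^\varepsilon$ itself and invoking \eqref{E:Trans}, \eqref{E:BdV_MTD}, Young's inequality, and Lemma \ref{L:Cor_UP} yields the uniform estimate
\begin{align*}
  \sup_{t\in[0,T]}\|u_N^\varepsilon(t)\|_{L^2(\Omega_t^\varepsilon)}^2+\|u_N^\varepsilon\|_{L_{W^{1,p}}^p(Q_T^\varepsilon)}^p \leq C\bigl(\|u_0^\varepsilon\|_{L^2(\Omega_0^\varepsilon)}^2+\|f^\varepsilon\|_{L_{[W^{1,p}]^\ast}^{p'}(Q_T^\varepsilon)}^{p'}+1\bigr),
\end{align*}
which rules out finite-time blow-up; reading the Galerkin identity backwards together with the resulting $L_{L^{p'}}^{p'}$-control of $|\nabla u_N^\varepsilon|^{p-2}\nabla u_N^\varepsilon$ then produces a uniform bound on $\partial_\varepsilon^\bullet u_N^\varepsilon$ in $L_{[W^{1,p}]^\ast}^{p'}(Q_T^\varepsilon)$, hence on $u_N^\varepsilon$ in $\mathbb{W}^{p,p'}(Q_T^\varepsilon)$.

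The main obstacle is identifying the weak limit of the nonlinear flux. By reflexivity and Lemma \ref{L:AuLi}, I extract along a non-relabelled subsequence $u_N^\varepsilon\rightharpoonup u^\varepsilon$ weakly in $\mathbb{W}^{p,p'}(Q_T^\varepsilon)$ and strongly in $L_{L^2}^2(Q_T^\varepsilon)$, $|\nabla u_N^\varepsilon|^{p-2}\nabla u_N^\varepsilon\rightharpoonup \chi$ weakly in $[L_{L^{p'}}^{p'}(Q_T^\varepsilon)]^n$, and $u_N^\varepsilon(T)\rightharpoonup u^\varepsilon(T)$ weakly in $L^2(\Omega_T^\varepsilon)$. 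Passing to the limit in the Galerkin identity (first against fixed $\psi_j^t$, then by density against arbitrary $\psi\in L_{W^{1,p}}^p(Q_T^\varepsilon)$) yields \eqref{E:WeFo_MTD} with $\chi$ in place of $|\nabla u^\varepsilon|^{p-2}\nabla u^\varepsilon$; the initial condition $u^\varepsilon(0)=u_0^\varepsilon$ is recovered by testing against $\eta(t)\psi_j^t$ with $\eta\in C^1([0,T])$ satisfying $\eta(T)=0$ and integrating by parts in time. To identify $\chi$, I run Minty--Browder: from \eqref{E:pVec_Coer},
\begin{align*}
  0 \leq \int_0^T\bigl(|\nabla u_N^\varepsilon|^{p-2}\nabla u_N^\varepsilon-|\nabla w|^{p-2}\nabla w,\nabla u_N^\varepsilon-\nabla w\bigr)_{L^2(\Omega_t^\varepsilon)}\,dt
\end{align*}
for every $w\in L_{W^{1,p}}^p(Q_T^\varepsilon)$; rewriting the diagonal piece $\int_0^T(|\nabla u_N^\varepsilon|^{p-2}\nabla u_N^\varepsilon,\nabla u_N^\varepsilon)_{L^2}\,dt$ by testing the Galerkin identity against $u_N^\varepsilon$ and applying \eqref{E:Trans} expresses it as $\tfrac12(\|u_0^\varepsilon\|_{L^2}^2-\|u_N^\varepsilon(T)\|_{L^2}^2)$ plus integrals whose integrands are products of a weakly-$L_{L^p}^p$-convergent factor and a strongly-$L_{L^2}^2$-convergent one and therefore pass to the limit; weak lower semicontinuity of the $L^2(\Omega_T^\varepsilon)$-norm then gives $\limsup_{N\to\infty}\int_0^T(|\nabla u_N^\varepsilon|^{p-2}\nabla u_N^\varepsilon,\nabla u_N^\varepsilon)_{L^2}\,dt\leq\int_0^T(\chi,\nabla u^\varepsilon)_{L^2}\,dt$, so $\int_0^T(\chi-|\nabla w|^{p-2}\nabla w,\nabla u^\varepsilon-\nabla w)_{L^2}\,dt\geq 0$ for every $w$; the substitution $w=u^\varepsilon\pm\lambda\varphi$, $\lambda\to 0^+$, identifies $\chi=|\nabla u^\varepsilon|^{p-2}\nabla u^\varepsilon$.

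For uniqueness, let $u_1^\varepsilon,u_2^\varepsilon$ be two weak solutions with the same data, set $w:=u_1^\varepsilon-u_2^\varepsilon\in\mathbb{W}^{p,p'}(Q_T^\varepsilon)$, test the difference of the two identities \eqref{E:WeFo_MTD} against $w$, and apply \eqref{E:Tr_Mult} with $\chi\equiv 1$ together with the advection-term integration by parts and \eqref{E:BdV_MTD} to obtain
\begin{align*}
  \tfrac12\|w(t)\|_{L^2(\Omega_t^\varepsilon)}^2+\int_0^t\bigl(|\nabla u_1^\varepsilon|^{p-2}\nabla u_1^\varepsilon-|\nabla u_2^\varepsilon|^{p-2}\nabla u_2^\varepsilon,\nabla w\bigr)_{L^2(\Omega_s^\varepsilon)}\,ds \leq C\int_0^t\|w(s)\|_{L^2(\Omega_s^\varepsilon)}^2\,ds.
\end{align*}
The integral on the left-hand side is nonnegative by \eqref{E:pVec_Coer}, so Gronwall's lemma forces $w\equiv 0$, proving uniqueness.
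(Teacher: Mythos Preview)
The paper does not give its own proof here; it cites \cite{Miu25pre_pLap}. Your existence scheme (Galerkin basis transported by the flow, energy bounds via \eqref{E:Trans} and \eqref{E:BdV_MTD}, Aubin--Lions, Minty--Browder) is the standard route and matches what that reference does.

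Your uniqueness argument, however, has a genuine gap, and it is precisely the obstruction the paper itself flags in Section~\ref{SS:OuLi_Uni} for the analogous surface problem. After applying \eqref{E:Tr_Mult} with $\chi\equiv 1$, the advection contribution $(w,\mathbf{v}^\varepsilon\cdot\nabla w)_{L^2(\Omega_s^\varepsilon)}$ survives. If you integrate it by parts, the divergence piece cancels but a boundary integral $\tfrac12\int_{\partial\Omega_s^\varepsilon}w^2 V^\varepsilon\,d\mathcal{H}^{n-1}$ remains, and this is controlled by a trace norm of $w$, not by $\|w\|_{L^2(\Omega_s^\varepsilon)}^2$. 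If instead you estimate it directly and try to absorb $\|w\|_{L^2}\|\nabla w\|_{L^2}$ using the strong monotonicity lower bound $c\|\nabla w\|_{L^p}^p$, Young's inequality leaves a remainder $C\|w\|_{L^2}^{p'}$ with $p'<2$. The resulting differential inequality $E'\leq CE+CE^{p'/2}$, $E(0)=0$, does \emph{not} force $E\equiv 0$: the map $s\mapsto s^{p'/2}$ fails the Osgood condition at the origin, and $E(t)=(ct)^{2/(2-p')}$ solves the comparison ODE. So the displayed inequality you claim, with right-hand side $C\int_0^t\|w\|_{L^2}^2$, is simply not available when $p>2$ and the domain moves.

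The remedy used in \cite{Miu25pre_pLap} (and reproduced for the limit problem in Theorem~\ref{T:Lim_Uni}) is to drop the $L^2$ energy method altogether and test with $\Lambda_\gamma'(w)$ for a mollification $\Lambda_\gamma$ of $|\cdot|$, following \cite{CaNoOr17}. The advection term then carries a factor $w\Lambda_\gamma''(w)$ supported on $\{|w|\leq\gamma\}$, hence is $O(\gamma)$ times an $L^1$-bounded quantity and vanishes as $\gamma\to 0$; one obtains $\|w(t)\|_{L^1(\Omega_t^\varepsilon)}\leq 0$ directly, with no Gronwall step.
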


Next, we estimate $u^\varepsilon$ explicitly in terms of $\varepsilon$.
We write $c$ and $c_T$ for general positive constants independent of $\varepsilon$ and $t$, but $c_T$ depends on $T$ in general.

\begin{proposition} \label{P:Mass_MTD}
  Suppose that $u_0^\varepsilon\in L^2(\Omega_0^\varepsilon)$ and $f^\varepsilon\in L_{[W^{1,p}]^\ast}^{p'}(Q_T^\varepsilon)$ satisfy \eqref{E:Data_MTD}, and let $u^\varepsilon$ be the unique weak solution to \eqref{E:pLap_MTD} given by Proposition \ref{P:ExUn_MTD}.
  Then,
  \begin{align}
    |(u^\varepsilon(t),1)_{L^2(\Omega_t^\varepsilon)}| &\leq c_T\varepsilon, \label{E:Mass_MTD} \\
    \|u^\varepsilon(t)\|_{W^{1,p}(\Omega_t^\varepsilon)} &\leq c_T\Bigl(\|\nabla u^\varepsilon(t)\|_{L^p(\Omega_t^\varepsilon)}+\varepsilon^{1/p}\Bigr) \label{E:W1p_MTD}
  \end{align}
  for all $t\in[0,T]$.
\end{proposition}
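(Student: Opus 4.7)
The plan is to first establish an $O(\varepsilon)$ bound on the total mass of $u^\varepsilon$ and then deduce the $W^{1,p}$-estimate from the Poincar\'e-type inequality \eqref{E:Cor_UP}. Conceptually, the Neumann-type boundary condition in \eqref{E:pLap_MTD} forces mass conservation up to the source, and the scaling of the data in \eqref{E:Data_MTD} pins down the resulting $\varepsilon$-power.

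For the mass balance, I would substitute spatially constant test functions $\psi(x,s)=\chi(s)$, with arbitrary $\chi\in L^p(0,T)$, into the weak form \eqref{E:WeFo_MTD}. Since $\nabla\psi=0$, the $p$-Laplace and convection terms drop out, and the fundamental lemma of the calculus of variations yields, for almost every $s\in(0,T)$,
\begin{align*}
  \langle\partial_\varepsilon^\bullet u^\varepsilon(s),1\rangle_{W^{1,p}(\Omega_s^\varepsilon)}+(u^\varepsilon(s),\mathrm{div}\,\mathbf{v}^\varepsilon(s))_{L^2(\Omega_s^\varepsilon)}=\langle f^\varepsilon(s),1\rangle_{W^{1,p}(\Omega_s^\varepsilon)}.
\end{align*}
The constant function $1$ belongs to $\mathbb{W}^{p,p'}(Q_T^\varepsilon)$ with $\partial_\varepsilon^\bullet 1=0$: its $L_{W^{1,p}}^p$-norm is controlled by $\bigl(\int_0^T|\Omega_s^\varepsilon|\,ds\bigr)^{1/p}\leq c_T\varepsilon^{1/p}$ via \eqref{E:Vol_MTD}, and the vanishing of the weak material derivative follows directly from Definition \ref{D:We_MatDe} together with the Reynolds transport theorem applied to smooth test functions. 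Hence the transport formula \eqref{E:Trans} with $u_1=u^\varepsilon$ and $u_2=1$ identifies the left-hand side above as $\tfrac{d}{ds}(u^\varepsilon(s),1)_{L^2(\Omega_s^\varepsilon)}$, and integrating from $0$ to $t$ yields the identity
\begin{align*}
  (u^\varepsilon(t),1)_{L^2(\Omega_t^\varepsilon)}=(u_0^\varepsilon,1)_{L^2(\Omega_0^\varepsilon)}+\int_0^t\langle f^\varepsilon(s),1\rangle_{W^{1,p}(\Omega_s^\varepsilon)}\,ds.
\end{align*}

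To conclude \eqref{E:Mass_MTD}, I bound each term on the right using \eqref{E:Data_MTD} and \eqref{E:Vol_MTD}. Cauchy--Schwarz gives $|(u_0^\varepsilon,1)_{L^2(\Omega_0^\varepsilon)}|\leq\|u_0^\varepsilon\|_{L^2(\Omega_0^\varepsilon)}|\Omega_0^\varepsilon|^{1/2}\leq c\varepsilon^{1/2}\cdot c\varepsilon^{1/2}=c\varepsilon$, while H\"older's inequality in time, combined with $\|1\|_{L_{W^{1,p}}^p(Q_T^\varepsilon)}\leq c_T\varepsilon^{1/p}$ and $1/p+1/p'=1$, yields
\begin{align*}
  \int_0^t|\langle f^\varepsilon(s),1\rangle_{W^{1,p}(\Omega_s^\varepsilon)}|\,ds\leq\|f^\varepsilon\|_{L_{[W^{1,p}]^\ast}^{p'}(Q_T^\varepsilon)}\|1\|_{L_{W^{1,p}}^p(Q_T^\varepsilon)}\leq c\varepsilon^{1/p'}\cdot c_T\varepsilon^{1/p}=c_T\varepsilon.
\end{align*}
Finally, \eqref{E:W1p_MTD} is an immediate consequence of Lemma \ref{L:Cor_UP} with $q=p$ and \eqref{E:Mass_MTD}: the last term in \eqref{E:Cor_UP} is bounded by $c_T\varepsilon^{-1+1/p}\cdot\varepsilon=c_T\varepsilon^{1/p}$, and adding the $L^p$-norm of $\nabla u^\varepsilon(t)$ recovers the full $W^{1,p}$-norm. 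The only delicate point is verifying that $1\in\mathbb{W}^{p,p'}(Q_T^\varepsilon)$ so that \eqref{E:Trans} applies; after that, the argument is a matter of careful $\varepsilon$-bookkeeping.
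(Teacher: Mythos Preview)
Your proof is correct and follows essentially the same approach as the paper: test the weak form with the constant function $1$, use the transport formula \eqref{E:Trans} to obtain the mass identity, bound the initial data and source terms via \eqref{E:Data_MTD} and \eqref{E:Vol_MTD}, and then invoke \eqref{E:Cor_UP} with $q=p$ for \eqref{E:W1p_MTD}. The only cosmetic difference is that the paper substitutes $\psi\equiv1$ directly in \eqref{E:WeFo_MTD} with $T$ replaced by $t$, whereas you first extract the pointwise-in-time identity via $\psi=\chi(s)$ and then integrate; both routes lead to the same mass balance.
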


\begin{proof}
  Let $\psi\equiv1$ in \eqref{E:WeFo_MTD} with $T$ replaced by $t\in[0,T]$.
  Then, we have
  \begin{align*}
    (u^\varepsilon(t),1)_{L^2(\Omega_t^\varepsilon)} = (u_0^\varepsilon,1)_{L^2(\Omega_0^\varepsilon)}+\int_0^t\langle f^\varepsilon,1\rangle_{W^{1,p}(\Omega_s^\varepsilon)}\,ds
  \end{align*}
  by \eqref{E:Trans} and $\partial_\varepsilon^\bullet\psi=0$.
  Moreover, by H\"{o}lder's inequality, \eqref{E:Vol_MTD}, and \eqref{E:Data_MTD},
  \begin{align*}
    |(u_0^\varepsilon,1)_{L^2(\Omega_0^\varepsilon)}| \leq \|u_0^\varepsilon\|_{L^2(\Omega_0^\varepsilon)}|\Omega_0^\varepsilon|^{1/2} \leq c\varepsilon.
  \end{align*}
  Since $\|1\|_{W^{1,p}(\Omega_s^\varepsilon)}=|\Omega_s^\varepsilon|^{1/p} \leq c\varepsilon^{1/p}$ by \eqref{E:Vol_MTD}, we also have
  \begin{align*}
    \left|\int_0^t\langle f^\varepsilon,1\rangle_{W^{1,p}(\Omega_s^\varepsilon)}\,ds\right| &\leq \int_0^t\|f^\varepsilon\|_{[W^{1,p}(\Omega_s^\varepsilon)]^\ast}\|1\|_{W^{1,p}(\Omega_s^\varepsilon)}\,ds \\
    &\leq c\varepsilon^{1/p}T^{1/p}\|f^\varepsilon\|_{L_{[W^{1,p}]^\ast}^{p'}(Q_T^\varepsilon)} \leq c\varepsilon T^{1/p}
  \end{align*}
  by H\"{o}lder's inequality, \eqref{E:Data_MTD}, and $1/p+1/p'=1$.
  Thus, \eqref{E:Mass_MTD} follows.
  Also, we have \eqref{E:W1p_MTD} by \eqref{E:Cor_UP} with $q=p$ and \eqref{E:Mass_MTD}.
\end{proof}

\begin{proposition} \label{P:Ener_MTD}
  Under the assumptions of Proposition \ref{P:Mass_MTD}, we have
  \begin{align} \label{E:Ener_MTD}
    \|u^\varepsilon(t)\|_{L^2(\Omega_t^\varepsilon)}^2+\int_0^t\|u^\varepsilon\|_{W^{1,p}(\Omega_s^\varepsilon)}^p\,ds \leq c_T\varepsilon \quad\text{for all}\quad t\in[0,T].
  \end{align}
\end{proposition}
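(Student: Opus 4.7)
The plan is a standard energy estimate: test the weak form against $u^\varepsilon$ itself, eliminate the material derivative via the transport identity \eqref{E:Trans}, and then track the $\varepsilon$-scaling through every inequality. Setting $\psi = u^\varepsilon$ in \eqref{E:WeFo_MTD}, integrating from $0$ to $t$, and applying \eqref{E:Trans} with $u_1 = u_2 = u^\varepsilon$ to rewrite $\int_0^t\langle\partial_\varepsilon^\bullet u^\varepsilon,u^\varepsilon\rangle\,ds$ yields
\[
\tfrac{1}{2}\|u^\varepsilon(t)\|_{L^2(\Omega_t^\varepsilon)}^2 + \int_0^t\|\nabla u^\varepsilon\|_{L^p(\Omega_s^\varepsilon)}^p\,ds = \tfrac{1}{2}\|u_0^\varepsilon\|_{L^2(\Omega_0^\varepsilon)}^2 + \int_0^t\langle f^\varepsilon,u^\varepsilon\rangle\,ds - \int_0^t(u^\varepsilon,\mathbf{v}^\varepsilon\cdot\nabla u^\varepsilon)_{L^2}\,ds - \tfrac{1}{2}\int_0^t(u^\varepsilon,u^\varepsilon\,\mathrm{div}\,\mathbf{v}^\varepsilon)_{L^2}\,ds.
\]
The goal is then to absorb the right-hand side into the two good terms on the left plus a controllable remainder of order $\varepsilon$.

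For the $\mathrm{div}\,\mathbf{v}^\varepsilon$ term, the uniform bound \eqref{E:BdV_MTD} gives domination by $c\|u^\varepsilon\|_{L^2}^2$. For the drift term, Cauchy--Schwarz together with \eqref{E:BdV_MTD} gives $c\|u^\varepsilon\|_{L^2}\|\nabla u^\varepsilon\|_{L^2}$, after which the interpolation \eqref{E:L2Lp_MTD} turns $\|\nabla u^\varepsilon\|_{L^2}$ into $c\varepsilon^{1/2-1/p}\|\nabla u^\varepsilon\|_{L^p}$. Young's inequality with exponents $p/2$ and $p/(p-2)$ then produces a bound of the form $\delta\|\nabla u^\varepsilon\|_{L^p}^p + c_\delta(\|u^\varepsilon\|_{L^2}^2 + \varepsilon)$, where the identity $(1-2/p)\cdot p/(p-2) = 1$ is what makes the residual exactly $O(\varepsilon)$ rather than a higher or lower power. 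For the forcing term, combine \eqref{E:W1p_MTD} with Young's inequality of exponents $p$ and $p'$ to get $\delta\|\nabla u^\varepsilon\|_{L^p}^p + c_\delta(\|f^\varepsilon\|_{[W^{1,p}]^\ast}^{p'} + \varepsilon)$.

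Choosing $\delta$ small enough to absorb the $\|\nabla u^\varepsilon\|_{L^p}^p$ contributions into the left-hand side, and invoking \eqref{E:Data_MTD} so that $\|u_0^\varepsilon\|_{L^2}^2 \leq c\varepsilon$ and $\int_0^T\|f^\varepsilon\|_{[W^{1,p}]^\ast}^{p'}\,ds \leq c\varepsilon$, the estimate becomes
\[
\|u^\varepsilon(t)\|_{L^2}^2 + \int_0^t\|\nabla u^\varepsilon\|_{L^p}^p\,ds \leq c_T\varepsilon + c\int_0^t\|u^\varepsilon(s)\|_{L^2}^2\,ds.
\]
Gronwall's lemma applied to the first summand on the left yields $\|u^\varepsilon(t)\|_{L^2}^2 \leq c_T\varepsilon$, and reinsertion of this bound produces $\int_0^t\|\nabla u^\varepsilon\|_{L^p}^p\,ds \leq c_T\varepsilon$. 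Finally, converting $\|\nabla u^\varepsilon\|_{L^p}^p$ to $\|u^\varepsilon\|_{W^{1,p}}^p$ via \eqref{E:W1p_MTD} costs only an additional $c_TT\varepsilon$ term, which gives \eqref{E:Ener_MTD}. There is no genuine analytical obstacle here—the main care point is purely the $\varepsilon$-bookkeeping across interpolation and Young's inequalities, which is ensured by the arithmetic identities $(1-2/p)\cdot p/(p-2) = 1$ and $1/p + 1/p' = 1$.
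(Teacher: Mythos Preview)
Your proof is correct and follows essentially the same approach as the paper: test against $u^\varepsilon$, apply the transport identity \eqref{E:Trans}, control the drift and divergence terms via \eqref{E:BdV_MTD} and \eqref{E:L2Lp_MTD}, convert $\|u^\varepsilon\|_{W^{1,p}}$ to $\|\nabla u^\varepsilon\|_{L^p}$ via \eqref{E:W1p_MTD}, absorb, and close with Gronwall. The only cosmetic difference is the order of operations---the paper first applies Young to get $\gamma\|u^\varepsilon\|_{W^{1,p}}^p + \gamma\|\nabla u^\varepsilon\|_{L^2}^2$ and then invokes \eqref{E:L2Lp_MTD}/\eqref{E:W1p_MTD} jointly to bound this by $c_T(\|\nabla u^\varepsilon\|_{L^p}^p + \varepsilon)$, whereas you interleave the interpolation and Young steps term by term---but the ingredients and $\varepsilon$-arithmetic are identical.
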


\begin{proof}
  Let $\psi=u^\varepsilon$ in \eqref{E:WeFo_MTD} with $T$ replaced by $t\in[0,T]$.
  Then, by \eqref{E:Trans},
  \begin{align*}
    &\frac{1}{2}\|u^\varepsilon(t)\|_{L^2(\Omega_t^\varepsilon)}^2+\int_0^t\|\nabla u^\varepsilon\|_{L^p(\Omega_s^\varepsilon)}^p\,ds+\int_0^t(u^\varepsilon,\mathbf{v}^\varepsilon\cdot\nabla u^\varepsilon)_{L^2(\Omega_s^\varepsilon)}\,ds \\
    &\qquad +\frac{1}{2}\int_0^t(u^\varepsilon,u^\varepsilon\,\mathrm{div}\,\mathbf{v}^\varepsilon)_{L^2(\Omega_s^\varepsilon)}\,ds = \frac{1}{2}\|u_0^\varepsilon\|_{L^2(\Omega_0^\varepsilon)}^2+\int_0^t\langle f^\varepsilon,u^\varepsilon\rangle_{W^{1,p}(\Omega_s^\varepsilon)}\,ds.
  \end{align*}
  We further apply \eqref{E:BdV_MTD} and H\"{o}lder's and Young's inequalities to get
  \begin{align*}
    &\frac{1}{2}\|u^\varepsilon(t)\|_{L^2(\Omega_t^\varepsilon)}^2+\int_0^t\|\nabla u^\varepsilon\|_{L^p(\Omega_s^\varepsilon)}^p\,ds \\
    &\qquad \leq \frac{1}{2}\|u_0^\varepsilon\|_{L^2(\Omega_0^\varepsilon)}^2+\gamma\int_0^t\Bigl(\|u^\varepsilon\|_{W^{1,p}(\Omega_s^\varepsilon)}^p+\|\nabla u^\varepsilon\|_{L^2(\Omega_s^\varepsilon)}^2\Bigr)ds \\
    &\qquad\qquad +c_\gamma\int_0^t\Bigl(\|u^\varepsilon\|_{L^2(\Omega_s^\varepsilon)}^2+\|f^\varepsilon\|_{[W^{1,p}(\Omega_s^\varepsilon)]^\ast}^{p'}\Bigr)\,ds
  \end{align*}
  with any $\gamma>0$, where $c_\gamma>0$ is a constant depending only on $\gamma$.
  Moreover, since $p>2$, it follows from \eqref{E:L2Lp_MTD}, \eqref{E:W1p_MTD}, and Young's inequality that
  \begin{align*}
    \|u^\varepsilon\|_{W^{1,p}(\Omega_s^\varepsilon)}^p+\|\nabla u^\varepsilon\|_{L^2(\Omega_s^\varepsilon)}^2 &\leq c_T\Bigl(\|\nabla u^\varepsilon\|_{L^p(\Omega_s^\varepsilon)}^p+\varepsilon\Bigr)+c\varepsilon^{1-2/p}\|\nabla u^\varepsilon\|_{L^p(\Omega_t^\varepsilon)}^2 \\
    &\leq c_T\Bigl(\|\nabla u^\varepsilon\|_{L^p(\Omega_s^\varepsilon)}^p+\varepsilon\Bigr).
  \end{align*}
  Thus, taking $\gamma:=1/2c_T$, we deduce from the above inequalities that
  \begin{align*}
    &\frac{1}{2}\|u^\varepsilon(t)\|_{L^2(\Omega_t^\varepsilon)}^2+\frac{1}{2}\int_0^t\|\nabla u^\varepsilon\|_{L^p(\Omega_s^\varepsilon)}^p\,ds \\
    &\qquad \leq \frac{1}{2}\|u_0^\varepsilon\|_{L^2(\Omega_0^\varepsilon)}^2+c_T\int_0^t\Bigl(\|u^\varepsilon\|_{L^2(\Omega_s^\varepsilon)}^2+\|f^\varepsilon\|_{[W^{1,p}(\Omega_s^\varepsilon)]^\ast}^{p'}+\varepsilon\Bigr)\,ds,
  \end{align*}
  and we multiply both sides by two and apply Gronwall's inequality to find that
  \begin{align*}
    \|u^\varepsilon(t)\|_{L^2(\Omega_t^\varepsilon)}^2+\int_0^t\|\nabla u^\varepsilon\|_{L^p(\Omega_s^\varepsilon)}^p\,ds &\leq c_T\left\{\|u_0^\varepsilon\|_{L^2(\Omega_0^\varepsilon)}^2+\int_0^t\Bigl(\|f^\varepsilon\|_{[W^{1,p}(\Omega_s^\varepsilon)]^\ast}^{p'}+\varepsilon\Bigr)\,ds\right\}.
  \end{align*}
  By this inequality, \eqref{E:Data_MTD}, and \eqref{E:W1p_MTD}, we obtain \eqref{E:Ener_MTD}.
\end{proof}

\begin{proposition} \label{P:L2H1_MTD}
  Under the assumptions of Proposition \ref{P:Mass_MTD}, we have
  \begin{align} \label{E:L2H1_MTD}
    \int_0^T\|u^\varepsilon\|_{H^1(\Omega_t^\varepsilon)}^2\,dt \leq c_T\varepsilon.
  \end{align}
\end{proposition}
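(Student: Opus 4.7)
The proof should be a short corollary of Proposition \ref{P:Ener_MTD} combined with the thin-domain embedding \eqref{E:L2Lp_MTD}. The plan is to split $\|u^\varepsilon\|_{H^1(\Omega_t^\varepsilon)}^2 = \|u^\varepsilon\|_{L^2(\Omega_t^\varepsilon)}^2 + \|\nabla u^\varepsilon\|_{L^2(\Omega_t^\varepsilon)}^2$ and handle each piece separately using that $p>2$ and $|\Omega_t^\varepsilon|\sim\varepsilon$.

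For the $L^2$ part, Proposition \ref{P:Ener_MTD} already supplies the pointwise-in-time bound $\|u^\varepsilon(t)\|_{L^2(\Omega_t^\varepsilon)}^2\leq c_T\varepsilon$, so integrating in $t\in(0,T)$ immediately yields the desired estimate (up to a constant depending on $T$).

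For the gradient part, I would apply \eqref{E:L2Lp_MTD} componentwise to $\nabla u^\varepsilon$ to obtain
\begin{align*}
  \|\nabla u^\varepsilon\|_{L^2(\Omega_t^\varepsilon)}^2 \leq c\,\varepsilon^{1-2/p}\|\nabla u^\varepsilon\|_{L^p(\Omega_t^\varepsilon)}^2
\end{align*}
for each $t\in[0,T]$. Integrating in time and invoking Hölder's inequality in $t$ with exponents $p/2$ and $p/(p-2)$ (valid since $p>2$), this produces
\begin{align*}
  \int_0^T\|\nabla u^\varepsilon\|_{L^2(\Omega_t^\varepsilon)}^2\,dt \leq c\,\varepsilon^{1-2/p}\,T^{1-2/p}\left(\int_0^T\|\nabla u^\varepsilon\|_{L^p(\Omega_t^\varepsilon)}^p\,dt\right)^{2/p}.
\end{align*}
Plugging the bound $\int_0^T\|\nabla u^\varepsilon\|_{L^p(\Omega_t^\varepsilon)}^p\,dt\leq c_T\varepsilon$ from Proposition \ref{P:Ener_MTD} into the right-hand side gives the factor $\varepsilon^{1-2/p}\cdot\varepsilon^{2/p}=\varepsilon$, and the claim follows.

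There is no genuine obstacle here; the only subtlety is matching the exponents so that the $\varepsilon^{1-2/p}$ coming from the thin-domain $L^p\hookrightarrow L^2$ embedding is exactly compensated by the $\varepsilon^{2/p}$ coming from raising the small quantity $\int_0^T\|\nabla u^\varepsilon\|_{L^p}^p\,dt\lesssim\varepsilon$ to the power $2/p$. This is also why the statement requires $p>2$: the time-integrability exponent $p/2$ in Hölder is strictly larger than one.
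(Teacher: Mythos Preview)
Your proof is correct and follows essentially the same approach as the paper: thin-domain embedding \eqref{E:L2Lp_MTD}, H\"older in time with exponents $p/2$ and $p/(p-2)$, then \eqref{E:Ener_MTD}. The only cosmetic difference is that the paper applies \eqref{E:L2Lp_MTD} to the full $W^{1,p}$-norm at once rather than splitting off $\|u^\varepsilon\|_{L^2}^2$ and using the sup-in-time bound for that piece.
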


\begin{proof}
  We use \eqref{E:L2Lp_MTD} (to $u^\varepsilon$ and $\nabla u^\varepsilon$) and H\"{o}lder's inequality to get
  \begin{align*}
    \int_0^T\|u^\varepsilon\|_{H^1(\Omega_t^\varepsilon)}^2\,dt &\leq c\varepsilon^{1-2/p}\int_0^T\|u^\varepsilon\|_{W^{1,p}(\Omega_t^\varepsilon)}^2\,dt \\
    &\leq c\varepsilon^{1-2/p}T^{1-2/p}\left(\int_0^T\|u^\varepsilon\|_{W^{1,p}(\Omega_t^\varepsilon)}^p\,dt\right)^{2/p}.
  \end{align*}
  Applying \eqref{E:Ener_MTD} with $t=T$ to the second line, we obtain \eqref{E:L2H1_MTD}.
\end{proof}

\begin{proposition} \label{P:dudt_MTD}
  Under the assumptions of Proposition \ref{P:Mass_MTD}, we have
  \begin{align} \label{E:dudt_MTD}
    \|\partial_\varepsilon^\bullet u^\varepsilon\|_{L_{[W^{1,p}]^\ast}^{p'}(Q_T^\varepsilon)} \leq c_T\varepsilon^{1/p'}.
  \end{align}
\end{proposition}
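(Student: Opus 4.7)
The plan is to estimate the dual norm $\|\partial_\varepsilon^\bullet u^\varepsilon\|_{L_{[W^{1,p}]^\ast}^{p'}(Q_T^\varepsilon)}$ by testing the weak formulation \eqref{E:WeFo_MTD} against an arbitrary $\psi\in L_{W^{1,p}}^p(Q_T^\varepsilon)$ and bounding each of the three resulting terms by $c_T\varepsilon^{1/p'}\|\psi\|_{L_{W^{1,p}}^p(Q_T^\varepsilon)}$. Since $W^{1,p}(\Omega_t^\varepsilon)$ is reflexive, we have $[L_{W^{1,p}}^p(Q_T^\varepsilon)]^\ast=L_{[W^{1,p}]^\ast}^{p'}(Q_T^\varepsilon)$, so taking the supremum over $\psi$ of unit norm will yield \eqref{E:dudt_MTD}.

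For the $p$-Laplacian term, H\"{o}lder's inequality in space followed by H\"{o}lder in time with conjugate exponents $p'$ and $p$ (using $(p-1)p'=p$) gives
\begin{align*}
  \left|\int_0^T(|\nabla u^\varepsilon|^{p-2}\nabla u^\varepsilon,\nabla\psi)_{L^2(\Omega_t^\varepsilon)}\,dt\right| \leq \left(\int_0^T\|\nabla u^\varepsilon\|_{L^p(\Omega_t^\varepsilon)}^p\,dt\right)^{1/p'}\|\nabla\psi\|_{L_{L^p}^p(Q_T^\varepsilon)},
\end{align*}
and \eqref{E:Ener_MTD} with $t=T$ bounds the first factor by $(c_T\varepsilon)^{1/p'}$. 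For the source term, H\"{o}lder in time together with \eqref{E:Data_MTD} gives $\bigl|\int_0^T\langle f^\varepsilon,\psi\rangle_{W^{1,p}(\Omega_t^\varepsilon)}\,dt\bigr|\leq \|f^\varepsilon\|_{L_{[W^{1,p}]^\ast}^{p'}(Q_T^\varepsilon)}\|\psi\|_{L_{W^{1,p}}^p(Q_T^\varepsilon)}\leq c\varepsilon^{1/p'}\|\psi\|_{L_{W^{1,p}}^p(Q_T^\varepsilon)}$.

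The transport/reaction term requires a little more care, and is where the main (mild) obstacle sits: the test function comes with a $W^{1,p}$-norm, while $u^\varepsilon$ is only controlled in $L^2$. Using \eqref{E:BdV_MTD}, the quantity $|(u^\varepsilon,\mathbf{v}^\varepsilon\cdot\nabla\psi+\psi\,\mathrm{div}\,\mathbf{v}^\varepsilon)_{L^2(\Omega_t^\varepsilon)}|$ is bounded by $c\|u^\varepsilon\|_{L^2(\Omega_t^\varepsilon)}\|\psi\|_{H^1(\Omega_t^\varepsilon)}$. Since $p>2$, applying \eqref{E:L2Lp_MTD} to $\psi$ and $\nabla\psi$ upgrades this to $c\varepsilon^{1/2-1/p}\|u^\varepsilon\|_{L^2(\Omega_t^\varepsilon)}\|\psi\|_{W^{1,p}(\Omega_t^\varepsilon)}$. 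H\"{o}lder in time with exponents $p'$ and $p$, combined with the pointwise bound $\|u^\varepsilon(t)\|_{L^2(\Omega_t^\varepsilon)}^2\leq c_T\varepsilon$ from \eqref{E:Ener_MTD} (so that $\int_0^T\|u^\varepsilon\|_{L^2(\Omega_t^\varepsilon)}^{p'}\,dt\leq c_T\varepsilon^{p'/2}$), yields the total
\begin{align*}
  c_T\,\varepsilon^{1/2-1/p}\cdot\varepsilon^{1/2}\,\|\psi\|_{L_{W^{1,p}}^p(Q_T^\varepsilon)} = c_T\,\varepsilon^{1-1/p}\,\|\psi\|_{L_{W^{1,p}}^p(Q_T^\varepsilon)} = c_T\,\varepsilon^{1/p'}\,\|\psi\|_{L_{W^{1,p}}^p(Q_T^\varepsilon)},
\end{align*}
where the final identity uses $1-1/p=1/p'$. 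The powers of $\varepsilon$ balance exactly, which is the only nontrivial point in the argument. Summing the three bounds and taking the supremum over $\psi$ of unit norm in $L_{W^{1,p}}^p(Q_T^\varepsilon)$ yields \eqref{E:dudt_MTD}.
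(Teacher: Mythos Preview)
Your proof is correct and follows essentially the same approach as the paper: test the weak form \eqref{E:WeFo_MTD} against an arbitrary $\psi$, bound the three terms using H\"older's inequality together with \eqref{E:BdV_MTD}, \eqref{E:L2Lp_MTD}, \eqref{E:Data_MTD}, and \eqref{E:Ener_MTD}, and note that the exponents combine via $1-1/p=1/p'$. The only cosmetic difference is that for the transport term the paper applies H\"older in time with exponents $(2,2)$ and then upgrades $\|\psi\|_{L_{H^1}^2}$ to $\|\psi\|_{L_{W^{1,p}}^p}$, whereas you first upgrade $\|\psi(t)\|_{H^1}$ pointwise and then apply H\"older in time with exponents $(p',p)$; the resulting power of $\varepsilon$ is identical.
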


\begin{proof}
  Let $\psi\in L_{W^{1,p}}^p(Q_T^\varepsilon)$.
  We apply \eqref{E:BdV_MTD} and H\"{o}lder's inequality to \eqref{E:WeFo_MTD} to get
  \begin{align*}
    \left|\int_0^T\langle\partial_\varepsilon^\bullet u^\varepsilon,\psi\rangle_{W^{1,p}(\Omega_t^\varepsilon)}\,dt\right| &\leq \|\nabla u^\varepsilon\|_{L_{L^p}^p(Q_T^\varepsilon)}^{p-1}\|\nabla\psi\|_{L_{L^p}^p(Q_T^\varepsilon)} \\
    &\qquad +c\|u^\varepsilon\|_{L_{L^2}^2(Q_T^\varepsilon)}\|\psi\|_{L_{H^1}^2(Q_T^\varepsilon)}\\
    &\qquad +\|f^\varepsilon\|_{L_{[W^{1,p}]^\ast}^{p'}(Q_T^\varepsilon)}\|\psi\|_{L_{W^{1,p}}^p(Q_T^\varepsilon)}.
  \end{align*}
  To the right-hand side, we further use \eqref{E:Data_MTD}, \eqref{E:Ener_MTD}, and
  \begin{align*}
    \|\psi\|_{L_{H^1}^2(Q_T^\varepsilon)} \leq c\varepsilon^{1/2-1/p}\|\psi\|_{L_{W^{1,p}}^2(Q_T^\varepsilon)} \leq c\varepsilon^{1/2-1/p}T^{1/2-1/p}\|\psi\|_{L_{W^{1,p}}^p(Q_T^\varepsilon)}
  \end{align*}
  by \eqref{E:L2Lp_MTD} and H\"{o}lder's inequality in time.
  Then, since $1-1/p=1/p'$, we get
  \begin{align*}
    \left|\int_0^T\langle\partial_\varepsilon^\bullet u^\varepsilon,\psi\rangle_{W^{1,p}(\Omega_t^\varepsilon)}\,dt\right| \leq c_T\varepsilon^{1/p'}\|\psi\|_{L_{W^{1,p}}^p(Q_T^\varepsilon)}.
  \end{align*}
  Hence, the inequality \eqref{E:dudt_MTD} follows.
\end{proof}

\section{Weighted average in the thin direction} \label{S:WeAve}
In this section, we study the weighted average of a function on $\Omega_t^\varepsilon$.
Also, we define the constant extension of a functional on $\Gamma_t$ by using the weighted average.

\subsection{Definition and basic properties} \label{SS:WA_Def}
Let $t\in[0,T]$.
For a function $\varphi$ on $\Omega_t^\varepsilon$, we define the weighted average of $\varphi$ in the thin direction (or the normal direction of $\Gamma_t$) by
\begin{align*}
  \mathcal{M}_\varepsilon\varphi(y) := \frac{1}{\varepsilon g(y,t)}\int_{\varepsilon g_0(y,t)}^{\varepsilon g_1(y,t)}\varphi\bigl(y+r\bm{\nu}(y,t)\bigr)J(y,t,r)\,dr, \quad y\in\Gamma_t,
\end{align*}
where $J$ is the Jacobian given by \eqref{E:Def_J}.
This definition and \eqref{E:CoV_MTD} imply
\begin{align} \label{E:Ave_Pair}
  \int_{\Omega_t^\varepsilon}\varphi(x)\bar{\eta}(x)\,dx = \varepsilon\int_{\Gamma_t}g(y,t)\mathcal{M}_\varepsilon\varphi(y)\eta(y)\,d\mathcal{H}^{n-1}(y)
\end{align}
for a function $\eta$ on $\Gamma_t$ and its constant extension $\bar{\eta}$ in the normal direction of $\Gamma_t$.

Let us give properties of the weighted average.
In what follows, we use the notation \eqref{E:Pull_MTD} and suppress the variables $y$ and $t$.
For example, we write
\begin{align*}
  \mathcal{M}_\varepsilon\varphi = \frac{1}{\varepsilon g}\int_{\varepsilon g_0}^{\varepsilon g_1}\varphi^\sharp(r)J(r)\,dr.
\end{align*}
Using \eqref{E:G_Bdd}, \eqref{E:J_Poly}, and \eqref{E:J_Bdd}, we can get the following two results as in the case \cite{Miu25_GL,Miu25pre_CH} of fixed surfaces and thin domains.
We omit details here.

\begin{lemma} \label{L:Ave_Lq}
  Let $q\in[1,\infty)$ and $\varphi\in L^q(\Omega_t^\varepsilon)$.
  Then,
  \begin{align} \label{E:Ave_Lq}
    \|\mathcal{M}_\varepsilon\varphi\|_{L^q(\Gamma_t)} \leq c\varepsilon^{-1/q}\|\varphi\|_{L^q(\Omega_t^\varepsilon)}.
  \end{align}
\end{lemma}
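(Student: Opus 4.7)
The plan is a direct estimate: first bound $|\mathcal{M}_\varepsilon\varphi(y)|$ pointwise in $y\in\Gamma_t$ by an inner $L^q$-norm of $\varphi^\sharp$ on the fiber over $y$ using Hölder's inequality, and then integrate in $y$ and recognize the resulting integral as $\|\varphi\|_{L^q(\Omega_t^\varepsilon)}^q$ (up to a constant) via the change of variables formula \eqref{E:Lq_MTD}.

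More concretely, I would start by using the pointwise bounds $J(y,t,r)\leq c$ from \eqref{E:J_Bdd} and $g(y,t)\geq c^{-1}$ from \eqref{E:G_Bdd} to get
\begin{equation*}
  |\mathcal{M}_\varepsilon\varphi(y)| \leq \frac{c}{\varepsilon}\int_{\varepsilon g_0(y,t)}^{\varepsilon g_1(y,t)}|\varphi^\sharp(y,r)|\,dr.
\end{equation*}
Then Hölder's inequality with exponents $q$ and $q'=q/(q-1)$ applied to the $r$-integral, together with $\varepsilon g_1-\varepsilon g_0=\varepsilon g\leq c\varepsilon$, yields
\begin{equation*}
  |\mathcal{M}_\varepsilon\varphi(y)| \leq \frac{c}{\varepsilon}(\varepsilon g)^{1/q'}\left(\int_{\varepsilon g_0}^{\varepsilon g_1}|\varphi^\sharp(y,r)|^q\,dr\right)^{1/q} \leq c\varepsilon^{-1/q}\left(\int_{\varepsilon g_0}^{\varepsilon g_1}|\varphi^\sharp(y,r)|^q\,dr\right)^{1/q},
\end{equation*}
where I used $1-1/q'=1/q$.

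Raising this to the $q$-th power, integrating over $\Gamma_t$ with respect to $\mathcal{H}^{n-1}$, and applying the right-hand inequality in \eqref{E:Lq_MTD} delivers
\begin{equation*}
  \|\mathcal{M}_\varepsilon\varphi\|_{L^q(\Gamma_t)}^q \leq c\varepsilon^{-1}\int_{\Gamma_t}\int_{\varepsilon g_0(y,t)}^{\varepsilon g_1(y,t)}|\varphi^\sharp(y,r)|^q\,dr\,d\mathcal{H}^{n-1}(y) \leq c\varepsilon^{-1}\|\varphi\|_{L^q(\Omega_t^\varepsilon)}^q,
\end{equation*}
from which \eqref{E:Ave_Lq} follows by taking $q$-th roots. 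There is no serious obstacle here: the only items to keep track of are the harmless constants from $g$ and $J$ and the exact $\varepsilon$-power coming from the Hölder factor $(\varepsilon g)^{1/q'}$ divided by $\varepsilon$, which is exactly where the $\varepsilon^{-1/q}$ scaling emerges.
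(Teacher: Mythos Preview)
Your proof is correct and is exactly the standard direct argument: pointwise bound via H\"older on the fiber, then integrate and use the change of variables estimate \eqref{E:Lq_MTD}. The paper itself does not spell out a proof but defers to \cite[Lemma~5.1]{Miu25_GL} and \cite[Lemma~4.1]{Miu25pre_CH}, where the same computation is carried out; your write-up is precisely what those references do.
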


\begin{proof}
  We refer to \cite[Lemma 5.1]{Miu25_GL} and \cite[Lemma 4.1]{Miu25pre_CH} for the proof.
\end{proof}

\begin{lemma} \label{L:AvDf_Lq}
  Let $q\in[1,\infty)$ and $\varphi\in W^{1,q}(\Omega_t^\varepsilon)$.
  Then,
  \begin{align} \label{E:AvDf_Lq}
    \Bigl\|\varphi-\overline{\mathcal{M}_\varepsilon\varphi}\Bigr\|_{L^q(\Omega_t^\varepsilon)} \leq c\varepsilon\|\varphi\|_{W^{1,q}(\Omega_t^\varepsilon)}.
  \end{align}
\end{lemma}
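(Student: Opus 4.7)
The plan is to control the difference pointwise on $\Omega_t^\varepsilon$ by splitting it into a ``variation in the normal direction'' piece (bounded by $\nabla\varphi$) and a ``Jacobian defect'' piece (bounded by $\varphi$ itself). Fix $x=y+r\bm{\nu}(y,t)\in\Omega_t^\varepsilon$ with $y=\pi(x,t)$ and $r=d(x,t)$, and set
\begin{align*}
  A(y,t):=\int_{\varepsilon g_0(y,t)}^{\varepsilon g_1(y,t)}J(y,t,s)\,ds.
\end{align*}
Using the notation \eqref{E:Pull_MTD} and noting that $\overline{\mathcal{M}_\varepsilon\varphi}(x)$ depends only on $y$, I would add and subtract $\varphi^\sharp(y,r)\,A(y,t)/(\varepsilon g(y,t))$ to write
\begin{align*}
  \varphi(x)-\overline{\mathcal{M}_\varepsilon\varphi}(x) &= \frac{1}{\varepsilon g}\int_{\varepsilon g_0}^{\varepsilon g_1}\bigl[\varphi^\sharp(y,r)-\varphi^\sharp(y,s)\bigr]J(y,t,s)\,ds \\
  &\quad + \varphi^\sharp(y,r)\Bigl(1-\tfrac{A(y,t)}{\varepsilon g(y,t)}\Bigr).
\end{align*}

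For the first piece, I use $\varphi^\sharp(y,r)-\varphi^\sharp(y,s)=\int_s^r\bm{\nu}(y,t)\cdot\nabla\varphi(y+\rho\bm{\nu}(y,t))\,d\rho$, together with $|r-s|\leq c\varepsilon$, $|\bm{\nu}|=1$, and $J\leq c$ from \eqref{E:J_Bdd}, to obtain an $r$-independent pointwise bound $c\int_{\varepsilon g_0}^{\varepsilon g_1}|\nabla\varphi|^\sharp(y,\rho)\,d\rho$. H\"older on the $\rho$-integral supplies a factor $\varepsilon^{q-1}$, and then integrating in $r$ over an interval of length $\leq c\varepsilon$ via \eqref{E:CoV_MTD} yields the contribution $c\varepsilon^q\|\nabla\varphi\|_{L^q(\Omega_t^\varepsilon)}^q$. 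For the second piece, \eqref{E:J_Poly} with $J_0\equiv1$ and \eqref{E:G_Bdd} give
\begin{align*}
  A(y,t)-\varepsilon g(y,t) = \sum_{k=1}^{n} J_k(y,t)\,\frac{(\varepsilon g_1)^{k+1}-(\varepsilon g_0)^{k+1}}{k+1} = O(\varepsilon^2),
\end{align*}
so $|1-A(y,t)/(\varepsilon g(y,t))|\leq c\varepsilon$, and the pointwise bound $c\varepsilon|\varphi(x)|$ integrates to $c\varepsilon^q\|\varphi\|_{L^q(\Omega_t^\varepsilon)}^q$. Taking $q$-th roots and summing yields \eqref{E:AvDf_Lq}.

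The substantive point, easy to miss, is that the naive expectation ``error $\lesssim\varepsilon\|\nabla\varphi\|_{L^q}$'' is false: already for $\varphi\equiv1$ one has $\overline{\mathcal{M}_\varepsilon 1}(x)=A(y,t)/(\varepsilon g(y,t))\neq 1$ because of the weight $J$, so the $\|\varphi\|_{L^q}$ term in the $W^{1,q}$ norm genuinely appears on the right-hand side. Keeping its coefficient $O(\varepsilon)$ rather than $O(1)$ is exactly what the identity $J_0\equiv 1$ in \eqref{E:J_Poly} provides; aside from this, everything is routine bookkeeping with \eqref{E:CoV_MTD}, \eqref{E:J_Bdd}, and \eqref{E:G_Bdd}.
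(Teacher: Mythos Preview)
Your proof is correct and follows the natural approach: the decomposition into a normal-variation piece and a Jacobian-defect piece, together with the observation that $J_0\equiv 1$ forces the latter to be $O(\varepsilon)$, is exactly what is needed. The paper itself does not prove this lemma in-line but refers to \cite[Lemma~5.2]{Miu25_GL} and \cite[Lemma~4.2]{Miu25pre_CH}, where the argument is essentially the same as yours; your closing remark about why the full $W^{1,q}$ norm is genuinely required (via the example $\varphi\equiv 1$) is a nice touch that makes the role of the weight $J$ explicit.
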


\begin{proof}
  We refer to \cite[Lemma 5.2]{Miu25_GL} and \cite[Lemma 4.2]{Miu25pre_CH} for the proof.
\end{proof}

Let $\eta$ be a function on $\Gamma_t$ and $\bar{\eta}$ be its constant extension.
We write
\begin{align*}
  d^0\bar{\eta} := \bar{\eta}, \quad d^1\bar{\eta} = d\bar{\eta} := d(\cdot,t)\bar{\eta}, \quad d^\ell\bar{\eta} := \{d(\cdot,t)\}^\ell\bar{\eta} \quad\text{on}\quad \overline{\mathcal{N}_t}, \quad \ell\in\mathbb{Z}_{\geq0},
\end{align*}
where $\overline{\mathcal{N}_t}$ is the tubular neighborhood of $\Gamma_t$ (see Lemma \ref{L:MS_Tubu}).
Let us compute the weighted average of $d^\ell\bar{\eta}$.
We use the functions $J_0\equiv1$ and $J_1,\dots,J_n$ on $\overline{S_T}$ given in \eqref{E:J_Poly}.

\begin{lemma} \label{L:Ave_CE}
  Let $\eta$ be a function on $\Gamma_t$.
  Then,
  \begin{align} \label{E:Ave_CE}
    \mathcal{M}_\varepsilon(d^\ell\bar{\eta}) = \eta\sum_{k=0}^n\frac{\varepsilon^{k+\ell}J_k}{k+\ell+1}\sum_{m=0}^{k+\ell}g_1^{k+\ell-m}g_0^m \quad\text{on}\quad \Gamma_t, \quad \ell\in\mathbb{Z}_{\geq0}.
  \end{align}
  In particular, if $\eta\in W^{1,q}(\Gamma_t)$ with $q\in[1,\infty)$, then
  \begin{align} \label{E:AvCE_W1q}
    \|\mathcal{M}_\varepsilon\bar{\eta}-\eta\|_{W^{1,q}(\Gamma_t)} \leq c\varepsilon\|\eta\|_{W^{1,q}(\Gamma_t)}, \quad \|\mathcal{M}_\varepsilon(d\bar{\eta})\|_{W^{1,q}(\Gamma_t)} \leq c\varepsilon\|\eta\|_{W^{1,q}(\Gamma_t)}.
  \end{align}
\end{lemma}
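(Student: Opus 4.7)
The proof is essentially a direct computation, so my plan is to unwind the definition of $\mathcal{M}_\varepsilon$, exploit the two elementary identities $d(y+r\bm{\nu}(y,t),t)=r$ (from \eqref{E:Fermi}) and $\bar{\eta}(y+r\bm{\nu}(y,t))=\eta(y)$, and then evaluate an explicit polynomial integral in $r$. Specifically, since $\bar\eta$ is constant along the normal ray, the sharp-pullback \eqref{E:Pull_MTD} satisfies $(d^\ell\bar\eta)^\sharp(y,r)=r^\ell\eta(y)$, and substituting this together with the expansion \eqref{E:J_Poly} of $J$ into the definition of $\mathcal{M}_\varepsilon$ gives
\begin{align*}
  \mathcal{M}_\varepsilon(d^\ell\bar\eta)(y) = \frac{\eta(y)}{\varepsilon g(y,t)}\sum_{k=0}^n J_k(y,t)\int_{\varepsilon g_0(y,t)}^{\varepsilon g_1(y,t)}r^{k+\ell}\,dr.
\end{align*}

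Next I would evaluate each integral as $\frac{1}{k+\ell+1}\bigl[(\varepsilon g_1)^{k+\ell+1}-(\varepsilon g_0)^{k+\ell+1}\bigr]$ and apply the telescoping factorization $a^{N+1}-b^{N+1}=(a-b)\sum_{m=0}^{N}a^{N-m}b^m$ with $N=k+\ell$, $a=g_1$, $b=g_0$. Since $g=g_1-g_0$, one factor cancels the $g$ in the denominator and one $\varepsilon$ cancels against the $\varepsilon$ in the denominator, leaving the asserted formula \eqref{E:Ave_CE}. There is no real obstacle here beyond keeping the bookkeeping of exponents straight.

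For the $W^{1,q}$-estimates I would read off the $\varepsilon$-orders directly from \eqref{E:Ave_CE}. When $\ell=0$, the $k=0$ summand is $\eta\cdot J_0\cdot 1=\eta$ (since $J_0\equiv 1$ and the inner sum has only the $m=0$ term); all remaining summands carry a factor $\varepsilon^k$ with $k\geq 1$, so one has
\begin{align*}
  \mathcal{M}_\varepsilon\bar\eta-\eta = \eta\sum_{k=1}^n\frac{\varepsilon^{k}J_k}{k+1}\sum_{m=0}^{k}g_1^{k-m}g_0^m.
\end{align*}
When $\ell=1$ every term carries a factor $\varepsilon^{k+1}$ with $k\geq 0$, so the whole expression is already of order $\varepsilon$. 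In both cases the coefficient multiplying $\eta$ is a polynomial in $\varepsilon$ whose coefficients are $C^\infty$ functions on $\overline{S_T}$ (built from $J_k$, $g_0$, $g_1$, and $g$, using \eqref{E:G_Bdd} to control $1/g$); thus these coefficients and their tangential derivatives are bounded on $\Gamma_t$ uniformly in $t$ and $\varepsilon\in(0,\varepsilon_0]$. Applying the product rule in $\nabla_\Gamma$ and factoring out $\varepsilon$ then yields both bounds in \eqref{E:AvCE_W1q}. The only minor care needed is to keep the $\varepsilon$-powers nonnegative for $\varepsilon\leq 1$, which is already assumed.
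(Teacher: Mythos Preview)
Your proposal is correct and follows essentially the same approach as the paper: both compute $\mathcal{M}_\varepsilon(d^\ell\bar\eta)$ by using $(d^\ell\bar\eta)^\sharp(r)=r^\ell\eta$ and the polynomial expansion \eqref{E:J_Poly} of $J$, integrate $r^{k+\ell}$ explicitly, and then factor $(\varepsilon g_1)^{k+\ell+1}-(\varepsilon g_0)^{k+\ell+1}$ to cancel the $\varepsilon g$ in the denominator. The paper's deduction of \eqref{E:AvCE_W1q} from \eqref{E:Ave_CE} is stated more tersely (invoking only $J_0\equiv 1$ and smoothness of $g_0,g_1,J_k$), but your more detailed reading of the $\varepsilon$-orders and the product-rule argument is exactly what is behind it.
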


\begin{proof}
  Since $\bar{\eta}^\sharp(r)=\eta$ is independent of $r$ and $d^\sharp(r)=r$, we see by \eqref{E:J_Poly} that
  \begin{align*}
    \mathcal{M}_\varepsilon(d^\ell\bar{\eta}) &= \frac{\eta}{\varepsilon g}\int_{\varepsilon g_0}^{\varepsilon g_1}r^\ell J(r)\,dr = \frac{\eta}{\varepsilon g}\sum_{k=0}^n\frac{J_k}{k+\ell+1}\{(\varepsilon g_1)^{k+\ell+1}-(\varepsilon g_0)^{k+\ell+1}\},
  \end{align*}
  and the right-hand side is equal to that of \eqref{E:Ave_CE}.
  Also, since $J_0\equiv1$, and since $g_0$, $g_1$, and $J_k$ are smooth on $\overline{S_T}$, we have \eqref{E:AvCE_W1q} by \eqref{E:Ave_CE}.
\end{proof}

Next, we give some results on the tangential gradient of the weighted average.

\begin{lemma} \label{L:Ave_TDr}
  Let $\varphi\in C(\overline{\Omega_t^\varepsilon})\cap C^1(\Omega_t^\varepsilon)$.
  Then,
  \begin{align} \label{E:Ave_TDr}
    \nabla_\Gamma\mathcal{M}_\varepsilon\varphi = \mathcal{M}_\varepsilon(\mathbf{B}\nabla\varphi)+\mathcal{M}_\varepsilon[(\partial_\nu\varphi+\varphi f_J)\mathbf{b}_\varepsilon]+\mathcal{M}_\varepsilon(\varphi\mathbf{b}_J) \quad\text{on}\quad \Gamma_t.
  \end{align}
  Here, $\partial_\nu\varphi:=\bar{\bm{\nu}}\cdot\nabla\varphi$ is the derivative of $\varphi$ in the direction of $\bar{\bm{\nu}}$.
  Also,
  \begin{align*}
    \mathbf{B}(x,t) \in \mathbb{R}^{n\times n}, \quad \mathbf{b}_\varepsilon(x,t)\in\mathbb{R}^n, \quad \mathbf{b}_J(x,t) \in \mathbb{R}^n, \quad f_J(x,t) \in \mathbb{R}
  \end{align*}
  for $(x,t)\in\bigcup_{s\in[0,T]}\overline{\mathcal{N}_s}\times\{s\}$ are given by
  \begin{align*}
    \mathbf{B}^\sharp(y,t,r) &:= \mathbf{P}(y,t)-r\mathbf{W}(y,t), \\
    \mathbf{b}_\varepsilon^\sharp(y,t,r) &:= \frac{1}{g(y,t)}\Bigl\{\bigl(r-\varepsilon g_0(y,t)\bigr)\nabla_\Gamma g_y(y,t)+\bigl(\varepsilon g_1(y,t)-r\bigr)\nabla_\Gamma g_0(y,t)\Bigr\}, \\
    \mathbf{b}_J^\sharp(y,t,r) &:= \frac{\nabla_\Gamma J(y,t,r)}{J(y,t,r)}, \quad f_J^\sharp(y,t,r) := \frac{\partial_rJ(y,t,r)}{J(y,t,r)}
  \end{align*}
  under the notation \eqref{E:Pull_MTD} with $(y,t)\in\overline{S_T}$ and $r\in[-\delta,\delta]$.
\end{lemma}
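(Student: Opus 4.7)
The plan is to apply the tangential Leibniz rule to the integral defining $\mathcal{M}_\varepsilon\varphi$, combine it with the chain rule for differentiating $\varphi^\sharp(y,t,r) = \varphi(y+r\bm{\nu}(y,t))$ in the tangential direction, and then reorganize the boundary contributions coming from the variable limits (together with the term produced by differentiating the prefactor $1/g$) via an integration by parts in $r$ so that they form the middle term of \eqref{E:Ave_TDr}.

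First I would write $\mathcal{M}_\varepsilon\varphi = (\varepsilon g)^{-1} I$ with $I(y,t) := \int_{\varepsilon g_0}^{\varepsilon g_1} \varphi^\sharp J\,dr$ and combine the product rule with the Leibniz formula for an integral with variable limits to obtain
\begin{align*}
  \nabla_\Gamma \mathcal{M}_\varepsilon\varphi = \frac{1}{\varepsilon g}\int_{\varepsilon g_0}^{\varepsilon g_1} \nabla_\Gamma(\varphi^\sharp J)\,dr + \frac{1}{g}\Bigl\{[\varphi^\sharp J]_{r=\varepsilon g_1}\nabla_\Gamma g_1 - [\varphi^\sharp J]_{r=\varepsilon g_0}\nabla_\Gamma g_0\Bigr\} - \frac{\nabla_\Gamma g}{\varepsilon g^2}\,I.
\end{align*}
For the interior term, since the tangential gradient of the map $y\mapsto y+r\bm{\nu}(y,t)$ equals $\mathbf{P}-r\mathbf{W} = \mathbf{B}^\sharp$, the chain rule yields $\nabla_\Gamma \varphi^\sharp = \mathbf{B}^\sharp \nabla\varphi$ evaluated at $y+r\bm{\nu}(y,t)$, and hence $\nabla_\Gamma(\varphi^\sharp J) = J\mathbf{B}^\sharp\nabla\varphi + \varphi^\sharp J\mathbf{b}_J^\sharp$ by the very definition of $\mathbf{b}_J$. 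Dividing by $\varepsilon g$ and integrating recovers exactly the first and third summands on the right-hand side of \eqref{E:Ave_TDr}.

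The main remaining task is to show that the two Leibniz boundary terms together with $-\nabla_\Gamma g/(\varepsilon g^2)\,I$ equal the middle term $\mathcal{M}_\varepsilon[(\partial_\nu\varphi + \varphi f_J)\mathbf{b}_\varepsilon]$. The key observation is that $(\partial_\nu\varphi)^\sharp = \partial_r\varphi^\sharp$ and $f_J^\sharp J = \partial_r J$, so $[(\partial_\nu\varphi)^\sharp + \varphi^\sharp f_J^\sharp]\,J = \partial_r(\varphi^\sharp J)$. Integrating by parts in $r$ against $\mathbf{b}_\varepsilon^\sharp$, and using the two immediate identities $\partial_r\mathbf{b}_\varepsilon^\sharp = \nabla_\Gamma g/g$ and $\mathbf{b}_\varepsilon^\sharp|_{r=\varepsilon g_i} = \varepsilon\nabla_\Gamma g_i$ that follow from the definition of $\mathbf{b}_\varepsilon^\sharp$, one checks that the resulting expression matches precisely the Leibniz boundary contributions plus the $-\nabla_\Gamma g/(\varepsilon g^2)\,I$ correction, completing the proof.

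The main subtlety, and the reason the formula takes its particular shape, is keeping careful track of all the places $y$ enters: the limits $\varepsilon g_i(y,t)$, the prefactor $1/g(y,t)$, the Jacobian $J(y,t,r)$, the normal vector $\bm{\nu}(y,t)$ inside the argument of $\varphi$, and the ambient evaluation point $y+r\bm{\nu}(y,t)$ of $\nabla\varphi$. The seemingly ad hoc combination $\partial_\nu\varphi + \varphi f_J$ in the middle term is engineered precisely so that it coincides with $\partial_r(\varphi^\sharp J)/J$, which is exactly what is needed for the reverse integration by parts to absorb both the boundary residuals from differentiating the limits and the $\nabla_\Gamma g$-term from differentiating the prefactor into a single $\mathcal{M}_\varepsilon$-expression.
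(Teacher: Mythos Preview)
Your argument is correct: the Leibniz differentiation of the integral, the chain rule $\nabla_\Gamma\varphi^\sharp=(\mathbf{P}-r\mathbf{W})\nabla\varphi$, and the integration-by-parts identity $[(\partial_\nu\varphi)^\sharp+\varphi^\sharp f_J^\sharp]J=\partial_r(\varphi^\sharp J)$ together with the boundary values $\mathbf{b}_\varepsilon^\sharp|_{r=\varepsilon g_i}=\varepsilon\nabla_\Gamma g_i$ and $\partial_r\mathbf{b}_\varepsilon^\sharp=\nabla_\Gamma g/g$ recover all three terms exactly. The paper itself does not give a proof here but refers to \cite[Lemma~5.6]{Miu25_GL}; your direct computation is the natural one and is presumably what appears in that reference.
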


\begin{proof}
  We refer to \cite[Lemma 5.6]{Miu25_GL} for the proof.
\end{proof}

\begin{lemma} \label{L:ATDr_Diff}
  Let $\varphi\in C(\overline{\Omega_t^\varepsilon})\cap C^1(\Omega_t^\varepsilon)$.
  Then,
  \begin{align} \label{E:ATDr_Diff}
    |\nabla_\Gamma\mathcal{M}_\varepsilon\varphi-\mathbf{P}\mathcal{M}_\varepsilon(\nabla\varphi)| \leq c\varepsilon\mathcal{M}_\varepsilon(|\varphi|+|\nabla\varphi|) \quad\text{on}\quad \Gamma_t.
  \end{align}
\end{lemma}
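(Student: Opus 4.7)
The plan is to derive the inequality directly from the exact formula in Lemma \ref{L:Ave_TDr} by extracting the principal term $\mathbf{P}\mathcal{M}_\varepsilon(\nabla\varphi)$ and showing that every remaining piece is pointwise bounded by $c\varepsilon$ times $|\varphi|$ or $|\nabla\varphi|$ inside the averaging integral.

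Concretely, I would first split $\mathbf{B}=\mathbf{P}+(\mathbf{B}-\mathbf{P})$. Since $\mathbf{P}=\mathbf{P}(y,t)$ is independent of the normal variable $r$, it pulls out of the weighted average, giving
\begin{align*}
  \mathcal{M}_\varepsilon(\mathbf{B}\nabla\varphi) = \mathbf{P}\mathcal{M}_\varepsilon(\nabla\varphi)+\mathcal{M}_\varepsilon\bigl((\mathbf{B}-\mathbf{P})\nabla\varphi\bigr).
\end{align*}
Substituting into \eqref{E:Ave_TDr} yields
\begin{align*}
  \nabla_\Gamma\mathcal{M}_\varepsilon\varphi-\mathbf{P}\mathcal{M}_\varepsilon(\nabla\varphi) = \mathcal{M}_\varepsilon\bigl((\mathbf{B}-\mathbf{P})\nabla\varphi\bigr)+\mathcal{M}_\varepsilon\bigl[(\partial_\nu\varphi+\varphi f_J)\mathbf{b}_\varepsilon\bigr]+\mathcal{M}_\varepsilon(\varphi\mathbf{b}_J),
\end{align*}
so it suffices to estimate the $L^\infty$ norms (in $r\in[\varepsilon g_0,\varepsilon g_1]$) of the four coefficients $\mathbf{B}-\mathbf{P}$, $\mathbf{b}_\varepsilon$, $f_J$, $\mathbf{b}_J$.

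The key observation is that every variable $r$ appearing in these coefficients satisfies $|r|\leq c\varepsilon$ on the averaging interval. From $\mathbf{B}^\sharp-\mathbf{P}=-r\mathbf{W}$ and the smoothness of $\mathbf{W}$ on $\overline{S_T}$, we get $|\mathbf{B}-\mathbf{P}|\leq c\varepsilon$. From the explicit formula for $\mathbf{b}_\varepsilon^\sharp$, both factors $(r-\varepsilon g_0)$ and $(\varepsilon g_1-r)$ lie in $[0,\varepsilon g]$, so the smoothness of $g_0,g_1,g$ on $\overline{S_T}$ together with \eqref{E:G_Bdd} gives $|\mathbf{b}_\varepsilon|\leq c\varepsilon$. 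For $f_J$ and $\mathbf{b}_J$, the expansion \eqref{E:J_Poly} shows $\partial_rJ=J_1+2rJ_2+\cdots$ (bounded on $\overline{S_T}\times[-\delta,\delta]$) and $\nabla_\Gamma J=\sum_{k\geq 1}r^k\nabla_\Gamma J_k$ (bounded by $c|r|\leq c\varepsilon$), so \eqref{E:J_Bdd} yields $|f_J|\leq c$ and $|\mathbf{b}_J|\leq c\varepsilon$.

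Combining these bounds with the pointwise estimate $|\mathcal{M}_\varepsilon\psi|\leq \mathcal{M}_\varepsilon|\psi|$, which follows immediately from the triangle inequality applied under the integral, together with $|\partial_\nu\varphi|\leq|\nabla\varphi|$, each of the three residual terms is pointwise bounded by $c\varepsilon\mathcal{M}_\varepsilon(|\varphi|+|\nabla\varphi|)$, and \eqref{E:ATDr_Diff} follows. There is no real obstacle here beyond keeping track of the constants: the work was done in Lemma \ref{L:Ave_TDr}, and the present lemma is essentially a quantified version of the fact that on a strip of thickness $\varepsilon$ all the geometric quantities $\mathbf{B}-\mathbf{P}$, $\mathbf{b}_\varepsilon$, $\mathbf{b}_J$ vanish at $r=0$ to order at least one in $r$.
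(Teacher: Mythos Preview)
Your proposal is correct and follows essentially the same approach as the paper: both use the formula \eqref{E:Ave_TDr}, pull $\mathbf{P}$ through the average via $\mathbf{P}\mathcal{M}_\varepsilon(\nabla\varphi)=\mathcal{M}_\varepsilon(\overline{\mathbf{P}}\nabla\varphi)$, and then bound the residual coefficients by $|\mathbf{B}^\sharp-\mathbf{P}|\leq c\varepsilon$, $|\mathbf{b}_\varepsilon^\sharp|\leq c\varepsilon$, $|\mathbf{b}_J^\sharp|\leq c\varepsilon$, $|f_J^\sharp|\leq c$ together with $|\partial_\nu\varphi|\leq|\nabla\varphi|$. Your justification of the bound $|\mathbf{b}_J^\sharp|\leq c\varepsilon$ via $\nabla_\Gamma J=\sum_{k\geq1}r^k\nabla_\Gamma J_k$ (using $J_0\equiv1$) is exactly what the paper has in mind when it cites \eqref{E:J_Poly} and \eqref{E:J_Bdd}.
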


\begin{proof}
  When $(y,t)\in\overline{S_T}$ and $r\in[\varepsilon g_0(y,t),\varepsilon g_1(y,t)]$, we have
  \begin{align*}
    &|\mathbf{B}^\sharp(y,t,r)-\mathbf{P}(y,t)| = |r\mathbf{W}(y,t)| \leq c\varepsilon, \\
    &|\mathbf{b}_\varepsilon^\sharp(y,t,r)| \leq c\varepsilon, \quad |\mathbf{b}_J^\sharp(y,t,r)| \leq c\varepsilon, \quad |f_J^\sharp(y,t,r)| \leq c
  \end{align*}
  by \eqref{E:G_Bdd}, \eqref{E:J_Poly}, \eqref{E:J_Bdd}, and the smoothness of $\mathbf{W}$, $g_0$, and $g_1$ on $\overline{S_T}$.
  Moreover,
  \begin{align*}
    \mathbf{P}\mathcal{M}_\varepsilon(\nabla\varphi) = \mathcal{M}_\varepsilon\Bigl(\overline{\mathbf{P}}\nabla\varphi\Bigr) \quad\text{on}\quad \Gamma_t, \quad |\partial_\nu\varphi| = |\bar{\bm{\nu}}\cdot\nabla\varphi| \leq |\nabla\varphi| \quad\text{in}\quad \Omega_t^\varepsilon.
  \end{align*}
  Applying these relations to \eqref{E:Ave_TDr}, we find that \eqref{E:ATDr_Diff} is valid.
\end{proof}

\begin{lemma} \label{L:Ave_W1q}
  Let $q\in[1,\infty)$ and $\varphi\in W^{1,q}(\Omega_t^\varepsilon)$.
  Then,
  \begin{align}
    \|\nabla_\Gamma\mathcal{M}_\varepsilon\varphi\|_{L^q(\Gamma_t)} & \leq c\Bigl(\varepsilon^{1-1/q}\|\varphi\|_{L^q(\Omega_t^\varepsilon)}+\varepsilon^{-1/q}\|\nabla\varphi\|_{L^q(\Omega_t^\varepsilon)}\Bigr), \label{E:Ave_GrLq} \\
    \|\mathcal{M}_\varepsilon\varphi\|_{W^{1,q}(\Gamma_t)} &\leq c\varepsilon^{-1/q}\|\varphi\|_{W^{1,q}(\Omega_t^\varepsilon)}. \label{E:Ave_W1q}
  \end{align}
\end{lemma}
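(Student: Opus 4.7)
The plan is to bound $\nabla_\Gamma\mathcal{M}_\varepsilon\varphi$ pointwise by averages of $|\varphi|$ and $|\nabla\varphi|$ using the explicit representation in Lemma~\ref{L:Ave_TDr}, and then feed this into the basic $L^q$-bound \eqref{E:Ave_Lq}. First I would assume $\varphi\in C^1(\overline{\Omega_t^\varepsilon})$ (which is dense in $W^{1,q}(\Omega_t^\varepsilon)$ since $\Omega_t^\varepsilon$ is a smooth bounded domain), so that \eqref{E:Ave_TDr} is applicable.

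Recall from the proof of Lemma~\ref{L:ATDr_Diff} the pointwise bounds $|\mathbf{B}^\sharp|\leq c$, $|\mathbf{b}_\varepsilon^\sharp|\leq c\varepsilon$, $|\mathbf{b}_J^\sharp|\leq c\varepsilon$, and $|f_J^\sharp|\leq c$ on the relevant range of $r$, together with $|\partial_\nu\varphi|\leq|\nabla\varphi|$. Substituting into \eqref{E:Ave_TDr} and using the obvious monotonicity $|\mathcal{M}_\varepsilon\psi|\leq\mathcal{M}_\varepsilon|\psi|$ (which follows since $J>0$ and $g>0$), I would obtain
\begin{align*}
  |\nabla_\Gamma\mathcal{M}_\varepsilon\varphi| \leq c\,\mathcal{M}_\varepsilon(|\nabla\varphi|)+c\varepsilon\,\mathcal{M}_\varepsilon(|\nabla\varphi|+|\varphi|)+c\varepsilon\,\mathcal{M}_\varepsilon(|\varphi|)
\end{align*}
on $\Gamma_t$. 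Since $0<\varepsilon<1$, the $c\varepsilon\,\mathcal{M}_\varepsilon(|\nabla\varphi|)$ term is absorbed into $c\,\mathcal{M}_\varepsilon(|\nabla\varphi|)$, leaving
\begin{align*}
  |\nabla_\Gamma\mathcal{M}_\varepsilon\varphi| \leq c\,\mathcal{M}_\varepsilon(|\nabla\varphi|)+c\varepsilon\,\mathcal{M}_\varepsilon(|\varphi|).
\end{align*}

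Taking the $L^q(\Gamma_t)$-norm and applying \eqref{E:Ave_Lq} twice gives exactly \eqref{E:Ave_GrLq} for $C^1$ functions. The bound \eqref{E:Ave_W1q} then follows by combining \eqref{E:Ave_GrLq} with \eqref{E:Ave_Lq} and noting $\varepsilon^{1-1/q}\leq\varepsilon^{-1/q}$ since $\varepsilon<1$. Finally, to pass from $C^1(\overline{\Omega_t^\varepsilon})$ to $W^{1,q}(\Omega_t^\varepsilon)$ for $q\in[1,\infty)$, I would take an approximating sequence $\varphi_k\in C^\infty(\overline{\Omega_t^\varepsilon})$ with $\varphi_k\to\varphi$ in $W^{1,q}(\Omega_t^\varepsilon)$; by \eqref{E:Ave_Lq} applied to $\varphi_k-\varphi$ and to $\nabla(\varphi_k-\varphi)$, the sequences $\mathcal{M}_\varepsilon\varphi_k$ and $\nabla_\Gamma\mathcal{M}_\varepsilon\varphi_k$ converge in $L^q(\Gamma_t)$, so the estimates pass to the limit.

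There is no real obstacle; the only point requiring a little care is the justification that the bounds on $|\mathbf{B}^\sharp|$, $|\mathbf{b}_\varepsilon^\sharp|$, $|\mathbf{b}_J^\sharp|$, $|f_J^\sharp|$ established inside the proof of Lemma~\ref{L:ATDr_Diff} are indeed available here with constants independent of $\varepsilon\in(0,\varepsilon_1]$ and $t\in[0,T]$, but this is immediate from \eqref{E:G_Bdd}, \eqref{E:J_Poly}, \eqref{E:J_Bdd}, and the smoothness of $\mathbf{W}$, $g_0$, $g_1$ on the compact set $\overline{S_T}$.
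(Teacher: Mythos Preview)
Your proposal is correct and essentially follows the paper's approach. The only difference is that the paper packages the pointwise bound via \eqref{E:ATDr_Diff} (together with $|\mathbf{P}\mathbf{a}|\leq|\mathbf{a}|$) to obtain $|\nabla_\Gamma\mathcal{M}_\varepsilon\varphi|\leq c\{\varepsilon\mathcal{M}_\varepsilon(|\varphi|)+\mathcal{M}_\varepsilon(|\nabla\varphi|)\}$ directly, whereas you go back to \eqref{E:Ave_TDr} and reproduce the term-by-term bounds from the proof of Lemma~\ref{L:ATDr_Diff}; the content is identical, and your explicit density step is a welcome detail the paper leaves implicit.
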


\begin{proof}
  We see by \eqref{E:ATDr_Diff} and $|\mathbf{P}\mathbf{a}|\leq|\mathbf{a}|$ on $\overline{S_T}$ for $\mathbf{a}\in\mathbb{R}^n$ that
  \begin{align*}
    |\nabla_\Gamma\mathcal{M}_\varepsilon\varphi| \leq c\{\varepsilon\mathcal{M}_\varepsilon(|\varphi|)+\mathcal{M}_\varepsilon(|\nabla\varphi|)\} \quad\text{on}\quad \Gamma_t.
  \end{align*}
  By this inequality and \eqref{E:Ave_Lq}, we get \eqref{E:Ave_GrLq} and \eqref{E:Ave_W1q}.
\end{proof}

\begin{lemma} \label{L:Adist_W1q}
  Let $q\in[1,\infty)$ and $\varphi\in W^{1,q}(\Omega_t^\varepsilon)$.
  Then,
  \begin{align} \label{E:Adist_W1q}
    \|\mathcal{M}_\varepsilon(d\varphi)\|_{W^{1,q}(\Gamma_t)} \leq c\varepsilon^{1-1/q}\|\varphi\|_{W^{1,q}(\Omega_t^\varepsilon)}.
  \end{align}
  Here, we use the notation $[d\varphi](x)=d(x,t)\varphi(x)$ for $x\in\Omega_t^\varepsilon$.
\end{lemma}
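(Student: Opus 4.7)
The plan is to split the $W^{1,q}(\Gamma_t)$ norm into its $L^q$ and tangential gradient parts and, in each, to extract an extra factor of $\varepsilon$ from the multiplier $d$ using $|d|\leq c\varepsilon$ in $\overline{\Omega_t^\varepsilon}$. For the $L^q$ part this is immediate: the pointwise bound $|\mathcal{M}_\varepsilon(d\varphi)|\leq c\varepsilon\,\mathcal{M}_\varepsilon(|\varphi|)$ on $\Gamma_t$ combined with \eqref{E:Ave_Lq} yields $\|\mathcal{M}_\varepsilon(d\varphi)\|_{L^q(\Gamma_t)}\leq c\varepsilon^{1-1/q}\|\varphi\|_{L^q(\Omega_t^\varepsilon)}$.

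For the tangential gradient I would argue by density of $C^1(\overline{\Omega_t^\varepsilon})$ in $W^{1,q}(\Omega_t^\varepsilon)$, first treating smooth $\varphi$ and applying Lemma \ref{L:Ave_TDr} with $d\varphi$ in place of $\varphi$. The identity $\nabla d=\bar{\bm{\nu}}$ from \eqref{E:Fermi} gives $\nabla(d\varphi)=\bar{\bm{\nu}}\varphi+d\nabla\varphi$ and $\partial_\nu(d\varphi)=\varphi+d\,\partial_\nu\varphi$, so the task reduces to estimating
\begin{align*}
  \mathcal{M}_\varepsilon(\mathbf{B}\nabla(d\varphi)),\quad \mathcal{M}_\varepsilon\bigl[(\partial_\nu(d\varphi)+d\varphi f_J)\mathbf{b}_\varepsilon\bigr],\quad \mathcal{M}_\varepsilon(d\varphi\,\mathbf{b}_J).
\end{align*}
The first is where the essential cancellation happens: since $\mathbf{B}=\bar{\mathbf{P}}-d\bar{\mathbf{W}}$ and $\bar{\mathbf{P}}\bar{\bm{\nu}}=0$, we get $\mathbf{B}\nabla(d\varphi)=-d\bar{\mathbf{W}}\bar{\bm{\nu}}\varphi+d\mathbf{B}\nabla\varphi$, which is pointwise dominated by $c\varepsilon(|\varphi|+|\nabla\varphi|)$ using boundedness of $\mathbf{W}$ and $\mathbf{B}$. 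The second is dominated by $c\varepsilon(|\varphi|+|\nabla\varphi|)$ via $|\mathbf{b}_\varepsilon|\leq c\varepsilon$ and boundedness of $f_J$, and the third by $c\varepsilon^2|\varphi|$ via $|d|\leq c\varepsilon$ and $|\mathbf{b}_J|\leq c\varepsilon$. Summing gives $|\nabla_\Gamma\mathcal{M}_\varepsilon(d\varphi)|\leq c\varepsilon\,\mathcal{M}_\varepsilon(|\varphi|+|\nabla\varphi|)$ on $\Gamma_t$, and a final application of \eqref{E:Ave_Lq} provides the $c\varepsilon^{1-1/q}\|\varphi\|_{W^{1,q}(\Omega_t^\varepsilon)}$ bound for the gradient. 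Passing to $W^{1,q}$ by density closes the proof.

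The main obstacle is recognizing that the naive route of applying \eqref{E:Ave_W1q} directly to the product $d\varphi$ yields only the suboptimal bound $c\varepsilon^{-1/q}\|\varphi\|_{W^{1,q}(\Omega_t^\varepsilon)}$, because $\|\nabla(d\varphi)\|_{L^q(\Omega_t^\varepsilon)}$ contains the $O(1)$ piece $\|\bar{\bm{\nu}}\varphi\|_{L^q(\Omega_t^\varepsilon)}$, which no amount of smallness of $d$ can cure. To obtain the sharp exponent $\varepsilon^{1-1/q}$ one must pass the derivative inside $\mathcal{M}_\varepsilon$ via Lemma \ref{L:Ave_TDr} so that the offending $O(1)$ normal piece $\bar{\bm{\nu}}\varphi$ is contracted with $\mathbf{B}$, at which point $\bar{\mathbf{P}}\bar{\bm{\nu}}=0$ restores the missing power of $\varepsilon$ through the remaining $-d\bar{\mathbf{W}}$ term.
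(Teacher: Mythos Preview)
Your proof is correct and follows essentially the same approach as the paper: both exploit the cancellation $\mathbf{P}\bm{\nu}=0$ to kill the $O(1)$ normal piece $\bar{\bm{\nu}}\varphi$ in $\nabla(d\varphi)$ and recover the missing factor of $\varepsilon$. The only cosmetic difference is that the paper uses the packaged estimate \eqref{E:ATDr_Diff} (which already absorbs the $\mathbf{b}_\varepsilon$, $\mathbf{b}_J$, $f_J$ error terms) and then computes $\mathbf{P}\mathcal{M}_\varepsilon[\nabla(d\varphi)]=\mathbf{P}\mathcal{M}_\varepsilon(d\nabla\varphi)$, whereas you go back to the full formula of Lemma~\ref{L:Ave_TDr} and bound its three terms separately; the content is the same.
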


\begin{proof}
  It follows from \eqref{E:Ave_Lq} and $|d|\leq c\varepsilon$ in $\overline{Q_T^\varepsilon}$ that
  \begin{align} \label{Pf_Adi:Lq}
    \|\mathcal{M}_\varepsilon(d\varphi)\|_{L^q(\Gamma_t)} \leq c\varepsilon^{-1/q}\|d\varphi\|_{L^q(\Omega_t^\varepsilon)} \leq c\varepsilon^{1-1/q}\|\varphi\|_{L^q(\Omega_t^\varepsilon)}.
  \end{align}
  Next, we see by \eqref{E:ATDr_Diff} that
  \begin{align} \label{Pf_Adi:TDr}
    |\nabla_\Gamma\mathcal{M}_\varepsilon(d\varphi)| \leq |\mathbf{P}\mathcal{M}_\varepsilon[\nabla(d\varphi)]|+c\varepsilon\mathcal{M}_\varepsilon(|d\varphi|+|\nabla(d\varphi)|) \quad\text{on}\quad \Gamma_t.
  \end{align}
  Since $\nabla(d\varphi)=\varphi\nabla d+d\nabla\varphi=\varphi\bar{\bm{\nu}}+d\nabla\varphi$ in $\Omega_t^\varepsilon$ by \eqref{E:Fermi}, we have
  \begin{align*}
    \mathbf{P}\mathcal{M}_\varepsilon[\nabla(d\varphi)] = (\mathcal{M}_\varepsilon\varphi)\mathbf{P}\bm{\nu}+\mathbf{P}\mathcal{M}_\varepsilon(d\nabla\varphi) = \mathbf{P}\mathcal{M}_\varepsilon(d\nabla\varphi) \quad\text{on}\quad \Gamma_t
  \end{align*}
  by $\mathbf{P}\bm{\nu}=\mathbf{0}_n$ on $\overline{S_T}$.
  Hence, by $|\mathbf{P}\mathbf{a}|\leq|\mathbf{a}|$ on $\overline{S_T}$ for $\mathbf{a}\in\mathbb{R}^n$ and $|d|\leq c\varepsilon$ in $\overline{Q_T^\varepsilon}$,
  \begin{align*}
    |\mathbf{P}\mathcal{M}_\varepsilon[\nabla(d\varphi)]| = |\mathbf{P}\mathcal{M}_\varepsilon(d\nabla\varphi)| \leq c\varepsilon\mathcal{M}_\varepsilon(|\nabla\varphi|) \quad\text{on}\quad \Gamma_t.
  \end{align*}
  Also, since $|\bar{\bm{\nu}}|=1$ and $|d|\leq c\varepsilon\leq c$ in $\overline{Q_T^\varepsilon}$, it follows that
  \begin{align*}
    |d\varphi|+|\nabla(d\varphi)| =|d\varphi|+|\varphi\bar{\bm{\nu}}+d\nabla\varphi| \leq c(|\varphi|+|\nabla\varphi|) \quad\text{in}\quad \Omega_t^\varepsilon.
  \end{align*}
  Applying the above two inequalities to \eqref{Pf_Adi:TDr}, we find that
  \begin{align*}
    |\nabla_\Gamma\mathcal{M}_\varepsilon(d\varphi)| \leq c\varepsilon\mathcal{M}_\varepsilon(|\varphi|+|\nabla\varphi|) \quad\text{on}\quad \Gamma_t.
  \end{align*}
  Thus, we use this inequality and \eqref{E:Ave_Lq} to deduce that
  \begin{align*}
    \|\nabla_\Gamma\mathcal{M}_\varepsilon(d\varphi)\|_{L^q(\Gamma_t^\varepsilon)} \leq c\varepsilon^{1-1/q}\|\varphi\|_{W^{1,q}(\Omega_t^\varepsilon)},
  \end{align*}
  and we obtain \eqref{E:Adist_W1q} by this inequality and \eqref{Pf_Adi:Lq}.
\end{proof}

Let $q\in(1,\infty)$ and $f\in[W^{1,q}(\Gamma_t)]^\ast$.
We define the ``constant extension'' $\bar{f}$ by
\begin{align} \label{E:Def_CEFu}
  \langle\bar{f},\varphi\rangle_{W^{1,q}(\Omega_t^\varepsilon)} := \varepsilon\langle f,g\mathcal{M}_\varepsilon\varphi\rangle_{W^{1,q}(\Gamma_t)}, \quad \varphi\in W^{1,q}(\Omega_t^\varepsilon)
\end{align}
based on \eqref{E:Ave_Pair}.
Note that $\bar{f}$ actually depends on $\varepsilon$ through \eqref{E:Def_CEFu}.

\begin{lemma} \label{L:CE_Func}
  For $f\in[W^{1,q}(\Gamma_t)]^\ast$, we have $\bar{f}\in[W^{1,q}(\Omega_t^\varepsilon)]^\ast$ and
  \begin{align} \label{E:CE_Func}
    \|\bar{f}\|_{[W^{1,q}(\Omega_t^\varepsilon)]^\ast} \leq c\varepsilon^{1/q'}\|f\|_{[W^{1,q}(\Gamma_t)]^\ast}, \quad \frac{1}{q}+\frac{1}{q'} = 1.
  \end{align}
\end{lemma}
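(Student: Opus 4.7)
The plan is to read off the bound directly from the definition \eqref{E:Def_CEFu} together with the $W^{1,q}$-bound on the weighted-average operator established in Lemma \ref{L:Ave_W1q}. Linearity of $\bar f$ on $W^{1,q}(\Omega_t^\varepsilon)$ is immediate from the linearity of $\mathcal{M}_\varepsilon$ and the right-hand side of \eqref{E:Def_CEFu}, so everything reduces to estimating $|\langle\bar f,\varphi\rangle_{W^{1,q}(\Omega_t^\varepsilon)}|$ in terms of $\|\varphi\|_{W^{1,q}(\Omega_t^\varepsilon)}$.

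The first step is to observe that multiplication by $g$ is bounded on $W^{1,q}(\Gamma_t)$ uniformly in $t$. Indeed, $g\in C^\infty(\overline{S_T})$ is bounded with bounded tangential gradient on the compact set $\overline{S_T}$ (see \eqref{E:G_Bdd} and Lemma \ref{L:MS_Reg} for the smoothness of the underlying geometric data), so there is a constant $c>0$, independent of $t$ and $\varepsilon$, with
\begin{equation*}
  \|g\eta\|_{W^{1,q}(\Gamma_t)} \leq c\|\eta\|_{W^{1,q}(\Gamma_t)} \quad\text{for all}\quad \eta\in W^{1,q}(\Gamma_t).
\end{equation*}
Applying this with $\eta=\mathcal{M}_\varepsilon\varphi$ and combining with \eqref{E:Ave_W1q} gives
\begin{equation*}
  \|g\mathcal{M}_\varepsilon\varphi\|_{W^{1,q}(\Gamma_t)} \leq c\|\mathcal{M}_\varepsilon\varphi\|_{W^{1,q}(\Gamma_t)} \leq c\varepsilon^{-1/q}\|\varphi\|_{W^{1,q}(\Omega_t^\varepsilon)}.
\end{equation*}

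The second step is to insert this into the definition \eqref{E:Def_CEFu} and apply the duality pairing on $\Gamma_t$:
\begin{equation*}
  |\langle\bar f,\varphi\rangle_{W^{1,q}(\Omega_t^\varepsilon)}| = \varepsilon|\langle f,g\mathcal{M}_\varepsilon\varphi\rangle_{W^{1,q}(\Gamma_t)}| \leq \varepsilon\|f\|_{[W^{1,q}(\Gamma_t)]^\ast}\|g\mathcal{M}_\varepsilon\varphi\|_{W^{1,q}(\Gamma_t)}.
\end{equation*}
Chaining the two estimates yields $|\langle\bar f,\varphi\rangle_{W^{1,q}(\Omega_t^\varepsilon)}|\leq c\varepsilon^{1-1/q}\|f\|_{[W^{1,q}(\Gamma_t)]^\ast}\|\varphi\|_{W^{1,q}(\Omega_t^\varepsilon)}$, which is \eqref{E:CE_Func} since $1-1/q=1/q'$. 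Taking the supremum over $\varphi$ with $\|\varphi\|_{W^{1,q}(\Omega_t^\varepsilon)}\leq 1$ shows that $\bar f\in[W^{1,q}(\Omega_t^\varepsilon)]^\ast$ with the stated norm bound.

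There is no genuine obstacle here: the only nonroutine input is the $\varepsilon^{-1/q}$ scaling of $\mathcal{M}_\varepsilon$ in \eqref{E:Ave_W1q}, and the $\varepsilon$ factor in front of the pairing in \eqref{E:Def_CEFu} is precisely what balances this out to produce the $\varepsilon^{1/q'}$ on the right-hand side. The only thing one might want to double-check is the implicit independence of the constant $c$ on $t\in[0,T]$, which follows from the uniformity of the constants in \eqref{E:G_Bdd} and \eqref{E:Ave_W1q} together with the compactness of $\overline{S_T}$.
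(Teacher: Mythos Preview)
Your proof is correct and follows essentially the same approach as the paper: apply the definition \eqref{E:Def_CEFu}, use the duality bound together with $g\in C^\infty(\overline{S_T})$ and \eqref{E:Ave_W1q}, and conclude via $1-1/q=1/q'$. The paper's version is more compressed but the ingredients and logic are identical.
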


\begin{proof}
  By $g\in C^\infty(\overline{S_T})$ and \eqref{E:Ave_W1q}, we have
  \begin{align*}
    |\langle\bar{f},\varphi\rangle_{W^{1,q}(\Omega_t^\varepsilon)}| = \varepsilon|\langle f,g\mathcal{M}_\varepsilon\varphi\rangle_{W^{1,q}(\Gamma_t)}| &\leq \varepsilon\|f\|_{[W^{1,q}(\Gamma_t)]^\ast}\|g\mathcal{M}_\varepsilon\varphi\|_{W^{1,q}(\Gamma_t)} \\
    &\leq c\varepsilon^{1-1/q}\|f\|_{[W^{1,q}(\Gamma_t)]^\ast}\|\varphi\|_{W^{1,q}(\Omega_t^\varepsilon)}
  \end{align*}
  for all $\varphi\in W^{1,q}(\Omega_t^\varepsilon)$.
  Since $1-1/q=1/q'$, we get \eqref{E:CE_Func} by this inequality.
\end{proof}

\subsection{Weak material derivative of the weighted average} \label{SS:WA_WeMa}
Let $C_{0,T}^\infty(\overline{Q_T^\varepsilon})$ and $\mathbb{W}^{p,p'}(Q_T^\varepsilon)$ be the function spaces given in \eqref{E:Def_S0T} and Definition \ref{D:TD_SoBo}, respectively, with $p\in(2,\infty)$ and $1/p+1/p'=1$.
Recall that we abuse the notations \eqref{E:Abuse} when $\varepsilon=0$.
We write $c_T$ for a general positive constant depending on $T$ but independent of $t$ and $\varepsilon$.

\begin{lemma} \label{L:Ave_WeMt}
  If $u\in\mathbb{W}^{p,p'}(Q_T^\varepsilon)$, then $\mathcal{M}_\varepsilon u\in\mathbb{W}^{p,p'}(S_T)$ and
  \begin{align} \label{E:Ave_WeMt}
    \|\partial_\Gamma^\bullet\mathcal{M}_\varepsilon u\|_{L_{[W^{1,p}]^\ast}^{p'}(S_T)} \leq c_T\Bigl(\varepsilon^{-1/p'}\|\partial_\varepsilon^\bullet u\|_{L_{[W^{1,p}]^\ast}^{p'}(Q_T^\varepsilon)}+\varepsilon^{1-1/p}\|u\|_{L_{L^p}^p(Q_T^\varepsilon)}\Bigr).
  \end{align}
  Moreover, $g\mathcal{M}_\varepsilon u\in\mathbb{W}^{p,p'}(S_T)$, and for all $\eta\in L_{W^{1,p}}^p(S_T)$, we have
  \begin{align} \label{E:AWM_Eq}
    \begin{aligned}
      &\frac{1}{\varepsilon}\int_0^T\Bigl\{\langle\partial_\varepsilon^\bullet u,\bar{\eta}\rangle_{W^{1,p}(\Omega_t^\varepsilon)}+(u,\bar{\eta}\,\mathrm{div}\,\mathbf{v}^\varepsilon)_{L^2(\Omega_\varepsilon)}\Bigr\}\,dt \\
      &\qquad = \int_0^T\Bigl\{\langle\partial_\Gamma^\bullet(g\mathcal{M}_\varepsilon u),\eta\rangle_{W^{1,p}(\Gamma_t)}+(g\mathcal{M}_\varepsilon u,\eta\,\mathrm{div}_\Gamma\mathbf{v}_\Gamma)_{L^2(\Gamma_t)}\Bigr\}\,dt.
    \end{aligned}
  \end{align}
\end{lemma}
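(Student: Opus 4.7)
The plan is to first construct the weak material derivative $\partial_\Gamma^\bullet\mathcal{M}_\varepsilon u$ on $S_T$ directly from Definition \ref{D:We_MatDe} by substituting a carefully chosen family of test functions into \eqref{E:Def_WeMat} for $\partial_\varepsilon^\bullet u$, then to deduce $g\mathcal{M}_\varepsilon u\in\mathbb{W}^{p,p'}(S_T)$ from the product rule (Lemma \ref{L:Wpp_Mult}), and finally to derive the identity \eqref{E:AWM_Eq} from a double application of the defining weak-derivative relations combined with density. The membership $\mathcal{M}_\varepsilon u\in L_{W^{1,p}}^p(S_T)$ is immediate from Lemma \ref{L:Ave_W1q} applied pointwise in $t$.

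The crucial step is the choice of test function. Given $\eta\in C_{0,T}^\infty(\overline{S_T})$, the constant extension $\overline{\eta/g}$ lies in $C_{0,T}^\infty(\overline{Q_T^\varepsilon})$ because $g$ is smooth and bounded below by \eqref{E:G_Bdd} and $\pi$ is smooth by Lemma \ref{L:MS_Tubu}. Substituting $\psi=\overline{\eta/g}$ into \eqref{E:Def_WeMat}, using $\partial_\varepsilon^\bullet\overline{\eta/g}=\overline{\partial_\Gamma^\bullet(\eta/g)}$ from \eqref{E:CE_Mat} together with the product rule, and decomposing $\mathrm{div}\,\mathbf{v}^\varepsilon=\bar\sigma_{\Gamma,g}+(\mathrm{div}\,\mathbf{v}^\varepsilon-\bar\sigma_{\Gamma,g})$ via \eqref{E:Vls_MTD} with $\sigma_{\Gamma,g}=\mathrm{div}_\Gamma\mathbf{v}_\Gamma+\partial_\Gamma^\bullet g/g$, a direct computation gives the pointwise identity
\[
\partial_\varepsilon^\bullet\overline{\eta/g}+\overline{\eta/g}\,\mathrm{div}\,\mathbf{v}^\varepsilon = \overline{(\partial_\Gamma^\bullet\eta+\eta\,\mathrm{div}_\Gamma\mathbf{v}_\Gamma)/g}+\overline{\eta/g}(\mathrm{div}\,\mathbf{v}^\varepsilon-\bar\sigma_{\Gamma,g}),
\]
whose key feature is the exact cancellation of the $\pm\eta\,\partial_\Gamma^\bullet g/g^2$ pieces generated by $\partial_\Gamma^\bullet(\eta/g)$ and by $\bar\sigma_{\Gamma,g}$. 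Pairing with $u$, converting the first summand via \eqref{E:Ave_Pair} into a surface pairing against $\varepsilon\,g\mathcal{M}_\varepsilon u$, and rearranging yields
\[
\int_0^T(\mathcal{M}_\varepsilon u,\partial_\Gamma^\bullet\eta+\eta\,\mathrm{div}_\Gamma\mathbf{v}_\Gamma)_{L^2(\Gamma_t)}\,dt = -\frac{1}{\varepsilon}\int_0^T\langle\partial_\varepsilon^\bullet u,\overline{\eta/g}\rangle_{W^{1,p}(\Omega_t^\varepsilon)}\,dt-\frac{1}{\varepsilon}\int_0^T R_\varepsilon(t)\,dt,
\]
where $R_\varepsilon(t)=(u,\overline{\eta/g}(\mathrm{div}\,\mathbf{v}^\varepsilon-\bar\sigma_{\Gamma,g}))_{L^2(\Omega_t^\varepsilon)}$ carries an extra $O(\varepsilon)$ from \eqref{E:Vls_MTD}. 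Using \eqref{E:CE_Lq} (both in $W^{1,p}$ and $L^{p'}$) together with H\"older's inequality in time, the first term on the right is bounded by $c\varepsilon^{-1/p'}\|\partial_\varepsilon^\bullet u\|_{L_{[W^{1,p}]^\ast}^{p'}}\|\eta\|_{L_{W^{1,p}}^p}$, and $\frac{1}{\varepsilon}|\int_0^T R_\varepsilon\,dt|\le c_T\varepsilon^{1-1/p}\|u\|_{L_{L^p}^p(Q_T^\varepsilon)}\|\eta\|_{L_{W^{1,p}}^p(S_T)}$ since $1-1/p=1/p'$. Density of $C_{0,T}^\infty(\overline{S_T})$ in $L_{W^{1,p}}^p(S_T)$ (Lemma \ref{L:EvBo_Den}) together with the duality $[L_{W^{1,p}}^p(S_T)]^\ast=L_{[W^{1,p}]^\ast}^{p'}(S_T)$ and Definition \ref{D:We_MatDe} then furnishes $\partial_\Gamma^\bullet\mathcal{M}_\varepsilon u$ satisfying the stated bound \eqref{E:Ave_WeMt}.

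With $\mathcal{M}_\varepsilon u\in\mathbb{W}^{p,p'}(S_T)$ in hand, Lemma \ref{L:Wpp_Mult} applied with $\chi=g\in C^1(\overline{S_T})$ yields $g\mathcal{M}_\varepsilon u\in\mathbb{W}^{p,p'}(S_T)$. For the identity \eqref{E:AWM_Eq}, I would test \eqref{E:Def_WeMat} for $\partial_\varepsilon^\bullet u$ against $\psi=\bar\eta$ with $\eta\in C_{0,T}^\infty(\overline{S_T})$: using $\partial_\varepsilon^\bullet\bar\eta=\overline{\partial_\Gamma^\bullet\eta}$ from \eqref{E:CE_Mat} and \eqref{E:Ave_Pair} shows that the left-hand side of \eqref{E:AWM_Eq} equals $-\int_0^T(g\mathcal{M}_\varepsilon u,\partial_\Gamma^\bullet\eta)_{L^2(\Gamma_t)}\,dt$, while Definition \ref{D:We_MatDe} applied to $g\mathcal{M}_\varepsilon u$ gives the same value for the right-hand side. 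Both sides depend continuously on $\eta\in L_{W^{1,p}}^p(S_T)$ (the left via \eqref{E:CE_Lq} and the estimates on $u$ and $\partial_\varepsilon^\bullet u$, the right by the dual norm of $\partial_\Gamma^\bullet(g\mathcal{M}_\varepsilon u)$), so density concludes. The main obstacle, and the reason I use $\overline{\eta/g}$ rather than the more obvious $\bar\eta$ in the construction step, is the need to extract the sharp coefficient $\varepsilon^{1-1/p}=\varepsilon^{1/p'}$ on the $\|u\|_{L_{L^p}^p}$ term: absorbing the factor $g$ into the test function causes the derivatives of $g$ to cancel exactly with the $\partial_\Gamma^\bullet g/g$ piece of $\sigma_{\Gamma,g}$, leaving only the $O(\varepsilon)$ residual that $\mathbf{v}^\varepsilon$ contributes beyond $\bar\sigma_{\Gamma,g}$; the naive analogous argument with $\bar\eta$ would only yield the coarser $\varepsilon^{-1/p}$.
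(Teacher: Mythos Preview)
Your proposal is correct and follows essentially the same approach as the paper. The only cosmetic difference is that the paper first tests \eqref{E:Def_WeMat} with $\bar\eta$ to obtain an identity featuring $g\eta$ on the surface side, and \emph{then} substitutes $\eta\mapsto\eta/g$ to isolate $\partial_\Gamma^\bullet\mathcal{M}_\varepsilon u$, whereas you test directly with $\overline{\eta/g}$; the resulting algebra (in particular the cancellation of the $\pm\eta\,\partial_\Gamma^\bullet g/g^2$ terms via $\sigma_{\Gamma,g}$) and the estimates are identical, and the derivation of \eqref{E:AWM_Eq} via the common value $-\int_0^T(g\mathcal{M}_\varepsilon u,\partial_\Gamma^\bullet\eta)_{L^2(\Gamma_t)}\,dt$ matches the paper exactly.
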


\begin{proof}
  We have $\mathcal{M}_\varepsilon u\in L_{W^{1,p}}^p(S_T)$ by $u\in L_{W^{1,p}}^p(Q_T^\varepsilon)$ and \eqref{E:Ave_W1q}.

  Let $\eta\in C_{0,T}^\infty(\overline{S_T})$.
  Then, $\bar{\eta}=\eta\circ\pi\in C_{0,T}^\infty(\overline{Q_T^\varepsilon})$ by the smoothness of $\pi$ and thus
  \begin{align*}
    \frac{1}{\varepsilon}\int_0^T\Bigl\{\langle\partial_\varepsilon^\bullet u,\bar{\eta}\rangle_{W^{1,p}(\Omega_t^\varepsilon)}+(u,\bar{\eta}\,\mathrm{div}\,\mathbf{v}^\varepsilon)_{L^2(\Omega_\varepsilon)}\Bigr\}\,dt = -\frac{1}{\varepsilon}\int_0^T(u,\partial_\varepsilon^\bullet\bar{\eta})_{L^2(\Omega_t^\varepsilon)}\,dt
  \end{align*}
  by \eqref{E:Def_WeMat}.
  We further use \eqref{E:CE_Mat} and \eqref{E:Ave_Pair} to the right-hand side.
  Then,
  \begin{align} \label{Pf_AWM:duav}
    \frac{1}{\varepsilon}\int_0^T\Bigl\{\langle\partial_\varepsilon^\bullet u,\bar{\eta}\rangle_{W^{1,p}(\Omega_t^\varepsilon)}+(u,\bar{\eta}\,\mathrm{div}\,\mathbf{v}^\varepsilon)_{L^2(\Omega_\varepsilon)}\Bigr\}\,dt = -\int_0^T(g\mathcal{M}_\varepsilon u,\partial_\Gamma^\bullet\eta)_{L^2(\Gamma_t)}\,dt.
  \end{align}
  Let $\sigma_{\Gamma,g}=\mathrm{div}_\Gamma\mathbf{v}_\Gamma+[\partial_\Gamma^\bullet g/g]$ on $\overline{S_T}$.
  We see by \eqref{E:Ave_Pair} that
  \begin{align*}
    \frac{1}{\varepsilon}\int_0^T\bigl(u,\overline{\eta\sigma_{\Gamma,g}}\bigr)_{L^2(\Omega_t^\varepsilon)}\,dt = \int_0^T(\mathcal{M}_\varepsilon u,(\partial_\Gamma^\bullet g)\eta+g\eta\,\mathrm{div}_\Gamma\mathbf{v}_\Gamma)_{L^2(\Gamma_t)}\,dt.
  \end{align*}
  By this equality, \eqref{Pf_AWM:duav}, and $\partial_\Gamma^\bullet(g\eta)=(\partial_\Gamma^\bullet g)\eta+g(\partial_\Gamma^\bullet\eta)$ on $\overline{S_T}$, we obtain
  \begin{align*}
    &\frac{1}{\varepsilon}\int_0^T\Bigl\{\langle\partial_\varepsilon^\bullet u,\bar{\eta}\rangle_{W^{1,p}(\Omega_t^\varepsilon)}+\bigl(u,\bar{\eta}(\mathrm{div}\,\mathbf{v}^\varepsilon-\bar{\sigma}_{\Gamma,g})\bigr)_{L^2(\Omega_\varepsilon)}\Bigr\}\,dt \\
    &\qquad = -\int_0^T(\mathcal{M}_\varepsilon u,\partial_\Gamma^\bullet(g\eta)+g\eta\,\mathrm{div}_\Gamma\mathbf{v}_\Gamma)_{L^2(\Gamma_t)}\,dt.
  \end{align*}
  In this equality, we replace $\eta$ by $\eta/g$.
  Moreover, we apply \eqref{E:Vls_MTD} and H\"{o}lder's inequality to the left-hand side.
  Then, we find that
  \begin{align*}
    &\left|\int_0^T(\mathcal{M}_\varepsilon u,\partial_\Gamma^\bullet\eta+\eta\,\mathrm{div}_\Gamma\mathbf{v}_\Gamma)_{L^2(\Gamma_t)}\,dt\right| \\
    &\qquad \leq \frac{1}{\varepsilon}\|\partial_\varepsilon^\bullet u\|_{L_{[W^{1,p}]^\ast}^{p'}(Q_T^\varepsilon)}\|\bar{\eta}/\bar{g}\|_{L_{W^{1,p}}^p(Q_T^\varepsilon)}+c\|u\|_{L_{L^2}^2(Q_T^\varepsilon)}\|\bar{\eta}/\bar{g}\|_{L_{L^2}^2(Q_T^\varepsilon)}.
  \end{align*}
  Moreover, we see by \eqref{E:L2Lp_MTD} and H\"{o}lder's inequality in time that
  \begin{align*}
    \|u\|_{L_{L^2}^2(Q_T^\varepsilon)}\|\bar{\eta}/\bar{g}\|_{L_{L^2}^2(Q_T^\varepsilon)} &\leq c_T\varepsilon^{1-2/p}\|u\|_{L_{L^p}^p(Q_T^\varepsilon)}\|\bar{\eta}/\bar{g}\|_{L_{L^p}^p(Q_T^\varepsilon)}.
  \end{align*}
  Also, it follows from \eqref{E:CE_Lq}, $g\in C^\infty(\overline{S_T})$, and \eqref{E:G_Bdd} that
  \begin{align*}
    \|\bar{\eta}/\bar{g}\|_{L_{L^p}^p(Q_T^\varepsilon)} \leq \|\bar{\eta}/\bar{g}\|_{L_{W^{1,p}}^p(Q_T^\varepsilon)} \leq c\varepsilon^{1/p}\|\eta/g\|_{L_{W^{1,p}}^p(S_T)} \leq c\varepsilon^{1/p}\|\eta\|_{L_{W^{1,p}}^p(S_T)}.
  \end{align*}
  Combining the above inequalities and using $-1+1/p=-1/p'$, we have
  \begin{align*}
    &\left|\int_0^T(\mathcal{M}_\varepsilon u,\partial_\Gamma^\bullet\eta+\eta\,\mathrm{div}_\Gamma\mathbf{v}_\Gamma)_{L^2(\Gamma_t)}\,dt\right| \\
    &\qquad \leq c_T\Bigl(\varepsilon^{-1/p'}\|\partial_\varepsilon^\bullet u\|_{L_{[W^{1,p}]^\ast}^{p'}(Q_T^\varepsilon)}+\varepsilon^{1-1/p}\|u\|_{L_{L^p}^p(Q_T^\varepsilon)}\Bigr)\|\eta\|_{L_{W^{1,p}}^p(S_T)}
  \end{align*}
  for all $\eta\in C_{0,T}^\infty(\overline{S_T})$.
  Since $C_{0,T}^\infty(\overline{S_T})$ is dense in $L_{W^{1,p}}^p(S_T)$ by Lemma \ref{L:EvBo_Den}, we see by the above inequality, Definition \ref{D:We_MatDe}, and Remark \ref{R:We_MatDe} that
  \begin{align*}
    \partial_\Gamma^\bullet\mathcal{M}_\varepsilon u \in [L_{W^{1,p}}^p(S_T)]^\ast = L_{[W^{1,p}]^\ast}^{p'}(S_T), \quad \mathcal{M}_\varepsilon u \in \mathbb{W}^{p,p'}(S_T)
  \end{align*}
  and the inequality \eqref{E:Ave_WeMt} holds.
  Moreover, $g\mathcal{M}_\varepsilon u\in\mathbb{W}^{p,p'}(S_T)$ by Lemma \ref{L:Wpp_Mult}, and we have \eqref{E:AWM_Eq} for $\eta\in C_{0,T}^\infty(\overline{S_T})$ by using \eqref{E:Def_WeMat} (with $\varepsilon=0$) to the right-hand side of \eqref{Pf_AWM:duav}.
  Thus, by a density argument, we find that \eqref{E:AWM_Eq} also holds for $\eta\in L_{W^{1,p}}^p(S_T)$.
\end{proof}

\section{Thin-film limit problem} \label{S:TFLPr}
The goal of this section is to prove Theorems \ref{T:TFL} and \ref{T:Lim_UnEx}.
We use the notations given in the previous sections, and write $c$ and $c_T$ for general positive constants independent of $t$ and $\varepsilon$, but $c_T$ depends on $T$ in general.
Also, we suppress the time variable of functions in the time integrals.

Let $p\in(2,\infty)$ and $p'\in(1,2)$ satisfy $1/p+1/p'=1$.
Throughout this section, we impose the assumptions of Theorem \ref{T:TFL} and denote by $u^\varepsilon$ the unique weak solution to \eqref{E:pLap_MTD} given in Proposition \ref{P:ExUn_MTD}.

\subsection{Convergence of the averaged solution} \label{SS:TF_CoAv}
First, we see that the weighted average of $u^\varepsilon$ converges in an appropriate sense.

\begin{proposition} \label{P:uAv_Bdd}
  We define
  \begin{align} \label{E:Def_veps}
    v^\varepsilon := \mathcal{M}_\varepsilon u^\varepsilon, \quad \mathbf{w}^\varepsilon := \mathcal{M}_\varepsilon(|\nabla u^\varepsilon|^{p-2}\nabla u^\varepsilon), \quad \zeta^\varepsilon := \mathcal{M}_\varepsilon(\partial_\nu u^\varepsilon) \quad\text{on}\quad S_T,
  \end{align}
  where $\partial_\nu u^\varepsilon=\bar{\bm{\nu}}\cdot\nabla u^\varepsilon$ is the derivative of $u^\varepsilon$ in the direction of $\bar{\bm{\nu}}$.
  Then,
  \begin{align*}
    v^\varepsilon\in\mathbb{W}^{p,p'}(S_T) \subset C_{L^2}(S_T), \quad \mathbf{w}^\varepsilon\in [L_{L^{p'}}^{p'}(S_T)]^n, \quad \zeta^\varepsilon\in L_{L^p}^p(S_T).
  \end{align*}
  Moreover, they are bounded uniformly in $\varepsilon$.
  More precisely,
  \begin{align} \label{E:uAv_Bdd}
    \|v^\varepsilon\|_{\mathbb{W}^{p,p'}(S_T)} \leq c_T, \quad \|\mathbf{w}^\varepsilon\|_{L_{L^{p'}}^{p'}(S_T)} \leq c_T, \quad \|\zeta^\varepsilon\|_{L_{L^p}^p(S_T)} \leq c_T.
  \end{align}
\end{proposition}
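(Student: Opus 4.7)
The plan is to derive every bound in \eqref{E:uAv_Bdd} by combining the thin-domain energy estimates from Section \ref{S:WSMTD} with the averaging inequalities of Section \ref{SS:WA_Def}. The underlying mechanism is uniform: each averaging inequality loses a factor $\varepsilon^{-1/q}$ compared to the $L^q$-norm on $\Omega_t^\varepsilon$, while Proposition \ref{P:Ener_MTD} produces precisely the offsetting factor $\varepsilon$ (in the right power) after integration in time. The embedding $\mathbb{W}^{p,p'}(S_T)\subset C_{L^2}(S_T)$ is a direct consequence of Lemma \ref{L:Trans} applied with $\varepsilon=0$ under the abuse of notation \eqref{E:Abuse}, so only the three norm bounds must be verified.

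For $v^\varepsilon$ I would first use \eqref{E:Ave_W1q} with $q=p$ pointwise in $t$, raise to the $p$-th power, and integrate to get
\begin{align*}
\|v^\varepsilon\|_{L_{W^{1,p}}^p(S_T)}^p \leq c\varepsilon^{-1}\|u^\varepsilon\|_{L_{W^{1,p}}^p(Q_T^\varepsilon)}^p \leq c_T
\end{align*}
by \eqref{E:Ener_MTD}. For the material derivative I would invoke Lemma \ref{L:Ave_WeMt} and plug in \eqref{E:dudt_MTD} and \eqref{E:Ener_MTD}: the first term on the right of \eqref{E:Ave_WeMt} is of order $\varepsilon^{-1/p'}\cdot\varepsilon^{1/p'}=1$, and the second of order $\varepsilon^{1-1/p}\cdot\varepsilon^{1/p}=\varepsilon$, both uniformly bounded. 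This yields $\partial_\Gamma^\bullet v^\varepsilon\in L_{[W^{1,p}]^\ast}^{p'}(S_T)$ with $\|v^\varepsilon\|_{\mathbb{W}^{p,p'}(S_T)}\leq c_T$ and hence $v^\varepsilon\in C_{L^2}(S_T)$.

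The bounds for $\mathbf{w}^\varepsilon$ and $\zeta^\varepsilon$ are pure $L^q$-estimates. Using \eqref{E:Ave_Lq} with $q=p'$ together with the identity $(p-1)p'=p$, I would write
\begin{align*}
\|\mathbf{w}^\varepsilon\|_{L_{L^{p'}}^{p'}(S_T)}^{p'} \leq c\varepsilon^{-1}\int_0^T\|\nabla u^\varepsilon\|_{L^p(\Omega_t^\varepsilon)}^{(p-1)p'}\,dt = c\varepsilon^{-1}\|\nabla u^\varepsilon\|_{L_{L^p}^p(Q_T^\varepsilon)}^p \leq c_T,
\end{align*}
and for $\zeta^\varepsilon$ I would apply \eqref{E:Ave_Lq} with $q=p$, use $|\partial_\nu u^\varepsilon|=|\bar{\bm{\nu}}\cdot\nabla u^\varepsilon|\leq|\nabla u^\varepsilon|$, and integrate to conclude $\|\zeta^\varepsilon\|_{L_{L^p}^p(S_T)}^p\leq c\varepsilon^{-1}\|\nabla u^\varepsilon\|_{L_{L^p}^p(Q_T^\varepsilon)}^p\leq c_T$ from \eqref{E:Ener_MTD}. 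The argument is essentially bookkeeping of $\varepsilon$-exponents, and I do not foresee any real obstruction; the one place calling for attention is the material-derivative bound, where two distinct $\varepsilon$-scales appear on the right-hand side of Lemma \ref{L:Ave_WeMt}, so one must check that the $\varepsilon^{1/p'}$ gain from Proposition \ref{P:dudt_MTD} and the $\varepsilon^{1/p}$ gain from Proposition \ref{P:Ener_MTD} are precisely the ones needed to absorb the corresponding negative powers.
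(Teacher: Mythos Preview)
Your proposal is correct and follows essentially the same approach as the paper: both combine the averaging inequalities \eqref{E:Ave_Lq}, \eqref{E:Ave_W1q}, and \eqref{E:Ave_WeMt} with the energy estimates \eqref{E:Ener_MTD} and \eqref{E:dudt_MTD}, and your explicit bookkeeping of the $\varepsilon$-exponents matches the paper's computation exactly.
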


\begin{proof}
  The regularity result follows from Definition \ref{D:WS_MTD} and Lemmas \ref{L:Trans}, \ref{L:Ave_Lq}, \ref{L:Ave_W1q}, and \ref{L:Ave_WeMt}.
  Next, we see by \eqref{E:Ave_Lq}, \eqref{E:Ave_W1q}, and $|\partial_\nu u^\varepsilon|=|\bar{\bm{\nu}}\cdot\nabla u^\varepsilon|\leq|\nabla u^\varepsilon|$ in $Q_T^\varepsilon$ that
  \begin{align*}
    \|v^\varepsilon\|_{L_{W^{1,p}}^p(S_T)} &\leq c\varepsilon^{-1/p}\|u^\varepsilon\|_{L_{W^{1,p}}^p(Q_T^\varepsilon)}, \\
    \|\mathbf{w}^\varepsilon\|_{L_{L^{p'}}^{p'}(S_T)} &\leq c\varepsilon^{-1/p'}\Bigl\|\,|\nabla u^\varepsilon|^{p-2}\nabla u^\varepsilon\,\Bigr\|_{L_{L^{p'}}^{p'}(Q_T^\varepsilon)} = c\varepsilon^{-1+1/p}\|\nabla u^\varepsilon\|_{L_{L^p}^p(Q_T^\varepsilon)}^{p-1}, \\
    \|\zeta^\varepsilon\|_{L_{L^p}^p(S_T)} &\leq c\varepsilon^{-1/p}\|\partial_\nu u^\varepsilon\|_{L_{L^p}^p(Q_T^\varepsilon)} \leq c\varepsilon^{-1/p}\|\nabla u^\varepsilon\|_{L_{L^p}^p(Q_T^\varepsilon)}.
  \end{align*}
  Note that we used $1/p'=1-1/p$ in the second line.
  Also, by \eqref{E:Ave_WeMt},
  \begin{align*}
    \|\partial_\Gamma^\bullet v^\varepsilon\|_{L_{[W^{1,p}]^\ast}^{p'}(S_T)} \leq c_T\Bigl(\varepsilon^{-1/p'}\|\partial_\varepsilon^\bullet u^\varepsilon\|_{L_{[W^{1,p}]^\ast}^{p'}(Q_T^\varepsilon)}+\varepsilon^{1-1/p}\|u^\varepsilon\|_{L_{L^p}^p(Q_T^\varepsilon)}\Bigr).
  \end{align*}
  Since $u_0^\varepsilon$ and $f^\varepsilon$ satisfies \eqref{E:Data_MTD}, we can apply \eqref{E:Ener_MTD} and \eqref{E:dudt_MTD} to the above inequalities.
  By this fact and $0<\varepsilon<1$, we obtain \eqref{E:uAv_Bdd}.
\end{proof}

\begin{proposition} \label{P:veps_Conv}
  There exist functions
  \begin{align*}
    v\in\mathbb{W}^{p,p'}(S_T)\subset C_{L^2}(S_T), \quad \mathbf{w}\in [L_{L^{p'}}^{p'}(S_T)]^n, \quad \zeta\in L_{L^p}^p(S_T)
  \end{align*}
  such that, up to a subsequence,
  \begin{align} \label{E:veps_WeCo}
    \begin{alignedat}{2}
      \lim_{\varepsilon\to0}v^\varepsilon &= v &\quad &\text{weakly in $\mathbb{W}^{p,p'}(S_T)$}, \\
      \lim_{\varepsilon\to0}\mathbf{w}^\varepsilon &= \mathbf{w} &\quad &\text{weakly in $[L_{L^{p'}}^{p'}(S_T)]^n$}, \\
      \lim_{\varepsilon\to0}\zeta^\varepsilon &= \zeta &\quad &\text{weakly in $L_{L^p}^p(S_T)$}.
    \end{alignedat}
  \end{align}
  Moreover, up to a subsequence,
  \begin{align} \label{E:veps_StCo}
    \lim_{\varepsilon\to0}v^\varepsilon = v \quad\text{strongly in $L_{L^2}^2(S_T)$ and thus in $L_{p'}^{p'}(S_T)$}.
  \end{align}
\end{proposition}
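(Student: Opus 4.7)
The plan is to deduce the result from the uniform bounds in Proposition \ref{P:uAv_Bdd} by combining standard reflexivity with the Aubin--Lions compact embedding of Lemma \ref{L:AuLi}.

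First, since $p,p'\in(1,\infty)$, each of the Bochner spaces $L_{W^{1,p}}^p(S_T)$, $L_{[W^{1,p}]^\ast}^{p'}(S_T)$, $L_{L^{p'}}^{p'}(S_T)$, and $L_{L^p}^p(S_T)$ is reflexive, and so is $\mathbb{W}^{p,p'}(S_T)$, which can be viewed as a closed subspace of the product $L_{W^{1,p}}^p(S_T)\times L_{[W^{1,p}]^\ast}^{p'}(S_T)$ via the graph map $u\mapsto(u,\partial_\Gamma^\bullet u)$. Given any sequence $\varepsilon_k\to 0$, the uniform bounds \eqref{E:uAv_Bdd} together with the Eberlein--\v{S}mulian theorem allow three successive subsequence extractions producing limits $v\in\mathbb{W}^{p,p'}(S_T)$, $\mathbf{w}\in[L_{L^{p'}}^{p'}(S_T)]^n$, and $\zeta\in L_{L^p}^p(S_T)$ along a single common subsequence, which yields \eqref{E:veps_WeCo}. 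The embedding $\mathbb{W}^{p,p'}(S_T)\hookrightarrow C_{L^2}(S_T)$ of Lemma \ref{L:Trans} (with $\varepsilon=0$) ensures that the trace-type regularity of $v$ stated in the proposition is automatic.

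For the strong convergence in $L_{L^2}^2(S_T)$, I would apply Lemma \ref{L:AuLi} with $\varepsilon=0$ under the abuse of notation \eqref{E:Abuse}, which gives that the embedding $\mathbb{W}^{p,p'}(S_T)\hookrightarrow L_{L^2}^2(S_T)$ is compact. Since $\{v^{\varepsilon_k}\}$ is bounded in $\mathbb{W}^{p,p'}(S_T)$, we can extract a further subsequence that converges strongly in $L_{L^2}^2(S_T)$, and by uniqueness of weak limits its limit coincides with $v$.

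The strong convergence in $L_{L^{p'}}^{p'}(S_T)$ then follows from a continuous embedding: since $p'<2$, and $|\Gamma_t|$ is uniformly bounded in $t$ by \eqref{E:Sur_Area}, H\"older's inequality in space gives $\|u\|_{L^{p'}(\Gamma_t)}\leq c\|u\|_{L^2(\Gamma_t)}$ uniformly in $t$; a further application of H\"older in time (using $T<\infty$) yields $\|u\|_{L_{L^{p'}}^{p'}(S_T)}\leq c_T\|u\|_{L_{L^2}^2(S_T)}$. Applying this bound to $v^{\varepsilon_k}-v$ concludes the proof. There is no real obstacle here: the statement is essentially a packaged corollary of the uniform estimates, and the only bookkeeping point is the diagonal extraction of a common subsequence realizing the three weak limits and the strong $L^2$ limit simultaneously.
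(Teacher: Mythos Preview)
Your proof is correct and follows essentially the same approach as the paper: the paper's own proof is a single sentence citing the uniform bounds \eqref{E:uAv_Bdd}, Lemmas \ref{L:Trans} and \ref{L:AuLi}, and the continuous embedding $L_{L^2}^2(S_T)\hookrightarrow L_{L^{p'}}^{p'}(S_T)$ by $p'<2$. You have simply spelled out the standard details (reflexivity, diagonal extraction, H\"older for the embedding) that are implicit in that sentence.
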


\begin{proof}
  The statement follows from \eqref{E:uAv_Bdd}, Lemmas \ref{L:Trans} and \ref{L:AuLi}, and the fact that $L_{L^2}^2(S_T)$ is continuously embedded into $L_{L^{p'}}^{p'}(S_T)$ by $p'<2$.
\end{proof}

It turns out later that the pair $(v,\zeta)$ is a unique weak solution to the limit problem \eqref{E:pLap_Lim} and $\mathbf{w}$ is uniquely determined by $v$ and $\zeta$.
Thus, the convergence \eqref{E:veps_WeCo} up to a subsequence can be replaced by that of the whole sequence $\varepsilon\to0$.
Based on this fact, we simply write the limit $\varepsilon\to0$ in the following discussions.

\begin{proposition} \label{P:gvep_WeCp}
  We have $gv^\varepsilon,gv\in\mathbb{W}^{p,p'}(S_T)$ and
  \begin{align} \label{E:gvep_WeCo}
    \lim_{\varepsilon\to0}\partial_\Gamma^\bullet(gv^\varepsilon) = \partial_\Gamma^\bullet(gv) \quad\text{weakly in $L_{[W^{1,p}]^\ast}^{p'}(S_T)$}.
  \end{align}
\end{proposition}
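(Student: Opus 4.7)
The plan is to reduce the claim to the product rule from Lemma \ref{L:Wpp_Mult} applied with multiplier $\chi=g$, which is smooth on the compact set $\overline{S_T}$, and then invoke the convergences already recorded in Proposition \ref{P:veps_Conv}.

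First I would observe that $v^\varepsilon,v\in\mathbb{W}^{p,p'}(S_T)$ by Propositions \ref{P:uAv_Bdd} and \ref{P:veps_Conv}, and that $g\in C^\infty(\overline{S_T})$, so Lemma \ref{L:Wpp_Mult} (used with $\varepsilon=0$ under the convention \eqref{E:Abuse}) gives $gv^\varepsilon, gv \in \mathbb{W}^{p,p'}(S_T)$ immediately, together with the identity
\[
\langle\partial_\Gamma^\bullet(gw),\eta\rangle_{W^{1,p}(\Gamma_t)} = \langle\partial_\Gamma^\bullet w,g\eta\rangle_{W^{1,p}(\Gamma_t)}+(w,\eta\,\partial_\Gamma^\bullet g)_{L^2(\Gamma_t)}
\]
for $w\in\{v^\varepsilon,v\}$, a.a.\ $t\in(0,T)$, and every $\eta\in L_{W^{1,p}}^p(S_T)$.

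Next, to test the weak convergence in $L_{[W^{1,p}]^\ast}^{p'}(S_T)$, I would fix an arbitrary $\eta\in L_{W^{1,p}}^p(S_T)$, integrate the two identities in time, and subtract. This reduces the claim to checking the two limits
\[
\int_0^T\langle\partial_\Gamma^\bullet v^\varepsilon-\partial_\Gamma^\bullet v,g\eta\rangle_{W^{1,p}(\Gamma_t)}\,dt\to 0, \qquad \int_0^T(v^\varepsilon-v,\eta\,\partial_\Gamma^\bullet g)_{L^2(\Gamma_t)}\,dt\to 0.
\]
The first follows from the weak convergence $\partial_\Gamma^\bullet v^\varepsilon\to\partial_\Gamma^\bullet v$ in $L_{[W^{1,p}]^\ast}^{p'}(S_T)$ given by \eqref{E:veps_WeCo}, once I note that $g\eta\in L_{W^{1,p}}^p(S_T)$ by smoothness of $g$ and \eqref{E:G_Bdd}. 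The second follows from the strong convergence $v^\varepsilon\to v$ in $L_{L^2}^2(S_T)$ from \eqref{E:veps_StCo}, once I confirm that $\eta\,\partial_\Gamma^\bullet g\in L_{L^2}^2(S_T)$, which is immediate from boundedness of $\partial_\Gamma^\bullet g$ on $\overline{S_T}$, the uniform area bound \eqref{E:Sur_Area}, and H\"{o}lder's inequality with $p>2$.

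I do not anticipate any genuine obstacle here: the argument is essentially bookkeeping, matching each factor in the product rule with the appropriate convergence. The one point worth highlighting is that because $\partial_\Gamma^\bullet v^\varepsilon$ converges only weakly, the lower-order term in the product rule cannot be controlled by weak convergence of $v^\varepsilon$ alone — one really needs the strong $L^2$-convergence, which is precisely what the Aubin--Lions lemma (Lemma \ref{L:AuLi}), used in Proposition \ref{P:veps_Conv}, supplies.
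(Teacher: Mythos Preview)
Your proposal is correct and follows essentially the same route as the paper: both invoke Lemma~\ref{L:Wpp_Mult} with $\chi=g$ to reduce to the weak convergence of $\partial_\Gamma^\bullet v^\varepsilon$ and of $v^\varepsilon$ already established in Proposition~\ref{P:veps_Conv}. One small correction: your closing remark that the lower-order term \emph{requires} the strong $L^2$-convergence is not accurate --- since $\eta\,\partial_\Gamma^\bullet g\in L_{L^{p'}}^{p'}(S_T)$ (by boundedness of $\partial_\Gamma^\bullet g$, $\eta\in L_{L^p}^p(S_T)$, and $p'<p$), the weak convergence $v^\varepsilon\to v$ in $L_{W^{1,p}}^p(S_T)$ from \eqref{E:veps_WeCo} already suffices, and this is exactly what the paper uses.
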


\begin{proof}
  By $g\in C^\infty(\overline{S_T})$ and Lemma \ref{L:Wpp_Mult}, we get $gv^\varepsilon,gv\in\mathbb{W}^{p,p'}(S_T)$.
  Also,
  \begin{alignat*}{2}
    \lim_{\varepsilon\to0}v^\varepsilon &= v &\quad &\text{weakly in $L_{W^{1,p}}^p(S_T)$}, \\
    \lim_{\varepsilon\to0}\partial_\Gamma^\bullet v^\varepsilon &= \partial_\Gamma^\bullet v &\quad &\text{weakly in $L_{[W^{1,p}]^\ast}^{p'}(S_T)$}
  \end{alignat*}
  by \eqref{E:veps_WeCo}, and thus we see by \eqref{E:WeMa_Mult} (with $\varepsilon=0$) that \eqref{E:gvep_WeCo} follows.
\end{proof}

\subsection{Limit weak form} \label{SS:TF_LiWF}
Next, we derive a weak form satisfied by $v$ and $\mathbf{w}$.

\begin{proposition} \label{P:LiWF_uw}
  For all $\eta\in L_{W^{1,p}}^p(S_T)$, we have
  \begin{align} \label{E:LiWF_uw}
    \begin{aligned}
      &\int_0^T\langle\partial_\Gamma^\bullet(gv),\eta\rangle_{W^{1,p}(\Gamma_t)}\,dt+\int_0^T(g\mathbf{w},\nabla_\Gamma\eta)_{L^2(\Gamma_t)}\,dt \\
      &\qquad +\int_0^T(gv,\mathbf{v}_\Gamma\cdot\nabla_\Gamma\eta+\eta\,\mathrm{div}_\Gamma\mathbf{v}_\Gamma)_{L^2(\Gamma_t)}\,dt = \int_0^T\langle f,g\eta\rangle_{W^{1,p}(\Gamma_t)}\,dt.
    \end{aligned}
  \end{align}
\end{proposition}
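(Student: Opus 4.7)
The plan is to substitute the test function $\psi = \bar{\eta}$ (the constant extension of an arbitrary $\eta \in L_{W^{1,p}}^p(S_T)$, which belongs to $L_{W^{1,p}}^p(Q_T^\varepsilon)$ by \eqref{E:CE_Lq}) into the weak form \eqref{E:WeFo_MTD}, divide through by $\varepsilon$, and pass to the limit term by term. Three structural mechanisms drive every calculation: the identity \eqref{E:Ave_Pair} converts $\varepsilon^{-1}\int_{\Omega_t^\varepsilon}(\cdot)\bar{\eta}\,dx$ into $\int_{\Gamma_t}g\mathcal{M}_\varepsilon(\cdot)\,\eta\,d\mathcal{H}^{n-1}$; the estimates \eqref{E:CEGr_NB} and \eqref{E:Vls_MTD} give size-$\varepsilon$ control on the errors $\nabla\bar{\eta} - \overline{\nabla_\Gamma\eta}$ and $\mathbf{v}^\varepsilon - \bar{\mathbf{v}}_\Gamma$; and the relation $1/p + 1/p' = 1$ makes the various error exponents sum to exactly $1$, cancelling the $\varepsilon^{-1}$ factor.

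For the material derivative term together with the $\bar{\eta}\,\mathrm{div}\,\mathbf{v}^\varepsilon$ piece of the convection, I would apply Lemma \ref{L:Ave_WeMt}, formula \eqref{E:AWM_Eq}, to rewrite their sum as $\int_0^T\{\langle\partial_\Gamma^\bullet(gv^\varepsilon),\eta\rangle_{W^{1,p}(\Gamma_t)} + (gv^\varepsilon,\eta\,\mathrm{div}_\Gamma\mathbf{v}_\Gamma)_{L^2(\Gamma_t)}\}\,dt$, which converges to the desired limit expression via Propositions \ref{P:veps_Conv} and \ref{P:gvep_WeCp}. For the gradient term I split $\nabla\bar{\eta} = \overline{\nabla_\Gamma\eta} + (\nabla\bar{\eta} - \overline{\nabla_\Gamma\eta})$; by \eqref{E:Ave_Pair} the principal part equals $\int_0^T(g\mathbf{w}^\varepsilon,\nabla_\Gamma\eta)_{L^2(\Gamma_t)}\,dt$, whose limit is $\int_0^T(g\mathbf{w},\nabla_\Gamma\eta)_{L^2(\Gamma_t)}\,dt$ by the weak convergence of $\mathbf{w}^\varepsilon$; the remainder is dominated by a H\"older product of $\|\nabla u^\varepsilon\|_{L_{L^p}^p(Q_T^\varepsilon)}^{p-1} = O(\varepsilon^{1/p'})$, from \eqref{E:Ener_MTD}, with the $O(\varepsilon^{1/p})$ factor coming from $|\nabla\bar{\eta} - \overline{\nabla_\Gamma\eta}| \leq c\varepsilon|\overline{\nabla_\Gamma\eta}|$ and \eqref{E:CE_Lq}, so it vanishes after multiplication by $\varepsilon^{-1}$.

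The remaining convection term $\varepsilon^{-1}\int(u^\varepsilon,\mathbf{v}^\varepsilon\cdot\nabla\bar{\eta})_{L^2(\Omega_t^\varepsilon)}\,dt$ is handled by the analogous decomposition $\mathbf{v}^\varepsilon\cdot\nabla\bar{\eta} = \bar{\mathbf{v}}_\Gamma\cdot\overline{\nabla_\Gamma\eta} + \bar{\mathbf{v}}_\Gamma\cdot(\nabla\bar{\eta}-\overline{\nabla_\Gamma\eta}) + (\mathbf{v}^\varepsilon-\bar{\mathbf{v}}_\Gamma)\cdot\nabla\bar{\eta}$: the principal piece yields $\int_0^T(gv^\varepsilon,\mathbf{v}_\Gamma\cdot\nabla_\Gamma\eta)_{L^2(\Gamma_t)}\,dt$, which converges by the strong convergence \eqref{E:veps_StCo}, and the two errors are controlled via \eqref{E:CEGr_NB}, \eqref{E:Vls_MTD}, and the bound $\|u^\varepsilon\|_{L_{L^{p'}}^{p'}(Q_T^\varepsilon)} = O(\varepsilon^{1/p'})$ obtained from \eqref{E:Ener_MTD}, \eqref{E:Vol_MTD}, and H\"older. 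Finally, for the forcing term I decompose $f^\varepsilon = \bar{f} + (f^\varepsilon - \bar{f})$: the defining identity \eqref{E:Def_CEFu} gives $\varepsilon^{-1}\langle\bar{f},\bar{\eta}\rangle_{W^{1,p}(\Omega_t^\varepsilon)} = \langle f, g\mathcal{M}_\varepsilon\bar{\eta}\rangle_{W^{1,p}(\Gamma_t)}$, which converges to $\int_0^T\langle f, g\eta\rangle_{W^{1,p}(\Gamma_t)}\,dt$ thanks to $\mathcal{M}_\varepsilon\bar{\eta} \to \eta$ in $W^{1,p}(\Gamma_t)$ from \eqref{E:AvCE_W1q}, while the remainder is killed by the strong convergence hypothesis \eqref{E:ExF_StCo} combined with $\|\bar{\eta}\|_{L_{W^{1,p}}^p(Q_T^\varepsilon)} \leq c\varepsilon^{1/p}\|\eta\|_{L_{W^{1,p}}^p(S_T)}$. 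The main technical obstacle is really just the bookkeeping required to verify that each error term scales with a strictly positive power of $\varepsilon$; conceptually the delicate step — the commutator of $\mathcal{M}_\varepsilon$ with the weak material derivative — has already been packaged into Lemma \ref{L:Ave_WeMt}.
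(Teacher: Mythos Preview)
Your proposal is correct and follows essentially the same route as the paper: substitute $\psi=\bar\eta$ in \eqref{E:WeFo_MTD}, divide by $\varepsilon$, and pass to the limit term by term, using \eqref{E:AWM_Eq} for the time-derivative block, the splitting $\nabla\bar\eta=\overline{\nabla_\Gamma\eta}+(\nabla\bar\eta-\overline{\nabla_\Gamma\eta})$ for the gradient and convection terms, and the decomposition $f^\varepsilon=\bar f+(f^\varepsilon-\bar f)$ for the forcing. The only cosmetic differences are that the paper bounds the convection errors with an $L^2\times L^2$ pairing (via $\|u^\varepsilon(t)\|_{L^2(\Omega_t^\varepsilon)}\le c_T\varepsilon^{1/2}$ from \eqref{E:Ener_MTD}) rather than your $L^{p'}\times L^p$ pairing, and it invokes the weak convergence \eqref{E:veps_WeCo} of $v^\varepsilon$ for the principal convection piece rather than the strong convergence \eqref{E:veps_StCo}; both choices work equally well.
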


\begin{proof}
  Let $\eta\in L_{W^{1,p}}^p(S_T)$.
  Then, $\bar{\eta}\in L_{W^{1,p}}^p(Q_T^\varepsilon)$ by \eqref{E:CE_Lq}.
  We set $\psi=\bar{\eta}$ in \eqref{E:WeFo_MTD} and divide both sides by $\varepsilon$ to get $\sum_{k=1}^3I_k^\varepsilon=I_4^\varepsilon$, where
  \begin{align} \label{Pf_LWF:Ieps}
    \begin{aligned}
      I_1^\varepsilon &:= \frac{1}{\varepsilon}\int_0^T\langle\partial_\varepsilon^\bullet u^\varepsilon,\bar{\eta}\rangle_{W^{1,p}(\Omega_t^\varepsilon)}\,dt+\frac{1}{\varepsilon}\int_0^T(u^\varepsilon,\bar{\eta}\,\mathrm{div}\,\mathbf{v}^\varepsilon)_{L^2(\Omega_t^\varepsilon)}\,dt, \\
      I_2^\varepsilon &:= \frac{1}{\varepsilon}\int_0^T(|\nabla u^\varepsilon|^{p-2}\nabla u^\varepsilon,\nabla\bar{\eta})_{L^2(\Omega_t^\varepsilon)}\,dt, \\
      I_3^\varepsilon &:= \frac{1}{\varepsilon}\int_0^T(u^\varepsilon,\mathbf{v}^\varepsilon\cdot\nabla\bar{\eta})_{L^2(\Omega_t^\varepsilon)}\,dt, \\
      I_4^\varepsilon &:= \frac{1}{\varepsilon}\int_0^T\langle f^\varepsilon,\bar{\eta}\rangle_{W^{1,p}(\Omega_t^\varepsilon)}\,dt.
    \end{aligned}
  \end{align}
  Let us consider the limit of each $I_k^\varepsilon$ as $\varepsilon\to0$.
  First, we have
  \begin{align} \label{Pf_LWF:I1}
    \begin{aligned}
      \lim_{\varepsilon\to0}I_1^\varepsilon &= \lim_{\varepsilon\to0}\left(\int_0^T\langle\partial_\Gamma^\bullet(gv^\varepsilon),\eta\rangle_{W^{1,p}(\Gamma_t)}\,dt+\int_0^T(gv^\varepsilon,\eta\,\mathrm{div}_\Gamma\mathbf{v}_\Gamma)_{L^2(\Gamma_t)}\,dt\right) \\
      &= \int_0^T\langle\partial_\Gamma^\bullet(gv),\eta\rangle_{W^{1,p}(\Gamma_t)}\,dt+\int_0^T(gv,\eta\,\mathrm{div}_\Gamma\mathbf{v}_\Gamma)_{L^2(\Gamma_t)}\,dt
    \end{aligned}
  \end{align}
  by \eqref{E:AWM_Eq}, \eqref{E:Def_veps}, \eqref{E:veps_WeCo}, and \eqref{E:gvep_WeCo}.
  Next, we split $I_2^\varepsilon=I_{2,1}^\varepsilon+I_{2,2}^\varepsilon$ into
  \begin{align*}
    I_{2,1}^\varepsilon &:= \frac{1}{\varepsilon}\int_0^T(|\nabla u^\varepsilon|^{p-2}\nabla u^\varepsilon,\nabla\bar{\eta})_{L^2(\Omega_t^\varepsilon)}\,dt-\int_0^T(g\mathbf{w}^\varepsilon,\nabla_\Gamma\eta)_{L^2(\Gamma_t)}\,dt, \\
    I_{2,2}^\varepsilon &:= \int_0^T(g\mathbf{w}^\varepsilon,\nabla_\Gamma\eta)_{L^2(\Gamma_t)}\,dt.
  \end{align*}
  Then, we see by \eqref{E:Ave_Pair} and \eqref{E:Def_veps} that
  \begin{align*}
    I_{2,1}^\varepsilon = \frac{1}{\varepsilon}\int_0^T\Bigl(|\nabla u^\varepsilon|^{p-2}\nabla u^\varepsilon,\nabla\bar{\eta}-\overline{\nabla_\Gamma\eta}\Bigr)_{L^2(\Gamma_t)}\,dt.
  \end{align*}
  Thus, by \eqref{E:CEGr_NB}, $|d|\leq c\varepsilon$ in $Q_T^\varepsilon$, H\"{o}lder's inequality, \eqref{E:CE_Lq}, and \eqref{E:Ener_MTD},
  \begin{align*}
    |I_{2,1}^\varepsilon| \leq c\|\nabla u^\varepsilon\|_{L_{L^p}^p(Q_T^\varepsilon)}^{p-1}\Bigl\|\overline{\nabla_\Gamma\eta}\Bigr\|_{L_{L^p}^p(Q_T^\varepsilon)} \leq c_T\varepsilon\|\nabla_\Gamma\eta\|_{L_{L^p}^p(S_T)} \to 0
  \end{align*}
  as $\varepsilon\to0$.
  From this result and \eqref{E:veps_WeCo}, we deduce that
  \begin{align} \label{Pf_LWF:I2}
    \lim_{\varepsilon\to0}I_2^\varepsilon = \lim_{\varepsilon\to0}I_{2,2}^\varepsilon = \int_0^T(g\mathbf{w},\nabla_\Gamma\eta)_{L^2(\Gamma_t)}\,dt.
  \end{align}
  Let us consider $I_3^\varepsilon$.
  We write $I_3^\varepsilon=I_{3,1}^\varepsilon+I_{3,2}^\varepsilon$, where
  \begin{align*}
    I_{3,1}^\varepsilon &:= \frac{1}{\varepsilon}\int_0^T(u^\varepsilon,\mathbf{v}^\varepsilon\cdot\nabla\bar{\eta})_{L^2(\Omega_t^\varepsilon)}-\frac{1}{\varepsilon}\int_0^T\Bigl(u^\varepsilon,\overline{\mathbf{v}_\Gamma\cdot\nabla_\Gamma\eta}\Bigr)_{L^2(\Omega_t^\varepsilon)}\,dt, \\
    I_{3,2}^\varepsilon &:= \frac{1}{\varepsilon}\int_0^T\Bigl(u^\varepsilon,\overline{\mathbf{v}_\Gamma\cdot\nabla_\Gamma\eta}\Bigr)_{L^2(\Omega_t^\varepsilon)}\,dt.
  \end{align*}
  It follows from \eqref{E:CEGr_NB}, $|d|\leq c\varepsilon$ in $\overline{Q_T^\varepsilon}$, \eqref{E:Vls_MTD}, \eqref{E:BdV_MTD}, and \eqref{E:CE_Lq} that
  \begin{align*}
    \Bigl\|\mathbf{v}^\varepsilon\cdot\nabla\bar{\eta}-\overline{\mathbf{v}_\Gamma\cdot\nabla_\Gamma\eta}\Bigr\|_{L^2(\Omega_t^\varepsilon)} \leq c\varepsilon\Bigl\|\overline{\nabla_\Gamma\eta}\Bigr\|_{L^2(\Omega_t^\varepsilon)} \leq c\varepsilon^{3/2}\|\nabla_\Gamma\eta\|_{L^2(\Gamma_t)}.
  \end{align*}
  Thus, by H\"{o}lder's inequality and \eqref{E:Ener_MTD}, we find that
  \begin{align*}
    |I_{3,1}^\varepsilon| \leq c\varepsilon^{1/2}\int_0^T\|u^\varepsilon\|_{L^2(\Omega_t^\varepsilon)}\|\nabla_\Gamma\eta\|_{L^2(\Gamma_t)}\,dt \leq c_T\varepsilon\int_0^T\|\nabla_\Gamma\eta\|_{L^2(\Gamma_t)}\,dt \to 0
  \end{align*}
  as $\varepsilon\to0$ (note that $\nabla_\Gamma\eta\in L_{L^p}^p(S_T)\subset L_{L^2}^1(S_T)$ by $p>2$).
  Moreover, since
  \begin{align*}
    I_{3,2}^\varepsilon = \int_0^T(gv^\varepsilon,\mathbf{v}_\Gamma\cdot\nabla_\Gamma\eta)_{L^2(\Gamma_t)}\,dt
  \end{align*}
  by \eqref{E:Ave_Pair} and \eqref{E:Def_veps}, it follows from \eqref{E:veps_WeCo} that
  \begin{align} \label{Pf_LWF:I3}
    \lim_{\varepsilon\to0}I_3^\varepsilon = \lim_{\varepsilon\to0}I_{3,2}^\varepsilon = \int_0^T(gv,\mathbf{v}_\Gamma\cdot\nabla_\Gamma\eta)_{L^2(\Gamma_t)}\,dt.
  \end{align}
  Lastly, we consider $I_4^\varepsilon$.
  We set
  \begin{align*}
    I_{4,1}^\varepsilon &:= \frac{1}{\varepsilon}\int_0^T\langle f^\varepsilon,\bar{\eta}\rangle_{W^{1,p}(\Omega_t^\varepsilon)}\,dt-\frac{1}{\varepsilon}\int_0^T\langle\bar{f},\bar{\eta}\rangle_{W^{1,p}(\Omega_t^\varepsilon)}\,dt, \\
    I_{4,2}^\varepsilon &:= \frac{1}{\varepsilon}\int_0^T\langle\bar{f},\bar{\eta}\rangle_{W^{1,p}(\Omega_t^\varepsilon)}\,dt-\int_0^T\langle f,g\eta\rangle_{W^{1,p}(\Gamma_t)}\,dt
  \end{align*}
  so that $I_4^\varepsilon=I_{4,1}^\varepsilon+I_{4,2}^\varepsilon+\int_0^T\langle f,g\eta\rangle_{W^{1,p}(\Gamma_t)}\,dt$.
  Then,
  \begin{align*}
    |I_{4,1}^\varepsilon| &\leq \frac{1}{\varepsilon}\|f^\varepsilon-\bar{f}\|_{L_{[W^{1,p}]^\ast}^{p'}(Q_T^\varepsilon)}\|\bar{\eta}\|_{L_{W^{1,p}}^p(Q_T^\varepsilon)} \\
    &\leq c\varepsilon^{-1+1/p}\|f^\varepsilon-\bar{f}\|_{L_{[W^{1,p}]^\ast}^{p'}(Q_T^\varepsilon)}\|\eta\|_{L_{W^{1,p}}^p(S_T)} \to 0
  \end{align*}
  as $\varepsilon\to0$ by H\"{o}lder's inequality, \eqref{E:CE_Lq}, $-1+1/p=-1/p'$, and \eqref{E:ExF_StCo}.
  Also,
  \begin{align*}
    |I_{4,2}^\varepsilon| &= \left|\int_0^T\langle f,g\mathcal{M}_\varepsilon\bar{\eta}\rangle_{W^{1,p}(\Gamma_t)}\,dt-\int_0^T\langle f,g\eta\rangle_{W^{1,p}(\Gamma_t)}\,dt\right| \\
    &\leq \|f\|_{L_{[W^{1,p}]^\ast}^{p'}(S_T)}\|g(\mathcal{M}_\varepsilon\bar{\eta}-\eta)\|_{L_{W^{1,p}}^p(S_T)} \\
    &\leq c\varepsilon\|f\|_{L_{[W^{1,p}]^\ast}^{p'}(S_T)}\|\eta\|_{L_{W^{1,p}}^p(S_T)} \to 0
  \end{align*}
  as $\varepsilon\to0$ by \eqref{E:Def_CEFu}, $g\in C^\infty(\overline{S_T})$, and \eqref{E:AvCE_W1q}.
  Therefore,
  \begin{align} \label{Pf_LWF:I4}
    \lim_{\varepsilon\to0}I_4^\varepsilon = \int_0^T\langle f,g\eta\rangle_{W^{1,p}(\Gamma_t)}\,dt,
  \end{align}
  and we get \eqref{E:LiWF_uw} by letting $\varepsilon\to0$ in $\sum_{k=1}^3I_k^\varepsilon=I_4^\varepsilon$ and using \eqref{Pf_LWF:I1}--\eqref{Pf_LWF:I4}.
\end{proof}

We also verify the initial condition for $v$.

\begin{proposition} \label{P:LiWF_Ini}
  We have $v(0)=v_0$ in $L^2(\Gamma_0)$.
\end{proposition}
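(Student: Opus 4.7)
The plan is to recover the initial trace of $v$ by testing the weak material derivatives of $v^\varepsilon$ and $v$ against a test function with prescribed initial value but vanishing final value, and then to pass to the limit $\varepsilon\to0$ using the available weak convergences together with hypothesis (b).

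More concretely, first I would choose test functions of the form $\eta(y,t) := \chi(t)\,\xi(\Phi_{-t}^0(y))$ with $\xi\in W^{1,p}(\Gamma_0)$ and $\chi\in C^\infty([0,T])$ satisfying $\chi(0)=1$, $\chi(T)=0$. By Assumption \ref{A:Flow_Sur} and Lemma \ref{L:MS_Reg}, such $\eta$ lies in $\mathbb{W}^{p,p'}(S_T)$, and $\eta(0)=\xi$ on $\Gamma_0$. Applying the transport formula \eqref{E:Trans} (with $\varepsilon=0$) to the pair $(v^\varepsilon,\eta)$ and integrating over $(0,T)$ yields
\begin{align*}
  -(v^\varepsilon(0),\eta(0))_{L^2(\Gamma_0)}
  &= \int_0^T\langle\partial_\Gamma^\bullet v^\varepsilon,\eta\rangle_{W^{1,p}(\Gamma_t)}\,dt
     +\int_0^T\langle\partial_\Gamma^\bullet\eta,v^\varepsilon\rangle_{W^{1,p}(\Gamma_t)}\,dt \\
  &\qquad +\int_0^T(v^\varepsilon,\eta\,\mathrm{div}_\Gamma\mathbf{v}_\Gamma)_{L^2(\Gamma_t)}\,dt,
\end{align*}
where I used $\eta(T)=0$. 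An entirely analogous identity holds for the limit pair $(v,\eta)$ thanks to $v\in\mathbb{W}^{p,p'}(S_T)$.

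Next I would pass to the limit $\varepsilon\to0$ in this identity. The key observation on the left-hand side is that $v^\varepsilon(0)=\mathcal{M}_\varepsilon u^\varepsilon(0)=\mathcal{M}_\varepsilon u_0^\varepsilon$ by the instantaneous nature of $\mathcal{M}_\varepsilon$ and the initial condition for $u^\varepsilon$; hence hypothesis (b), i.e.\ \eqref{E:u0Av_WeCo}, gives
\begin{align*}
  (v^\varepsilon(0),\eta(0))_{L^2(\Gamma_0)} = (\mathcal{M}_\varepsilon u_0^\varepsilon,\xi)_{L^2(\Gamma_0)} \to (v_0,\xi)_{L^2(\Gamma_0)}.
\end{align*}
For the right-hand side, the weak convergence \eqref{E:veps_WeCo} of $v^\varepsilon$ in $\mathbb{W}^{p,p'}(S_T)$, which in particular yields $v^\varepsilon\to v$ weakly in $L_{W^{1,p}}^p(S_T)$ and $\partial_\Gamma^\bullet v^\varepsilon\to\partial_\Gamma^\bullet v$ weakly in $L_{[W^{1,p}]^\ast}^{p'}(S_T)$, paired with the fact that $\eta$, $\partial_\Gamma^\bullet\eta$, and $\eta\,\mathrm{div}_\Gamma\mathbf{v}_\Gamma$ are smooth enough to serve as the test fields in the respective dualities, shows that each of the three integrals converges to its counterpart in the identity for $v$. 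Combining these limits with the identity for $v$ gives
\begin{align*}
  (v_0,\xi)_{L^2(\Gamma_0)} = (v(0),\xi)_{L^2(\Gamma_0)} \quad \text{for all } \xi\in W^{1,p}(\Gamma_0).
\end{align*}

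Finally, since $\Gamma_0$ is compact and smooth, $W^{1,p}(\Gamma_0)$ is dense in $L^2(\Gamma_0)$, so the equality above extends to all $\xi\in L^2(\Gamma_0)$, which is exactly $v(0)=v_0$ in $L^2(\Gamma_0)$. The whole argument is standard once one has the transport formula \eqref{E:Trans} and the weak convergences established in Proposition \ref{P:veps_Conv}; the only point requiring a tiny bit of care is ensuring that the initial traces $\eta(0)$ of admissible test functions exhaust a dense subspace of $L^2(\Gamma_0)$, which is handled by the explicit $\chi\otimes\xi$ construction above.
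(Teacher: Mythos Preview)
Your argument is correct and takes a genuinely different route from the paper's. The paper returns to the thin-domain weak form \eqref{E:WeFo_MTD}: it substitutes $\psi=\bar{\eta}$ (with the same type of test function you use), integrates by parts in time on $\Omega_t^\varepsilon$, averages via \eqref{E:Ave_Pair} to obtain a term $(g(0)\mathcal{M}_\varepsilon u_0^\varepsilon,\eta_0)_{L^2(\Gamma_0)}$, recycles the already-computed limits \eqref{Pf_LWF:I2}--\eqref{Pf_LWF:I4} of the remaining integrals, and then compares the result against the limit weak form \eqref{E:LiWF_uw} evaluated at the same $\eta$. Your approach is leaner: you never touch the thin domain again and instead apply the transport formula directly to $(v^\varepsilon,\eta)$ on $S_T$, using only the weak convergence $v^\varepsilon\to v$ in $\mathbb{W}^{p,p'}(S_T)$ from Proposition~\ref{P:veps_Conv}. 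This buys you a shorter proof that is independent of the specific structure of \eqref{E:LiWF_uw}, at the small cost of one extra claim that the paper's route avoids entirely: you need $v^\varepsilon(0)=\mathcal{M}_\varepsilon u_0^\varepsilon$ as an equality of time traces in $L^2(\Gamma_0)$. This is true but not tautological---it follows because $u^\varepsilon\in C_{L^2}(Q_T^\varepsilon)$, the instantaneous operator $\mathcal{M}_\varepsilon\colon L^2(\Omega_t^\varepsilon)\to L^2(\Gamma_t)$ is uniformly bounded and depends smoothly on $t$, so $t\mapsto\mathcal{M}_\varepsilon(u^\varepsilon(t))$ is continuous and therefore coincides with the continuous representative of $v^\varepsilon$ guaranteed by Lemma~\ref{L:Trans}. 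You should make this step explicit rather than leaving it at ``instantaneous nature''.
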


\begin{proof}
  Let $\Phi_{\pm(\cdot)}^0$ be the mappings given in Assumption \ref{A:Flow_Sur}.
  Also, let $\theta=\theta(t)$ be a smooth function on $[0,T]$ satisfying $\theta(0)=1$ and $\theta(T)=0$.
  We set
  \begin{align*}
    \eta(y,t) := \theta(t)\eta_0\bigl(\Phi_{-t}^0(y)\bigr), \quad (y,t) \in \overline{S_T}
  \end{align*}
  for any $\eta_0\in C^\infty(\Gamma_0)$.
  Then,
  \begin{align} \label{Pf_LIn:Test}
    \eta \in C^\infty(\overline{S_T}), \quad \eta(0) = \eta_0 \quad\text{on}\quad \Gamma_0, \quad \eta(T) = 0 \quad\text{on}\quad \Gamma_T.
  \end{align}
  We set $\psi=\bar{\eta}$ in \eqref{E:WeFo_MTD}, divide both sides by $\varepsilon$, and use \eqref{E:Trans}.
  Then, we have
  \begin{align*}
    -\frac{1}{\varepsilon}(u_0^\varepsilon,\bar{\eta}_0)_{L^2(\Omega_0^\varepsilon)}-\frac{1}{\varepsilon}\int_0^T\Bigl(u^\varepsilon,\overline{\partial_\Gamma^\bullet\eta}\Bigr)_{L^2(\Omega_t^\varepsilon)}\,dt+I_2^\varepsilon+I_3^\varepsilon = I_4^\varepsilon
  \end{align*}
  by \eqref{E:CE_Mat} and \eqref{Pf_LIn:Test}, where $I_2^\varepsilon$, $I_3^\varepsilon$, and $I_4^\varepsilon$ are given by \eqref{Pf_LWF:Ieps}.
  Let $\varepsilon\to0$.
  Then,
  \begin{align*}
    \lim_{\varepsilon\to0}\frac{1}{\varepsilon}(u_0^\varepsilon,\bar{\eta}_0)_{L^2(\Omega_0^\varepsilon)} = \lim_{\varepsilon\to0}(g\mathcal{M}_\varepsilon u_0^\varepsilon,\eta_0)_{L^2(\Gamma_0)} = (gv_0,\eta_0)_{L^2(\Gamma_0)}
  \end{align*}
  by \eqref{E:Ave_Pair} and \eqref{E:u0Av_WeCo}.
  Also, we see by \eqref{E:Ave_Pair}, \eqref{E:Def_veps}, and \eqref{E:veps_WeCo} that
  \begin{align*}
    \lim_{\varepsilon\to0}\frac{1}{\varepsilon}\int_0^T\Bigl(u^\varepsilon,\overline{\partial_\Gamma^\bullet\eta}\Bigr)_{L^2(\Omega_t^\varepsilon)}\,dt = \lim_{\varepsilon\to0}\int_0^T(gv^\varepsilon,\partial_\Gamma^\bullet\eta)_{L^2(\Gamma_t)}\,dt = \int_0^T(gv,\partial_\Gamma^\bullet\eta)_{L^2(\Gamma_t)}\,dt.
  \end{align*}
  By these results and \eqref{Pf_LWF:I2}--\eqref{Pf_LWF:I4}, we obtain
  \begin{align*}
    &(g(0)v_0,\eta_0)_{L^2(\Gamma_0)}-\int_0^T(gv,\partial_\Gamma^\bullet\eta)_{L^2(\Gamma_t)}\,dt+\int_0^T(g\mathbf{w},\nabla_\Gamma\eta)_{L^2(\Gamma_t)}\,dt \\
    &\qquad +\int_0^T(gv,\mathbf{v}_\Gamma\cdot\nabla\eta)_{L^2(\Gamma_t)}\,dt = \int_0^T\langle f,g\eta\rangle_{W^{1,p}(\Gamma_t)}\,dt.
  \end{align*}
  Also, we take the above $\eta$ in \eqref{E:LiWF_uw} and use \eqref{E:Trans} (with $\varepsilon=0$) and \eqref{Pf_LIn:Test} to get
  \begin{align*}
    &(g(0)v(0),\eta_0)_{L^2(\Gamma_0)}-\int_0^T(gv,\partial_\Gamma^\bullet\eta)_{L^2(\Gamma_t)}\,dt+\int_0^T(g\mathbf{w},\nabla_\Gamma\eta)_{L^2(\Gamma_t)}\,dt \\
    &\qquad +\int_0^T(gv,\mathbf{v}_\Gamma\cdot\nabla\eta)_{L^2(\Gamma_t)}\,dt = \int_0^T\langle f,g\eta\rangle_{W^{1,p}(\Gamma_t)}\,dt.
  \end{align*}
  Comparing these relations, we find that
  \begin{align*}
    (g(0)v(0),\eta_0)_{L^2(\Gamma_0)} = (g(0)v_0,\eta_0)_{L^2(\Gamma_0)} \quad\text{for all}\quad \eta_0 \in C^\infty(\Gamma_0).
  \end{align*}
  Thus, $v(0)=v_0$ in $L^2(\Gamma_0)$ by \eqref{E:G_Bdd} and the density of $C^\infty(\Gamma_0)$ in $L^2(\Gamma_0)$.
\end{proof}

\subsection{Characterization of the gradient term} \label{SS:TF_Char}
Let us characterize $\mathbf{w}$ and $\zeta$ in terms of $v$.
First, we determine the normal component of $\mathbf{w}$.
Recall that $C_{0,T}^\infty(\overline{S_T})$ is the function space given in \eqref{E:Def_S0T} with $\varepsilon=0$ (see also \eqref{E:Abuse} for the notations).

\begin{proposition} \label{P:Chw_Nor}
  We have $\mathbf{w}\cdot\bm{\nu}+V_\Gamma v=0$ a.e. on $S_T$, where $V_\Gamma=\mathbf{v}_\Gamma\cdot\bm{\nu}$ is the scalar outer normal velocity of $\Gamma_t$.
\end{proposition}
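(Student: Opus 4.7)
The plan is to extract the equation $\mathbf{w}\cdot\bm{\nu}+V_\Gamma v=0$ by testing the thin-domain weak form \eqref{E:WeFo_MTD} against $\psi=d\bar{\eta}$, where $\eta\in C_{0,T}^\infty(\overline{S_T})$ and $d(\cdot,t)$ is the signed distance from $\Gamma_t$. This test function lies in $C_{0,T}^\infty(\overline{Q_T^\varepsilon})$ and is carefully engineered: the prefactor $d$ is of size $\varepsilon$ on $\overline{Q_T^\varepsilon}$, so it suppresses contributions not meant to survive, while $\nabla d=\bar{\bm{\nu}}$ exposes the normal components of $\mathbf{w}^\varepsilon$ and $\mathbf{v}^\varepsilon$ after averaging via \eqref{E:Ave_Pair}.

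Expanding $\nabla(d\bar{\eta})=\bar{\eta}\bar{\bm{\nu}}+d\nabla\bar{\eta}$, the $p$-Laplacian integral in \eqref{E:WeFo_MTD} divided by $\varepsilon$ splits into a main piece $\int_0^T(g\mathbf{w}^\varepsilon\cdot\bm{\nu},\eta)_{L^2(\Gamma_t)}\,dt$, obtained by applying \eqref{E:Ave_Pair} to the $\bar{\eta}\bar{\bm{\nu}}$ part and using that $\bar{\bm{\nu}}$ is constant along normal fibres, plus a remainder bounded via \eqref{E:Ener_MTD}, \eqref{E:CE_Lq}, and $|d|\leq c\varepsilon$ by
\[
\frac{1}{\varepsilon}\,\|\nabla u^\varepsilon\|_{L_{L^p}^p(Q_T^\varepsilon)}^{p-1}\,\|d\nabla\bar{\eta}\|_{L_{L^p}^p(Q_T^\varepsilon)}\leq c\,\varepsilon^{1/p'+1/p}\|\eta\|_{L_{W^{1,p}}^p(S_T)}=c\varepsilon\|\eta\|_{L_{W^{1,p}}^p(S_T)}.
\]
Analogously, writing $\mathbf{v}^\varepsilon\cdot\nabla(d\bar{\eta})=\bar{\eta}(\mathbf{v}^\varepsilon\cdot\bar{\bm{\nu}})+d(\mathbf{v}^\varepsilon\cdot\nabla\bar{\eta})$, replacing $\mathbf{v}^\varepsilon\cdot\bar{\bm{\nu}}$ by $\bar{V}_\Gamma$ up to an $O(\varepsilon)$ error via \eqref{E:Vls_MTD}, and applying \eqref{E:Ave_Pair} produces the main piece $\int_0^T(gV_\Gamma v^\varepsilon,\eta)_{L^2(\Gamma_t)}\,dt$, with errors absorbed by the bound $\|u^\varepsilon\|_{L^2(\Omega_t^\varepsilon)}\leq c\varepsilon^{1/2}$ coming from \eqref{E:Ener_MTD}.

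For the remaining three terms of \eqref{E:WeFo_MTD} divided by $\varepsilon$ the goal is to show that they all vanish as $\varepsilon\to0$. The term involving $u^\varepsilon(d\bar{\eta})\,\mathrm{div}\,\mathbf{v}^\varepsilon$ is immediate from $|d|\leq c\varepsilon$, \eqref{E:BdV_MTD}, and the $L^2$-bound on $u^\varepsilon$. The material-derivative contribution is the real obstacle: the crude duality estimate $\varepsilon^{-1}\|\partial_\varepsilon^\bullet u^\varepsilon\|_{L_{[W^{1,p}]^\ast}^{p'}(Q_T^\varepsilon)}\|d\bar{\eta}\|_{L_{W^{1,p}}^p(Q_T^\varepsilon)}$ is only $O(1)$, because the exponents $1/p+1/p'=1$ precisely cancel the prefactor. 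I would therefore integrate by parts in time via \eqref{E:Def_WeMat}, transferring $\partial_\varepsilon^\bullet$ onto $d\bar{\eta}$; since $\partial_\varepsilon^\bullet(d\bar{\eta})=(\partial_\varepsilon^\bullet d)\bar{\eta}+d\,\overline{\partial_\Gamma^\bullet\eta}$ is pointwise $O(\varepsilon)$ by \eqref{E:DisMt_Bdd} and \eqref{E:CE_Mat}, the $L^2$--$L^2$ pairing with $u^\varepsilon$ is $O(\varepsilon)$. For the source I split $\langle f^\varepsilon,d\bar{\eta}\rangle=\langle f^\varepsilon-\bar{f},d\bar{\eta}\rangle+\langle\bar{f},d\bar{\eta}\rangle$: assumption \eqref{E:ExF_StCo} handles the first, while \eqref{E:Def_CEFu} together with the crucial bound $\|\mathcal{M}_\varepsilon(d\bar{\eta})\|_{W^{1,p}(\Gamma_t)}\leq c\varepsilon\|\eta\|_{W^{1,p}(\Gamma_t)}$ from \eqref{E:AvCE_W1q} yields an extra $\varepsilon$ that defeats the prefactor.

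Collecting and passing to the limit via \eqref{E:veps_WeCo} gives
\[
\int_0^T\int_{\Gamma_t}g(\mathbf{w}\cdot\bm{\nu}+V_\Gamma v)\eta\,d\mathcal{H}^{n-1}\,dt=0\quad\text{for every }\eta\in C_{0,T}^\infty(\overline{S_T}),
\]
and the pointwise a.e.\ conclusion follows from $g\geq c>0$ in \eqref{E:G_Bdd} together with the density of $C_{0,T}^\infty(\overline{S_T})$ in $L_{L^p}^p(S_T)$ furnished by Lemma~\ref{L:EvBo_Den}.
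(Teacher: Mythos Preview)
Your proposal is correct and follows essentially the same approach as the paper: testing \eqref{E:WeFo_MTD} with $\psi=d\bar{\eta}$, expanding $\nabla(d\bar{\eta})=\bar{\eta}\bar{\bm{\nu}}+d\nabla\bar{\eta}$, handling the material-derivative term by integration by parts in time together with \eqref{E:DisMt_Bdd} and \eqref{E:CE_Mat}, treating the source via the splitting $f^\varepsilon=(f^\varepsilon-\bar{f})+\bar{f}$ with \eqref{E:ExF_StCo} and \eqref{E:AvCE_W1q}, and passing to the limit via \eqref{E:veps_WeCo}. The only cosmetic difference is that the paper groups the material-derivative and the $\mathrm{div}\,\mathbf{v}^\varepsilon$ terms together as a single $K_1^\varepsilon$ and applies \eqref{E:Trans}, whereas you treat them separately and invoke \eqref{E:Def_WeMat}; the outcome is the same.
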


\begin{proof}
  Let $\eta\in C_{0,T}^\infty(\overline{S_T})$.
  We divide \eqref{E:WeFo_MTD} by $\varepsilon$ and substitute
  \begin{align*}
    \psi(x,t) = [d\bar{\eta}](x,t) = d(x,t)\bar{\eta}(x,t), \quad (x,t)\in Q_T^\varepsilon
  \end{align*}
  to get $\sum_{j=1}^3K_j^\varepsilon=K_4^\varepsilon$, where
  \begin{align*}
    K_1^\varepsilon &:= \frac{1}{\varepsilon}\int_0^T\langle\partial_\varepsilon^\bullet u^\varepsilon,d\bar{\eta}\rangle_{W^{1,p}(\Omega_\varepsilon)}\,dt+\frac{1}{\varepsilon}\int_0^T(u^\varepsilon,d\bar{\eta}\,\mathrm{div}\,\mathbf{v}^\varepsilon)_{L^2(\Omega_t^\varepsilon)}\,dt, \\
    K_2^\varepsilon &:= \frac{1}{\varepsilon}\int_0^T\bigl(|\nabla u^\varepsilon|^{p-2}\nabla u^\varepsilon,\nabla(d\bar{\eta})\bigr)_{L^2(\Omega_t^\varepsilon)}\,dt, \\
    K_3^\varepsilon &:= \frac{1}{\varepsilon}\int_0^T\bigl(u^\varepsilon,\mathbf{v}^\varepsilon\cdot\nabla(d\bar{\eta})\bigr)_{L^2(\Omega_t^\varepsilon)}\,dt, \\
    K_4^\varepsilon &:= \frac{1}{\varepsilon}\int_0^T\langle f^\varepsilon,d\bar{\eta}\rangle_{W^{1,p}(\Omega_t^\varepsilon)}\,dt.
  \end{align*}
  Let us first show $K_1^\varepsilon,K_4^\varepsilon\to0$ as $\varepsilon\to0$.
  We have
  \begin{align*}
    K_1^\varepsilon = -\frac{1}{\varepsilon}\int_0^T\bigl(u^\varepsilon,\partial_\varepsilon^\bullet(d\bar{\eta})\bigr)_{L^2(\Omega_t^\varepsilon)}\,dt = \frac{1}{\varepsilon}\int_0^T\Bigl(u^\varepsilon,\bar{\eta}\,\partial_\varepsilon^\bullet d+d\,\overline{\partial_\Gamma^\bullet\eta}\Bigr)_{L^2(\Omega_t^\varepsilon)}\,dt
  \end{align*}
  by \eqref{E:Trans}, $\bar{\eta}(0)=\bar{\eta}(T)=0$, and \eqref{E:CE_Mat}.
  Thus,
  \begin{align*}
    |K_1^\varepsilon| &\leq c\int_0^T\|u^\varepsilon\|_{L^2(\Omega_t^\varepsilon)}\left(\|\bar{\eta}\|_{L^2(\Omega_t^\varepsilon)}+\Bigl\|\overline{\partial_\Gamma^\bullet\eta}\Bigr\|_{L^2(\Omega_t^\varepsilon)}\right)\,dt \\
    &\leq c_T\varepsilon\int_0^T\Bigl(\|\eta\|_{L^2(\Gamma_t)}+\|\partial_\Gamma^\bullet\eta\|_{L^2(\Gamma_t)}\Bigr)\,dt \to 0
  \end{align*}
  as $\varepsilon\to0$ by $|d|\leq c\varepsilon$ in $\overline{Q_T^\varepsilon}$, \eqref{E:DisMt_Bdd}, H\"{o}lder's inequality, \eqref{E:CE_Lq}, and \eqref{E:Ener_MTD}.
  Next, let
  \begin{align*}
    K_{4,1}^\varepsilon := \frac{1}{\varepsilon}\int_0^T\langle f^\varepsilon-\bar{f},d\bar{\eta}\rangle_{W^{1,p}(\Omega_t^\varepsilon)}\,dt, \quad K_{4,2}^\varepsilon := \frac{1}{\varepsilon}\int_0^T\langle\bar{f},d\bar{\eta}\rangle_{W^{1,p}(\Omega_t^\varepsilon)}\,dt
  \end{align*}
  so that $K_4^\varepsilon=K_{4,1}^\varepsilon+K_{4,2}^\varepsilon$.
  Noting that $\nabla d=\overline{\bm{\nu}}$ in $\overline{Q_T^\varepsilon}$ by \eqref{E:Fermi}, we have
  \begin{align*}
    \nabla(d\bar{\eta}) = \overline{\eta\bm{\nu}}+d\nabla\bar{\eta}, \quad |\bar{\bm{\nu}}| = 1, \quad |d| \leq c\varepsilon \leq c \quad\text{in}\quad \overline{Q_T^\varepsilon}.
  \end{align*}
  By these relations and \eqref{E:CE_Lq}, we see that
  \begin{align*}
    \|d\bar{\eta}\|_{L_{W^{1,p}}^p(Q_T^\varepsilon)} \leq c\|\bar{\eta}\|_{L_{W^{1,p}}^p(Q_T^\varepsilon)} \leq c\varepsilon^{1/p}\|\eta\|_{L_{W^{1,p}}^p(S_T)}.
  \end{align*}
  Thus, it follows from H\"{o}lder's inequality, $-1+1/p=-1/p'$, and \eqref{E:ExF_StCo} that
  \begin{align*}
    |K_{4,1}^\varepsilon| &\leq \frac{1}{\varepsilon}\|f^\varepsilon-\bar{f}\|_{L_{[W^{1,p}]^\ast}^{p'}(Q_T^\varepsilon)}\|d\bar{\eta}\|_{L_{W^{1,p}}^p(Q_T^\varepsilon)} \\
    &\leq c\varepsilon^{-1+1/p}\|f^\varepsilon-\bar{f}\|_{L_{[W^{1,p}]^\ast}^{p'}(Q_T^\varepsilon)}\|\eta\|_{L_{W^{1,p}}^p(S_T)} \to 0
  \end{align*}
  as $\varepsilon\to0$.
  Also, by \eqref{E:Def_CEFu}, H\"{o}lder's inequality, $g\in C^\infty(\overline{S_T})$, and \eqref{E:AvCE_W1q}, we have
  \begin{align*}
    |K_{4,2}^\varepsilon| = \left|\int_0^T\langle f,g\mathcal{M}_\varepsilon(d\bar{\eta})\rangle_{W^{1,p}(\Gamma_t)}\,dt\right| &\leq \|f\|_{L_{[W^{1,p}]^\ast}^{p'}(S_T)}\|g\mathcal{M}_\varepsilon(d\bar{\eta})\|_{L_{W^{1,p}}^p(S_T)} \\
    &\leq c\varepsilon\|f\|_{L_{[W^{1,p}]^\ast}^{p'}(S_T)}\|\eta\|_{L_{W^{1,p}}^p(S_T)} \to 0
  \end{align*}
  as $\varepsilon\to0$.
  Hence, $K_4^\varepsilon=K_{4,1}^\varepsilon+K_{4,2}^\varepsilon\to0$ as $\varepsilon\to0$.

  Let us consider $K_2^\varepsilon$ and $K_3^\varepsilon$.
  We define
  \begin{align*}
    K_{2,1}^\varepsilon &:= \frac{1}{\varepsilon}\int_0^T(|\nabla u^\varepsilon|^{p-2}\nabla u^\varepsilon,d\nabla\bar{\eta})_{L^2(\Omega_t^\varepsilon)}\,dt, \\
    K_{2,2}^\varepsilon &:= \frac{1}{\varepsilon}\int_0^T\Bigl(|\nabla u^\varepsilon|^{p-2}\nabla u^\varepsilon,\overline{\eta\bm{\nu}}\Bigr)_{L^2(\Omega_t^\varepsilon)}\,dt.
  \end{align*}
  Then, $K_2^\varepsilon=K_{2,1}^\varepsilon+K_{2,2}^\varepsilon$ since $\nabla d=\bar{\bm{\nu}}$ in $\overline{Q_T^\varepsilon}$.
  We see that
  \begin{align*}
    |K_{2,1}^\varepsilon| \leq c\|\nabla u^\varepsilon\|_{L_{L^p}^p(Q_T^\varepsilon)}^{p-1}\Bigl\|\overline{\nabla_\Gamma\eta}\Bigr\|_{L_{L^p}^p(Q_T^\varepsilon)} \leq c_T\varepsilon\|\nabla_\Gamma\eta\|_{L_{L^p}^p(S_T)} \to 0
  \end{align*}
  as $\varepsilon\to0$ by $|d|\leq c\varepsilon$ in $\overline{Q_T^\varepsilon}$, H\"{o}lder's inequality, \eqref{E:CE_Lq}, and \eqref{E:Ener_MTD}.
  Also,
  \begin{align*}
    K_{2,2}^\varepsilon = \int_0^T(g\mathbf{w}^\varepsilon,\eta\bm{\nu})_{L^2(\Gamma_t)}\,dt = \int_0^T(g\mathbf{w}^\varepsilon\cdot\bm{\nu},\eta)_{L^2(\Gamma_t)}\,dt
  \end{align*}
  by \eqref{E:Ave_Pair} and \eqref{E:Def_veps}.
  Thus, we use \eqref{E:veps_WeCo} to get
  \begin{align} \label{Pf_CwN:K2}
    \lim_{\varepsilon\to0}K_2^\varepsilon = \lim_{\varepsilon\to0}K_{2,2}^\varepsilon = \int_0^T(g\mathbf{w}\cdot\bm{\nu},\eta)_{L^2(\Gamma_t)}\,dt.
  \end{align}
  For $K_3^\varepsilon$, we again use $\nabla d=\bar{\bm{\nu}}$ in $\overline{Q_T^\varepsilon}$ and write $K_3^\varepsilon=K_{3,1}^\varepsilon+K_{3,2}^\varepsilon$ with
  \begin{align*}
    K_{3,1}^\varepsilon &:= \frac{1}{\varepsilon}\int_0^T\bigl(u^\varepsilon,\mathbf{v}^\varepsilon\cdot[d\nabla\bar{\eta}]\bigr)_{L^2(\Omega_t^\varepsilon)}\,dt+\frac{1}{\varepsilon}\int_0^T\Bigl(u^\varepsilon,[\mathbf{v}^\varepsilon-\bar{\mathbf{v}}_\Gamma]\cdot\bigl[\overline{\eta\bm{\nu}}\bigr]\Bigr)_{L^2(\Omega_t^\varepsilon)}\,dt, \\
    K_{3,2}^\varepsilon &:= \frac{1}{\varepsilon}\int_0^T\Bigl(u^\varepsilon,\bar{\mathbf{v}}_\Gamma\cdot\bigl[\overline{\eta\bm{\nu}}\bigr]\Bigr)_{L^2(\Omega_t^\varepsilon)}\,dt.
  \end{align*}
  We apply $|\bar{\bm{\nu}}|=1$ and $|d|\leq c\varepsilon$ in $\overline{Q_T^\varepsilon}$, \eqref{E:CEGr_NB}, \eqref{E:Vls_MTD}, \eqref{E:BdV_MTD} to $K_{3,1}^\varepsilon$ and then use H\"{o}lder's inequality, \eqref{E:CE_Lq}, and \eqref{E:Ener_MTD} to find that
  \begin{align*}
    |K_{3,1}^\varepsilon| &\leq c\int_0^T\|u^\varepsilon\|_{L^2(\Omega_t^\varepsilon)}\left(\|\bar{\eta}\|_{L^2(\Omega_t^\varepsilon)}+\Bigl\|\overline{\nabla_\Gamma\eta}\Bigr\|_{L^2(\Omega_t^\varepsilon)}\right)\,dt \\
    &\leq c_T\varepsilon\int_0^T\|\eta\|_{H^1(\Gamma_t)}\,dt \to 0
  \end{align*}
  as $\varepsilon\to0$.
  Also, by \eqref{E:Ave_Pair}, \eqref{E:Def_veps}, and $V_\Gamma=\mathbf{v}_\Gamma\cdot\bm{\nu}$, we have
  \begin{align*}
    K_{3,2}^\varepsilon = \int_0^T\bigl(gv^\varepsilon,\mathbf{v}_\Gamma\cdot[\eta\bm{\nu}]\bigr)_{L^2(\Gamma_t)}\,dt = \int_0^T(gV_\Gamma v^\varepsilon,\eta)_{L^2(\Gamma_t)}\,dt.
  \end{align*}
  Hence, it follows from \eqref{E:veps_WeCo} that
  \begin{align} \label{Pf_CwN:K3}
    \lim_{\varepsilon\to0}K_3^\varepsilon = \lim_{\varepsilon\to0}K_{3,2}^\varepsilon = \int_0^T(gV_\Gamma v,\eta)_{L^2(\Gamma_t)}\,dt.
  \end{align}
  Now, we send $\varepsilon\to0$ in $\sum_{j=1}^3K_j^\varepsilon=K_4^\varepsilon$ and use $K_1^\varepsilon,K_4^\varepsilon\to0$, \eqref{Pf_CwN:K2}, and \eqref{Pf_CwN:K3} to get
  \begin{align*}
    \int_0^T\bigl(g[\mathbf{w}\cdot\bm{\nu}+V_\Gamma v],\eta\bigr)_{L^2(\Gamma_t)}\,dt = 0 \quad\text{for all}\quad \eta\in C_{0,T}^\infty(\overline{S_T}).
  \end{align*}
  By this result and \eqref{E:G_Bdd}, we conclude that $\mathbf{w}\cdot\bm{\nu}+V_\Gamma v=0$ a.e. on $S_T$.
\end{proof}

Next, we determine the whole part of $\mathbf{w}$ by a monotonicity argument.
In this step, the strong convergence \eqref{E:veps_StCo} of $v^\varepsilon$ plays a crucial role.

\begin{proposition} \label{P:Chw_All}
  We have
  \begin{align} \label{E:Chw_All}
    \mathbf{w} = (|\nabla_\Gamma v|^2+\zeta^2)^{(p-2)/2}(\nabla_\Gamma v+\zeta\bm{\nu}) \quad\text{a.e. on}\quad S_T.
  \end{align}
\end{proposition}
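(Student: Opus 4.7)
The plan is to run a Minty monotonicity argument in the averaged form, starting from the pointwise inequality \eqref{E:pVec_Coer}, which gives, for every $\mathbf{z}\in[L_{L^p}^p(S_T)]^n$ and every nonnegative $\theta\in C_c^\infty(0,T)$,
\begin{equation*}
  \frac{1}{\varepsilon}\int_0^T\theta(t)\Bigl(|\nabla u^\varepsilon|^{p-2}\nabla u^\varepsilon-|\bar{\mathbf{z}}|^{p-2}\bar{\mathbf{z}},\,\nabla u^\varepsilon-\bar{\mathbf{z}}\Bigr)_{L^2(\Omega_t^\varepsilon)}\,dt \geq 0.
\end{equation*}
Expanding the product yields four integrals. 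Three of them have the form $|\nabla u^\varepsilon|^{p-2}\nabla u^\varepsilon\cdot\bar{\mathbf{z}}$, $|\bar{\mathbf{z}}|^{p-2}\bar{\mathbf{z}}\cdot\nabla u^\varepsilon$, and $|\bar{\mathbf{z}}|^p$; I would handle them by writing $\bar{\mathbf{z}}=\overline{\mathbf{z}^\tau}+\overline{(\mathbf{z}\cdot\bm{\nu})\bm{\nu}}$, applying \eqref{E:Ave_Pair} together with the weak convergences in \eqref{E:veps_WeCo}, and using that $\mathcal{M}_\varepsilon(\nabla u^\varepsilon)\rightharpoonup \mathbf{b}_{v,\zeta}:=\nabla_\Gamma v+\zeta\bm{\nu}$ weakly in $[L_{L^p}^p(S_T)]^n$, which follows from Lemma \ref{L:ATDr_Diff} applied to $u^\varepsilon$ (so that $\mathbf{P}\mathcal{M}_\varepsilon(\nabla u^\varepsilon)=\nabla_\Gamma v^\varepsilon+O(\varepsilon)$) combined with the definition $\zeta^\varepsilon=\mathcal{M}_\varepsilon(\partial_\nu u^\varepsilon)$ for the normal part, plus Lemma \ref{L:CEGr_NB} to replace $\bar{\mathbf{z}}$ by $\overline{\mathbf{z}_\Gamma}$ up to an $O(\varepsilon)$ error.

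The delicate term is
\begin{equation*}
  J^\varepsilon(\theta) := \frac{1}{\varepsilon}\int_0^T\theta(t)\|\nabla u^\varepsilon\|_{L^p(\Omega_t^\varepsilon)}^p\,dt.
\end{equation*}
To evaluate $\lim_{\varepsilon\to0}J^\varepsilon(\theta)$ I would test $\psi=\theta u^\varepsilon$ in \eqref{E:WeFo_MTD} and use \eqref{E:Tr_Mult} with $\chi=\theta$; since $\theta(0)=\theta(T)=0$, the boundary traces $\|u^\varepsilon(0)\|_{L^2}^2$ and $\|u^\varepsilon(T)\|_{L^2}^2$ disappear, and $J^\varepsilon(\theta)$ reduces to a combination of the four integrals in \eqref{E:OuLi_Quad}, all of which are linear or quadratic in $u^\varepsilon$ and $f^\varepsilon$. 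Passing to the limit in these is possible because Proposition \ref{P:veps_Conv} gives $v^\varepsilon\to v$ strongly in $L_{L^2}^2(S_T)$ via Aubin--Lions (Lemma \ref{L:AuLi}), because $\mathbf{v}^\varepsilon\to\bar{\mathbf{v}}_\Gamma$ and $\mathrm{div}\,\mathbf{v}^\varepsilon\to\bar{\sigma}_{\Gamma,g}$ uniformly by \eqref{E:Vls_MTD}, and because the assumption \eqref{E:ExF_StCo} replaces $f^\varepsilon$ by $\bar{f}$ with negligible error; after applying \eqref{E:Ave_Pair} and \eqref{E:AvDf_Lq} to pull all integrals onto $\Gamma_t$, these produce exactly the same combination arising from testing $\eta=v$ in the limit weak form \eqref{E:LiWF_uw}, so I obtain
\begin{equation*}
  \lim_{\varepsilon\to0}J^\varepsilon(\theta) = \int_0^T\theta(t)(g\mathbf{w},\mathbf{b}_{v,\zeta})_{L^2(\Gamma_t)}\,dt,
\end{equation*}
where $\nabla_\Gamma v$ comes from $(g\mathbf{w},\nabla_\Gamma v)_{L^2(\Gamma_t)}$ in \eqref{E:LiWF_uw} and the $\zeta\bm{\nu}$ piece is supplied by Proposition \ref{P:Chw_Nor} rewritten as $\mathbf{w}\cdot\bm{\nu}\,\zeta=-V_\Gamma v\zeta$.

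Combining the four limits, the monotonicity inequality passes to
\begin{equation*}
  \int_0^T\theta(t)\int_{\Gamma_t}g\Bigl(\mathbf{w}-|\mathbf{z}|^{p-2}\mathbf{z}\Bigr)\cdot(\mathbf{b}_{v,\zeta}-\mathbf{z})\,d\mathcal{H}^{n-1}\,dt \geq 0
\end{equation*}
for every vector field $\mathbf{z}$ on $S_T$ (tangential or not) and every nonnegative $\theta\in C_c^\infty(0,T)$; I want to emphasize that the normal contribution $-\int_0^T\theta(g(\mathbf{w}\cdot\bm{\nu}+V_\Gamma v),\zeta)_{L^2(\Gamma_t)}\,dt$ that would otherwise obstruct the monotonicity vanishes identically thanks to Proposition \ref{P:Chw_Nor}. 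Now the Minty trick applies pointwise in $(y,t)$: taking $\mathbf{z}=\mathbf{b}_{v,\zeta}\pm\lambda\mathbf{h}$ for $\mathbf{h}\in[L^\infty(S_T)]^n$ and $\lambda\to0^+$, dividing by $\lambda$, and letting $\theta$ range over nonnegative test functions, I conclude that $g(\mathbf{w}-|\mathbf{b}_{v,\zeta}|^{p-2}\mathbf{b}_{v,\zeta})\cdot\mathbf{h}=0$ a.e.; since $g\geq c>0$ by \eqref{E:G_Bdd} and $\mathbf{h}$ is arbitrary, this yields \eqref{E:Chw_All}. The main obstacle is unquestionably the step $\lim_{\varepsilon\to0}J^\varepsilon(\theta)$, where the nonlinearity forces me to trade strong $L^p$ compactness (which is unavailable) for strong $L^2$ compactness of the \emph{averaged} solution $v^\varepsilon$ and to absorb the boundary time traces by choosing $\theta\in C_c^\infty(0,T)$; without this device the argument breaks down.
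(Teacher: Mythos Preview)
Your proposal is correct and follows essentially the same approach as the paper: start from the $\varepsilon$-level monotonicity inequality with a time cutoff $\theta\in C_c^\infty(0,T)$, pass the three cross terms to the limit via \eqref{E:Ave_Pair} and the weak convergences \eqref{E:veps_WeCo} together with $\mathcal{M}_\varepsilon(\nabla u^\varepsilon)\rightharpoonup\mathbf{b}_{v,\zeta}$, handle the quartic term $J^\varepsilon(\theta)$ by testing $\psi=\theta u^\varepsilon$ in \eqref{E:WeFo_MTD} and invoking the strong $L^2$ compactness of $v^\varepsilon$ from Lemma~\ref{L:AuLi}, and finish with Minty's trick after using Proposition~\ref{P:Chw_Nor} to kill the normal obstruction.

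One small point of precision: your sentence ``these produce exactly the same combination arising from testing $\eta=v$ in the limit weak form \eqref{E:LiWF_uw}'' is not literally true. The thin-domain velocity term $\frac{1}{\varepsilon}(u^\varepsilon,\mathbf{v}^\varepsilon\cdot\nabla u^\varepsilon)_{L^2(\Omega_t^\varepsilon)}$ converges (after averaging) to $(gv,\mathbf{v}_\Gamma\cdot\mathbf{b}_{v,\zeta})_{L^2(\Gamma_t)}$ with the \emph{full} vector $\mathbf{b}_{v,\zeta}$, whereas \eqref{E:LiWF_uw} with $\eta=\theta v$ produces only $(gv,\mathbf{v}_\Gamma\cdot\nabla_\Gamma v)_{L^2(\Gamma_t)}$. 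The discrepancy is exactly $-(gV_\Gamma v,\zeta)_{L^2(\Gamma_t)}$, and this is the term you then convert into $(g\mathbf{w}\cdot\bm{\nu},\zeta)_{L^2(\Gamma_t)}$ via Proposition~\ref{P:Chw_Nor}. You do identify this fix correctly, but stating it this way makes clear that Proposition~\ref{P:Chw_Nor} is used \emph{once} here, and your later remark that the obstruction $-\int_0^T\theta(g(\mathbf{w}\cdot\bm{\nu}+V_\Gamma v),\zeta)\,dt$ vanishes is then redundant rather than an additional step. Also, the reference to Lemma~\ref{L:CEGr_NB} for handling $\bar{\mathbf{z}}$ is misplaced (that lemma concerns gradients of constant extensions); since $\mathbf{z}$ already lives on $\Gamma_t$, no correction is needed there.
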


\begin{proof}
  Let $\mathbf{b}_{v,\zeta}^\varepsilon:=\nabla_\Gamma v^\varepsilon+\zeta^\varepsilon\bm{\nu}$ and $\mathbf{b}_{v,\zeta}:=\nabla_\Gamma v+\zeta\bm{\nu}$ on $S_T$.
  Then, by \eqref{E:veps_WeCo},
  \begin{align} \label{Pf_CwA:b_WC}
    \lim_{\varepsilon\to0}\mathbf{b}_{v,\zeta}^\varepsilon = \mathbf{b}_{v,\zeta} \quad\text{weakly in $[L_{L^p}^p(S_T)]^n$}.
  \end{align}
  Also, since $\mathbf{a}=\mathbf{P}\mathbf{a}+(\mathbf{a}\cdot\bm{\nu})\bm{\nu}$ on $S_T$ for $\mathbf{a}\in\mathbb{R}^n$ and
  \begin{align*}
    \mathcal{M}_\varepsilon(\nabla u^\varepsilon)\cdot\bm{\nu} = \mathcal{M}_\varepsilon(\nabla u^\varepsilon\cdot\bar{\bm{\nu}}) = \mathcal{M}_\varepsilon(\partial_\nu u^\varepsilon) = \zeta^\varepsilon \quad\text{on}\quad S_T,
  \end{align*}
  we have $\mathcal{M}_\varepsilon(\nabla u^\varepsilon)=\mathbf{P}\mathcal{M}_\varepsilon(\nabla u^\varepsilon)+\zeta^\varepsilon\bm{\nu}$ on $S_T$.
  Thus,
  \begin{align*}
    |\mathcal{M}_\varepsilon(\nabla u^\varepsilon)-\mathbf{b}_{v,\zeta}^\varepsilon| = |\mathbf{P}\mathcal{M}_\varepsilon(\nabla u^\varepsilon)-\nabla_\Gamma v^\varepsilon| \leq c\varepsilon\mathcal{M}_\varepsilon(|u^\varepsilon|+|\nabla u^\varepsilon|) \quad\text{on}\quad S_T
  \end{align*}
  by \eqref{E:Def_veps} and \eqref{E:ATDr_Diff}, and it follows from \eqref{E:Ave_Lq} and then \eqref{E:Ener_MTD} that
  \begin{align*}
    \|\mathcal{M}_\varepsilon(\nabla u^\varepsilon)-\mathbf{b}_{v,\zeta}^\varepsilon\|_{L_{L^p}^p(S_T)} &\leq c\varepsilon\|\mathcal{M}_\varepsilon(|u^\varepsilon|+|\nabla u^\varepsilon|)\|_{L_{L^p}^p(S_T)} \\
    &\leq c\varepsilon^{1-1/p}\|u^\varepsilon\|_{L_{W^{1,p}}^p(Q_T^\varepsilon)} \\
    &\leq c_T\varepsilon \to 0
  \end{align*}
  as $\varepsilon\to0$.
  By this result and \eqref{Pf_CwA:b_WC}, we find that
  \begin{align} \label{Pf_CwA:AvGr}
    \lim_{\varepsilon\to0}\mathcal{M}_\varepsilon(\nabla u^\varepsilon) = \mathbf{b}_{v,\zeta} \quad\text{weakly in $[L_{L^p}^p(S_T)]^n$}.
  \end{align}
  Now, let $\mathbf{z}\in[L_{L^p}^p(S_T)]^n$.
  For each $t\in(0,T)$, we see by \eqref{E:pVec_Coer} that
  \begin{align*}
    (|\nabla u^\varepsilon|^{p-2}\nabla u^\varepsilon-|\bar{\mathbf{z}}|^{p-2}\bar{\mathbf{z}},\nabla u^\varepsilon-\bar{\mathbf{z}})_{L^2(\Omega_t^\varepsilon)} \geq 0.
  \end{align*}
  Let $\theta=\theta(t)$ be any nonnegative function in $C_c^\infty(0,T)$.
  We multiply the above inequality by $\theta(t)/\varepsilon\geq0$ and integrate it over $(0,T)$.
  Then, we have $\sum_{k=1}^4L_k^\varepsilon\geq0$, where
  \begin{align*}
    L_1^\varepsilon &:= \frac{1}{\varepsilon}\int_0^T\theta(|\nabla u^\varepsilon|^{p-2}\nabla u^\varepsilon,\nabla u^\varepsilon)_{L^2(\Omega_t^\varepsilon)}\,dt, \\
    L_2^\varepsilon &:= -\frac{1}{\varepsilon}\int_0^T\theta(|\nabla u^\varepsilon|^{p-2}\nabla u^\varepsilon,\bar{\mathbf{z}})_{L^2(\Omega_t^\varepsilon)}\,dt, \\
    L_3^\varepsilon &:= -\frac{1}{\varepsilon}\int_0^T\theta(|\bar{\mathbf{z}}|^{p-2}\bar{\mathbf{z}},\nabla u^\varepsilon)_{L^2(\Omega_t^\varepsilon)}\,dt, \\
    L_4^\varepsilon &:= \frac{1}{\varepsilon}\int_0^T\theta(|\bar{\mathbf{z}}|^{p-2}\bar{\mathbf{z}},\bar{\mathbf{z}})_{L^2(\Omega_t^\varepsilon)}\,dt.
  \end{align*}
  Let us derive the limit of each $L_k^\varepsilon$ as $\varepsilon\to0$.
  We first consider $L_2^\varepsilon$ and get
  \begin{align} \label{Pf_CwA:L2}
    \lim_{\varepsilon\to0}L_2^\varepsilon = \lim_{\varepsilon\to0}\left(-\int_0^T\theta(g\mathbf{w}^\varepsilon,\mathbf{z})_{L^2(\Gamma_t)}\,dt\right) = -\int_0^T\theta(g\mathbf{w},\mathbf{z})_{L^2(\Gamma_t)}\,dt
  \end{align}
  by \eqref{E:Ave_Pair}, \eqref{E:Def_veps}, and \eqref{E:veps_WeCo}.
  Next, we have
  \begin{align} \label{Pf_CwA:L3}
    \begin{aligned}
      \lim_{\varepsilon\to0}L_3^\varepsilon &= \lim_{\varepsilon\to0}\left(-\int_0^T\theta\bigl(g|\mathbf{z}|^{p-2}\mathbf{z},\mathcal{M}_\varepsilon(\nabla u^\varepsilon)\bigr)_{L^2(\Gamma_t)}\,dt\right) \\
      &= -\int_0^T\theta(g|\mathbf{z}|^{p-2}\mathbf{z},\mathbf{b}_{v,\zeta})_{L^2(\Gamma_t)}\,dt
    \end{aligned}
  \end{align}
  by \eqref{E:Ave_Pair}, \eqref{Pf_CwA:AvGr}, and $\theta g|\mathbf{z}|^{p-2}\mathbf{z}\in[L_{L^{p'}}^{p'}(S_T)]^n$.
  To compute $L_4^\varepsilon$, we see that
  \begin{align*}
    \|\mathcal{M}_\varepsilon\bar{\mathbf{z}}-\mathbf{z}\|_{L_{L^p}^p(S_T)} \leq c\varepsilon\|\mathbf{z}\|_{L_{L^p}^p(S_T)} \to 0 \quad\text{as}\quad \varepsilon\to0
  \end{align*}
  by \eqref{E:Ave_CE} and the boundedness of $g_0$, $g_1$, and $J_k$ on $\overline{S_T}$.
  Thus, by \eqref{E:Ave_Pair},
  \begin{align} \label{Pf_CwA:L4}
    \lim_{\varepsilon\to0}L_4^\varepsilon = \lim_{\varepsilon\to0}\int_0^T\theta(g|\mathbf{z}|^{p-2}\mathbf{z},\mathcal{M}_\varepsilon\bar{\mathbf{z}})_{L^2(\Gamma_t)}\,dt = \int_0^T\theta(g|\mathbf{z}|^{p-2}\mathbf{z},\mathbf{z})_{L^2(\Gamma_t)}\,dt.
  \end{align}
  The most difficult term is $L_1^\varepsilon$.
  We set $\psi=\theta u^\varepsilon$ in \eqref{E:WeFo_MTD} to get $L_1^\varepsilon=\sum_{k=1}^3L_{1,k}^\varepsilon$, where
  \begin{align*}
    L_{1,1}^\varepsilon &:= -\frac{1}{\varepsilon}\int_0^T\langle\partial_\varepsilon^\bullet u^\varepsilon,\theta u^\varepsilon\rangle_{W^{1,p}(\Omega_t^\varepsilon)}\,dt, \\
    L_{1,2}^\varepsilon &:= -\frac{1}{\varepsilon}\int_0^T\theta(u^\varepsilon,\mathbf{v}^\varepsilon\cdot\nabla u^\varepsilon+u^\varepsilon\,\mathrm{div}\,\mathbf{v}^\varepsilon)_{L^2(\Omega_t^\varepsilon)}\,dt, \\
    L_{1,3}^\varepsilon &:= \frac{1}{\varepsilon}\int_0^T\theta\langle f^\varepsilon,u^\varepsilon\rangle_{W^{1,p}(\Omega_t^\varepsilon)}\,dt.
  \end{align*}
  First, we consider $L_{1,2}^\varepsilon$.
  Let $\sigma_{\Gamma,g}=\mathrm{div}_\Gamma\mathbf{v}_\Gamma+[\partial_\Gamma^\bullet g/g]$ on $\overline{S_T}$ and
  \begin{align*}
    \xi_1^\varepsilon := \mathbf{v}^\varepsilon\cdot\nabla u^\varepsilon+u^\varepsilon\,\mathrm{div}\,\mathbf{v}^\varepsilon, \quad \xi_2^\varepsilon := \bar{\mathbf{v}}_\Gamma\cdot\nabla u^\varepsilon+u^\varepsilon\bar{\sigma}_{\Gamma,g} \quad\text{on}\quad Q_T^\varepsilon.
  \end{align*}
  We see by \eqref{E:Vls_MTD}, the smoothness of $g$ and $\mathbf{v}_\Gamma$ on $\overline{S_T}$, and \eqref{E:G_Bdd} that
  \begin{align} \label{Pf_CwA:xi_Bd}
    |\xi_1^\varepsilon-\xi_2^\varepsilon| \leq c\varepsilon(|u^\varepsilon|+|\nabla u^\varepsilon|), \quad |\xi_2^\varepsilon| \leq c(|u^\varepsilon|+|\nabla u^\varepsilon|) \quad\text{on}\quad Q_T^\varepsilon.
  \end{align}
  Also, since $\mathcal{M}_\varepsilon\xi_2^\varepsilon=\mathbf{v}_\Gamma\cdot\mathcal{M}_\varepsilon(\nabla u^\varepsilon)+v^\varepsilon\sigma_{\Gamma,g}$ on $S_T$, we have
  \begin{align} \label{Pf_CwA:Avxi2}
    \lim_{\varepsilon\to0}\mathcal{M}_\varepsilon\xi_2^\varepsilon = \mathbf{v}_\Gamma\cdot\mathbf{b}_{v,\zeta}+v\sigma_{\Gamma,g} \quad\text{weakly in $L_{L^p}^p(S_T)$}
  \end{align}
  by \eqref{E:veps_WeCo} and \eqref{Pf_CwA:AvGr}.
  We split $L_{1,2}^\varepsilon=\sum_{k=1}^3L_{1,2,k}^\varepsilon$ into
  \begin{align*}
    L_{1,2,1}^\varepsilon &:= -\frac{1}{\varepsilon}\int_0^T\theta(u^\varepsilon,\xi_1^\varepsilon)_{L^2(\Omega_t^\varepsilon)}+\frac{1}{\varepsilon}\int_0^T\theta(u^\varepsilon,\xi_2^\varepsilon)_{L^2(\Omega_t^\varepsilon)}\,dt, \\
    L_{1,2,2}^\varepsilon &:= -\frac{1}{\varepsilon}\int_0^T\theta(u^\varepsilon,\xi_2^\varepsilon)_{L^2(\Omega_t^\varepsilon)}+\frac{1}{\varepsilon}\int_0^T\theta(\bar{v}^\varepsilon,\xi_2^\varepsilon)_{L^2(\Omega_t^\varepsilon)}\,dt, \\
    L_{1,2,3}^\varepsilon &:= -\frac{1}{\varepsilon}\int_0^T\theta(\bar{v}^\varepsilon,\xi_2^\varepsilon)_{L^2(\Omega_t^\varepsilon)}\,dt.
  \end{align*}
  To $L_{1,2,1}^\varepsilon$, we apply \eqref{Pf_CwA:xi_Bd}, $\theta\in C_c^\infty(0,T)$, H\"{o}lder's inequality, and \eqref{E:L2H1_MTD}.
  Then,
  \begin{align*}
    |L_{1,2,1}^\varepsilon| \leq c\int_0^T\|u^\varepsilon\|_{L^2(\Omega_t^\varepsilon)}\|u^\varepsilon\|_{H^1(\Omega_t^\varepsilon)}\,dt \leq c\int_0^T\|u^\varepsilon\|_{H^1(\Omega_t^\varepsilon)}^2\,dt \leq c_T\varepsilon.
  \end{align*}
  Also, since $v^\varepsilon=\mathcal{M}_\varepsilon u^\varepsilon$, we see by H\"{o}lder's inequality, \eqref{E:AvDf_Lq}, \eqref{Pf_CwA:xi_Bd}, and \eqref{E:L2H1_MTD} that
  \begin{align*}
    |L_{1,2,2}^\varepsilon| \leq \frac{c}{\varepsilon}\int_0^T\|u^\varepsilon-\bar{v}^\varepsilon\|_{L^2(\Omega_t^\varepsilon)}\|\xi_2^\varepsilon\|_{L^2(\Omega_t^\varepsilon)}\,dt \leq c\int_0^T\|u^\varepsilon\|_{H^1(\Omega_t^\varepsilon)}^2\,dt \leq c_T\varepsilon.
  \end{align*}
  Thus, $L_{1,2,k}^\varepsilon\to0$ as $\varepsilon\to0$ for $k=1,2$.
  Moreover, since
  \begin{align*}
    L_{1,2,3}^\varepsilon = -\int_0^T\theta(gv^\varepsilon,\mathcal{M}_\varepsilon\xi_2^\varepsilon)_{L^2(\Gamma_t)}\,dt
  \end{align*}
  by \eqref{E:Ave_Pair}, we apply the strong convergence \eqref{E:veps_StCo} of $v^\varepsilon$ and the weak convergence \eqref{Pf_CwA:Avxi2} of $\mathcal{M}_\varepsilon\xi_2^\varepsilon$ to the right-hand side to find that
  \begin{align} \label{Pf_CwA:L1_02}
    \lim_{\varepsilon\to0}L_{1,2}^\varepsilon = \lim_{\varepsilon\to0}L_{1,2,3}^\varepsilon = -\int_0^T\theta(gv,\mathbf{v}_\Gamma\cdot\mathbf{b}_{v,\zeta}+v\sigma_{\Gamma,g})_{L^2(\Gamma_t)}\,dt.
  \end{align}
  Next, we apply \eqref{E:Tr_Mult} to $L_{1,1}^\varepsilon$.
  Then, since $\theta=\theta(t)$ and $\theta(0)=\theta(T)=0$, we have
  \begin{align*}
    L_{1,1}^\varepsilon = \frac{1}{2\varepsilon}\int_0^T\theta'(u^\varepsilon,u^\varepsilon)_{L^2(\Omega_t^\varepsilon)}\,dt+\frac{1}{2\varepsilon}\int_0^T\theta(u^\varepsilon,u^\varepsilon\,\mathrm{div}\,\mathbf{v}^\varepsilon)_{L^2(\Omega_t^\varepsilon)}\,dt
  \end{align*}
  with $\theta'=d\theta/dt$.
  Thus, as in the case of $L_{1,2}^\varepsilon$, we can get
  \begin{align} \label{Pf_CwA:L1_01}
    \lim_{\varepsilon\to0}L_{1,1}^\varepsilon = \frac{1}{2}\int_0^T\theta'(gv,v)_{L^2(\Gamma_t)}\,dt+\frac{1}{2}\int_0^T\theta(gv,v\sigma_{\Gamma,g})_{L^2(\Gamma_t)}\,dt
  \end{align}
  by using \eqref{E:Vls_MTD}, \eqref{E:L2H1_MTD}, \eqref{E:Ave_Pair}, \eqref{E:AvDf_Lq}, and the weak and strong convergence \eqref{E:veps_WeCo} and \eqref{E:veps_StCo} of $v^\varepsilon$.
  For $L_{1,3}^\varepsilon$, we split it into the sum of
  \begin{align*}
    L_{1,3,1}^\varepsilon := \frac{1}{\varepsilon}\int_0^T\theta\langle f^\varepsilon-\bar{f},u^\varepsilon\rangle_{W^{1,p}(\Omega_t^\varepsilon)}\,dt, \quad L_{1,3,2}^\varepsilon := \frac{1}{\varepsilon}\int_0^T\theta\langle\bar{f},u^\varepsilon\rangle_{W^{1,p}(\Omega_t^\varepsilon)}\,dt.
  \end{align*}
  By H\"{o}lder's inequality, \eqref{E:Ener_MTD}, $-1+1/p=-1/p'$, and \eqref{E:ExF_StCo}, we see that
  \begin{align*}
    |L_{1,3,1}^\varepsilon| &\leq \frac{c}{\varepsilon}\|f^\varepsilon-\bar{f}\|_{L_{[W^{1,p}]^\ast}^{p'}(Q_T^\varepsilon)}\|u^\varepsilon\|_{L_{W^{1,p}}^p(Q_T^\varepsilon)} \\
    &\leq c\varepsilon^{-1+1/p}\|f^\varepsilon-\bar{f}\|_{L_{[W^{1,p}]^\ast}^{p'}(Q_T^\varepsilon)} \to 0
  \end{align*}
  as $\varepsilon\to0$.
  Also, since $\bar{f}$ is given by \eqref{E:Def_CEFu} and $v^\varepsilon=\mathcal{M}_\varepsilon u^\varepsilon$, we have
  \begin{align} \label{Pf_CwA:L1_03}
    \lim_{\varepsilon\to0}L_{1,3}^\varepsilon = \lim_{\varepsilon\to0}L_{1,3,2}^\varepsilon = \lim_{\varepsilon\to0}\int_0^T\theta\langle f,gv^\varepsilon\rangle_{W^{1,p}(\Gamma_t)}\,dt = \int_0^T\theta\langle f,gv\rangle_{W^{1,p}(\Gamma_t)}\,dt
  \end{align}
  by \eqref{E:veps_WeCo}.
  Thus, by $L_1^\varepsilon=\sum_{k=1}^3L_{1,k}^\varepsilon$ and \eqref{Pf_CwA:L1_02}--\eqref{Pf_CwA:L1_03}, we find that
  \begin{align} \label{Pf_CwA:L1_all}
    \begin{aligned}
      \lim_{\varepsilon\to0}L_1^\varepsilon &= \frac{1}{2}\int_0^T\theta'(gv,v)_{L^2(\Gamma_t)}\,dt-\int_0^T\theta(gv,\mathbf{v}_\Gamma\cdot\mathbf{b}_{v,\zeta})_{L^2(\Gamma_t)}\,dt \\
      &\qquad -\frac{1}{2}\int_0^T\theta(gv,v\sigma_{\Gamma,g})_{L^2(\Gamma_t)}\,dt+\int_0^T\theta\langle f,gv\rangle_{W^{1,p}(\Gamma_t)}\,dt.
    \end{aligned}
  \end{align}
  On the other hand, we apply \eqref{E:WeMa_Mult} (with $\varepsilon=0$) to \eqref{E:LiWF_uw} to get
  \begin{align*}
    &\int_0^T\langle\partial_\Gamma^\bullet v,g\eta\rangle_{W^{1,p}(\Gamma_t)}\,dt+\int_0^T(v,\eta\partial_\Gamma^\bullet g)_{L^2(\Gamma_t)}\,dt+\int_0^T(g\mathbf{w},\nabla_\Gamma\eta)_{L^2(\Gamma_t)}\,dt \\
    &\qquad +\int_0^T(gv,\mathbf{v}_\Gamma\cdot\nabla_\Gamma\eta+\eta\,\mathrm{div}_\Gamma\mathbf{v}_\Gamma)_{L^2(\Gamma_t)}\,dt = \int_0^T\langle f,g\eta\rangle_{W^{1,p}(\Gamma_t)}\,dt
  \end{align*}
  for $\eta\in L_{W^{1,p}}^p(S_T)$.
  In this equality, we set $\eta=\theta v$, use
  \begin{align*}
    \int_0^T\langle\partial_\Gamma^\bullet v,g\theta v\rangle_{W^{1,p}(\Gamma_t)}\,dt = -\frac{1}{2}\int_0^T\Bigl\{\bigl(v,v\partial_\Gamma^\bullet(g\theta)\bigr)_{L^2(\Gamma_t)}+(v,g\theta v\,\mathrm{div}_\Gamma\mathbf{v}_\Gamma)_{L^2(\Gamma_t)}\Bigr\}\,dt
  \end{align*}
  by \eqref{E:Tr_Mult} (with $\varepsilon=0$ and $\chi=g\theta$) and $\theta(0)=\theta(T)=0$, and apply
  \begin{align*}
    \partial_\Gamma^\bullet(g\theta) = \theta\partial_\Gamma^\bullet g+g\partial_\Gamma^\bullet\theta = \theta\partial_\Gamma^\bullet g+g\theta' \quad\text{on}\quad \overline{S_T}
  \end{align*}
  by $\theta=\theta(t)$.
  Then, we find that
  \begin{align*}
    &-\frac{1}{2}\int_0^T\theta'(gv,v)_{L^2(\Gamma_t)}\,dt+\int_0^T\theta(g\mathbf{w},\nabla_\Gamma v)_{L^2(\Gamma_t)}\,dt+\int_0^T\theta(gv,\mathbf{v}_\Gamma\cdot\nabla_\Gamma v)_{L^2(\Gamma_t)}\,dt \\
    &\qquad\qquad +\frac{1}{2}\int_0^T\theta\Bigl\{(v,v\partial_\Gamma^\bullet g)_{L^2(\Gamma_t)}+(gv,v\,\mathrm{div}_\Gamma\mathbf{v}_\Gamma)_{L^2(\Gamma_t)}\Bigr\}\,dt = \int_0^T\theta\langle f,gv\rangle_{W^{1,p}(\Gamma_t)}\,dt.
  \end{align*}
  We substitute this expression for the last term of \eqref{Pf_CwA:L1_all} and use
  \begin{align*}
    (gv,v\sigma_{\Gamma,g})_{L^2(\Gamma_t)} = (v,v\partial_\Gamma^\bullet g)_{L^2(\Gamma_t)}+(gv,v\,\mathrm{div}_\Gamma\mathbf{v}_\Gamma)_{L^2(\Gamma_t)}
  \end{align*}
  by $\sigma_{\Gamma,g}=\mathrm{div}_\Gamma\mathbf{v}_\Gamma+[\partial_\Gamma^\bullet g/g]$ on $\overline{S_T}$.
  Then, we obtain
  \begin{align} \label{Pf_CwA:L1_ano}
    \lim_{\varepsilon\to0}L_1^\varepsilon = \int_0^T\theta(g\mathbf{w},\nabla_\Gamma v)_{L^2(\Gamma_t)}\,dt+\int_0^T\theta\bigl(gv,\mathbf{v}_\Gamma\cdot[\nabla_\Gamma v-\mathbf{b}_{v,\zeta}]\bigr)_{L^2(\Gamma_t)}\,dt.
  \end{align}
  Now, we send $\varepsilon\to0$ in $\sum_{k=1}^4L_k^\varepsilon\geq0$ and use \eqref{Pf_CwA:L2}--\eqref{Pf_CwA:L4} and \eqref{Pf_CwA:L1_ano} to get
  \begin{align*}
   &\int_0^T\theta(g\mathbf{w},\nabla_\Gamma v-\mathbf{z})_{L^2(\Gamma_t)}\,dt+\int_0^T\theta(g|\mathbf{z}|^{p-2}\mathbf{z},\mathbf{z}-\mathbf{b}_{v,\zeta})_{L^2(\Gamma_t)}\,dt \\
   &\qquad +\int_0^T\theta\bigl(gv,\mathbf{v}_\Gamma\cdot[\nabla_\Gamma v-\mathbf{b}_{v,\zeta}]\bigr)_{L^2(\Gamma_t)}\,dt \geq 0.
  \end{align*}
  Moreover, since $\nabla_\Gamma v=\mathbf{b}_{v,\zeta}-\zeta\bm{\nu}$ and $V_\Gamma=\mathbf{v}_\Gamma\cdot\bm{\nu}$, the above inequality reads
  \begin{align*}
    \int_0^T\theta\bigl(g[\mathbf{w}-|\mathbf{z}|^{p-2}\mathbf{z}],\mathbf{b}_{v,\zeta}-\mathbf{z}\bigr)_{L^2(\Gamma_t)}\,dt-\int_0^T\theta\bigl(g[\mathbf{w}\cdot\bm{\nu}+V_\Gamma v],\zeta\bigr)_{L^2(\Gamma_t)}dt \geq 0,
  \end{align*}
  and the second integral vanishes by Proposition \ref{P:Chw_Nor}.
  Therefore,
  \begin{align} \label{Pf_CwA:bz}
    \int_0^T\theta\bigl(g[\mathbf{w}-|\mathbf{z}|^{p-2}\mathbf{z}],\mathbf{b}_{v,\zeta}-\mathbf{z}\bigr)_{L^2(\Gamma_t)}\,dt \geq 0
  \end{align}
  for all $\mathbf{z}\in[L_{L^p}^p(S_T)]^n$ and all nonnegative $\theta\in C_c^\infty(0,T)$.
  Now, we set
  \begin{align*}
    \mathbf{z} = \mathbf{z}_h := \mathbf{b}_{v,\zeta}-h\bm{\eta} \in [L_{L^p}^p(S_T)]^n \quad\text{for any}\quad \bm{\eta}\in[C_{0,T}^\infty(\overline{S_T})]^n, \quad h\in(0,1)
  \end{align*}
  in \eqref{Pf_CwA:bz} and divide both sides by $h>0$.
  Then, we have
  \begin{align} \label{Pf_CwA:Ineq}
    \int_0^T\theta\bigl(g[\mathbf{w}-|\mathbf{z}_h|^{p-2}\mathbf{z}_h],\bm{\eta}\bigr)_{L^2(\Gamma_t)}\,dt \geq 0.
  \end{align}
  Moreover, we see by \eqref{E:pVec_Lip}, $|\mathbf{z}_h|\leq|\mathbf{b}_{v,\zeta}|+|\bm{\eta}|$, and Young's inequality that
  \begin{align*}
    \bigl|\,|\mathbf{z}_h|^{p-2}\mathbf{z}_h-|\mathbf{b}_{v,\zeta}|^{p-2}\mathbf{b}_{v,\zeta}\,\bigr| &\leq ch\Bigl(|\mathbf{b}_{v,\zeta}|^{p-2}+|\bm{\eta}|^{p-2}\Bigr)|\bm{\eta}| \\
    &\leq ch\Bigl(|\mathbf{b}_{v,\zeta}|^{p-1}+|\bm{\eta}|^{p-1}\Bigr)
  \end{align*}
  on $S_T$.
  From this inequality and $1/p+1/p'=1$, it follows that
  \begin{align*}
    \bigl\|\,|\mathbf{z}_h|^{p-2}\mathbf{z}_h-|\mathbf{b}_{v,\zeta}|^{p-2}\mathbf{b}_{v,\zeta}\,\bigr\|_{L_{L^{p'}}^{p'}(S_T)} \leq ch\Bigl(\|\mathbf{b}_{v,\zeta}\|_{L_{L^p}^p(S_T)}^{p-1}+\|\bm{\eta}\|_{L_{L^p}^p(S_T)}^{p-1}\Bigr) \to 0
  \end{align*}
  as $h\to0$.
  Thus, letting $h\to0$ in \eqref{Pf_CwA:Ineq}, we find that
  \begin{align*}
    \int_0^T\theta\bigl(g[\mathbf{w}-|\mathbf{b}_{v,\zeta}|^{p-2}\mathbf{b}_{v,\zeta}],\bm{\eta}\bigr)_{L^2(\Gamma_t)}\,dt \geq 0,
  \end{align*}
  and we also have the opposite inequality by replacing $\bm{\eta}$ with $-\bm{\eta}$.
  Therefore,
  \begin{align} \label{Pf_CwA:theta}
    \int_0^T\theta\bigl(g[\mathbf{w}-|\mathbf{b}_{v,\zeta}|^{p-2}\mathbf{b}_{v,\zeta}],\bm{\eta}\bigr)_{L^2(\Gamma_t)}\,dt = 0.
  \end{align}
  Since $\bm{\eta}\in[C_{0,T}^\infty(\overline{S_T})]^n$, we can take a small $t_0>0$ such that
  \begin{align*}
    \bm{\eta}(\cdot,t) = \mathbf{0}_n \quad\text{on}\quad \Gamma_t \quad\text{for all}\quad t\in[0,t_0]\cup[T-t_0,T],
  \end{align*}
  where $\mathbf{0}_n$ is the zero vector in $\mathbb{R}^n$.
  Then, taking a nonnegative function $\theta\in C_c^\infty(0,T)$ such that $\theta(t)=1$ for all $t\in[t_0,T-t_0]$ in \eqref{Pf_CwA:theta}, we find that
  \begin{align*}
    \int_0^T\bigl(g[\mathbf{w}-|\mathbf{b}_{v,\zeta}|^{p-2}\mathbf{b}_{v,\zeta}],\bm{\eta}\bigr)_{L^2(\Gamma_t)}\,dt = 0 \quad\text{for all}\quad \bm{\eta}\in[C_{0,T}^\infty(\overline{S_T})]^n.
  \end{align*}
  By this equality and \eqref{E:G_Bdd}, we obtain $\mathbf{w}=|\mathbf{b}_{v,\zeta}|^{p-2}\mathbf{b}_{v,\zeta}$ a.e. on $S_T$.
  Moreover,
  \begin{align*}
    \mathbf{b}_{v,\zeta} = \nabla_\Gamma v+\zeta\bm{\nu}, \quad |\mathbf{b}_{v,\zeta}| = (|\nabla_\Gamma v|^2+\zeta^2)^{1/2} \quad\text{on}\quad S_T
  \end{align*}
  by $\nabla_\Gamma v\cdot\bm{\nu}=0$ and $|\bm{\nu}|=1$ on $S_T$.
  Therefore, \eqref{E:Chw_All} follows.
\end{proof}

We can also characterize $\zeta$ in terms of $v$.

\begin{proposition} \label{P:Ch_zeta}
  We have
  \begin{align} \label{E:Ch_zeta}
    (|\nabla_\Gamma v|^2+\zeta^2)^{(p-2)/2}\zeta+V_\Gamma v = 0 \quad\text{a.e. on}\quad S_T.
  \end{align}
\end{proposition}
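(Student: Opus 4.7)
The proof plan is short because all the hard work has already been done in Propositions \ref{P:Chw_Nor} and \ref{P:Chw_All}. My approach is to combine these two results algebraically.

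First, I would take the characterization of $\mathbf{w}$ from Proposition \ref{P:Chw_All},
\begin{align*}
  \mathbf{w} = (|\nabla_\Gamma v|^2+\zeta^2)^{(p-2)/2}(\nabla_\Gamma v+\zeta\bm{\nu}) \quad\text{a.e. on}\quad S_T,
\end{align*}
and compute its normal component by taking the inner product with $\bm{\nu}$. Using the basic identities $\nabla_\Gamma v\cdot\bm{\nu}=0$ (since $\nabla_\Gamma v=\mathbf{P}\nabla\tilde{v}$ with $\mathbf{P}\bm{\nu}=\mathbf{0}_n$) and $|\bm{\nu}|^2=1$ on $\overline{S_T}$, I obtain
\begin{align*}
  \mathbf{w}\cdot\bm{\nu} = (|\nabla_\Gamma v|^2+\zeta^2)^{(p-2)/2}\zeta \quad\text{a.e. on}\quad S_T.
\end{align*}

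Then I would substitute this into the identity $\mathbf{w}\cdot\bm{\nu}+V_\Gamma v=0$ provided by Proposition \ref{P:Chw_Nor} to conclude \eqref{E:Ch_zeta}. There is no genuine obstacle here: the nontrivial analytic content (namely the monotonicity argument that identifies $\mathbf{w}$ with $|\mathbf{b}_{v,\zeta}|^{p-2}\mathbf{b}_{v,\zeta}$, and the distance-function test that yields the normal flux balance) is already packaged in the two preceding propositions. The only thing to check is the purely algebraic step of projecting the vector identity onto $\bm{\nu}$, which is immediate.
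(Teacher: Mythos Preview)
Your proposal is correct and matches the paper's proof essentially verbatim: the paper also takes the inner product of \eqref{E:Chw_All} with $\bm{\nu}$, uses $\nabla_\Gamma v\cdot\bm{\nu}=0$ and $|\bm{\nu}|=1$ to obtain $\mathbf{w}\cdot\bm{\nu}=(|\nabla_\Gamma v|^2+\zeta^2)^{(p-2)/2}\zeta$, and then invokes Proposition~\ref{P:Chw_Nor}.
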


\begin{proof}
  It follows from \eqref{E:Chw_All}, $\nabla_\Gamma v\cdot\bm{\nu}=0$, and $|\bm{\nu}|=1$ on $S_T$ that
  \begin{align*}
    \mathbf{w}\cdot\bm{\nu} = (|\nabla_\Gamma v|^2+\zeta^2)^{(p-2)/2}\zeta \quad\text{a.e. on}\quad S_T.
  \end{align*}
  By this relation and Proposition \ref{P:Chw_Nor}, we obtain \eqref{E:Ch_zeta}.
\end{proof}

\begin{remark} \label{R:Ch_zeta}
  When $v$ is given, there exists a unique function $\zeta$ on $S_T$ that satisfies \eqref{E:Ch_zeta}.
  Indeed, for any given constants $a\geq0$ and $b\in\mathbb{R}$, let
  \begin{align*}
    \varphi(s) := (a+s^2)^{(p-2)/2}s+b, \quad s\in\mathbb{R}.
  \end{align*}
  Then, $\varphi$ is strictly increasing and $\varphi(s)\to\pm\infty$ as $s\to\pm\infty$ (double-sign corresponds) by $p>2$.
  Thus, the equation $\varphi(s)=0$ has a unique solution.
\end{remark}

\subsection{Weak solution to the limit problem} \label{SS:TF_Lim}
Suppose for a while that all functions in \eqref{E:LiWF_uw}, \eqref{E:Chw_All}, and \eqref{E:Ch_zeta} are smooth.
For a vector field $\mathbf{z}\colon S_T\to\mathbb{R}^n$, let $\mathbf{z}^\tau:=\mathbf{P}\mathbf{z}$ be its tangential component.
Then, since $\nabla_\Gamma\eta$ and $\mathbf{z}^\tau$ are tangential on $\Gamma_t$, we have
\begin{align*}
  \int_{\Gamma_t}\mathbf{z}\cdot\nabla_\Gamma\eta\,d\mathcal{H}^{n-1} = \int_{\Gamma_t}\mathbf{z}^\tau\cdot\nabla_\Gamma\eta\,d\mathcal{H}^{n-1} = -\int_{\Gamma_t}\eta\,\mathrm{div}_\Gamma\mathbf{z}^{\tau}\,d\mathcal{H}^{n-1}
\end{align*}
by \eqref{E:IbP_Sur}.
Using this with $\mathbf{z}=g\mathbf{w},gv\mathbf{v}_\Gamma$, we write \eqref{E:LiWF_uw} as
\begin{align*}
  \int_0^T\Bigl(\partial_\Gamma^\bullet(gv)+gv\,\mathrm{div}_\Gamma\mathbf{v}_\Gamma-\mathrm{div}_\Gamma\bigl[g(\mathbf{w}^\tau+v\mathbf{v}_\Gamma^\tau)\bigr],\eta\Bigr)_{L^2(\Gamma_t)}\,dt = \int_0^T(gf,\eta)_{L^2(\Gamma_t)}\,dt.
\end{align*}
Moreover, $\mathbf{w}^\tau$ and $\zeta$ are determined by $v$ through \eqref{E:Chw_All} and \eqref{E:Ch_zeta}, i.e.
\begin{align*}
  \mathbf{w}^\tau = (|\nabla_\Gamma v|^2+\zeta^2)^{(p-2)/2}\nabla_\Gamma v, \quad (|\nabla_\Gamma v|^2+\zeta^2)^{(p-2)/2}\zeta+V_\Gamma v = 0 \quad\text{on}\quad S_T.
\end{align*}
Hence, the limit problem \eqref{E:pLap_Lim} is obtained from \eqref{E:LiWF_uw}, \eqref{E:Chw_All}, and \eqref{E:Ch_zeta}.
This observation justifies to define a weak solution to \eqref{E:pLap_Lim} as follows.

\begin{definition} \label{D:WS_Lim}
  For given $v_0\in L^2(\Gamma_0)$ and $f\in L_{[W^{1,p}]^\ast}^{p'}(S_T)$, we say that a pair $(v,\zeta)$ is a weak solution to \eqref{E:pLap_Lim} if $(v,\zeta)\in\mathbb{W}^{p,p'}(S_T)\times L_{L^p}^p(S_T)$ and it satisfies \eqref{E:Ch_zeta} and
  \begin{align} \label{E:WeFo_Lim}
    \begin{aligned}
      &\int_0^T\langle\partial_\Gamma^\bullet(gv),\eta\rangle_{W^{1,p}(\Gamma_t)}\,dt+\int_0^T\bigl(g(|\nabla_\Gamma v|^2+\zeta^2)^{(p-2)/2}\nabla_\Gamma v,\nabla_\Gamma\eta\bigr)_{L^2(\Gamma_t)}\,dt \\
      &\qquad +\int_0^T(gv,\mathbf{v}_\Gamma\cdot\nabla_\Gamma\eta+\eta\,\mathrm{div}_\Gamma\mathbf{v}_\Gamma)_{L^2(\Gamma_t)}\,dt = \int_0^T\langle f,g\eta\rangle_{W^{1,p}(\Gamma_t)}\,dt
    \end{aligned}
  \end{align}
  for all $\eta\in L_{W^{1,p}}^p(S_T)$ and the initial condition $v|_{t=0}=v_0$ in $L^2(\Gamma_0)$.
\end{definition}

Note that the initial condition makes sense since $\mathbb{W}^{p,p'}(S_T)\subset C_{L^2}(S_T)$ by Lemma \ref{L:Trans}.
The equations \eqref{E:Ch_zeta} and \eqref{E:WeFo_Lim} can be combined into one weak form, which is crucial for the proof of the uniqueness of a weak solution to \eqref{E:pLap_Lim}.

\begin{lemma} \label{L:WFL_Equi}
  Let $f\in L_{[W^{1,p}]^\ast}^{p'}(S_T)$.
  A pair $(v,\zeta)\in\mathbb{W}^{p,p'}(S_T)\times L_{L^p}^p(S_T)$ satisfies \eqref{E:Ch_zeta} and \eqref{E:WeFo_Lim} if and only if
  \begin{align} \label{E:WFL_Equi}
    \begin{aligned}
      &\int_0^T\langle\partial_\Gamma^\bullet(gv),\eta\rangle_{W^{1,p}(\Gamma_t)}\,dt+\int_0^T(g|\mathbf{b}_{v,\zeta}|^{p-2}\mathbf{b}_{v,\zeta},\nabla_\Gamma\eta+\xi\bm{\nu})_{L^2(\Gamma_t)}\,dt \\
      &\qquad +\int_0^T(gv,\mathbf{v}_\Gamma\cdot[\nabla_\Gamma\eta+\xi\bm{\nu}]+\eta\,\mathrm{div}_\Gamma\mathbf{v}_\Gamma)_{L^2(\Gamma_t)}\,dt = \int_0^T\langle f,g\eta\rangle_{W^{1,p}(\Gamma_t)}\,dt
    \end{aligned}
  \end{align}
  for all $(\eta,\xi)\in L_{W^{1,p}}^p(S_T)\times L_{L^p}^p(S_T)$, where $\mathbf{b}_{v,\zeta}=\nabla_\Gamma v+\zeta\bm{\nu}$ on $S_T$.
\end{lemma}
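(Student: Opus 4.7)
The plan is to exploit the orthogonal decomposition $\mathbf{b}_{v,\zeta}=\nabla_\Gamma v+\zeta\bm{\nu}$ with $\nabla_\Gamma v\cdot\bm{\nu}=0$, which yields
\begin{align*}
  |\mathbf{b}_{v,\zeta}|^2 &= |\nabla_\Gamma v|^2+\zeta^2, \\
  |\mathbf{b}_{v,\zeta}|^{p-2}\mathbf{b}_{v,\zeta}\cdot(\nabla_\Gamma\eta+\xi\bm{\nu}) &= (|\nabla_\Gamma v|^2+\zeta^2)^{(p-2)/2}(\nabla_\Gamma v\cdot\nabla_\Gamma\eta+\zeta\xi),
\end{align*}
together with $\mathbf{v}_\Gamma\cdot(\nabla_\Gamma\eta+\xi\bm{\nu})=\mathbf{v}_\Gamma\cdot\nabla_\Gamma\eta+V_\Gamma\xi$ from $V_\Gamma=\mathbf{v}_\Gamma\cdot\bm{\nu}$. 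Substituting these identities into \eqref{E:WFL_Equi}, the left-hand side splits cleanly into the weak form \eqref{E:WeFo_Lim} plus an ``extra'' term carrying all the $\xi$-dependence, namely
\begin{align*}
  \int_0^T\bigl(g[(|\nabla_\Gamma v|^2+\zeta^2)^{(p-2)/2}\zeta+V_\Gamma v],\xi\bigr)_{L^2(\Gamma_t)}\,dt.
\end{align*}

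With this decomposition in hand, both directions of the equivalence are immediate. For the forward direction, assume \eqref{E:Ch_zeta} and \eqref{E:WeFo_Lim}. The extra term above vanishes pointwise a.e. on $S_T$ by \eqref{E:Ch_zeta}, hence it integrates to zero against any $\xi\in L_{L^p}^p(S_T)$, while the $\eta$-part of \eqref{E:WFL_Equi} coincides with \eqref{E:WeFo_Lim}. Adding the two contributions gives \eqref{E:WFL_Equi} for all admissible test pairs.

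For the backward direction, suppose \eqref{E:WFL_Equi} holds for every $(\eta,\xi)\in L_{W^{1,p}}^p(S_T)\times L_{L^p}^p(S_T)$. Choosing $\xi\equiv0$ recovers \eqref{E:WeFo_Lim}. Subtracting this from \eqref{E:WFL_Equi} with general $\xi$ leaves
\begin{align*}
  \int_0^T\bigl(g[(|\nabla_\Gamma v|^2+\zeta^2)^{(p-2)/2}\zeta+V_\Gamma v],\xi\bigr)_{L^2(\Gamma_t)}\,dt = 0 \quad\text{for all}\quad \xi\in L_{L^p}^p(S_T).
\end{align*}
Since $(|\nabla_\Gamma v|^2+\zeta^2)^{(p-2)/2}\zeta+V_\Gamma v$ lies in $L_{L^{p'}}^{p'}(S_T)$ (by $\zeta,\nabla_\Gamma v\in L_{L^p}^p(S_T)$, the growth $(p-2)+1=p-1$, and boundedness of $V_\Gamma$ and $v$ coming from $v\in\mathbb{W}^{p,p'}(S_T)$), and since $g$ is bounded below by a positive constant via \eqref{E:G_Bdd}, the fundamental lemma of the calculus of variations applied over $S_T$ forces \eqref{E:Ch_zeta} to hold a.e.

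This argument is essentially bookkeeping, so I do not anticipate any real obstacle; the only point demanding mild care is verifying that the integrand to which we apply the fundamental lemma indeed belongs to the space $L_{L^{p'}}^{p'}(S_T)$ dual to the test-function space $L_{L^p}^p(S_T)$, which is where the orthogonality $|\mathbf{b}_{v,\zeta}|^2=|\nabla_\Gamma v|^2+\zeta^2$ (and hence the exponent bookkeeping $p-1=p/p'$) is used.
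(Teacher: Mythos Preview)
Your proof is correct and follows essentially the same approach as the paper: both exploit the orthogonality $\nabla_\Gamma v\cdot\bm{\nu}=0$ and $V_\Gamma=\mathbf{v}_\Gamma\cdot\bm{\nu}$ to split \eqref{E:WFL_Equi} into the $\eta$-part (which is \eqref{E:WeFo_Lim}) and the $\xi$-part (which encodes \eqref{E:Ch_zeta}). The only cosmetic difference is that for the backward direction the paper sets $\eta=0$ directly rather than subtracting \eqref{E:WeFo_Lim}; also, your phrase ``boundedness of $v$'' is a slip---what you need (and have) is $v\in L_{L^p}^p(S_T)\subset L_{L^{p'}}^{p'}(S_T)$, not $L^\infty$.
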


\begin{proof}
  Suppose first that $(v,\zeta)$ satisfies \eqref{E:Ch_zeta} and \eqref{E:WeFo_Lim}.
  Then, since
  \begin{align*}
    (|\nabla_\Gamma v|+\zeta^2)^{1/2} = |\mathbf{b}_{v,\zeta}|, \quad |\bm{\nu}| = 1, \quad V_\Gamma = \mathbf{v}_\Gamma\cdot\bm{\nu} \quad\text{on}\quad S_T,
  \end{align*}
  it follows from \eqref{E:Ch_zeta} that
  \begin{align*}
    \int_0^T(g|\mathbf{b}_{v,\zeta}|^{p-2}\zeta\bm{\nu},\xi\bm{\nu})_{L^2(\Gamma_t)}\,dt+\int_0^T(gv\mathbf{v}_\Gamma,\xi\bm{\nu})_{L^2(\Gamma_t)}\,dt = 0
  \end{align*}
  for all $\xi\in L_{L^p}^p(S_T)$.
  Adding this to \eqref{E:WeFo_Lim}, we get \eqref{E:WFL_Equi}.
  Conversely, if \eqref{E:WFL_Equi} is satisfied, then we have \eqref{E:WeFo_Lim} by setting $\xi=0$, and also get \eqref{E:Ch_zeta} by setting $\eta=0$.
\end{proof}

The next result is also important for the uniqueness of a weak solution.

\begin{lemma} \label{L:WFL_Con}
  For $i=1,2$, let $(v_i,\zeta_i)\in L_{W^{1,p}}^p(S_T)\times L_{L^p}^p(S_T)$ satisfy \eqref{E:Ch_zeta}.
  Then,
  \begin{align} \label{E:WFL_Con}
    \nabla_\Gamma v_1 = \nabla_\Gamma v_2, \quad \zeta_1 = \zeta_2 \quad\text{a.e. on}\quad \{(y,t)\in S_T \mid v_1(y,t)=v_2(y,t)\}.
  \end{align}
\end{lemma}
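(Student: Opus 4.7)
The plan is to handle the two conclusions separately: first the equality of the tangential gradients, obtained from a standard property of Sobolev functions on the slices $\Gamma_t$, and then the equality of the normal components $\zeta_i$, obtained from the strict monotonicity of the map $s\mapsto(a+s^2)^{(p-2)/2}s$ already exploited in Remark \ref{R:Ch_zeta}.

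For the first step, I would argue slice by slice in $t$. By definition of $L_{W^{1,p}}^p(S_T)$ and Fubini's theorem, for almost every $t\in(0,T)$ we have $v_i(\cdot,t)\in W^{1,p}(\Gamma_t)$ and $\zeta_i(\cdot,t)\in L^p(\Gamma_t)$ for $i=1,2$. Fix such a $t$ and set $w:=v_1(\cdot,t)-v_2(\cdot,t)\in W^{1,p}(\Gamma_t)$. The closed surface $\Gamma_t$ is covered by finitely many smooth local charts, so via a standard pull-back to $\mathbb{R}^{n-1}$ one reduces to the classical fact (see e.g.\ the Stampacchia/Serrin lemma) that a Sobolev function has vanishing weak gradient a.e.\ on its zero set. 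This yields $\nabla_\Gamma w=0$, i.e.\ $\nabla_\Gamma v_1=\nabla_\Gamma v_2$, a.e.\ on $E_t:=\{y\in\Gamma_t\mid v_1(y,t)=v_2(y,t)\}$. Reassembling over $t$ via Fubini gives $\nabla_\Gamma v_1=\nabla_\Gamma v_2$ a.e.\ on the set $E:=\{(y,t)\in S_T\mid v_1(y,t)=v_2(y,t)\}$.

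For the second step, I restrict attention to a point $(y,t)\in E$ (in the almost-everywhere sense of the previous paragraph) where additionally $\nabla_\Gamma v_1(y,t)=\nabla_\Gamma v_2(y,t)$. Writing $a:=|\nabla_\Gamma v_1(y,t)|^2=|\nabla_\Gamma v_2(y,t)|^2\geq 0$ and $b:=V_\Gamma(y,t)v_1(y,t)=V_\Gamma(y,t)v_2(y,t)$, the assumption \eqref{E:Ch_zeta} satisfied by both pairs $(v_i,\zeta_i)$ reads
\begin{align*}
  (a+\zeta_1^2)^{(p-2)/2}\zeta_1+b = 0 = (a+\zeta_2^2)^{(p-2)/2}\zeta_2+b.
\end{align*}
Thus $\zeta_1(y,t)$ and $\zeta_2(y,t)$ are both roots of $\varphi(s)=(a+s^2)^{(p-2)/2}s+b$, which is strictly increasing on $\mathbb{R}$ by the computation in Remark \ref{R:Ch_zeta} (this is where $p>2$ enters). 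Therefore $\zeta_1(y,t)=\zeta_2(y,t)$, proving $\zeta_1=\zeta_2$ a.e.\ on $E$.

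The only nontrivial point is the first step, invoking the vanishing-gradient property on a manifold rather than in Euclidean space; however, this follows routinely by localization in smooth charts and does not require any new ingredient beyond the hypothesis $v_i(\cdot,t)\in W^{1,p}(\Gamma_t)$. Everything else is pointwise algebra on the intersection of the two relevant full-measure subsets of $E$.
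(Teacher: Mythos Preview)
Your proposal is correct and follows essentially the same approach as the paper: the paper also first invokes the flat-domain fact (citing \cite[Lemma 7.7]{GilTru01}) that Sobolev gradients agree a.e.\ on the coincidence set, and then uses the strict monotonicity from Remark \ref{R:Ch_zeta} to conclude $\zeta_1=\zeta_2$ on the set where $v_1=v_2$ and $\nabla_\Gamma v_1=\nabla_\Gamma v_2$. Your slice-by-slice and local-chart justification simply fills in the details the paper leaves implicit.
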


\begin{proof}
 As in the case of flat domains (see \cite[Lemma 7.7]{GilTru01}), we can find that
 \begin{align*}
   \nabla_\Gamma v_1 = \nabla_\Gamma v_2 \quad\text{a.e. on}\quad \{(y,t)\in S_T \mid v_1(y,t)=v_2(y,t)\}.
 \end{align*}
 Also, the equation $(a+s^2)^{(p-2)/2}s+b=0$ has a unique solution $s\in\mathbb{R}$ for any given $a\geq0$ and $b\in\mathbb{R}$ as explained in Remark \ref{R:Ch_zeta}.
  This fact implies that
  \begin{align*}
    \zeta_1 = \zeta_2 \quad\text{on}\quad \{(y,t)\in S_T \mid v_1(y,t)=v_2(y,t), \, \nabla_\Gamma v_1(y,t) = \nabla_\Gamma v_2(y,t)\},
  \end{align*}
  since $(v_1,\zeta_1)$ and $(v_2,\zeta_2)$ satisfy \eqref{E:Ch_zeta}.
  Thus, we get \eqref{E:WFL_Con}.
\end{proof}

Let us prove the uniqueness of a weak solution to \eqref{E:pLap_Lim}.

\begin{theorem} \label{T:Lim_Uni}
  For each $v_0\in L^2(\Gamma_0)$ and $f\in L_{[W^{1,p}]^\ast}^{p'}(S_T)$, there exists at most one weak solution to \eqref{E:pLap_Lim}.
\end{theorem}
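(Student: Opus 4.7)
The plan is to reduce to showing $v_{\mathrm{d}} := v_1 - v_2 \equiv 0$ on $S_T$ (the equality $\zeta_1 = \zeta_2$ then follows automatically from Lemma \ref{L:WFL_Con}), and to prove this via a mollified $L^1(\Gamma_t)$-energy estimate driven by the \emph{combined} weak form of Lemma \ref{L:WFL_Equi}, following the strategy sketched in Section \ref{SS:OuLi_Uni}. First I would subtract \eqref{E:WFL_Equi} for $(v_1,\zeta_1)$ and $(v_2,\zeta_2)$. Writing $\mathbf{b}_i := \nabla_\Gamma v_i + \zeta_i\bm{\nu}$, $\zeta_{\mathrm{d}} := \zeta_1 - \zeta_2$, and $\mathbf{b}_{\mathrm{d}} := \nabla_\Gamma v_{\mathrm{d}} + \zeta_{\mathrm{d}}\bm{\nu}$, I obtain
\begin{align*}
  &\int_0^T \langle \partial_\Gamma^\bullet(g v_{\mathrm{d}}), \eta \rangle_{W^{1,p}(\Gamma_t)}\,dt + \int_0^T \bigl(g(|\mathbf{b}_1|^{p-2}\mathbf{b}_1 - |\mathbf{b}_2|^{p-2}\mathbf{b}_2),\, \nabla_\Gamma\eta + \xi\bm{\nu}\bigr)_{L^2(\Gamma_t)}\,dt \\
  &\qquad + \int_0^T \bigl(g v_{\mathrm{d}},\, \mathbf{v}_\Gamma\cdot(\nabla_\Gamma\eta + \xi\bm{\nu}) + \eta\,\mathrm{div}_\Gamma\mathbf{v}_\Gamma\bigr)_{L^2(\Gamma_t)}\,dt = 0
\end{align*}
for every admissible pair $(\eta,\xi)$.

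Next, for $\delta\in(0,1)$ I would introduce a smooth odd nondecreasing approximation $S_\delta$ of $\mathrm{sgn}$ with $|S_\delta|\leq 1$ and $\|s S_\delta'(s)\|_{L^\infty(\mathbb{R})}\leq c$ uniformly in $\delta$, such that $S_\delta(s)\to\mathrm{sgn}(s)$ and $sS_\delta'(s)\to 0$ pointwise as $\delta\to 0^+$ (e.g.\ $S_\delta(s)=\tanh(s/\delta)$), and set $\Lambda_\delta(s) := \int_0^s S_\delta(r)\,dr$, so that $0\leq\Lambda_\delta\leq|\cdot|$ and $\Lambda_\delta(s)\to|s|$ as $\delta\to 0^+$. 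Fixing $t_0\in(0,T]$ and a smooth cutoff $\theta_\sigma\in C^1([0,T])$ approximating $\chi_{[0,t_0]}$, the heart of the proof is to test with
\begin{align*}
  \eta = \theta_\sigma(t)\,S_\delta(v_{\mathrm{d}}), \qquad \xi = \theta_\sigma(t)\,S_\delta'(v_{\mathrm{d}})\zeta_{\mathrm{d}}.
\end{align*}
This is admissible for each fixed $\delta$ because $\Gamma_t$ is compact and $S_\delta,S_\delta'$ are bounded. Crucially, $\nabla_\Gamma\eta + \xi\bm{\nu} = \theta_\sigma S_\delta'(v_{\mathrm{d}})\mathbf{b}_{\mathrm{d}}$, so by \eqref{E:pVec_Coer}, $S_\delta'\geq 0$, and $g>0$ the nonlinear gradient term is nonnegative and can be discarded. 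Combining Lemma \ref{L:Wpp_Mult} applied to $gv_{\mathrm{d}}$ with the chain rule of Lemma \ref{L:Wpp_Comp} applied to $v_{\mathrm{d}}$ (and a density approximation to justify the weighted chain rule for $\Lambda_\delta(v_{\mathrm{d}})$), letting $\sigma\to 0$, and using $v_{\mathrm{d}}(0)=0$, the time-derivative term rewrites as
\begin{align*}
  \int_0^{t_0}\langle\partial_\Gamma^\bullet(gv_{\mathrm{d}}),S_\delta(v_{\mathrm{d}})\rangle_{W^{1,p}(\Gamma_t)}\,dt = \int_{\Gamma_{t_0}} g\,\Lambda_\delta(v_{\mathrm{d}})(t_0)\,d\mathcal{H}^{n-1} - \int_0^{t_0}\!\int_{\Gamma_t}\Lambda_\delta(v_{\mathrm{d}})\bigl(\partial_\Gamma^\bullet g + g\,\mathrm{div}_\Gamma\mathbf{v}_\Gamma\bigr) + R_\delta,
\end{align*}
where the residual $R_\delta$ involves only $v_{\mathrm{d}}S_\delta(v_{\mathrm{d}}) - \Lambda_\delta(v_{\mathrm{d}})$ and hence tends to $0$ as $\delta\to 0^+$. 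The remaining velocity contribution splits into (i) a term bounded by $c\int_0^{t_0}\!\int_{\Gamma_t} g\Lambda_\delta(v_{\mathrm{d}})$, absorbable by Gronwall, and (ii) the cross term $\int_0^{t_0}\!\int_{\Gamma_t} g\,v_{\mathrm{d}}S_\delta'(v_{\mathrm{d}})\,\mathbf{v}_\Gamma\cdot\mathbf{b}_{\mathrm{d}}$, which is dominated pointwise by $c|\mathbf{v}_\Gamma\cdot\mathbf{b}_{\mathrm{d}}|\in L^1$ uniformly in $\delta$ and vanishes pointwise as $\delta\to 0^+$, hence tends to $0$ by dominated convergence. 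Passing to the limit $\delta\to 0^+$ via Fatou on the left and dominated convergence on the right yields
\begin{align*}
  \int_{\Gamma_{t_0}} g(t_0)|v_{\mathrm{d}}(t_0)|\,d\mathcal{H}^{n-1} \leq c\int_0^{t_0}\!\int_{\Gamma_t} g|v_{\mathrm{d}}|\,d\mathcal{H}^{n-1}\,dt.
\end{align*}
Gronwall's inequality, together with \eqref{E:G_Bdd}, then forces $v_{\mathrm{d}}\equiv 0$ on $S_T$, completing the proof.

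The main obstacle is twofold. First, the choice of the extra test direction $\xi\bm{\nu}$ in the combined form \eqref{E:WFL_Equi} is essential: with only the scalar equation, the nonlinear gradient term is not sign-definite (since $\mathbf{b}_i$ has a normal component) and the coupling $\int g V_\Gamma v_{\mathrm{d}}\zeta_{\mathrm{d}}$ arising from $\mathbf{v}_\Gamma\cdot\bm{\nu}=V_\Gamma$ cannot be absorbed into $L^2$- and $L^p$-norms when $p\neq 2$. Second, and more delicately, one must rigorously justify the weighted chain rule for the time-derivative term: Lemma \ref{L:Wpp_Comp} directly produces only $\frac{d}{dt}\int_{\Gamma_t}\Lambda_\delta(v_{\mathrm{d}})$ (without the weight $g$), while Lemma \ref{L:Wpp_Mult} handles multiplication by a $C^1$ factor for $u\in\mathbb{W}^{p,p'}$. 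Combining them requires either verifying $\Lambda_\delta(v_{\mathrm{d}})\in\mathbb{W}^{p,p'}(S_T)$ for each fixed $\delta>0$ so that Lemma \ref{L:Wpp_Mult} applies with $\chi=g$ and $u=\Lambda_\delta(v_{\mathrm{d}})$, or a time-mollification of $v_{\mathrm{d}}$ followed by a careful passage to the limit — this is precisely where the $L^1$-contraction technique of \cite{CaNoOr17} enters and replaces the more standard $L^2$-energy method that would fail here due to the nonlinearity.
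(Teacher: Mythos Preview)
Your strategy — the $L^1$-contraction argument via the combined weak form \eqref{E:WFL_Equi} and a mollified sign function — is exactly the paper's, but you diverge at a decisive technical point: the paper tests with $\eta=\Lambda_\gamma'(gv_{\mathrm{d}})$ and $\xi=g\zeta_{\mathrm{d}}\Lambda_\gamma''(gv_{\mathrm{d}})$, composing the mollifier with $gv_{\mathrm{d}}$ rather than with $v_{\mathrm{d}}$. This single change dissolves the obstacle you flag. Since $gv_{\mathrm{d}}\in\mathbb{W}^{p,p'}(S_T)$ by Lemma~\ref{L:Wpp_Mult}, Lemma~\ref{L:Wpp_Comp} applies directly with $u=gv_{\mathrm{d}}$ and produces exactly the pairing $\langle\partial_\Gamma^\bullet(gv_{\mathrm{d}}),\Lambda_\gamma'(gv_{\mathrm{d}})\rangle$ that appears in the difference equation; no weighted chain rule is needed. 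The price is an extra term $v_{\mathrm{d}}\Lambda_\gamma''(gv_{\mathrm{d}})\nabla_\Gamma g$ in $\nabla_\Gamma\eta+\xi\bm{\nu}$, which the paper handles (as $\mathcal{I}_\gamma^3$) via Lemma~\ref{L:WFL_Con}: the support of $\Lambda_\gamma''(gv_{\mathrm{d}})$ shrinks to $\{v_1=v_2\}$, where $\mathbf{z}_{\mathrm{d}}=\mathbf{b}_{\mathrm{d}}=0$. A further bonus is that the transport term $\int|gv_{\mathrm{d}}|\,\mathrm{div}_\Gamma\mathbf{v}_\Gamma$ coming from \eqref{E:Wpp_Comp} cancels exactly against the limit of $\mathcal{I}_\gamma^5$, so the paper obtains $\int_{\Gamma_t}|gv_{\mathrm{d}}(t)|\,d\mathcal{H}^{n-1}\leq 0$ directly and never needs Gronwall.

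Your variant is plausibly salvageable, but as written the gap is real: you need $\langle\partial_\Gamma^\bullet v_{\mathrm{d}},\,gS_\delta(v_{\mathrm{d}})\rangle$ to equal $\frac{d}{dt}\int_{\Gamma_t} g\Lambda_\delta(v_{\mathrm{d}})$ modulo controllable remainders, and neither Lemma~\ref{L:Wpp_Mult} nor Lemma~\ref{L:Wpp_Comp} delivers this. One would have to prove either a weighted analogue of \eqref{E:Wpp_Comp} or that $\Lambda_\delta(v_{\mathrm{d}})\in\mathbb{W}^{p,p'}(S_T)$ so that Lemma~\ref{L:Wpp_Mult} can be applied to it — you name both options but carry out neither. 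On the credit side, your treatment of the cross term $\int gv_{\mathrm{d}}S_\delta'(v_{\mathrm{d}})\,\mathbf{v}_\Gamma\cdot\mathbf{b}_{\mathrm{d}}$ via the uniform bound $|sS_\delta'(s)|\leq c$ and dominated convergence is more elementary than the paper's corresponding argument for $\mathcal{I}_\gamma^4$ and does not rely on Lemma~\ref{L:WFL_Con}.
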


\begin{proof}
  We follow the idea of the proof of \cite[Theorem 2.9]{CaNoOr17} (see also \cite[Proposition 3.4]{Miu25pre_pLap}), but some modifications are required in order to deal with the functions $g$ and $\zeta$.

  Let $\rho$ be a standard mollifier on $\mathbb{R}$, i.e., for $z\in\mathbb{R}$,
  \begin{align*}
    \rho(z) :=
    \begin{cases}
      c_\rho\exp\left(\dfrac{1}{z^2-1}\right) &(|z|<1), \\
      0 &(|z|\geq1),
    \end{cases}
    \quad c_\rho := \left\{\int_{-1}^1\exp\left(\frac{1}{z^2-1}\right)\,dz\right\}^{-1}.
  \end{align*}
  For $\gamma\in(0,1)$, let $\Lambda_\gamma(z)$ be the mollification of $|z|$ given by
  \begin{align*}
    \Lambda_\gamma(z) := \frac{1}{\gamma}\int_{-\gamma}^\gamma\rho\left(\frac{s}{\gamma}\right)|z-s|\,ds = \int_{-1}^1\rho(s)|z-\gamma s|\,ds, \quad z\in\mathbb{R}.
  \end{align*}
  We see that $\Lambda_\gamma\in C^\infty(\mathbb{R})$.
  When $|z|\leq\gamma$, we have
  \begin{align*}
    \Lambda_\gamma(z) &= \int_{-1}^{z/\gamma}\rho(s)(z-\gamma s)\,ds+\int_{z/\gamma}^1\rho(s)(\gamma s-z)\,ds, \\
    \Lambda_\gamma'(z) &= \int_{-1}^{z/\gamma}\rho(s)\,ds-\int_{z/\gamma}^1\rho(s)\,ds, \quad \Lambda_\gamma''(z) = \frac{2}{\gamma}\rho\left(\frac{z}{\gamma}\right).
  \end{align*}
  Also, $\Lambda_\gamma(z)=|z|$ for $|z|\geq\gamma$.
  Thus, for all $z\in\mathbb{R}$, we easily get
  \begin{align} \label{Pf_LUn:abs}
    \begin{aligned}
      0 &\leq \Lambda_\gamma(z) \leq 1+|z|, \quad \lim_{\gamma\to0}\Lambda_\gamma(z) = |z|, \\
      -1 &\leq \Lambda_\gamma'(z) \leq 1, \quad \lim_{\gamma\to0}\Lambda_\gamma'(z) = \mathrm{sgn}\,z =
      \begin{cases}
        z/|z| &(z\neq0), \\
        0 &(z=0),
      \end{cases} \\
      0 &\leq \Lambda_\gamma''(z) \leq \frac{2c_\rho}{\gamma e} \quad (|z| \leq \gamma), \quad \Lambda_\gamma''(z) = 0 \quad (|z| \geq \gamma).
    \end{aligned}
  \end{align}
  Let $(v_1,\zeta_1)$ and $(v_2,\zeta_2)$ be weak solutions to \eqref{E:pLap_Lim} with same data $v_0$ and $f$, and let
  \begin{align*}
    v_{\mathrm{d}} := v_1-v_2 \in \mathbb{W}^{p,p'}(S_T) \subset C_{L^2}(S_T), \quad \zeta_{\mathrm{d}} := \zeta_1-\zeta_2 \in L_{L^p}^p(S_T).
  \end{align*}
  Also, let $\mathbf{b}_i:=\nabla_\Gamma v_i+\zeta_i\bm{\nu}$ on $S_T$ for $i=1,2$ and
  \begin{align} \label{Pf_LUn:bd}
    \mathbf{b}_{\mathrm{d}} := \mathbf{b}_1-\mathbf{b}_2 = \nabla_\Gamma v_{\mathrm{d}}+\zeta_{\mathrm{d}}\bm{\nu}, \quad \mathbf{z}_{\mathrm{d}} := |\mathbf{b}_1|^{p-2}\mathbf{b}_1-|\mathbf{b}_2|^{p-2}\mathbf{b}_2 \quad\text{on}\quad S_T.
  \end{align}
  Since $(v_i,\zeta_i)$ satisfies \eqref{E:WFL_Equi} with $\mathbf{b}_{v,\zeta}$ replaced by $\mathbf{b}_i$ for $i=1,2$ by Lemma \ref{L:WFL_Equi}, we take the difference of those weak forms with $T$ replaced by $t$ to get
  \begin{align} \label{Pf_LUn:WF_Df}
    \begin{aligned}
      &\int_0^t\langle\partial_\Gamma^\bullet(gv_{\mathrm{d}}),\eta\rangle_{W^{1,p}(\Gamma_s)}\,ds+\int_0^t(g\mathbf{z}_{\mathrm{d}},\nabla_\Gamma\eta+\xi\bm{\nu})_{L^2(\Gamma_s)}\,ds \\
      &\qquad +\int_0^t(gv_{\mathrm{d}},\mathbf{v}_\Gamma\cdot[\nabla_\Gamma\eta+\xi\bm{\nu}]+\eta\,\mathrm{div}_\Gamma\mathbf{v}_\Gamma)_{L^2(\Gamma_s)}\,ds = 0
    \end{aligned}
  \end{align}
  for all $(\eta,\xi)\in L_{W^{1,p}}^p(S_T)\times L_{L^p}^p(S_T)$ and $t\in[0,T]$.
  In this equation, we set
  \begin{align*}
    \eta = \eta_{\mathrm{d}} := \Lambda_\gamma'(gv_{\mathrm{d}}), \quad \xi = \xi_{\mathrm{d}} := g\zeta_{\mathrm{d}}\Lambda_\gamma''(gv_{\mathrm{d}}).
  \end{align*}
  Then, $(\eta_{\mathrm{d}},\xi_{\mathrm{d}})\in L_{W^{1,p}}^p(S_T)\times L_{L^p}^p(S_T)$ by \eqref{Pf_LUn:abs}.
  Moreover,
  \begin{align*}
    \nabla_\Gamma\eta_{\mathrm{d}}+\xi_{\mathrm{d}}\bm{\nu} &= \Lambda_\gamma''(gv_{\mathrm{d}})\nabla_\Gamma(gv_{\mathrm{d}})+g\zeta_{\mathrm{d}}\Lambda_\gamma''(gv_{\mathrm{d}})\bm{\nu} \\
    &= g\Lambda_\gamma''(gv_{\mathrm{d}})(\nabla_\Gamma v_{\mathrm{d}}+\zeta_{\mathrm{d}}\bm{\nu})+v_{\mathrm{d}}\Lambda_\gamma''(gv_{\mathrm{d}})\nabla_\Gamma g \\
    &= g\Lambda_\gamma''(gv_{\mathrm{d}})\mathbf{b}_{\mathrm{d}}+v_{\mathrm{d}}\Lambda_\gamma''(gv_{\mathrm{d}})\nabla_\Gamma g
  \end{align*}
  on $S_T$.
  Thus, substituting $(\eta_{\mathrm{d}},\xi_{\mathrm{d}})$ for \eqref{Pf_LUn:WF_Df}, we get $\sum_{k=1}^5\mathcal{I}_\gamma^k(t)=0$, where
  \begin{align*}
    \mathcal{I}_\gamma^1(t) &:= \int_0^t\langle\partial_\Gamma^\bullet(gv_{\mathrm{d}}),\Lambda_\Gamma'(gv_{\mathrm{d}})\rangle_{W^{1,p}(\Gamma_s)}\,ds, \\
    \mathcal{I}_\gamma^2(t) &:= \int_0^t\bigl(g\mathbf{z}_{\mathrm{d}},g\Lambda_\gamma''(gv_{\mathrm{d}})\mathbf{b}_{\mathrm{d}}\bigr)_{L^2(\Gamma_s)}\,ds, \\
    \mathcal{I}_\gamma^3(t) &:= \int_0^t\bigl(g\mathbf{z}_{\mathrm{d}},v_{\mathrm{d}}\Lambda_\gamma''(gv_{\mathrm{d}})\nabla_\Gamma g\bigr)_{L^2(\Gamma_s)}\,ds, \\
    \mathcal{I}_\gamma^4(t) &:= \int_0^t\bigl(gv_{\mathrm{d}},\Lambda_\gamma''(gv_{\mathrm{d}})\mathbf{v}_\Gamma\cdot[g\mathbf{b}_{\mathrm{d}}+v_{\mathrm{d}}\nabla_\Gamma g]\bigr)_{L^2(\Gamma_s)}\,ds, \\
    \mathcal{I}_\gamma^5(t) &:= \int_0^t\bigl(gv_{\mathrm{d}},\Lambda_\gamma'(gv_{\mathrm{d}})\,\mathrm{div}_\Gamma\mathbf{v}_\Gamma\bigr)_{L^2(\Gamma_s)}\,ds.
  \end{align*}
  Let us compute each $\mathcal{I}_\gamma^k(t)$.
  First, we see by \eqref{E:pVec_Coer}, \eqref{Pf_LUn:abs}, and \eqref{Pf_LUn:bd} that
  \begin{align} \label{Pf_LUn:I2}
    \mathcal{I}_\gamma^2(t) = \int_0^t\left(\int_{\Gamma_s}g^2\Lambda_\gamma''(gv_{\mathrm{d}})(\mathbf{z}_{\mathrm{d}}\cdot\mathbf{b}_{\mathrm{d}})\,d\mathcal{H}^{n-1}\right)\,ds \geq 0.
  \end{align}
  Next, we apply \eqref{E:Wpp_Comp} with $\varepsilon=0$ to $\mathcal{I}_\gamma^1(t)$ to observe that
  \begin{align*}
    \mathcal{I}_\gamma^1(t) &= \int_{\Gamma_t}[\Lambda_\gamma(gv_{\mathrm{d}})](t)\,d\mathcal{H}^{n-1}-\int_{\Gamma_0}[\Lambda_\gamma(gv_{\mathrm{d}})](0)\,d\mathcal{H}^{n-1} \\
    &\qquad -\int_0^t\left(\int_{\Gamma_s}\Lambda_\gamma(gv_{\mathrm{d}})\,\mathrm{div}_\Gamma\mathbf{v}_\Gamma\,d\mathcal{H}^{n-1}\right)\,ds.
  \end{align*}
  For each $s\in[0,T]$, it follows from \eqref{Pf_LUn:abs} and $g\in C^\infty(\overline{S_T})$ that
  \begin{align*}
    0 \leq [\Lambda_\gamma(gv_{\mathrm{d}})](s) \leq 1+c|v_{\mathrm{d}}(s)|, \quad \lim_{\gamma\to0}[\Lambda_\gamma(gv_{\mathrm{d}})](s) = |[gv_{\mathrm{d}}](s)| \quad\text{a.e. on}\quad \Gamma_s.
  \end{align*}
  Moreover, $1+c|v_{\mathrm{d}}(s)|\in L^1(\Gamma_s)$ by \eqref{E:Sur_Area} and $v_{\mathrm{d}}\in C_{L^2}(S_T)\subset C_{L^1}(S_T)$.
  Thus,
  \begin{align*}
    \lim_{\gamma\to0}\int_{\Gamma_s}[\Lambda_\gamma(gv_{\mathrm{d}})](s)\,d\mathcal{H}^{n-1} = \int_{\Gamma_s}|[gv_{\mathrm{d}}](s)|\,d\mathcal{H}^{n-1} \quad\text{for all}\quad s\in[0,T]
  \end{align*}
  by the dominated convergence theorem.
  Similarly, we can observe that
  \begin{align*}
    \lim_{\gamma\to0}\,\bigl\|\,\Lambda_\gamma(gv_{\mathrm{d}})-|gv_{\mathrm{d}}|\,\bigr\|_{L_{L^1}^1(S_T)} = 0.
  \end{align*}
  By these results and the smoothness of $\mathbf{v}_\Gamma$ on $\overline{S_T}$, we find that
  \begin{align} \label{Pf_LUn:I1}
    \begin{aligned}
      \lim_{\gamma\to0}\mathcal{I}_\gamma^1(t) &= \int_{\Gamma_t}|[gv_{\mathrm{d}}](t)|\,d\mathcal{H}^{n-1}-\int_{\Gamma_0}|[gv_{\mathrm{d}}](0)|\,d\mathcal{H}^{n-1} \\
      &\qquad -\int_0^t\left(\int_{\Gamma_s}|gv_{\mathrm{d}}|\,\mathrm{div}_\Gamma\mathbf{v}_\Gamma\,d\mathcal{H}^{n-1}\right)\,ds.
    \end{aligned}
  \end{align}
  Let us show $\mathcal{I}_\gamma^3(t),\mathcal{I}_\gamma^4(t)\to0$ as $\gamma\to0$.
  To this end, we define
  \begin{align*}
    \mathcal{A}_\gamma &:= \{(y,t)\in S_T \mid |[gv_{\mathrm{d}}](y,t)| \leq \gamma\}, \quad \gamma\in(0,1), \\
    \mathcal{A}_0 &:= \{(y,t)\in S_T \mid [gv_{\mathrm{d}}](y,t) = 0\},
  \end{align*}
  and let $\chi_\gamma$ and $\chi_0$ be the characteristic functions of $\mathcal{A}_\gamma$ and $\mathcal{A}_0$, respectively.
  Then,
  \begin{align*}
    \mathcal{A}_0 = \{(y,t)\in S_T \mid v_1(y,t) = v_2(y,t)\}, \quad \quad \lim_{\gamma\to0}\chi_\gamma = \chi_0 \quad\text{on}\quad S_T,
  \end{align*}
  where the first relation follows from \eqref{E:G_Bdd} and $v_{\mathrm{d}}=v_1-v_2$.
  Since
  \begin{align*}
    0 \leq \Lambda_\gamma''(gv_{\mathrm{d}}) \leq \frac{2c_\rho}{\gamma e}\chi_\gamma, \quad |v_{\mathrm{d}}|\chi_\gamma = \frac{1}{g}|gv_{\mathrm{d}}|\chi_\gamma \leq c\gamma\chi_\gamma \quad\text{on}\quad S_T
  \end{align*}
  by \eqref{Pf_LUn:abs}, the definition of $\mathcal{A}_\gamma$, and \eqref{E:G_Bdd}, and since $g$ and $\mathbf{v}_\Gamma$ are smooth on $\overline{S_T}$,
  \begin{align*}
    |\mathcal{I}_\gamma^3(t)| &\leq c\int_0^T\left(\int_{\Gamma_s}|\mathbf{z}_{\mathrm{d}}|\chi_\gamma\,d\mathcal{H}^{n-1}\right)\,ds, \\
    |\mathcal{I}_\gamma^4(t)| &\leq c\int_0^T\left(\int_{\Gamma_s}(|\mathbf{b}_{\mathrm{d}}|+\gamma)\chi_\gamma\,d\mathcal{H}^{n-1}\right)\,ds.
  \end{align*}
  In the right-hand side, we observe by \eqref{E:WFL_Con} and \eqref{Pf_LUn:bd} that
  \begin{align*}
    \lim_{\gamma\to0}|\mathbf{z}_{\mathrm{d}}|\chi_\gamma = |\mathbf{z}_{\mathrm{d}}|\chi_0 = 0, \quad \lim_{\gamma\to0}(|\mathbf{b}_{\mathrm{d}}|+\gamma)\chi_\gamma = |\mathbf{b}_{\mathrm{d}}|\chi_0 = 0 \quad\text{a.e. on}\quad S_T.
  \end{align*}
  Moreover, $0\leq|\mathbf{z}_{\mathrm{d}}|\chi_\gamma\leq|\mathbf{z}_{\mathrm{d}}|$ and $0\leq(|\mathbf{b}_{\mathrm{d}}|+\gamma)\chi_\gamma\leq|\mathbf{b}_{\mathrm{d}}|+1$ on $S_T$, and
  \begin{align*}
    |\mathbf{z}_{\mathrm{d}}| \in L_{L^{p'}}^{p'}(S_T) \subset L_{L^1}^1(S_T), \quad |\mathbf{b}_{\mathrm{d}}|+1 \in L_{L^p}^p(S_T) \subset L_{L^1}^1(S_T)
  \end{align*}
  by \eqref{Pf_LUn:bd}, $v_{\mathrm{d}}\in L_{W^{1,p}}^p(S_T)$, $\zeta_{\mathrm{d}}\in L_{L^p}^p(S_T)$.
  Thus,
  \begin{align*}
    \int_0^T\left(\int_{\Gamma_s}|\mathbf{z}_{\mathrm{d}}|\chi_\gamma\,d\mathcal{H}^{n-1}\right)\,ds, \, \int_0^T\left(\int_{\Gamma_s}(|\mathbf{b}_{\mathrm{d}}|+\gamma)\chi_\gamma\,d\mathcal{H}^{n-1}\right)\,ds \to 0
  \end{align*}
  as $\gamma\to0$ by the dominated convergence theorem and we obtain
  \begin{align} \label{Pf_LUn:I34}
    \lim_{\gamma\to0}\mathcal{I}_\gamma^3(t) = \lim_{\gamma\to0}\mathcal{I}_\gamma^4(t) = 0.
  \end{align}
  For $\mathcal{I}_\gamma^5(t)$, we see by \eqref{Pf_LUn:abs} and $g\in C^\infty(\overline{S_T})$ that
  \begin{align*}
    \bigl|\,gv_{\mathrm{d}}\Lambda_\gamma'(gv_{\mathrm{d}})-|gv_{\mathrm{d}}|\,\bigr| \leq c|v_{\mathrm{d}}|, \quad \lim_{\gamma\to0}gv_{\mathrm{d}}\Lambda_\gamma'(gv_{\mathrm{d}}) = |gv_{\mathrm{d}}| \quad\text{a.e. on} \quad S_T.
  \end{align*}
  Thus, the dominated convergence theorem implies that
  \begin{align*}
    \lim_{\gamma\to0}\,\bigl\|\,gv_{\mathrm{d}}\Lambda_\gamma'(gv_{\mathrm{d}})-|gv_{\mathrm{d}}|\,\bigr\|_{L_{L^1}^1(S_T)} = 0,
  \end{align*}
  and it follows from this result and the smoothness of $\mathbf{v}_\Gamma$ on $\overline{S_T}$ that
  \begin{align} \label{Pf_LUn:I5}
    \begin{aligned}
      \lim_{\gamma\to0}\mathcal{I}_\gamma^5(t) &= \lim_{\gamma\to0}\int_0^t\left(\int_{\Gamma_s}gv_{\mathrm{d}}\Lambda_\gamma'(gv_{\mathrm{d}})\,\mathrm{div}_\Gamma\mathbf{v}_\Gamma\,d\mathcal{H}^{n-1}\right)\,ds \\
      &= \int_0^t\left(\int_{\Gamma_s}|gv_{\mathrm{d}}|\,\mathrm{div}_\Gamma\mathbf{v}_\Gamma\,d\mathcal{H}^{n-1}\right)\,ds.
    \end{aligned}
  \end{align}
  Now, we send $\gamma\to0$ in $\sum_{k=1}^5\mathcal{I}_\gamma^k(t)=0$ and use \eqref{Pf_LUn:I2}--\eqref{Pf_LUn:I5} to get
  \begin{align*}
    \int_{\Gamma_t}|[gv_{\mathrm{d}}](t)|\,d\mathcal{H}^{n-1}-\int_{\Gamma_0}|[gv_{\mathrm{d}}](0)|\,d\mathcal{H}^{n-1} \leq 0.
  \end{align*}
  Moreover, since $v_{\mathrm{d}}(0)=v_1(0)-v_2(0)=0$ on $\Gamma_0$ and \eqref{E:G_Bdd} holds, we have
  \begin{align*}
    \int_{\Gamma_t}|v_{\mathrm{d}}(t)|\,d\mathcal{H}^{n-1} \leq c\int_{\Gamma_t}|[gv_{\mathrm{d}}](t)|\,d\mathcal{H}^{n-1} \leq 0
  \end{align*}
  and thus $v_{\mathrm{d}}(t)=0$ a.e. on $\Gamma_t$ for all $t\in[0,T]$.
  Hence, $v_1=v_2$ a.e. on $S_T$, and it follows from \eqref{E:WFL_Con} that $\zeta_1=\zeta_2$ a.e. on $S_T$.
\end{proof}

Now, we return to the setting of Theorem \ref{T:TFL} and complete its proof.

\begin{proof}[Proof of Theorem \ref{T:TFL}]
  Let $v^\varepsilon$ and $\zeta^\varepsilon$ be given by \eqref{E:Def_veps}.
  We see that subsequences of $v^\varepsilon$ and $\zeta^\varepsilon$ converge weakly in the sense of \eqref{E:veps_WeCo} by Proposition \ref{P:veps_Conv}, and that the pair $(v,\zeta)$ of the weak limits is a unique weak solution to \eqref{E:pLap_Lim} by Propositions \ref{P:LiWF_uw}--\ref{P:Ch_zeta} and Theorem \ref{T:Lim_Uni}.
  By the uniqueness, we also find that the weak convergence \eqref{E:veps_WeCo} actually holds for the whole sequences of $v^\varepsilon$ and $\zeta^\varepsilon$ as $\varepsilon\to0$.
  The proof is complete.
\end{proof}

We also observe that Theorem \ref{T:Lim_UnEx} is an immediate consequence of Theorem \ref{T:TFL}.

\begin{proof}[Proof of Theorem \ref{T:Lim_UnEx}]
  For $\varepsilon>0$, let $f^\varepsilon:=\bar{f}$ in $L_{[W^{1,p}]^\ast}^{p'}(Q_T^\varepsilon)$ and
  \begin{align*}
    u_0^\varepsilon(X) := \frac{\bar{v}_0(X)}{J(\pi(X),0,d(X))} = \frac{v_0(Y)}{J(Y,0,r)}, \quad X = Y+r\bm{\nu}(Y,0) \in \Omega_0^\varepsilon.
  \end{align*}
  Then, we have \eqref{E:Data_MTD} by \eqref{E:J_Bdd}, \eqref{E:CE_Lq}, and \eqref{E:CE_Func}.
  Also, \eqref{E:u0Av_WeCo} and \eqref{E:ExF_StCo} hold by $\mathcal{M}_\varepsilon u_0^\varepsilon=v_0$ on $\Gamma_0$ and the definition of $f^\varepsilon$.
  Thus, we can apply Theorem \ref{T:TFL} (and Theorem \ref{T:Lim_Uni}) to get the existence and uniqueness of a weak solution to \eqref{E:pLap_Lim}.
\end{proof}

\begin{remark} \label{R:Lim_UnEx}
  The Galerkin method may be also applicable to construct a weak solution to \eqref{E:pLap_Lim} as in \cite{Miu25pre_pLap}.
  However, it seems that some careful considerations will be required when one uses a theory of ordinary differential equations to get a finite-dimensional approximate solution $(v_N,\zeta_N)$.
  Indeed, since $\zeta_N$ is determined by $v_N$ through the nonlinear relation \eqref{E:Ch_zeta}, the continuity (in a suitable sense) of the term
  \begin{align*}
    (|\nabla_\Gamma v_N|^2+\zeta_N^2)^{(p-2)/2}\nabla_\Gamma v_N = \{|\nabla_\Gamma v_N|^2+[\zeta_N(v_N)]^2\}^{(p-2)/2}\nabla_\Gamma v_N
  \end{align*}
  with respect to the coefficients of $v_N$ is not obvious.
  We leave this issue open.
\end{remark}

\section{Proofs of auxiliary results} \label{S:Pf_Aux}
This sections provides the proofs of auxiliary results in Section \ref{S:Prelim}.

\subsection{Results on a moving surface} \label{SS:PA_Sur}
Let us show Lemmas \ref{L:MS_ST}--\ref{L:CoSur_Lq}.
For $t\in[0,T]$ and $y\in\Gamma_t$, let $T_y\Gamma_t$ be the tangent plane of $\Gamma_t$ at $y$.
We consider $T_y\Gamma_t$ as a subspace of $\mathbb{R}^n$.
Let $\Phi_{\pm(\cdot)}^0$ be the mappings given in Assumption \ref{A:Flow_Sur}.
We denote by
\begin{align*}
  [D\Phi_t^0]_Y\colon T_Y\Gamma_0 \to T_{\Phi_t^0(Y)}\Gamma_t, \quad Y\in\Gamma_0, \quad t\in[0,T]
\end{align*}
the differential of $\Phi_t^0$ at $Y$.
It is a linear isomorphism by Assumption \ref{A:Flow_Sur}.

\begin{proof}[Proof of Lemma \ref{L:MS_ST}]
  First, we see that $\overline{S_T}$ is compact in $\mathbb{R}^{n+1}$ since it is the image of the compact set $\Gamma_0\times[0,T]$ under the smooth mapping
  \begin{align*}
    \Gamma_0\times[0,T] \ni (Y,t) \mapsto (\Phi_t^0(Y),t) \in \mathbb{R}^{n+1}.
  \end{align*}
  Let us show that $\overline{S_T}$ is a smooth hypersurface in $\mathbb{R}^{n+1}$ (the statement for $S_T$ is proved in the same way).
  Fix any $(y_0,t_0)\in\overline{S_T}$ and let $Y_0:=\Phi_{-t_0}^0(y_0)\in\Gamma_0$.
  Since $\Gamma_0$ is a smooth hypersurface in $\mathbb{R}^n$, we can take an open subset $U$ of $\mathbb{R}^{n-1}$ with $0\in U$ and a smooth local parametrization $Y\colon U\to\Gamma_0$ such that $Y_0=Y(0)$ and the vectors
  \begin{align*}
    \partial_{\xi_i}Y(0) \in T_{Y_0}\Gamma_0, \quad i=1,\dots,n-1
  \end{align*}
  are linearly independent.
  Let
  \begin{align*}
    y(\xi,t) := [\Phi_t^0\circ Y](\xi) = \Phi_t^0\bigl(Y(\xi)\bigr) \in \Gamma_t, \quad (\xi,t)\in U\times[0,T]
  \end{align*}
  so that $y_0=\Phi_{t_0}^0(Y_0)=y(0,t_0)$.
  Then, $y$ is smooth on $U\times[0,T]$ and the vectors
  \begin{align*}
    \partial_{\xi_i}y(0,t_0) = [D\Phi_{t_0}^0]_{Y_0}\partial_{\xi_i}Y(0) \in T_{y_0}\Gamma_{t_0}, \quad i=1,\dots,n-1
  \end{align*}
  are linearly independent by Assumption \ref{A:Flow_Sur}.
  Thus, writing
  \begin{align*}
    y(\xi,t) = \bigl(y'(\xi,t),y_n(\xi,t)\bigr) = \bigl(y_1(\xi,t),\dots,y_{n-1}(\xi,t),y_n(\xi,t)\bigr)
  \end{align*}
  and changing the orders of $y_1,\dots,y_n$ if necessary, we may assume that the vectors
  \begin{align*}
    \partial_{\xi_i}y'(0,t_0) \in \mathbb{R}^{n-1}, \quad i=1,\dots,n-1
  \end{align*}
  are linearly independent.
  Now, we define a mapping $\psi\colon U\times[0,T]\to\mathbb{R}^n$ by
  \begin{align*}
    \psi(\xi,t) := (y'(\xi,t),t), \quad (\xi,t)\in U\times[0,T].
  \end{align*}
  Then, $\psi$ is smooth on $U\times[0,T]$ and $\psi(0,t_0)=(y_0',t_0)$ with
  \begin{align*}
    y_0 = (y_0',y_{0,n}) = (y_{0,1},\dots,y_{0,n-1},y_{0,n}).
  \end{align*}
  Moreover, the vectors
  \begin{align*}
    \partial_{\xi_1}\psi(0,t_0),\dots,\partial_{\xi_{n-1}}\psi(0,t_0),\partial_t\psi(0,t_0) \in \mathbb{R}^n
  \end{align*}
  are linearly independent, since
  \begin{align*}
    \partial_{\xi_i}\psi(0,t_0) =
    \begin{pmatrix}
      \partial_{\xi_i}y'(0,t_0) \\
      0
    \end{pmatrix},
    \quad \partial_t\psi(0,t_0) =
    \begin{pmatrix}
      \partial_ty'(0,t_0) \\ 1
    \end{pmatrix}
  \end{align*}
  and the vectors $\partial_{\xi_i}y'(0,t_0)$ are linearly independent.
  Thus, by the inverse function theorem, we can take a small number $\sigma_0>0$ such that, if we set
  \begin{align} \label{Pf_ST:V}
    \begin{gathered}
      B_{\sigma_0}(y_0') := \{y'\in\mathbb{R}^{n-1} \mid |y'-y_0'|<\sigma_0\}, \\
      W := B_{\sigma_0}(y_0')\times\bigl((t_0-\sigma_0,t_0+\sigma_0)\cap[0,T]\bigr), \quad V := \psi^{-1}(W),
    \end{gathered}
  \end{align}
  then $\psi\colon V\to W$ is a diffeomorphism.
  By the definition of $\psi$, we can write
  \begin{align*}
    \psi^{-1}(y',t) = (\xi(y',t),t), \quad (y',t) \in W
  \end{align*}
  with a smooth mapping $\xi(y',t)$ on $W$.
  Then, since $\psi(0,t_0)=(y_0',t_0)$,
  \begin{align*}
    \xi(y_0',t_0) = 0, \quad y_n(\xi(y_0',t_0),t_0) = y_n(0,t_0) = y_{0,n}.
  \end{align*}
  Moreover, we have $y'(\xi(z',t),t)=z'$ for $(z',t)\in W$ by
  \begin{align*}
    (y'(\xi(z',t),t),t) = \psi(\xi(z',t),t) = \psi\bigl(\psi^{-1}(z',t)\bigr) = (z',t).
  \end{align*}
  Let $z_n(z',t):=y_n(\xi(z',t),t)$ for $(z',t)\in W$.
  Then, $z_n$ is smooth on $W$ and
  \begin{align} \label{Pf_ST:ysz}
    y(\xi(z',t),t) = \bigl(y'(\xi(z',t),t),y_n(\xi(z',t),t)\bigr) = \bigl(z',z_n(z',t)\bigr), \quad (z',t) \in W.
  \end{align}
  Also, $(y_0',t_0)\in W$ and $y_{0,n}=z_n(y_0',t_0)$.
  Thus, $\overline{S_T}$ is locally expressed as a graph
  \begin{align} \label{Pf_ST:loc}
    \overline{S_T} = \{(z',z_n(z',t),t) \mid (z',t)\in W\}
  \end{align}
  near $(y_0,t_0)$.
  This shows that $\overline{S_T}$ is a smooth hypersurface in $\mathbb{R}^{n+1}$.
\end{proof}

\begin{proof}[Proof of Lemma \ref{L:MS_Reg}]
  Fix any $(y_0,t_0)\in\overline{S_T}$.
  We use the notations in the proof of Lemma \ref{L:MS_ST} above.
  Hence, $\overline{S_T}$ is locally expressed as \eqref{Pf_ST:loc} near $(y_0,t_0)$.
  We have
  \begin{align*}
    [\Phi_{-(\cdot)}^0](y(\xi,t),t) = \Phi_{-t}^0\bigl(y(\xi,t)\bigr) = Y(\xi), \quad (\xi,t)\in U\times[0,T]
  \end{align*}
  by $y(\xi,t)=\Phi_t^0\bigl(Y(\xi)\bigr)$.
  Thus, we set $\xi=\xi(z',t)$ and use \eqref{Pf_ST:ysz} to get
  \begin{align*}
    [\Phi_{-(\cdot)}^0](z',z_n(z',t),t) = Y\bigl(\xi(z',t)\bigr), \quad (z',t)\in W.
  \end{align*}
  Since the right-hand side is smooth on $W$, we find that $\Phi_{-(\cdot)}^0$ is smooth near $(y_0,t_0)$.
  Next, we see by \eqref{Pf_ST:loc} that $\bm{\nu}$ is locally expressed as
  \begin{align} \label{Pf_Re:nulc}
    \bm{\nu}(z',z_n(z',t),t) = \pm\frac{1}{\sqrt{1+|\nabla_{z'}z_n(z',t)|^2}}
    \begin{pmatrix}
      -\nabla_{z'}z_n(z',t) \\
      1
    \end{pmatrix},
    \quad (z',t) \in W.
  \end{align}
  On the other hand, by Assumption \ref{A:Flow_Sur} (ii), we have
  \begin{align*}
    \bm{\nu}(z',z_n(z',t),t) = \tilde{\bm{\nu}}(z',z_n(z',t),t), \quad (z',t) \in W,
  \end{align*}
  and the right-hand side is continuous on $W$ by the continuity of $\tilde{\bm{\nu}}$ and $z_n$.
  Hence, $\bm{\nu}$ takes only one sign in the local expression \eqref{Pf_Re:nulc}.
  By this fact and the smoothness of $z_n$, we find that $\bm{\nu}$ is smooth near $(y_0,t_0)$.
  Therefore, $\Phi_{-(\cdot)}^0$ and $\bm{\nu}$ are smooth on $\overline{S_T}$.
\end{proof}

To prove Lemma \ref{L:MS_Tubu}, we prepare an auxiliary result.

\begin{lemma} \label{L:MS_ILC}
  Let $t\in[0,T]$.
  We define a mapping
  \begin{align*}
    x_t\colon\Gamma_t\times\mathbb{R}\to\mathbb{R}^n, \quad x_t(y,r) := y+r\bm{\nu}(y,t).
  \end{align*}
  Then, there exist constants $c_0,\rho_0>0$ independent of $t$ such that
  \begin{align} \label{E:MS_ILC}
    |y_1-y_2|+|r_1-r_2| \leq c_0|x_t(y_1,r_1)-x_t(y_2,r_2)|
  \end{align}
  for all $y_1,y_2\in\Gamma_t$ and $r_1,r_2\in[-\rho_0,\rho_0]$.
\end{lemma}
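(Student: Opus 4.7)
My plan is to argue by contradiction, using the compactness of $\overline{S_T}$ (Lemma \ref{L:MS_ST}) together with a quantitative form of the inverse function theorem applied to a local space-time parametrization of $\overline{S_T}$. Suppose no such $c_0, \rho_0$ exist. Then there are sequences $t_k\in[0,T]$, $y_1^k,y_2^k\in\Gamma_{t_k}$, and $r_1^k,r_2^k\in[-1/k,1/k]$ such that
\begin{align*}
  L_k := |y_1^k-y_2^k|+|r_1^k-r_2^k| > k\,|x_{t_k}(y_1^k,r_1^k)-x_{t_k}(y_2^k,r_2^k)| =: kD_k.
\end{align*}
Since $\overline{S_T}$ is compact, after extracting a subsequence I may assume $(y_i^k,t_k)\to(y_i^\ast,t^\ast)\in\overline{S_T}$ for $i=1,2$ and $r_i^k\to0$.

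If $y_1^\ast\neq y_2^\ast$, then $L_k$ stays bounded (the diameter of $\overline{S_T}$ controls it plus $1$), while $D_k\to|y_1^\ast-y_2^\ast|>0$ because $r_i^k\bm{\nu}(y_i^k,t_k)\to\mathbf{0}_n$. Thus $kD_k\to\infty$ contradicts $L_k>kD_k$. So I may assume $y_1^\ast=y_2^\ast=:y^\ast$, and must derive a contradiction from local considerations near $(y^\ast,t^\ast)$.

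By the proof of Lemma \ref{L:MS_ST}, I can choose a smooth local parametrization $Y\colon U\times I\to\overline{S_T}$ with $Y(0,t^\ast)=y^\ast$, where $U\subset\mathbb{R}^{n-1}$ is open and $I\subset[0,T]$ is a relative open interval around $t^\ast$, such that $Y(\xi,t)\in\Gamma_t$ and $\partial_{\xi_1}Y(0,t^\ast),\dots,\partial_{\xi_{n-1}}Y(0,t^\ast)$ are linearly independent (they span $T_{y^\ast}\Gamma_{t^\ast}$). Since $\bm{\nu}$ is smooth on $\overline{S_T}$ (Lemma \ref{L:MS_Reg}), the composed map
\begin{align*}
  F(\xi,r,t) := Y(\xi,t)+r\,\bm{\nu}(Y(\xi,t),t)
\end{align*}
is smooth on $U\times\mathbb{R}\times I$, and the partial Jacobian $D_{(\xi,r)}F(0,0,t^\ast)$ is the invertible matrix formed by the tangent frame of $\Gamma_{t^\ast}$ at $y^\ast$ together with $\bm{\nu}(y^\ast,t^\ast)$. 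By continuity, $D_{(\xi,r)}F$ stays uniformly invertible on $\overline{B_\rho(0)}\times[-\rho,\rho]\times\overline{I'}$ for some $\rho>0$ and some relatively open $I'\ni t^\ast$ with $\overline{I'}\subset I$.

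A quantitative inverse function theorem (applied with parameter $t$) then yields constants $\rho'>0$ and $C>0$ such that $F(\cdot,\cdot,t)$ is bi-Lipschitz from $\overline{B_{\rho'}(0)}\times[-\rho',\rho']$ onto its image with Lipschitz constant of the inverse bounded by $C$, uniformly in $t\in\overline{I'}$. Writing $y_i^k=Y(\xi_i^k,t_k)$ for $k$ large (so that $\xi_i^k\to0$ and $t_k\in I'$), I obtain
\begin{align*}
  |\xi_1^k-\xi_2^k|+|r_1^k-r_2^k| \leq C\,|F(\xi_1^k,r_1^k,t_k)-F(\xi_2^k,r_2^k,t_k)| = C\,D_k,
\end{align*}
while $|y_1^k-y_2^k|\leq C'|\xi_1^k-\xi_2^k|$ by smoothness of $Y$ and the compactness of $\overline{U}\times[0,T]$. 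Consequently $L_k\leq(C+C')D_k$ for all large $k$, which contradicts $L_k>kD_k$.

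The main obstacle is the last two paragraphs: I must ensure that the bi-Lipschitz estimate for $F(\cdot,\cdot,t)$ is quantitative and \emph{uniform in the parameter} $t$ near $t^\ast$. This is where the compactness of $\overline{S_T}$ and the continuity of $D_{(\xi,r)}F$ in $(\xi,r,t)$ are essential, and it is the one place where some care is needed beyond standard local arguments. Once this uniformity is available, both the trivial case $y_1^\ast\neq y_2^\ast$ and the local case $y_1^\ast=y_2^\ast$ close the contradiction cleanly, yielding the desired constants $c_0,\rho_0$ independent of $t$.
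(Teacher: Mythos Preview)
Your proposal is correct and follows the same global strategy as the paper: argue by contradiction, extract convergent subsequences via compactness of $\overline{S_T}$, reduce to $y_1^\ast=y_2^\ast$, and then close by a local analysis near $(y^\ast,t^\ast)$.

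The difference lies entirely in how the local step is executed. You invoke a quantitative, parameter-uniform inverse function theorem for $F(\xi,r,t)=Y(\xi,t)+r\bm{\nu}(Y(\xi,t),t)$, using that $D_{(\xi,r)}F(0,0,t^\ast)$ is the invertible matrix with columns the tangent frame plus $\bm{\nu}$, and then continuity on a compact parameter set for uniformity in $t$. The paper instead writes $\Gamma_t$ locally as a graph $(z',z_n(z',t))$ and derives by hand the two geometric estimates
\[
|\bm{\nu}(y_{1,k},t_k)-\bm{\nu}(y_{2,k},t_k)|\leq c|y_{1,k}'-y_{2,k}'|,\qquad
|(y_{1,k}-y_{2,k})\cdot\bm{\nu}(y_{1,k},t_k)|\leq c|y_{1,k}'-y_{2,k}'|^2,
\]
then manipulates the identity $(r_{1,k}-r_{2,k})\bm{\nu}=x_{1,k}-x_{2,k}-(y_{1,k}-y_{2,k})-r_{2,k}(\bm{\nu}_1-\bm{\nu}_2)$ to force $y_{1,k}=y_{2,k}$ and $r_{1,k}=r_{2,k}$ for large $k$, contradicting the strict inequality. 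Your route is cleaner and more conceptual; the paper's is more explicit and self-contained (no black-box parametric inverse function theorem). The point you flag as ``the main obstacle'' --- uniformity in $t$ --- is exactly what the paper sidesteps by working with explicit constants coming from bounds on $\nabla_{z'}\varphi_0$ and $\nabla_{z'}\bm{\nu}_0$ over the compact set $\overline{W'}$.
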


\begin{proof}
  We follow the proof in the case of a fixed surface \cite[Theorem III.3.5]{BoyFab13}.

  Assume to the contrary that there exist sequences
  \begin{align*}
    t_k\in[0,T], \quad y_{1,k},y_{2,k}\in\Gamma_{t_k}, \quad r_{1,k},r_{2,k} \in \left[-\frac{1}{k},\frac{1}{k}\right], \quad k\in\mathbb{N}
  \end{align*}
  and $x_{j,k}:=x_{t_k}(y_{j,k},r_{j,k})=y_{j,k}+r_{j,k}\bm{\nu}(y_{j,k},t_k)$ for $j=1,2$ such that
  \begin{align} \label{Pf_MC:Cont}
    |y_{1,k}-y_{2,k}|+|r_{1,k}-r_{2,k}| > k|x_{1,k}-x_{2,k}|.
  \end{align}
  Let $Y_{j,k}:=\Phi_{-t_k}^0(y_{j,k})\in\Gamma_0$ for $j=1,2$.
  Since $[0,T]$ and $\Gamma_0$ are compact, we have
  \begin{align*}
    \lim_{k\to\infty}t_k = t_0 \in [0,T], \quad \lim_{k\to\infty}Y_{j,k} = Y_j \in \Gamma_0, \quad j=1,2
  \end{align*}
  up to a subsequence.
  Then, since $\Phi_{(\cdot)}^0$ is smooth on $\Gamma_0\times[0,T]$,
  \begin{align*}
    \lim_{k\to\infty}y_{j,k} &= \lim_{k\to\infty}\Phi_{t_k}^0(Y_{j,k}) = \Phi_{t_0}^0(Y_j) =: y_j \in \Gamma_{t_0}, \quad j=1,2.
  \end{align*}
  Moreover, $x_{j,k}\to y_j$ as $k\to\infty$ by $|\bm{\nu}(y_{j,k},t_k)|=1$ and $r_{j,k}\to0$.
  Since
  \begin{align*}
    |x_{1,k}-x_{2,k}| < \frac{1}{k}|y_{1,k}-y_{2,k}|+\frac{1}{k}|r_{1,k}-r_{2,k}| \to 0
  \end{align*}
  as $k\to\infty$ by \eqref{Pf_MC:Cont}, $y_{j,k}\to y_j$, and $r_{j,k}\to0$, we find that $y_1=y_2$.

  In what follows, we set $y_0:=y_1\in\Gamma_{t_0}$ and use the notations in the proof of Lemma \ref{L:MS_ST}.
  Thus, $\overline{S_T}$ is locally expressed as \eqref{Pf_ST:loc} near $(y_0,t_0)$, where $W$ is given by \eqref{Pf_ST:V} and $z_n$ is a smooth function on $W$.
  Let
  \begin{align*}
    B' := B_{\sigma_0/2}(y_0'), \quad I' := \left(t_0-\frac{\sigma_0}{2},t_0+\frac{\sigma_0}{2}\right)\cap[0,T], \quad W' := B'\times I'.
  \end{align*}
  Since $(y_{j,k},t_k)\to(y_0,t_0)$ as $k\to\infty$, we have
  \begin{align*}
    (y_{j,k},t_k) = (y_{j,k}',z_n(y_{j,k}',t_k),t_k), \quad (y_{j,k}',t_k) \in \overline{W'}, \quad j=1,2
  \end{align*}
  when $k$ is sufficiently large.
  For $(z',t) \in W$, we define
  \begin{align*}
    \varphi_0(z',t) := \bigl(z',z_n(z',t)\bigr) \in \Gamma_t, \quad \bm{\nu}_0(z',t) := \bm{\nu}(\varphi_0(z',t),t) \in \mathbb{R}^n.
  \end{align*}
  Since $\bm{\nu}_0$ is of the form \eqref{Pf_Re:nulc}, $\overline{B'}$ is convex in $\mathbb{R}^{n-1}$, and $\overline{W'}$ is compact in $W$,
  \begin{align} \label{Pf_MC:phi0}
    |\nabla_{z'}\varphi_0(z',t)| \leq c, \quad |\bm{\nu}_0(z',t)-\bm{\nu}_0(w',t)| \leq c|z'-w'|
  \end{align}
  for $z',w'\in\overline{B'}$ and $t\in\overline{I'}$.
  Also, for $z_1',z_2'\in\overline{B'}$ and $t\in\overline{I'}$, we define curves
  \begin{align*}
    z'(s) := sz_1'+(1-s)z_2' \in \overline{B'}, \quad \gamma(s) := \varphi_0(z'(s),t)\in\Gamma_t, \quad s\in[0,1].
  \end{align*}
  Then, since the vector $\frac{d\gamma}{ds}(s)$ is tangent to $\Gamma_t$ at $\gamma(s)$ for each $s\in[0,1]$,
  \begin{align*}
    \{\varphi_0(z_1',t)-\varphi_0(z_2',t)\}\cdot\bm{\nu}_0(z_1',t) &= \int_0^1\frac{d\gamma}{ds}(s)\cdot\bm{\nu}_0(z_1',t)\,ds \\
    &= \int_0^1\frac{d\gamma}{ds}(s)\cdot\{\bm{\nu}_0(z_1',t)-\bm{\nu}_0(z'(s),t)\}\,ds.
  \end{align*}
  Thus, using the chain rule for $\gamma$ and applying \eqref{Pf_MC:phi0}, we get
  \begin{align} \label{Pf_MC:inner}
    |\{\varphi_0(z_1',t)-\varphi_0(z_2',t)\}\cdot\bm{\nu}_0(z_1',t)| \leq c|z_1'-z_2'|^2.
  \end{align}
  Now, we apply \eqref{Pf_MC:phi0} and \eqref{Pf_MC:inner} to $y_{j,k}=\varphi_0(y_{j,k}',t_k)$ to get
  \begin{align} \label{Pf_MC:nudif}
    \begin{aligned}
      |\bm{\nu}(y_{1,k},t_k)-\bm{\nu}(y_{2,k},t_k)| &\leq c|y_{1,k}'-y_{2,k}'|, \\
      |(y_{1,k}-y_{2,k})\cdot\bm{\nu}(y_{1,k},t_k)| &\leq c|y_{1,k}'-y_{2,k}'|^2.
    \end{aligned}
  \end{align}
  By $x_{j,k}=y_{j,k}+r_{j,k}\bm{\nu}(y_{j,k},t_k)$, we observe that
  \begin{align} \label{Pf_MC:rnu}
    \begin{aligned}
      &(r_{1,k}-r_{2,k})\bm{\nu}(y_{1,k},t_k) \\
      &\qquad = (x_{1,k}-x_{2,k})-(y_{1,k}-y_{2,k})-r_{2,k}\{\bm{\nu}(y_{1,k},t_k)-\bm{\nu}(y_{2,k},t_k)\}.
    \end{aligned}
  \end{align}
  Thus, we take the inner product of both sides with $\bm{\nu}(y_{1,k},t_k)$ to get
  \begin{align*}
    |r_{1,k}-r_{2,k}| \leq |x_{1,k}-x_{2,k}|+|(y_{1,k}-y_{2,k})\cdot\bm{\nu}(y_{1,k},t_k)|+\frac{1}{k}|\bm{\nu}(y_{1,k},t_k)-\bm{\nu}(y_{2,k},t_k)|
  \end{align*}
  by $|\bm{\nu}(y_{1,k},t_k)|=1$ and $|r_{2,k}|\leq1/k$.
  We further use \eqref{Pf_MC:Cont}, \eqref{Pf_MC:nudif}, and
  \begin{align} \label{Pf_MC:aux}
    |y_{1,k}'-y_{2,k}'| \leq |y_{1,k}-y_{2,k}|,
  \end{align}
  since we write $y=(y',y_n)$ for $y\in\mathbb{R}^n$.
  Then, we have
  \begin{align*}
    |r_{1,k}-r_{2,k}| \leq \frac{1}{k}|r_{1,k}-r_{2,k}|+\frac{c}{k}|y_{1,k}-y_{2,k}|+c|y_{1,k}-y_{2,k}|^2
  \end{align*}
  and thus, when $k$ is sufficiently large,
  \begin{align} \label{Pf_MC:rdif}
    |r_{1,k}-r_{2,k}| \leq \frac{c}{k}|y_{1,k}-y_{2,k}|+c|y_{1,k}-y_{2,k}|^2.
  \end{align}
  Also, we see by \eqref{Pf_MC:rnu}, $|\bm{\nu}(y_{1,k},t_k)|=1$, and $|r_{2,k}|\leq1/k$ that
  \begin{align*}
    |y_{1,k}-y_{2,k}| &\leq |r_{1,k}-r_{2,k}|+|x_{1,k}-x_{2,k}|+\frac{1}{k}|\bm{\nu}(y_{1,k},t_k)-\bm{\nu}(y_{2,k},t_k)|.
  \end{align*}
  Thus, noting that
  \begin{align*}
    |x_{1,k}-x_{2,k}| \leq \frac{1}{k}|y_{1,k}-y_{2,k}|+|r_{1,k}-r_{2,k}|
  \end{align*}
  by \eqref{Pf_MC:Cont} and $1/k\leq1$, we use \eqref{Pf_MC:nudif}, \eqref{Pf_MC:aux}, and \eqref{Pf_MC:rdif} to get
  \begin{align*}
    |y_{1,k}-y_{2,k}| \leq \frac{c}{k}|y_{1,k}-y_{2,k}|+c|y_{1,k}-y_{2,k}|^2 = c\left(\frac{1}{k}+|y_{1,k}-y_{2,k}|\right)|y_{1,k}-y_{2,k}|.
  \end{align*}
  Since $|y_{1,k}-y_{2,k}|\to|y_0-y_0|=0$ as $k\to\infty$, this inequality gives
  \begin{align*}
    |y_{1,k}-y_{2,k}| \leq \frac{1}{2}|y_{1,k}-y_{2,k}|
  \end{align*}
  when $k$ is sufficiently large.
  Thus, $y_{1,k}=y_{2,k}$.
  Then, we also get $r_{1,k}=r_{2,k}$ by \eqref{Pf_MC:rdif}, but this contradicts the strict inequality of \eqref{Pf_MC:Cont}.
  Hence, the statement holds.
\end{proof}

\begin{proof}[Proof of Lemma \ref{L:MS_Tubu}]
  Let $x_t$ and $\rho_0$ be given in Lemma \ref{L:MS_ILC}.
  We set
  \begin{align*}
    \overline{\mathcal{N}_t^0} := \{x\in\mathbb{R}^n \mid -\rho_0 \leq d(x,t) \leq \rho_0\}, \quad t\in[0,T].
  \end{align*}
  Let $x_1\in\overline{\mathcal{N}_t^0}$.
  Since $\Gamma_t$ is compact, there exists at least one $y_1\in\Gamma_t$ such that
  \begin{align} \label{Pf_MT:dist}
    r_1 := d(x_1,t) =
    \begin{cases}
      -|x_1-y_1| &\text{if}\quad x\in\Omega_t, \\
      |x_1-y_1| &\text{if}\quad x\in\mathbb{R}^n\setminus\Omega_t,
    \end{cases}
  \end{align}
  where $\Omega_t$ is a bounded domain in $\mathbb{R}^n$ with boundary $\Gamma_t$ (see Section \ref{SS:Pre_Sur}).
  Then,
  \begin{align*}
    (x_1-y_1)\cdot\frac{d\gamma}{ds}(0) = 0 \quad\text{for any curve}\quad \gamma\colon\mathbb{R}\to\Gamma_t \quad\text{with}\quad \gamma(0)=y_1,
  \end{align*}
  since $|x_1-\gamma(s)|^2$ takes a local minimum at $s=0$.
  Thus, $x_1-y_1$ is orthogonal to $\Gamma_t$ at $y_1$.
  By this fact and \eqref{Pf_MT:dist}, we find that $x_1-y_1=r_1\bm{\nu}(y_1,t)$, i.e.
  \begin{align*}
    x_1 = y_1+r_1\bm{\nu}(y_1,t) = x_t(y_1,r_1), \quad y_1\in\Gamma_t, \quad r_1 = d(x_1,t) \in [-\rho_0,\rho_0].
  \end{align*}
  Moreover, if $x_1=x_t(y_2,r_2)$ with some $y_2\in\Gamma_t$ and $r_2\in[-\rho_0,\rho_0]$, then $y_1=y_2$ and $r_1=r_2$ by \eqref{E:MS_ILC}.
  Hence, we can define $\pi(x_1,t):=y_1$ to get the one-to-one correspondence
  \begin{align} \label{Pf_MT:Corr}
    x = x_t(y,r) \in \overline{\mathcal{N}_t^0} \quad\Leftrightarrow\quad (y,r) = \bigl(\pi(x,t),d(x,t)\bigr) \in \Gamma_t\times[-\rho_0,\rho_0].
  \end{align}
  This gives the first relation of \eqref{E:Fermi}.

  The second relation of \eqref{E:Fermi} is shown as in the case of a fixed surface \cite[Section III.3.2]{BoyFab13}.
  For the reader's convenience, we give the proof by assuming that $d$ is smooth (we confirm it later in some space-time domain).
  Let
  \begin{align*}
    t_0\in[0,T], \quad x_0\in\overline{\mathcal{N}_{t_0}^0}, \quad y_0 = \pi(x_0,t_0)\in\Gamma_{t_0}, \quad r_0 = d(x_0,t_0) \in[-\rho_0,\rho_0].
  \end{align*}
  We take an orthonormal basis $\{\mathbf{u}_1,\dots,\mathbf{u}_n\}$ of $\mathbb{R}^n$ such that $\mathbf{u}_1=\bm{\nu}(y_0,t_0)$ and write
  \begin{align*}
    \nabla d(x_0,t_0) = \alpha_1\mathbf{u}_1+\dots+\alpha_n\mathbf{u}_n, \quad \alpha_k = \nabla d(x_0,t_0)\cdot\mathbf{u}_k, \quad k=1,\dots,n.
  \end{align*}
  From the triangle inequality
  \begin{align*}
    |x_1-y| \leq |x_1-x_2|+|x_2-y|, \quad x_1,x_2\in\mathbb{R}^n, \quad y\in\Gamma_{t_0},
  \end{align*}
  we can deduce that $|d(x,t_0)-d(x_0,t_0)|\leq|x-x_0|$ when $x$ is near $x_0$.
  Thus,
  \begin{align} \label{Pf_MT:grd}
    |\nabla d(x_0,t_0)| = (\alpha_1^2+\cdots+\alpha_n^2)^{1/2} \leq 1.
  \end{align}
  On the other hand, when $h\in\mathbb{R}\setminus\{0\}$ is sufficiently small, we have
  \begin{align*}
    x_0+h\mathbf{u}_1 = y_0+(r_0+h)\bm{\nu}(y_0,t_0) = x_{t_0}(y_0,r_0+h)
  \end{align*}
  and thus $d(x_0+h\mathbf{u}_1,t_0)=r_0+h=d(x_0,t_0)+h$ by \eqref{Pf_MT:Corr}.
  Hence,
  \begin{align*}
    \alpha_1 = \nabla d(x_0,t_0)\cdot\mathbf{u}_1 = \lim_{h\to0}\frac{d(x_0+h\mathbf{u}_1,t_0)-d(x_0,t_0)}{h} = 1.
  \end{align*}
  By this result and \eqref{Pf_MT:grd}, we find that $\alpha_k=0$ for $k=2,\dots,n$ and
  \begin{align*}
    \nabla d(x_0,t_0) = \mathbf{u}_1 = \bm{\nu}(y_0,t_0).
  \end{align*}
  Since $y_0=\pi(x_0,t_0)$, it follows that the second relation of \eqref{E:Fermi} is valid.

  Let us show the smoothness of $\pi$ and $d$.
  Let $(y_0,t_0)\in\overline{S_T}$ and $Y_0=\Phi_{-t_0}^0(y_0)\in\Gamma_0$.
  As in the proof of Lemma \ref{L:MS_ST}, we take a smooth local parametrization $Y\colon U\to\Gamma_0$ such that $Y(0)=Y_0$ and the vectors $\partial_{\xi_i}Y(0)$ are linearly independent.
  We set
  \begin{align*}
    \Psi_0\colon U\times[-\rho_0,\rho_0]\times[0,T] \to \mathbb{R}^{n+1}, \quad \Psi_0(\xi,r,t) := (y(\xi,t)+r\bm{\nu}(y(\xi,t),t),t),
  \end{align*}
  where $y(\xi,t)=\Phi_t^0\bigl(Y(\xi)\bigr)$.
  Then, $\Psi_0$ is smooth on $U\times[-\rho_0,\rho_0]\times[0,T]$ and
  \begin{align} \label{Pf_MT:psi}
    \pi\bigl(\Psi_0(\xi,r,t)\bigr) = y(\xi,t), \quad d\bigl(\Psi_0(\xi,r,t)\bigr) = r
  \end{align}
  by \eqref{Pf_MT:Corr}.
  Moreover, $\Psi_0(0,0,t_0)=(y_0,t_0)$ and the vectors
  \begin{align*}
    \partial_{\xi_1}\Psi_0(0,0,t_0),\dots,\partial_{\xi_{n-1}}\Psi_0(0,0,t_0),\partial_r\Psi_0(0,0,t_0),\partial_t\Psi_0(0,0,t_0) \in \mathbb{R}^{n+1}
  \end{align*}
  are linearly independent, since (we suppress the argument $(0,0,t_0)$ below)
  \begin{align*}
    \partial_{\xi_i}\Psi_0 =
    \begin{pmatrix}
      \partial_{\xi_i}y(0,t_0) \\
      0
    \end{pmatrix},
    \quad \partial_r\Psi_0 =
    \begin{pmatrix}
      \bm{\nu}(y_0,t_0) \\
      0
    \end{pmatrix},
    \quad \partial_t\Psi_0 =
    \begin{pmatrix}
      \partial_ty(0,t_0) \\
      1
    \end{pmatrix}
  \end{align*}
  and the vectors $\partial_{\xi_i}y(0,t_0)\in T_{y_0}\Gamma_{t_0}$ are linearly independent and $\bm{\nu}(y_0,t_0)$ is orthogonal to $T_{y_0}\Gamma_{t_0}$.
  Thus, by the inverse function theorem, we can take small numbers
  \begin{align} \label{Pf_MT:sigma}
    \sigma_0 = \sigma_0(y_0,t_0) > 0, \quad \delta_0 = \delta_0(y_0,t_0) \in (0,\rho_0)
  \end{align}
  such that, if we define the sets $A_0=A_0(y_0,t_0)$ with $A=B,I,V,W$ by
  \begin{align} \label{Pf_MT:U0}
    \begin{gathered}
      B_0 := \{\xi\in\mathbb{R}^{n-1} \mid |\xi|<\sigma_0\}, \quad I_0 := (t_0-\sigma_0,t_0+\sigma_0)\cap[0,T], \\
      V_0 := B_0\times[-\delta_0,\delta_0]\times I_0, \quad W_0 := \Psi_0(V_0),
    \end{gathered}
  \end{align}
  then $\Psi_0\colon V_0\to W_0$ is a diffeomorphism.
  By the definition of $\Psi_0$, we can write
  \begin{align*}
    \Psi_0^{-1}(x,t) = (\xi(x,t),r(x,t),t) \in V_0, \quad (x,t)\in W_0,
  \end{align*}
  where $\xi(x,t)$ and $r(x,t)$ are smooth on $W_0$.
  Substituting this for \eqref{Pf_MT:psi}, we get
  \begin{align*}
    \pi(x,t) = y(\xi(x,t),t), \quad d(x,t) = r(x,t), \quad (x,t) \in W_0.
  \end{align*}
  Thus, $\pi$ and $d$ are smooth on $W_0$.
  Now, since $\overline{S_T}$ is compact and
  \begin{align*}
    \overline{S_T} \subset \textstyle\bigcup_{(y_0,t_0)\in\overline{S_T}}W_0 = \textstyle\bigcup_{(y_0,t_0)\in\overline{S_T}}W_0(y_0,t_0)
  \end{align*}
  by $(y_0,t_0)=\Psi_0(0,0,t_0)\in W_0$, we can take a finite covering
  \begin{align*}
    \overline{S_T} \subset \overline{\mathcal{W}_T} := \textstyle\bigcup_{\ell=1}^LW_\ell = \textstyle\bigcup_{\ell=1}^L\Psi_\ell(V_\ell),
  \end{align*}
  and $\pi$ and $d$ are smooth on $\overline{\mathcal{W}_T}$.
  Here, we use the notations \eqref{Pf_MT:sigma}--\eqref{Pf_MT:U0} with subscript $0$ replaced by $\ell$.
  Since $\Psi_\ell(\xi,r,t)\in\overline{S_T}$ implies $r=0$ by \eqref{Pf_MT:psi}, we further get
  \begin{align} \label{Pf_MT:cover}
    \overline{S_T} \subset \textstyle\bigcup_{\ell=1}^L\Psi_\ell(B_\ell\times\{0\}\times I_\ell).
  \end{align}
  Let $\delta:=\min_{1\leq\ell\leq L}\delta_\ell$.
  Then, $0<\delta<\rho_0$ and $\delta$ is independent of $t$.
  Also, let
  \begin{align*}
    \overline{\mathcal{N}_t} := \{x\in\mathbb{R}^n \mid -\delta \leq d(x,t) \leq \delta\} \subset \overline{\mathcal{N}_t^0}, \quad \overline{\mathcal{U}_T} := \textstyle\bigcup_{t\in[0,T]}\overline{\mathcal{N}_t}\times\{t\}.
  \end{align*}
  For each $(x,t)\in\overline{\mathcal{U}_T}$, the first relation of \eqref{E:Fermi} holds by \eqref{Pf_MT:Corr}.
  Thus, we can write
  \begin{align*}
    x = y+r\bm{\nu}(y,t), \quad y = \pi(x,t) \in \Gamma_t, \quad r = d(x,t) \in [-\delta,\delta].
  \end{align*}
  Then, since $(y,t)\in\overline{S_T}$, it follows from \eqref{Pf_MT:cover} that
  \begin{align*}
    t \in I_\ell, \quad (y,t) = \Psi_\ell(\xi,0,t) \quad\text{with some}\quad \ell\in\{1,\dots,L\}, \quad \xi\in B_\ell.
  \end{align*}
  By this fact, $r\in[-\delta,\delta]$, and $\delta\leq\delta_\ell$, we find that
  \begin{align*}
    (x,t) = \Psi_\ell(\xi,r,t) \in \Psi_\ell(V_\ell) = W_\ell \subset \overline{\mathcal{W}_T}.
  \end{align*}
  Thus, $\overline{\mathcal{U}_T}\subset\overline{\mathcal{W}_T}$.
  Since $\pi$ and $d$ are smooth on $\overline{\mathcal{W}_T}$, they are also smooth on $\overline{\mathcal{U}_T}$.
\end{proof}

Let us prove Lemma \ref{L:CoSur_Lq}.
To this end, we give a change of variables formula.

\begin{lemma} \label{L:CoV_Sur}
  Let $\mathbf{v}_\Gamma$ be the velocity of $\Gamma_t$ given by \eqref{E:Def_vSur} and
  \begin{align} \label{E:Def_JSur}
    \mathcal{J}_t^0(Y) := \exp\left(\int_0^t\mathrm{div}_\Gamma\mathbf{v}_\Gamma(\Phi_\tau^0(Y),\tau)\,d\tau\right), \quad (Y,t)\in\Gamma_0\times[0,T].
  \end{align}
  Then, for all $t\in[0,T]$ and $v\in L^1(\Gamma_t)$, we have
  \begin{align} \label{E:CoV_Sur}
    \int_{\Gamma_t}v(y)\,d\mathcal{H}^{n-1}(y) = \int_{\Gamma_0}v\bigl(\Phi_t^0(Y)\bigr)\mathcal{J}_t^0(Y)\,d\mathcal{H}^{n-1}(Y).
  \end{align}
\end{lemma}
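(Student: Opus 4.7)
The plan is to prove \eqref{E:CoV_Sur} by localizing the integrals through a finite partition of unity subordinate to a covering of $\Gamma_0$ by parametrization patches, applying the classical surface change of variables on each patch, and identifying the resulting local Jacobian of the metric transformation with $\mathcal{J}_t^0$ by an ODE argument. Thus, fix a smooth local parametrization $Y\colon U\to\Gamma_0$ with $U\subset\mathbb{R}^{n-1}$ open, and set $y(\xi,t):=\Phi_t^0(Y(\xi))\in\Gamma_t$. By Assumption \ref{A:Flow_Sur} and Lemma \ref{L:MS_Reg}, $y(\cdot,t)$ is a smooth parametrization of an open subset of $\Gamma_t$, and the metric $g_{ij}(\xi,t):=\partial_{\xi_i}y\cdot\partial_{\xi_j}y$ satisfies $\det g(\xi,t)>0$ uniformly on compact subsets of $U\times[0,T]$. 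The classical surface area formula then yields
\begin{align*}
  \int_{y(U,t)}v\,d\mathcal{H}^{n-1} = \int_U v(y(\xi,t))\sqrt{\det g(\xi,t)}\,d\xi,
\end{align*}
and the analogous formula at $t=0$ replaces $g$ by $g|_{t=0}$.

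The heart of the argument is to show that locally
\begin{align} \label{Pf_Loc:ratio}
  \sqrt{\det g(\xi,t)}/\sqrt{\det g(\xi,0)} = \mathcal{J}_t^0(Y(\xi)).
\end{align}
To this end, I will compute $\partial_t\sqrt{\det g(\xi,t)}$ using Jacobi's formula:
\begin{align*}
  \partial_t\sqrt{\det g(\xi,t)} = \tfrac{1}{2}\sqrt{\det g(\xi,t)}\,g^{ij}(\xi,t)\,\partial_tg_{ij}(\xi,t),
\end{align*}
where $g^{ij}$ is the inverse metric. Since $\partial_ty(\xi,t)=\mathbf{v}_\Gamma(y(\xi,t),t)$ by \eqref{E:Def_vSur}, differentiating $g_{ij}=\partial_{\xi_i}y\cdot\partial_{\xi_j}y$ in $t$ and symmetrizing give
\begin{align*}
  g^{ij}(\xi,t)\,\partial_tg_{ij}(\xi,t) = 2g^{ij}(\xi,t)\,\partial_{\xi_i}\bigl(\mathbf{v}_\Gamma(y(\xi,t),t)\bigr)\cdot\partial_{\xi_j}y(\xi,t).
\end{align*}
On the other hand, for any smooth vector field $\mathbf{v}$ in a neighborhood of $\Gamma_t$, the tangential gradient $\nabla_\Gamma v_k$ of each component satisfies $\partial_{\xi_i}(v_k\circ y)=\nabla_\Gamma v_k\cdot\partial_{\xi_i}y$ (since the normal part of $\nabla\tilde v_k$ is orthogonal to $\partial_{\xi_i}y$), hence $\nabla_\Gamma v_k=g^{ij}\partial_{\xi_i}(v_k\circ y)\partial_{\xi_j}y$ in local coordinates; summing the diagonal components yields the local formula
\begin{align*}
  \mathrm{div}_\Gamma\mathbf{v}(y(\xi,t),t) = g^{ij}(\xi,t)\,\partial_{\xi_i}\bigl(\mathbf{v}\circ y\bigr)(\xi,t)\cdot\partial_{\xi_j}y(\xi,t).
\end{align*}
Combining these identities gives the ODE
\begin{align*}
  \partial_t\sqrt{\det g(\xi,t)} = \sqrt{\det g(\xi,t)}\,\mathrm{div}_\Gamma\mathbf{v}_\Gamma\bigl(\Phi_t^0(Y(\xi)),t\bigr), \quad t\in[0,T]
\end{align*}
with initial value $\sqrt{\det g(\xi,0)}$, whose unique solution is exactly $\sqrt{\det g(\xi,0)}\,\mathcal{J}_t^0(Y(\xi))$ by the definition \eqref{E:Def_JSur}. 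This proves \eqref{Pf_Loc:ratio}.

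Substituting \eqref{Pf_Loc:ratio} into the local area formula and changing variables through $Y=Y(\xi)$ on $\Gamma_0$, I obtain \eqref{E:CoV_Sur} restricted to $y(U,t)$ for any $v$ supported in $y(U,t)$. Because $\overline{S_T}$ is compact by Lemma \ref{L:MS_ST}, I can cover $\Gamma_0$ by finitely many such patches $\{Y_\ell(U_\ell)\}_{\ell=1}^L$ and take a smooth partition of unity $\{\chi_\ell\}$ on $\Gamma_0$ subordinate to this cover; transporting it by $\Phi_t^0$ gives a partition of unity $\{\chi_\ell\circ\Phi_{-t}^0\}$ on $\Gamma_t$ subordinate to $\{y_\ell(U_\ell,t)\}$. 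Writing $v=\sum_\ell(\chi_\ell\circ\Phi_{-t}^0)v$, applying the local formula to each piece, and summing yields \eqref{E:CoV_Sur} for any $v\in C(\Gamma_t)$; a density argument then extends it to $v\in L^1(\Gamma_t)$. The main obstacle is the identification in \eqref{Pf_Loc:ratio}, which hinges on recognizing $g^{ij}\partial_{\xi_i}(\mathbf{v}_\Gamma\circ y)\cdot\partial_{\xi_j}y$ as the ambient-space surface divergence $\mathrm{div}_\Gamma\mathbf{v}_\Gamma$ defined in Section \ref{SS:Pre_Sur}; once this classical identity is in hand, everything else reduces to an elementary ODE and a partition of unity.
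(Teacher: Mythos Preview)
Your proof is correct but takes a genuinely different route from the paper's. You work in local coordinates, compute $\partial_t\sqrt{\det g}$ via Jacobi's formula, identify the result with $\sqrt{\det g}\,\mathrm{div}_\Gamma\mathbf{v}_\Gamma$ through the local expression for the surface divergence, solve the resulting ODE to obtain \eqref{Pf_Loc:ratio}, and then globalize by a partition of unity. The paper instead argues globally: it invokes the Leibniz formula $\frac{d}{ds}\int_{\Gamma_s}f\,d\mathcal{H}^{n-1}=\int_{\Gamma_s}(\partial_\Gamma^\bullet f+f\,\mathrm{div}_\Gamma\mathbf{v}_\Gamma)\,d\mathcal{H}^{n-1}$ (cited from \cite{DziEll07}) and, for fixed $t$ and $v\in C^1(\Gamma_t)$, constructs $f(y,s):=v(\Phi_t^0(\Phi_{-s}^0(y)))\exp\bigl(-\int_t^s\mathrm{div}_\Gamma\mathbf{v}_\Gamma(\Phi_\tau^0(\Phi_{-s}^0(y)),\tau)\,d\tau\bigr)$, checks that $\partial_\Gamma^\bullet f=-f\,\mathrm{div}_\Gamma\mathbf{v}_\Gamma$, and hence that $s\mapsto\int_{\Gamma_s}f\,d\mathcal{H}^{n-1}$ is constant, yielding \eqref{E:CoV_Sur} by evaluating at $s=0$ and $s=t$. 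The paper's argument is shorter and coordinate-free, but it treats the Leibniz formula as a black box; your computation is essentially what one would do to \emph{prove} that formula, so your route is more self-contained at the cost of the partition-of-unity bookkeeping and the coordinate identity for $\mathrm{div}_\Gamma$.
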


\begin{proof}
  We first recall that the Leibniz formula
  \begin{align} \label{Pf_CVS:LF}
    \frac{d}{ds}\int_{\Gamma_s}f\,d\mathcal{H}^{n-1} = \int_{\Gamma_s}(\partial_\Gamma^\bullet f+f\,\mathrm{div}_\Gamma\mathbf{v}_\Gamma)\,d\mathcal{H}^{n-1}, \quad s\in[0,T]
  \end{align}
  holds for a $C^1$ function $f$ on $\overline{S_T}$ (see \cite[Lemma 2.2]{DziEll07}).

  Fix $t\in[0,T]$ and let $v\in C^1(\Gamma_t)$ first.
  We set $V(Y):=v\bigl(\Phi_t(Y)\bigr)$ and
  \begin{align*}
    F(Y,s) := V(Y)\exp\left(-\int_t^s\mathrm{div}_\Gamma\mathbf{v}_\Gamma(\Phi_\tau^0(Y),\tau)\,d\tau\right), \quad (Y,s) \in \Gamma_0\times[0,T].
  \end{align*}
  Note that $V$ is independent of $s$.
  Also, $F$ is $C^1$ on $\Gamma_0\times[0,T]$ and
  \begin{align*}
    \partial_sF(Y,s) = -F(Y,s)\mathrm{div}_\Gamma\mathbf{v}_\Gamma(\Phi_s^0(Y),s).
  \end{align*}
  Let $f(y,s):=F(\Phi_{-s}^0(y),s)$ for $(y,s)\in\overline{S_T}$.
  Then, by \eqref{E:Def_MtSur},
  \begin{align*}
    \partial_\Gamma^\bullet f(y,s) &= \frac{\partial}{\partial s}\Bigl(f(\Phi_s^0(Y),s)\Bigr)\Big|_{Y=\Phi_{-s}^0(y)} = \partial_sF(\Phi_{-s}^0(y),s) \\
    &= -f(y,s)\mathrm{div}_\Gamma\mathbf{v}_\Gamma(y,s).
  \end{align*}
  Applying this to \eqref{Pf_CVS:LF}, we get $\frac{d}{ds}\int_{\Gamma_s}f\,d\mathcal{H}^{n-1}=0$.
  Therefore,
  \begin{align*}
    \int_{\Gamma_t}f(y,t)\,d\mathcal{H}^{n-1}(y) = \int_{\Gamma_0}f(Y,0)\,d\mathcal{H}^{n-1}(Y).
  \end{align*}
  Moreover, since $V(Y)=v\bigl(\Phi_t^0(Y)\bigr)$, $\int_t^t=0$, and $-\int_t^0=\int_0^t$, we have
  \begin{align*}
    f(y,t) &= F(\Phi_{-t}^0(y),t) = V\bigl(\Phi_{-t}^0(y)\bigr) = v(y), \quad y\in\Gamma_t, \\
    f(Y,0) &= F(Y,0) = V(Y)\mathcal{J}_t^0(Y) = v\bigl(\Phi_t^0(Y)\bigr)\mathcal{J}_t^0(Y), \quad Y\in\Gamma_0.
  \end{align*}
  Thus, the formula \eqref{E:CoV_Sur} is valid when $v\in C^1(\Gamma_t)$.
  Also, we see that $C^1(\Gamma_t)$ is dense in $L^1(\Gamma_t)$ by standard localization and mollification arguments.
  Hence, we also have \eqref{E:CoV_Sur} for $v\in L^1(\Gamma_t)$ by a density argument.
\end{proof}

\begin{proof}[Proof of Lemma \ref{L:CoSur_Lq}]
  For the function $\mathcal{J}_{(\cdot)}^0$ given by \eqref{E:Def_JSur}, we see that
  \begin{align} \label{Pf_CSL:JSBd}
    c^{-1} \leq \mathcal{J}_t^0(Y) \leq c, \quad (Y,t) \in \Gamma_0\times[0,T],
  \end{align}
  since $\Phi_{(\cdot)}^0$ and $\mathbf{v}_\Gamma$ are smooth on the compact sets $\Gamma_0\times[0,T]$ and $\overline{S_T}$, respectively.
  Thus, the inequality \eqref{E:CoSur_Lq} follows from \eqref{E:CoV_Sur} and \eqref{Pf_CSL:JSBd}.
  Also, if we have
  \begin{align} \label{Pf_CSL:TGr}
    c^{-1}\bigl|\nabla_\Gamma V(Y)\bigr| \leq \bigl|\nabla_\Gamma v\bigl(\Phi_t^0(Y)\bigr)\bigr| \leq c|\nabla_\Gamma V(Y)|, \quad Y\in\Gamma_0,
  \end{align}
  then we get \eqref{E:CoSur_W1q} by \eqref{E:CoV_Sur}, \eqref{Pf_CSL:JSBd}, and \eqref{Pf_CSL:TGr}.

  Let us show \eqref{Pf_CSL:TGr}.
  Since $\Gamma_0$ is compact, we can take a finite number of smooth local parametrizations $Y_\ell\colon\overline{B}\to\Gamma_0$ such that $\Gamma_0=\bigcup_{\ell=1}^LY_\ell(\overline{B})$, where $\overline{B}$ is the unit closed ball in $\mathbb{R}^{n-1}$.
  Thus, it is sufficient to show \eqref{Pf_CSL:TGr} on each $Y_\ell(\overline{B})$.
  Now, we fix and suppress the subscript $\ell$ so that $Y\colon\overline{B}\to\Gamma_0$ is a smooth local parametrization.
  Let
  \begin{align*}
    y(\xi,s) := \Phi_s^0\bigl(Y(\xi)\bigr) \in \Gamma_s, \quad \theta_{ij}(\xi,s) := \partial_{\xi_i}y(\xi,s)\cdot\partial_{\xi_j}y(\xi,s)
  \end{align*}
  for $(\xi,s)\in\overline{B}\times[0,T]$ and $i,j=1,\dots,n-1$.
  Also, let $\theta:=(\theta_{ij})_{i,j}$.
  Then,
  \begin{align} \label{Pf_CSL:psdf}
    \mathbf{a}\cdot[\theta(\xi,s)\mathbf{a}] = |\mathbf{a}_{\xi,s}|^2 \geq 0, \quad (\mathbf{a},\xi,s) \in \mathbb{R}^{n-1}\times\overline{B}\times[0,T],
  \end{align}
  where $\mathbf{a}_{\xi,s}:=\sum_{i=1}^{n-1}a_i\partial_{\xi_i}y(\xi,s)$ with $\mathbf{a}=(a_1,\dots,a_{n-1})^{\mathrm{T}}$.
  Moreover, the vectors
  \begin{align*}
    \partial_{\xi_i}y(\xi,s) = [D\Phi_s^0]_{Y(\xi)}\partial_{x_i}Y(\xi) \in T_{y(\xi,s)}\Gamma_s, \quad i = 1,\dots,n-1
  \end{align*}
  are linearly independent for all $(\xi,s)\in\overline{B}\times[0,T]$, since $Y$ is a local parametrization of $\Gamma_0$ and the differential $[D\Phi_s^0]_{Y(\xi)}$ is a linear isomorphism by Assumption \ref{A:Flow_Sur}.
  Thus, the equality holds in \eqref{Pf_CSL:psdf} if and only if $\mathbf{a}=\mathbf{0}_{n-1}$ is the zero vector.
  This shows that $\theta$ is invertible on $\overline{B}\times[0,T]$.
  Let $\theta^{-1}=(\theta^{ij})_{i,j}$ be the inverse matrix of $\theta$.
  Then, $\theta^{-1}$ is smooth on $\overline{B}\times[0,T]$ since $\theta$ is so.
  Moreover, setting $\mathbf{a}=\theta^{-1}\mathbf{b}$ in \eqref{Pf_CSL:psdf}, we have
  \begin{align*}
    [\theta^{-1}(\xi,s)\mathbf{b}]\cdot\mathbf{b} > 0, \quad (\mathbf{b},\xi,s) \in \mathbb{R}^{n-1}\times\overline{B}\times[0,T], \quad \mathbf{b} \neq \mathbf{0}_{n-1}.
  \end{align*}
  Thus, denoting by $\mathbb{S}^{n-2}$ the unit sphere in $\mathbb{R}^{n-1}$, we see that
  \begin{align*}
    c^{-1} \leq [\theta^{-1}(\xi,s)\mathbf{b}]\cdot\mathbf{b} \leq c, \quad (\mathbf{b},\xi,s) \in \mathbb{S}^{n-2}\times\overline{B}\times[0,T]
  \end{align*}
  with a constant $c>0$ by the compactness of $\mathbb{S}^{n-2}\times\overline{B}\times[0,T]$, and this gives
  \begin{align} \label{Pf_CSL:equi}
    c^{-1}|\mathbf{b}|^2 \leq [\theta^{-1}(\xi,s)\mathbf{b}]\cdot\mathbf{b} \leq c|\mathbf{b}|^2, \quad (\mathbf{b},\xi,s) \in \mathbb{R}^{n-1}\times\overline{B}\times[0,T].
  \end{align}
  Now, we define and observe that
  \begin{align*}
    V^\sharp(\xi) := V\bigl(Y(\xi)\bigr) = v\Bigl(\Phi_t^0\bigl(Y(\xi)\bigr)\Bigr) = v\bigl(y(\xi,t)\bigr), \quad \xi\in\overline{B}.
  \end{align*}
  Thus, the tangential gradients of $V$ on $\Gamma_0$ and of $v$ on $\Gamma_t$ are expressed as
  \begin{align*}
    \nabla_\Gamma V\bigl(Y(\xi)\bigr) &= \sum_{i,j=1}^{n-1}\theta^{ij}(\xi,0)\partial_{\xi_i}V^\sharp(\xi)\partial_{\xi_j}y(\xi,0), \\
    \nabla_\Gamma v\bigl(y(\xi,t)\bigr) &= \sum_{i,j=1}^{n-1}\theta^{ij}(\xi,t)\partial_{\xi_i}V^\sharp(\xi)\partial_{\xi_j}y(\xi,t)
  \end{align*}
  for $\xi\in\overline{B}$, and writing $\nabla_\xi V^\sharp=(\partial_{\xi_1}V^\sharp,\dots,\partial_{\xi_{n-1}}V^\sharp)^{\mathrm{T}}$, we can deduce that
  \begin{align*}
    \bigl|\nabla_\Gamma V\bigl(Y(\xi)\bigr)\bigr|^2 &= [\theta^{-1}(\xi,0)\nabla_\xi V^\sharp(\xi)]\cdot\nabla_\xi V^\sharp(\xi), \\
    \bigl|\nabla_\Gamma v\bigl(y(\xi,t)\bigr)\bigr|^2 &= [\theta^{-1}(\xi,t)\nabla_\xi V^\sharp(\xi)]\cdot\nabla_\xi V^\sharp(\xi).
  \end{align*}
  Hence, applying \eqref{Pf_CSL:equi} to the right-hand sides, we find that \eqref{Pf_CSL:TGr} holds on $Y(\overline{B})$.
\end{proof}

\subsection{Uniform Poincar\'{e} inequality} \label{SS:PA_UP}
The goal of this subsection is to prove Lemma \ref{L:UP_MTD}.
To this end, we derive a uniform Poincar\'{e} inequality on $\Gamma_t$ under a weighted constraint.
Recall that we write $c$ for a general positive constant independent of $t$ and $\varepsilon$.

\begin{lemma} \label{L:UP_Sur}
  Let $q\in[1,\infty)$.
  There exists a constant $c>0$ such that
  \begin{align} \label{E:UP_Sur}
    \|v\|_{L^q(\Gamma_t)} \leq c\|\nabla_\Gamma v\|_{L^q(\Gamma_t)}
  \end{align}
  for all $t\in[0,T]$ and $v\in L^q(\Gamma_t)$ satisfying $\int_{\Gamma_t}g(y,t)v(y)\,d\mathcal{H}^{n-1}(y)=0$.
\end{lemma}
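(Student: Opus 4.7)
The plan is to transplant the problem to the fixed reference surface $\Gamma_0$ via the flow map $\Phi_t^0$ and then prove a uniform weighted Poincar\'e inequality on $\Gamma_0$ by a compactness--contradiction argument. First, given $v\in L^q(\Gamma_t)$ with $\int_{\Gamma_t}gv\,d\mathcal{H}^{n-1}=0$, I set $V:=v\circ\Phi_t^0\in L^q(\Gamma_0)$. By Lemma~\ref{L:CoSur_Lq}, $\|v\|_{L^q(\Gamma_t)}$ and $\|\nabla_\Gamma v\|_{L^q(\Gamma_t)}$ are comparable (uniformly in $t$) to $\|V\|_{L^q(\Gamma_0)}$ and $\|\nabla_\Gamma V\|_{L^q(\Gamma_0)}$. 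Using the change of variables formula \eqref{E:CoV_Sur}, the constraint becomes $\int_{\Gamma_0}G_tV\,d\mathcal{H}^{n-1}=0$ with $G_t(Y):=g(\Phi_t^0(Y),t)\mathcal{J}_t^0(Y)$, where $\mathcal{J}_t^0$ is the Jacobian defined by \eqref{E:Def_JSur}. The function $(Y,t)\mapsto G_t(Y)$ is continuous on the compact set $\Gamma_0\times[0,T]$ and satisfies $c^{-1}\leq G_t(Y)\leq c$ by \eqref{E:G_Bdd} and \eqref{Pf_CSL:JSBd}. Hence it suffices to establish the inequality
\begin{equation*}
  \|V\|_{L^q(\Gamma_0)} \leq c\|\nabla_\Gamma V\|_{L^q(\Gamma_0)}, \quad V\in W^{1,q}(\Gamma_0), \quad \int_{\Gamma_0}G_tV\,d\mathcal{H}^{n-1}=0,
\end{equation*}
with a constant $c$ independent of $t\in[0,T]$.

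To prove this, I will argue by contradiction. Suppose no such uniform constant exists. Then there are sequences $t_k\in[0,T]$ and $V_k\in W^{1,q}(\Gamma_0)$ with $\|V_k\|_{L^q(\Gamma_0)}=1$, $\int_{\Gamma_0}G_{t_k}V_k\,d\mathcal{H}^{n-1}=0$, and $\|\nabla_\Gamma V_k\|_{L^q(\Gamma_0)}\to0$. By compactness of $[0,T]$ we may assume $t_k\to t_\ast$. The bound $\|V_k\|_{W^{1,q}(\Gamma_0)}\leq C$ together with the compact embedding $W^{1,q}(\Gamma_0)\hookrightarrow L^q(\Gamma_0)$ (and reflexivity for $q>1$, or the Rellich--Kondrachov theorem for $q=1$ via a standard truncation) gives, along a subsequence, $V_k\to V_\ast$ strongly in $L^q(\Gamma_0)$ with $\|V_\ast\|_{L^q(\Gamma_0)}=1$ and $\nabla_\Gamma V_\ast=0$ in the sense of distributions. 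Since $\Gamma_0$ is connected (by Assumption~\ref{A:Flow_Sur}, as $\Gamma_0=\partial\Omega_0$ is a connected closed hypersurface in our setting), $V_\ast$ is a nonzero constant. The continuity of $G_t(Y)$ on $\Gamma_0\times[0,T]$ yields $G_{t_k}\to G_{t_\ast}$ uniformly on $\Gamma_0$, and passing to the limit in the constraint gives $V_\ast\int_{\Gamma_0}G_{t_\ast}\,d\mathcal{H}^{n-1}=0$. Because $G_{t_\ast}\geq c^{-1}>0$, this forces $V_\ast=0$, contradicting $\|V_\ast\|_{L^q(\Gamma_0)}=1$.

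The only step that requires care is the case $q=1$, where the embedding $W^{1,1}(\Gamma_0)\hookrightarrow L^1(\Gamma_0)$ is still compact (Rellich--Kondrachov on a compact manifold) and the argument proceeds identically, so no special treatment is really needed. Combining the uniform inequality on $\Gamma_0$ with Lemma~\ref{L:CoSur_Lq} yields \eqref{E:UP_Sur}; the density of $W^{1,q}(\Gamma_t)$ in $\{v\in L^q(\Gamma_t):\int_{\Gamma_t}gv\,d\mathcal{H}^{n-1}=0\}\cap W^{1,q}(\Gamma_t)$ is automatic. The main (and essentially only) obstacle is ensuring uniformity in $t$, which is overcome by the compactness of $[0,T]$ and the joint continuity of the weight $G_t(Y)$ on $\Gamma_0\times[0,T]$.
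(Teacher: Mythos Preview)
Your proof is correct and follows essentially the same route as the paper: pull back to the fixed surface $\Gamma_0$ via $\Phi_t^0$ using Lemma~\ref{L:CoSur_Lq} and the change of variables formula \eqref{E:CoV_Sur}, then run a compactness--contradiction argument using the compact embedding $W^{1,q}(\Gamma_0)\hookrightarrow L^q(\Gamma_0)$ and the continuity of the weight in $t$. The only cosmetic difference is that the paper launches the contradiction on the moving surfaces first and then transports everything to $\Gamma_0$, whereas you reduce to $\Gamma_0$ before setting up the contradiction; the substance is identical.
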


\begin{proof}
  Assume to the contrary that there exist $t_k\in[0,T]$ and $v_k\in L^q(\Gamma_{t_k})$ such that
  \begin{align*}
    \|v_k\|_{L^q(\Gamma_{t_k})} > k\|\nabla_\Gamma v_k\|_{L^q(\Gamma_{t_k})}, \quad \int_{\Gamma_{t_k}}g(y,t_k)v_k(y)\,d\mathcal{H}^{n-1}(y) = 0, \quad k\in\mathbb{N}.
  \end{align*}
  Let $V_k(Y):=v_k\bigl(\Phi_{t_k}^0(Y)\bigr)$ for $Y\in\Gamma_0$.
  Then, by \eqref{E:CoSur_Lq}, \eqref{E:CoSur_W1q}, and \eqref{E:CoV_Sur},
  \begin{align} \label{Pf_UPS:Cont}
    \|V_k\|_{L^q(\Gamma_0)} > ck\|\nabla_\Gamma V_k\|_{L^q(\Gamma_0)}, \quad \int_{\Gamma_0}g(\Phi_{t_k}^0(Y),t_k)V_k(Y)\mathcal{J}_{t_k}^0(Y)\,d\mathcal{H}^{n-1}(Y) = 0.
  \end{align}
  By $\|V_k\|_{L^q(\Gamma_0)}>0$, we may replace $V_k$ by $V_k/\|V_k\|_{L^q(\Gamma_0)}$ to assume that
  \begin{align*}
    \|V_k\|_{L^q(\Gamma_0)} = 1, \quad \|\nabla_\Gamma V_k\|_{L^q(\Gamma_0)} < \frac{1}{ck} \quad\text{and thus}\quad \|V_k\|_{W^{1,q}(\Gamma_0)} \leq c.
  \end{align*}
  Since $[0,T]$ is compact, we have $t_k\to t_0\in[0,T]$ as $k\to\infty$ up to a subsequence.
  Moreover, since the embedding $W^{1,q}(\Gamma_0)\hookrightarrow L^q(\Gamma_0)$ is compact (see e.g. \cite[Lemma 3.7]{Miu20_03}),
  \begin{align*}
    \lim_{k\to\infty}V_k = V_0 \quad\text{weakly in $W^{1,q}(\Gamma_0)$ and strongly in $L^q(\Gamma_0)$}
  \end{align*}
  with some $V_0\in W^{1,q}(\Gamma_0)$ up to a subsequence.
  Hence,
  \begin{align*}
    \|V_0\|_{L^q(\Gamma_0)} = 1, \quad \|\nabla_\Gamma V_0\|_{L^q(\Gamma_0)} = 0, \quad\text{i.e.}\quad \nabla_\Gamma V_0 = 0 \quad\text{a.e. on}\quad \Gamma_0,
  \end{align*}
  and it follows that $V_0$ is constant on $\Gamma_0$ (see e.g. \cite[Lemma 3.6]{Miu20_03}).
  On the other hand, since $\Phi_{(\cdot)}^0$ and $J_{(\cdot)}^0$ are smooth on $\Gamma_0\times[0,T]$, and since $g$ is smooth on $\overline{S_T}$, we send $k\to\infty$ in the second equality of \eqref{Pf_UPS:Cont} to get
  \begin{align*}
    V_0\int_{\Gamma_0}g(\Phi_{t_0}^0(Y),t_0)\mathcal{J}_{t_0}^0(Y)\,d\mathcal{H}^{n-1}(Y) = 0.
  \end{align*}
  By this equality, \eqref{E:G_Bdd}, and \eqref{Pf_CSL:JSBd}, we have $V_0=0$, but this contradicts $\|V_0\|_{L^q(\Gamma_0)}=1$.
  Hence, the inequality \eqref{E:UP_Sur} is valid.
\end{proof}

\begin{proof}[Proof of Lemma \ref{L:UP_MTD}]
  Let $u\in W^{1,q}(\Omega_t^\varepsilon)$ satisfy $(u,1)_{L^2(\Omega_t^\varepsilon)}=0$.
  Then, since
  \begin{align*}
    \mathcal{M}_\varepsilon u \in W^{1,q}(\Gamma_t), \quad \int_{\Gamma_t}g(y,t)\mathcal{M}_\varepsilon u(y)\,d\mathcal{H}^{n-1}(y) = \frac{1}{\varepsilon}\int_{\Omega_t^\varepsilon}u(x)\,dx = 0
  \end{align*}
  by Lemma \ref{L:Ave_W1q} and \eqref{E:Ave_Pair}, we can apply \eqref{E:UP_Sur} to $\mathcal{M}_\varepsilon u$ and use \eqref{E:Ave_GrLq} to get
  \begin{align*}
    \|\mathcal{M}_\varepsilon u\|_{L^q(\Gamma_t)} \leq c\|\nabla_\Gamma\mathcal{M}_\varepsilon u\|_{L^q(\Gamma_t)} \leq c\Bigl(\varepsilon^{1-1/q}\|u\|_{L^q(\Omega_t^\varepsilon)}+\varepsilon^{-1/q}\|\nabla u\|_{L^q(\Omega_t^\varepsilon)}\Bigr).
  \end{align*}
  Also, it follows from \eqref{E:CE_Lq} and \eqref{E:AvDf_Lq} that
  \begin{align*}
    \|u\|_{L^q(\Omega_t^\varepsilon)} &\leq \Bigl\|u-\overline{\mathcal{M}_\varepsilon u}\Bigr\|_{L^q(\Omega_t^\varepsilon)}+\Bigl\|\overline{\mathcal{M}_\varepsilon u}\Bigr\|_{L^q(\Omega_t^\varepsilon)} \\
    &\leq c\Bigl(\varepsilon\|u\|_{W^{1,q}(\Omega_t^\varepsilon)}+\varepsilon^{1/q}\|\mathcal{M}_\varepsilon u\|_{L^q(\Gamma_t)}\Bigr).
  \end{align*}
  Thus, combining the above inequalities and using $0<\varepsilon<1$, we obtain
  \begin{align*}
    \|u\|_{L^q(\Omega_t^\varepsilon)} \leq c_1\varepsilon\|u\|_{L^q(\Omega_t^\varepsilon)}+c_2\|\nabla u\|_{L^q(\Omega_t)}
  \end{align*}
  with some constants $c_1,c_2>0$ independent of $t$ and $\varepsilon$.
  Now, let
  \begin{align*}
    \varepsilon_1 := \min\left\{\frac{1}{2c_1},\varepsilon_0\right\} \in (0,\varepsilon_0],
  \end{align*}
  where $\varepsilon_0\in(0,1)$ is the constant given at the beginning of Section \ref{SS:Pre_Thin}.
  Then,
  \begin{align*}
    \|u\|_{L^q(\Omega_t^\varepsilon)} \leq \frac{1}{2}\|u\|_{L^q(\Omega_t^\varepsilon)}+c_2\|\nabla u\|_{L^q(\Omega_t)}
  \end{align*}
  for $0<\varepsilon\leq\varepsilon_1$ by $c_1\varepsilon_1\leq1/2$.
  Hence, the inequality \eqref{E:UP_MTD} follows.
\end{proof}

\subsection{Velocity of the moving thin domain} \label{SS:PA_VMTD}
Let $\Phi_{\pm(\cdot)}^\varepsilon$ be the mappings given in Lemma \ref{L:Flow_MTD}.
We prove Lemmas \ref{L:Vls_MTD} and \ref{L:Dist_Mat}.

\begin{proof}[Proof of Lemma \ref{L:Vls_MTD}]
  Let $X\in\overline{\Omega_0^\varepsilon}$.
  By \eqref{E:Def_FloMTD} and \eqref{E:Def_vMTD}, we have
  \begin{align*}
    \Phi_t^\varepsilon(X) = \Phi_t^0(Y)+r^\varepsilon\bm{\nu}(\Phi_t^0(Y),t), \quad \mathbf{v}^\varepsilon(\Phi_t^\varepsilon(X),t) = \partial_t\Phi_t^\varepsilon(X)
  \end{align*}
  for $t\in[0,T]$, where $Y=\pi(X,0)\in\Gamma_0$ and
  \begin{align} \label{Pf_VMTD:re}
    r^\varepsilon = r^\varepsilon(X,t) = \frac{g(\Phi_t^0(Y),t)}{g(Y,0)}\{d(X,0)-\varepsilon g_0(Y,0)\}+\varepsilon g_0(\Phi_t^0(Y),t).
  \end{align}
  Using \eqref{E:Def_vSur} and \eqref{E:Def_MtSur}, we compute $\partial_t\Phi_t^\varepsilon(X)$ to get
  \begin{align*}
    \mathbf{v}^\varepsilon(\Phi_t^\varepsilon(X),t) &= \mathbf{v}_\Gamma(\Phi_t^0(Y),t)+\frac{\partial r^\varepsilon}{\partial t}\bm{\nu}(\Phi_t^0(Y),t)+r^\varepsilon\partial_\Gamma^\bullet\bm{\nu}(\Phi_t^0(Y),t), \\
    \frac{\partial r^\varepsilon}{\partial t} &= \frac{\partial_\Gamma^\bullet g(\Phi_t^0(Y),t)}{g(Y,0)}\{d(X,0)-\varepsilon g_0(Y,0)\}+\varepsilon\partial_\Gamma^\bullet g_0(\Phi_t^0(Y),t).
  \end{align*}
  Next, we define and observe by \eqref{E:Fermi} and \eqref{E:Def_FloMTD} that
  \begin{align} \label{Pf_VMTD:xy}
    x := \Phi_t^\varepsilon(X) \in \overline{\Omega_t^\varepsilon}, \quad y := \Phi_t^0(Y) = \pi(\Phi_t^\varepsilon(X),t) = \pi(x,t) \in \Gamma_t.
  \end{align}
  Then, the above expressions of $\mathbf{v}^\varepsilon$ and $\partial r^\varepsilon/\partial t$ read
  \begin{align} \label{Pf_VMTD:vtr}
    \begin{aligned}
      \mathbf{v}^\varepsilon(x,t) &= \mathbf{v}_\Gamma(y,t)+\frac{\partial r^\varepsilon}{\partial t}\bm{\nu}(y,t)+r^\varepsilon\partial_\Gamma^\bullet\bm{\nu}(y,t), \\
      \frac{\partial r^\varepsilon}{\partial t} &= \frac{\partial_\Gamma^\bullet g(y,t)}{g(Y,0)}\{d(X,0)-\varepsilon g_0(Y,0)\}+\varepsilon\partial_\Gamma^\bullet g_0(y,t).
    \end{aligned}
  \end{align}
  Moreover, since $r^\varepsilon=d(\Phi_t^\varepsilon(X),t)$ by \eqref{E:Fermi} and \eqref{E:Def_FloMTD}, we have
  \begin{align*}
    d(x,t) = r^\varepsilon = \frac{g(y,t)}{g(Y,0)}\{d(X,0)-\varepsilon g_0(Y,0)\}+\varepsilon g_0(y,t)
  \end{align*}
  by \eqref{Pf_VMTD:re} and \eqref{Pf_VMTD:xy}.
  From this relation, we deduce that
  \begin{align*}
    \frac{1}{g(Y,0)}\{d(X,0)-\varepsilon g_0(Y,0)\} = \frac{1}{g(y,t)}\{d(x,t)-\varepsilon g_0(y,t)\}.
  \end{align*}
  Hence, we can write $\partial r^\varepsilon/\partial t$ in \eqref{Pf_VMTD:vtr} as
  \begin{align} \label{Pf_VMTD:dtdr}
    \frac{\partial r^\varepsilon}{\partial t} = \frac{\partial_\Gamma^\bullet g(y,t)}{g(y,t)}\{d(x,t)-\varepsilon g_0(y,t)\}+\varepsilon\partial_\Gamma^\bullet g_0(y,t),
  \end{align}
  and we apply this relation and $r^\varepsilon=d(x,t)$ to $\mathbf{v}^\varepsilon$ in \eqref{Pf_VMTD:vtr} to find that
  \begin{align*}
    \mathbf{v}^\varepsilon(x,t) = \mathbf{v}_\Gamma(y,t)+d(x,t)\mathbf{v}_1(y,t)+\varepsilon\mathbf{v}_2(y,t),
  \end{align*}
  where $\mathbf{v}_1,\mathbf{v}_2\colon\overline{S_T}\to\mathbb{R}^n$ are smooth vector fields given by
  \begin{align*}
    \mathbf{v}_1 := \frac{\partial_\Gamma^\bullet g}{g}\,\bm{\nu}+\partial_\Gamma^\bullet\bm{\nu}, \quad \mathbf{v}_2 := \left(\partial_\Gamma^\bullet g_0-\frac{\partial_\Gamma^\bullet g}{g}\,g_0\right)\bm{\nu} \quad\text{on}\quad \overline{S_T}.
  \end{align*}
  Now, since $y=\pi(x,t)$, the above expression can be written as
  \begin{align*}
    \mathbf{v}^\varepsilon(x,t) = \bar{\mathbf{v}}_\Gamma(x,t)+d(x,t)\bar{\mathbf{v}}_1(x,t)+\varepsilon\bar{\mathbf{v}}_2(x,t), \quad (x,t)\in\overline{Q_T^\varepsilon}.
  \end{align*}
  Thus, we get the first inequality of \eqref{E:Vls_MTD} by $|d|\leq c\varepsilon$ in $\overline{Q_T^\varepsilon}$.
  Next, we set
  \begin{align*}
    \mathbf{G}_{\Gamma,g} := \nabla_\Gamma\mathbf{v}_\Gamma+\bm{\nu}\otimes\mathbf{v}_1 = \nabla_\Gamma\mathbf{v}_\Gamma+\bm{\nu}\otimes\left[\frac{\partial_\Gamma^\bullet g}{g}\bm{\nu}+\partial_\Gamma^\bullet\bm{\nu}\right] \quad\text{on}\quad \overline{S_T}.
  \end{align*}
  Since $\nabla(d\bar{\mathbf{v}}_1)=(\nabla d)\otimes\bar{\mathbf{v}}_1+d\nabla\bar{\mathbf{v}}_1=\bar{\bm{\nu}}\otimes\bar{\mathbf{v}}_1+d\nabla\bar{\mathbf{v}}_1$ in $\overline{Q_T^\varepsilon}$ by \eqref{E:Fermi}, we have
  \begin{align*}
    \nabla\mathbf{v}^\varepsilon = \nabla\bar{\mathbf{v}}_\Gamma+\nabla(d\bar{\mathbf{v}}_1)+\varepsilon\nabla\bar{\mathbf{v}}_2 = \overline{\mathbf{G}}_{\Gamma,g}+\Bigl(\nabla\bar{\mathbf{v}}_\Gamma-\overline{\nabla_\Gamma\mathbf{v}_\Gamma}\Bigr)+d\nabla\bar{\mathbf{v}}_1+\varepsilon\nabla\bar{\mathbf{v}}_2
  \end{align*}
  in $\overline{Q_T^\varepsilon}$.
  Thus, the second inequality of \eqref{E:Vls_MTD} holds by \eqref{E:CEGr_NB} and $|d|\leq c\varepsilon$ in $\overline{Q_T^\varepsilon}$.
  Also,
  \begin{align*}
    \mathrm{tr}[\mathbf{G}_{\Gamma,g}] = \mathrm{div}_\Gamma\mathbf{v}_\Gamma+\frac{\partial_\Gamma^\bullet g}{g}|\bm{\nu}|^2+\bm{\nu}\cdot\partial_\Gamma^\bullet\bm{\nu} = \mathrm{div}_\Gamma\mathbf{v}_\Gamma+\frac{\partial_\Gamma^\bullet g}{g} = \sigma_{\Gamma,g} \quad\text{on}\quad \overline{S_T}
  \end{align*}
  by $|\bm{\nu}|=1$ and $\bm{\nu}\cdot\partial_\Gamma^\bullet\bm{\nu}=\partial_\Gamma^\bullet(|\bm{\nu}|^2/2)=0$ on $\overline{S_T}$.
  By this relation and the second inequality of \eqref{E:Vls_MTD}, we obtain the last inequality of \eqref{E:Vls_MTD}.
\end{proof}

\begin{proof}[Proof of Lemma \ref{L:Dist_Mat}]
  For $X\in\overline{\Omega_0^\varepsilon}$, let $Y=\pi(X,0)\in\Gamma_0$ and $r^\varepsilon$ be given by \eqref{Pf_VMTD:re}.
  Also, let $x$ and $y$ be given by \eqref{Pf_VMTD:xy}, and let $\eta$ be a function on $\overline{S_T}$.
  Then,
  \begin{align*}
    d(\Phi_t^\varepsilon(X),t) = r^\varepsilon, \quad \bar{\eta}(\Phi_t^\varepsilon(X),t) = \bar{\eta}(x,t) = \eta(y,t) = \eta(\Phi_t^0(Y),t)
  \end{align*}
  by \eqref{E:Fermi}, \eqref{E:Def_FloMTD}, and $\bar{\eta}(\cdot,t)=\eta(\pi(\cdot,t),t)$.
  Thus, we see by \eqref{E:Def_MtMTD} that
  \begin{align*}
    \partial_\varepsilon^\bullet d(x,t) &= \frac{\partial}{\partial t}\Bigl(d(\Phi_t^\varepsilon(X),t)\Bigr) = \frac{\partial r^\varepsilon}{\partial t}, \\
    \partial_\varepsilon^\bullet\bar{\eta}(x,t) &= \frac{\partial}{\partial t}\Bigl(\bar{\eta}(\Phi_t^\varepsilon(X),t)\Bigr) = \frac{\partial}{\partial t}\Bigl(\eta(\Phi_t^0(Y),t)\Bigr),
  \end{align*}
  and we use \eqref{E:Def_MtSur} and \eqref{Pf_VMTD:dtdr} to the right-hand sides to get \eqref{E:Dist_Mat} and \eqref{E:CE_Mat}.
\end{proof}

\section{Conclusion} \label{S:Concl}
We studied the parabolic $p$-Laplace equation \eqref{E:pLap_MTD} with $p>2$ in the moving thin domain $\Omega_t^\varepsilon$ of the form \eqref{E:Def_MTD}.
When $\varepsilon\to0$ and $\Omega_t^\varepsilon$ shrinks to the given closed moving hypersurface $\Gamma_t$, we showed that the average in the thin direction of a weak solution to \eqref{E:pLap_MTD} converges weakly in an appropriate function space on $\Gamma_t$.
Moreover, we derived the limit problem \eqref{E:pLap_Lim} on $\Gamma_t$ rigorously by characterizing the limit function as a unique weak solution.
The proof relies on the abstract framework of evolving Bochner spaces developed in \cite{AlCaDjEl23,AlElSt15_PM}, the properties of the weighted average operator, and a monotonicity argument.
In particular, we determined the weak limit of the averaged nonlinear gradient term by choosing test functions appropriately and making use of the strong convergence of the averaged weak solution due to the Aubin-Lions lemma on evolving Bochner spaces.
Also, we combined the two equations of the limit problem \eqref{E:pLap_Lim} into one weak form and used it to establish the uniqueness of a weak solution to \eqref{E:pLap_Lim}.

As explained in Section \ref{S:Intro}, the limit problem \eqref{E:pLap_Lim} obtained here can be seen as a local mass conservation law on the moving hypersurface $\Gamma_t$ in which the rate of mass moving on $\Gamma_t$ is balanced with a normal flux.
It extends the local mass conservation law described in \cite{DziEll07,DziEll13_AN} which only takes into account a tangential flux.
This paper gives the first result on a rigorous thin-film limit of nonlinear equations in moving thin domains around given moving hypersurfaces, and also proposes a new kind of conservation law.

\section*{Acknowledgments}
The work of the author was supported by JSPS KAKENHI Grant Number 23K12993.

\bibliographystyle{abbrv}
\bibliography{pLap_MoThin_Ref}

\end{document}